\definecolor{myblue}{rgb}{0,0,0.6}
\newtheorem{theorem}{Theorem}[section]
\newtheorem{cor}[theorem]{Corollary}
\newtheorem{definition}[theorem]{Definition}
\newtheorem{notations}[theorem]{Notation}
\newtheorem{lem}[theorem]{Lemma}
\newtheorem{prop}[theorem]{Proposition}
\newtheorem{rem}[theorem]{Remark}
\newtheorem{remark}[theorem]{Remark}
\numberwithin{equation}{section}
\newcommand{\hsc}{\hbar}
\newcommand{\muin}{\mu_{\rm in}}
\newcommand{\muout}{\mu_{\rm out}}
\newcommand{\tfa}{\text{ for all }}
\newcommand{\tand}{\text{ and }}
\newcommand{\ton}{\text{ on }}
\newcommand{\tin}{\text{ in }}
\newcommand{\beq}{\begin{equation}}
\newcommand{\eeq}{\end{equation}}
\newcommand{\beqs}{\begin{equation*}}
\newcommand{\eeqs}{\end{equation*}}
\newcommand{\bbU}{\mathbb{U}}
\newcommand{\cT}{{\mathcal T}}
\newcommand{\cI}{{\mathcal I}}
\newcommand{\bcT}{\boldsymbol{\mathcal{T}}}
\newcommand{\be}{\boldsymbol{e}}
\newcommand{\bF}{\boldsymbol{F}}
\newcommand{\pdiff}[2]{\frac{\partial #1}{\partial #2}}
\newcommand*{\N}[1]{\left\|#1\right\|}
\newcommand{\bv}{\boldsymbol{v}}
\newcommand{\bu}{\boldsymbol{u}}
\newcommand{\imp}{\mathrm{imp}} 
\newcommand{\Imap}[4]{\cI_{\Gamma_{#1.#2} \ra \Gamma_{#3,#4}}}
\newcommand{\Imaps}[4]{\cI_{\Gamma_{#1}^{#2} \ra \Gamma_{#3}^{#4}}}
\newcommand{\ra}{\to}
\newcommand{\hgamma}{{\gamma}}
\newcommand{\hrho}{{\rho}}
 \newcommand{\wOmega}{\widehat{\domain}}
 \newcommand{\pwOmega}{\partial \wOmega}
\newcommand{\hL}{{L}}
\newcommand{\bit}{\begin{itemize}}
\newcommand{\eit}{\end{itemize}}
\newcommand{\ben}{\begin{enumerate}}
\newcommand{\een}{\end{enumerate}}
\newcommand{\leftsubscript}{l}
\newcommand{\middlesubscript}{i}
\newcommand{\rightsubscript}{r}
\newcommand{\goodmap}{\mathcal I^-_{1}}
\newcommand{\badmap}{\mathcal I^+_{1}}
\newcommand{\maptwonewplus}{\mathcal I^+_{2}}
\newcommand{\maptwonewminus}{\mathcal I^-_{2}}
\newcommand{\domain}{D}
\newcommand{\height}{{\mathsf h}}
\DeclareMathSymbol{\shortminus}{\mathbin}{AMSa}{"39}
\title[Sharp bounds on Helmholtz impedance-to-impedance maps]{Sharp bounds on Helmholtz impedance-to-impedance maps and application to overlapping domain decomposition}
\author{David Lafontaine} 
\address{
CNRS and Institut de Math\'ematiques de Toulouse, UMR5219; Universit\'e de Toulouse, CNRS; UPS, F-31062 Toulouse Cedex 9, France}\email{david.lafontaine@math.univ-toulouse.fr}
\author{Euan A.~Spence}
\address{Department of Mathematical Sciences, University of Bath, Bath, BA2 7AY, UK}
\email{E.A.Spence@bath.ac.uk}
\begin{document}

\maketitle

\begin{abstract}
We prove sharp bounds on certain impedance-to-impedance maps (and their compositions) for the Helmholtz equation with large wavenumber (i.e., at high-frequency) using semiclassical defect measures.
The paper \cite{GoGaGrLaSp:22} recently showed that the behaviour of these impedance-to-impedance maps (and their compositions) dictates the convergence of the 
parallel overlapping Schwarz domain-decomposition method with impedance boundary conditions on the subdomain boundaries. 
For a model decomposition with two subdomains and sufficiently-large overlap, the results of this paper combined with those in  \cite{GoGaGrLaSp:22} show 
that the parallel Schwarz method is power contractive, independent of the wavenumber.
For strip-type decompositions with many subdomains, the results of this paper show that the composite impedance-to-impedance maps, in general, behave ``badly'' with respect to the wavenumber; nevertheless, by proving results about the composite maps applied to a restricted class of data, we give insight into 
the wavenumber-robustness of the parallel Schwarz method observed in the numerical experiments in \cite{GoGaGrLaSp:22}.
\end{abstract}

\section{Introduction}\label{sec:1}

\subsection{Motivation and outline}\label{sec:motivation}

Over the last 30 years, there has been sustained interest in computing approximations to solutions of the Helmholtz equation $\Delta u + k^2 u=0$ with wavenumber $k\gg 1$ using domain-decomposition (DD) methods; see the recent review papers \cite{GaZh:19, GaZh:22}. However, it still remains an open problem to provide a $k$-explicit convergence theory, valid for arbitrarily-large $k$, for any practical DD method for computing approximations to Helmholtz solutions.

Working towards this goal, the paper \cite{GoGaGrLaSp:22} studied a parallel overlapping Schwarz method for the Helmholtz equation, where impedance boundary conditions are imposed on the subdomain boundaries. This method 
can be thought of as the overlapping analogue of the parallel non-overlapping method introduced in Despr\'es' thesis \cite{De:91, BeDe:97}, which was the first method in the Helmholtz context to demonstrate the benefits of using impedance boundary conditions
 on the subdomain problems (the analogous algorithm for Laplace's equation, with Robin boundary conditions on the subdomains, was introduced by P.-L.~Lions \cite{Li:90}). Indeed, with or without overlap, the parallel Schwarz method with Dirichlet boundary conditions on the subdomains need not converge when applied to the Helmholtz equation (see, e.g., \cite[\S2.2.1]{DoJoNa:15}), and is not even well posed if $k^2$ is a Dirichlet eigenvalue of the Laplacian on one of the subdomains. 
In contrast, the method with impedance boundary conditions is well posed, and always converges in the non-overlapping case by \cite[Theorem 1]{BeDe:97} (see, e.g., \cite[\S2.2.2]{DoJoNa:15}).

The paper \cite{GoGaGrLaSp:22} studied 
the parallel overlapping Schwarz method  at the continuous level, i.e., without discretisation of the subdomain problems, with then the follow-up paper \cite{GoGrSp:22} showing that, at least for certain 2-d decompositions and provided the discretisation is sufficiently fine, the discrete method inherits the properties of the method at the continuous level. 

The main result of \cite{GoGaGrLaSp:22} was the expression of the error-propagation operator (i.e., the operator describing how the error propagates from one iteration to the next) in 
terms of certain impedance-to-impedance maps. This result paved the way for $k$-explicit results about the convergence of the DD method to be obtained from $k$-explicit bounds on the norms of the  impedance-to-impedance maps (and/or their compositions), and the present paper  provides such $k$-explicit bounds for a model set-up in 2-d.

The structure of the paper is as follows. \S\ref{sec:1.2} defines the impedance-to-impedance maps for a certain 2-d model set-up, and \S\ref{sec:1.3} states bounds on these maps that are sharp in the limit $k\to\infty$. \S\ref{sec:1.4} states sharp bounds on compositions of these maps.
\S\ref{sec:2} discusses the implications of the main results for the parallel Schwarz method, recapping the necessary material from \cite{GoGaGrLaSp:22}; \S\ref{sec:2} also recaps other literature on impedance-to-impedance maps in the context of domain-decomposition, including the recent paper \cite{BeCaMa:22} (see Remark \ref{rem:literature}).
\S\ref{sec:3} recaps known material about semiclassical defect measures (mainly from \cite{Miller}). \S\ref{sec:4} proves propagation results at the level of measures for the models we are interested in.
\S\S\ref{sec:6}-\S\ref{sec:7} prove the main results. 
\S\ref{sec:5} shows wellposedness results for our models.

\subsection{Definition of the impedance-to-impedance maps}\label{sec:1.2}

Given cartesian coordinates $(x_1,x')$ in $\mathbb R^2$ and $\height , \mathsf {d_{\leftsubscript}}, \mathsf {d_{\rightsubscript}} > 0$, we let
\begin{gather*}
\domain := (0,\mathsf {d_{\leftsubscript}} + \mathsf {d_{\rightsubscript}}) \times (0,\height)\\
\Gamma_{\leftsubscript} := \{0\} \times [0,\height],\qquad \Gamma_{\middlesubscript} := \{\mathsf {d_{\leftsubscript}}\} \times [0,\height], \qquad \Gamma_{\rightsubscript} := \{\mathsf {d_{\leftsubscript}} + \mathsf {d_{\rightsubscript}} \} \times [0,\height], \\ 
\Gamma_t := (0,\mathsf {d_{\leftsubscript}} + \mathsf {d_{\rightsubscript}}) \times \{ \height  \}, \qquad \Gamma_b := (0,\mathsf {d_{\leftsubscript}} + \mathsf {d_{\rightsubscript})} \times \{ 0 \}, \\
\Gamma_s:= \Gamma_t \cup \Gamma_b \cup \Gamma_{\rightsubscript}, \qquad\Gamma'_s := \Gamma_t \cup \Gamma_b;
\end{gather*} 
see Figure \ref{fig:model}.

\newcommand{\drawcell}{

\draw [thick] (0,0) -- (5,0)  node[midway, below] {$\Gamma_b$};
\draw [thick] (5,0) -- (5,3)  node[midway, right] {$\Gamma_{\rightsubscript}$};
\draw [thick] (5,3) -- (0,3) node[midway, above] {$\Gamma_t$};
\draw [thick] (0,3) -- (0,0) node[midway, left] {$\Gamma_{\leftsubscript}$};

\draw [thick, dashed]  (3,0) -- (3,3)  node[midway, right] {$\Gamma_{\middlesubscript}$} ;

\draw [<->] (1/20,3-1/5) -- (3-1/20,3-1/5)  node[midway, below] {$\mathsf {d_{\leftsubscript}}$};
\draw [<->] (3+1/20,3-1/5) -- (5-1/20,3-1/5)  node[midway, below] {$\mathsf{d_{\rightsubscript}}$};
\draw [<->] (1/5,1/20) -- (1/5, 3-1/20)  node[midway, right] {$\height $};
}

\begin{figure}[h!]
\begin{center}
\begin{tikzpicture}
\drawcell
\end{tikzpicture}
\caption{The domain $\domain$, the boundaries $\Gamma_\leftsubscript, \Gamma_\rightsubscript, \Gamma_t$, and $\Gamma_b$, and the interior interface $\Gamma_\middlesubscript$}
\label{fig:model}
\end{center}
\end{figure}

We are interested in the two following model problems.

\noindent\begin{minipage}{0.5\linewidth}
\begin{equation} \label{eq:model_cell} \tag{Model 1}
\begin{cases}
(\Delta + k^2) u = 0 \text{ in } \domain \\
(\frac{1}{i}\partial_{x_1} + k) u = g \; \text{ on } \Gamma_{\leftsubscript}, \\
\text{$u$ is outgoing near $\Gamma_s$},
\end{cases}
\end{equation}
\end{minipage}%
\begin{minipage}{0.5\linewidth}
\begin{equation} \label{eq:model_cell_timp} \tag{Model 2}
\begin{cases}
(\Delta + k^2) u = 0 \text{ in } \domain \\
(\frac{1}{i}\partial_{x_1} + k) u = g \; \text{ on } \Gamma_{\leftsubscript}, \\
(-\frac{1}{i}\partial_{x_1} + k) u = 0 \; \text{ on } \Gamma_{\rightsubscript}, \\
\text{$u$ is outgoing near $\Gamma'_s$},
\end{cases}
\end{equation}
\end{minipage}\par\vspace{\belowdisplayskip}
\noindent where the outgoingness near $\Gamma_s$ (resp.~$\Gamma'_s$) is defined in Definition \ref{def:outgo} below. 
Since the outgoing boundary condition is a high-frequency ($k\rightarrow \infty$) assumption, there might exist more than one
solution to such a model for a given $k$; however, all solutions will have the same high-frequency behaviour. 
Indeed, 
for both models, an admissible 
(in the sense of Definition \ref{def:adm} below)
solution operator $g \mapsto u$ exists,
and two solutions coming from distinct admissible solution operators always coincide modulo $O(k^{-\infty})$ (see Lemmas \ref{lem:modelwp} and \ref{lem:modelHF} below). Since we are interested in the high-frequency 
behaviour of the solutions, we therefore fix from now on an admissible solution operator to both models.

Observe that in \ref{eq:model_cell_timp}, the impedance data $(-\frac{1}{i}\partial_n +k)u$ is specified on both $\Gamma_\leftsubscript$ and $\Gamma_\rightsubscript$, where $\partial_n$ is the outward normal derivative to $\domain$. 
 If $g \in L^2(\Gamma_{\leftsubscript})$ and $u$ is the associated solution of (\ref{eq:model_cell}) in $\domain$, we let
$$
\goodmap(\height , \mathsf {d_{\leftsubscript}}, k) g := \gamma_{\Gamma_{\middlesubscript}}\left(\frac{1}{i}\partial_{x_1} - k\right)u, \hspace{0.6cm}\badmap(\height , \mathsf {d_{\leftsubscript}}, k)  g := \gamma_{\Gamma_{\middlesubscript}}\left(\frac{1}{i}\partial_{x_1} + k\right)u;
$$
i.e., $\goodmap$ and $\badmap$ are the two different impedance traces on $\Gamma_\middlesubscript$, with the plus/minus superscripts correspond to the plus/minus in the impedance traces,
where $\gamma_{\Gamma_{\middlesubscript}}$ denotes the trace operator onto $L^2(\Gamma_{\middlesubscript})$. In the same way, if $g \in L^2(\Gamma_{\leftsubscript})$, and $u$ is the associated solution of (\ref{eq:model_cell_timp}) in $\domain $, we let
$$
\maptwonewminus (\height , \mathsf {d_{\leftsubscript}}, \mathsf {d_{\rightsubscript}}, k) g := \gamma_{\Gamma_{\middlesubscript}}
\left(\frac{1}{i}\partial_{x_1} - k\right)u,
\hspace{0.6cm}\maptwonewplus (\height , \mathsf {d_{\leftsubscript}}, \mathsf {d_{\rightsubscript}}, k) g := \gamma_{\Gamma_{\middlesubscript}}\left(\frac{1}{i}\partial_{x_1} + k\right)u.
$$

\subsection{Upper and lower bounds on the impedance operators}\label{sec:1.3}
\begin{theorem}[Upper and lower bounds for (\ref{eq:model_cell})] \label{th:model1}
 Let $\height , \mathsf {d_\leftsubscript} > 0$ and let $\theta_{\rm max}\in (0,\pi/2)$ be defined by
$$
\theta_{\rm max}(\height , \mathsf {d_\leftsubscript}) := \arctan \frac {\height } {\mathsf{d_\leftsubscript}}.
$$
$\;$
\begin{enumerate}
\item For any $\epsilon>0$, there exists $k_0(\epsilon)>0$ such that, for all $k\geq k_0$
$$
\Vert \goodmap(\height , \mathsf {d_{\leftsubscript}}, k)  \Vert_{L^2 \rightarrow L^2}\leq   \frac{1-\cos( \theta_{\rm max})}{1 + \cos (\theta_{\rm max})} + \epsilon
$$
In addition,
$$
\limsup_{k \rightarrow \infty }\Vert \goodmap(\height , \mathsf {d_{\leftsubscript}}, k)  \Vert_{L^2 \rightarrow L^2} \geq  \frac{1-\cos( \theta_{\rm max})}{1 + \cos( \theta_{\rm max})}.
$$
\item For any $\epsilon>0$, there exists $k_0(\epsilon)>0$ such that, for all $k\geq k_0$
$$
\Vert \badmap(\height , \mathsf {d_{\leftsubscript}}, k)  \Vert_{L^2 \rightarrow L^2}\leq 1 + \epsilon
$$
In addition,
$$
\limsup_{k \rightarrow \infty }\Vert \badmap(\height , \mathsf {d_{\leftsubscript}}, k)  \Vert_{L^2 \rightarrow L^2} \geq  1.
$$
\end{enumerate}
\end{theorem}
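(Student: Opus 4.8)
\emph{Setup.} The plan is to pass to the high-frequency limit through semiclassical defect measures, as developed in \S\ref{sec:3}--\S\ref{sec:4}. Write $\hsc:=1/k$ and, for a solution $u$ of (\ref{eq:model_cell}) with data $g$, set $v:=ku$; then $(\hsc^2\Delta+1)v=0$ in $\domain$, the data reads $g=\gamma_{\Gamma_\leftsubscript}\big(\tfrac{\hsc}{i}\partial_{x_1}+1\big)v$, and the two impedance traces on $\Gamma_\middlesubscript$ read $\goodmap g=\gamma_{\Gamma_\middlesubscript}\big(\tfrac{\hsc}{i}\partial_{x_1}-1\big)v$ and $\badmap g=\gamma_{\Gamma_\middlesubscript}\big(\tfrac{\hsc}{i}\partial_{x_1}+1\big)v$. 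Since $\tfrac{\hsc}{i}\partial_{x_1}\pm1$ has semiclassical symbol $\xi_1\pm1$, on the part of phase space where a wave moves to the right (so $\xi_1=c(\xi'):=\sqrt{1-|\xi'|^2}$) these operators act, at the level of measures, by multiplication by $(c\pm1)^2$.

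\emph{Upper bounds.} I would argue by contradiction. If (1) failed there would be $k_n\to\infty$ and data $g_n$ with $\|g_n\|_{L^2(\Gamma_\leftsubscript)}=1$ and $\|\goodmap(\height,\mathsf{d_\leftsubscript},k_n)g_n\|_{L^2(\Gamma_\middlesubscript)}\to L>\tfrac{1-\cos\theta_{\rm max}}{1+\cos\theta_{\rm max}}$. Set $v_n:=k_nu_n$; the stability estimate for the model (Lemma~\ref{lem:modelwp}) gives $\|v_n\|\lesssim1$ together with the needed trace bounds, and ellipticity of $\hsc^2\Delta+1$ forces $v_n$ to be $\hsc$-oscillating and microlocalised on $\{|\xi|\le1\}$. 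Passing to a subsequence, $v_n$ has a defect measure $\mu$ carried by $\{|\xi|=1\}$, and the traces $\gamma_{\Gamma_\leftsubscript}v_n,\gamma_{\Gamma_\middlesubscript}v_n$ and the functions $g_n,\goodmap g_n,\badmap g_n$ have defect measures on the relevant tangential phase spaces. Splitting the measure $\sigma_\leftsubscript$ of $\gamma_{\Gamma_\leftsubscript}v_n$ into $\sigma_\leftsubscript^++\sigma_\leftsubscript^-$ according to the sign of $\xi_1$, the trace symbol calculus (using $\mathrm{supp}\,\mu\subset\{|\xi|=1\}$ and controlling the glancing set $|\xi'|=1$) yields
\[
\|g_n\|_{L^2(\Gamma_\leftsubscript)}^2\ \longrightarrow\ \int(1+c)^2\,d\sigma_\leftsubscript^++\int(1-c)^2\,d\sigma_\leftsubscript^-\ \ge\ \int(1+c)^2\,d\sigma_\leftsubscript^+.
\]
The geometric input is the propagation result of \S\ref{sec:4}: because $\Gamma_t,\Gamma_b,\Gamma_\rightsubscript$ are all outgoing, no mass returns towards $\Gamma_\leftsubscript$ nor crosses $\Gamma_\middlesubscript$ from the right, so the measure $\sigma_\middlesubscript$ of $\gamma_{\Gamma_\middlesubscript}v_n$ is carried by $\{\xi_1\ge0\}$ and equals the pushforward, under the straight-line ray map, of $\mathbf 1_{S}\,\sigma_\leftsubscript^+$, where $S\subset T^*\Gamma_\leftsubscript$ is the set of incoming covectors whose ray reaches $\Gamma_\middlesubscript$ before leaving $\domain$ through $\Gamma_t\cup\Gamma_b$. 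As $c$ is conserved along rays and the ray map is, for each fixed tangential frequency, a measure-preserving translation, it follows that
\[
\|\goodmap g_n\|_{L^2(\Gamma_\middlesubscript)}^2\to\int_S(1-c)^2\,d\sigma_\leftsubscript^+,\qquad \|\badmap g_n\|_{L^2(\Gamma_\middlesubscript)}^2\to\int_S(1+c)^2\,d\sigma_\leftsubscript^+.
\]
Finally, an elementary computation on the rectangle shows that a ray issued from $(0,y)$ with tangential frequency $\xi'$ reaches $\Gamma_\middlesubscript$ only if $\sqrt{1-c^2}/c\le\height/\mathsf{d_\leftsubscript}=\tan\theta_{\rm max}$, i.e.\ only if $c\ge\cos\theta_{\rm max}$; since $s\mapsto(1-s)/(1+s)$ is decreasing, on $S$ one has $(1-c)^2\le\big(\tfrac{1-\cos\theta_{\rm max}}{1+\cos\theta_{\rm max}}\big)^2(1+c)^2$. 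Combining the three displays (using for (2) only that $\int_S(1+c)^2\,d\sigma_\leftsubscript^+\le\int(1+c)^2\,d\sigma_\leftsubscript^+$) gives $L^2\le\big(\tfrac{1-\cos\theta_{\rm max}}{1+\cos\theta_{\rm max}}\big)^2$ in (1) and $L^2\le1$ in (2), contradicting the choice of $L$.

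\emph{Lower bounds.} For each $\theta_0\in(0,\theta_{\rm max})$, I would construct a sequence of Gaussian-beam quasimodes concentrating along a single ray entering $\Gamma_\leftsubscript$ at angle $\theta_0$ from an interior point $y_0\in(0,\height-\mathsf{d_\leftsubscript}\tan\theta_0)$, chosen so that the ray reaches $\Gamma_\middlesubscript$ while staying at positive distance from all corners of $\domain$ and from $\Gamma_t\cup\Gamma_b$; Lemmas~\ref{lem:modelwp}--\ref{lem:modelHF} then show that the true solution with the corresponding impedance data $g_n$ agrees with the beam up to $O(k^{-\infty})$. Here $\sigma_\leftsubscript=\sigma_\leftsubscript^+$ concentrates at a covector with $c=\cos\theta_0$ lying in $S$, so the displays above give $\|\goodmap g_n\|/\|g_n\|\to(1-\cos\theta_0)/(1+\cos\theta_0)$ and $\|\badmap g_n\|/\|g_n\|\to1$; letting $\theta_0\uparrow\theta_{\rm max}$ gives the $\limsup$ bound in (1), and $\theta_0=0$ (a beam straight across) gives it in (2).

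\emph{Main obstacles.} I expect the real work to be: (i) the careful bookkeeping of how the interior measure, the two boundary-trace measures and the data measure are related, in particular showing that the glancing frequencies $|\xi'|=1$ and the corners of $\domain$ carry no mass affecting the bounds---this is exactly what \S\ref{sec:4} must supply; and (ii) in the lower bound, producing genuine (approximate) solutions launched at angles arbitrarily close to $\theta_{\rm max}$, so that the supporting ray passes arbitrarily close to the corner defining $\theta_{\rm max}$ without interacting with it.
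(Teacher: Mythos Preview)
Your approach is essentially the same as the paper's: a contradiction argument via defect measures for the upper bounds, and coherent-state/beam data for the lower bounds, with the key geometric observation that any ray from $\Gamma_\leftsubscript$ reaching $\Gamma_\middlesubscript$ has $\sqrt r\ge\cos\theta_{\max}$. A few remarks on the differences and on the places where your sketch is imprecise:

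\textbf{Boundary-measure bookkeeping.} Your measures $\sigma_\leftsubscript^\pm$ ``split according to the sign of $\xi_1$'' are informal, since boundary measures live on $T^*\Gamma$ (no $\xi_1$ variable). The paper encodes this through the incoming/outgoing measures $\mu^{\leftsubscript}_{\rm in},\mu^{\leftsubscript}_{\rm out}$ of Lemma~\ref{lem:interpr}, and---crucially---through the \emph{auxiliary} measures $\widetilde\mu_{\rm in/out}$ and $\widetilde\nu_d$ of Definition~\ref{def:aux}, which are the measures of $(\hsc D_{x_1}+1)u$ rather than of $u$. This packaging makes the algebra clean: since $\mu^{\leftsubscript}_{\rm in}=0$ (outgoingness, Lemma~\ref{lem:muinlup}), one gets $\widetilde\nu^{\leftsubscript}_d(T^*\Gamma_\leftsubscript)\le\limsup\|g\|^2$, and then Corollary~\ref{cor:meas} gives
\[
\lim_\ell\|(\hsc_\ell D_{x_1}+\iota)u(\hsc_\ell)\|_{L^2(\Gamma_\middlesubscript)}^2=\int_{\mathcal B^{\leftsubscript}_{\leftsubscript\to\middlesubscript}}\Big(\tfrac{\iota+\sqrt r}{1+\sqrt r}\Big)^2\,d\widetilde\nu^{\leftsubscript}_d,
\]
from which both upper bounds follow at once. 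Your display $\|g_n\|^2\to\int(1+c)^2\,d\sigma^+_\leftsubscript+\int(1-c)^2\,d\sigma^-_\leftsubscript$ is not quite right (only ``$\limsup\ge$'' is available, and $\sigma^-_\leftsubscript=0$ anyway by outgoingness), but the inequality you actually use is correct.

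\textbf{Lower bound: no beam needed.} Your plan to build a Gaussian-beam quasimode and then invoke Lemmas~\ref{lem:modelwp}--\ref{lem:modelHF} to compare with the true solution is more work than necessary, and your use of those lemmas is off: Lemma~\ref{lem:modelHF} compares two \emph{admissible solution operators}, not a quasimode with a solution. The paper avoids this entirely by prescribing only the boundary data as a coherent state,
\[
g(\hsc)=(\pi\hsc)^{-1/4}e^{i(y-y_0)\xi'_0/\hsc}e^{-i(y-y_0)^2/2\hsc}\chi(y),
\]
for which $\widetilde\nu^{\leftsubscript}_d=\delta_{(y_0,\xi'_0)}$ by direct computation, and then the \emph{same} identity from Corollary~\ref{cor:meas} computes the limit of $\|(\hsc D_{x_1}+\iota)u\|_{L^2(\Gamma_\middlesubscript)}$. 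With $\xi'_0=\sin(\theta_{\max}-\epsilon)$ and $y_0$ chosen so that $(y_0,\xi'_0)\in\mathcal B^{\leftsubscript}_{\leftsubscript\to\middlesubscript}$, the limit is exactly $\big(\tfrac{\iota+\cos(\theta_{\max}-\epsilon)}{1+\cos(\theta_{\max}-\epsilon)}\big)^2$, which handles both $\iota=\pm1$ simultaneously. In particular your ``obstacle (ii)'' is a non-issue: for each fixed $\epsilon>0$ the ray stays away from corners, and one lets $\epsilon\downarrow0$ at the end.
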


 \subsubsection*{The heuristic behind Theorem \ref{th:model1}.} In Fourier space, $\frac{1}{i}\partial_{x_1} \pm k$ acts as multiplication by $\xi_1 \pm k$, where $\xi_1$ is the first component of the dual variable to $x$ 
 (that is, the frequency); 
thus $\goodmap$ is governed by $(\xi_1-k)/(\xi_1+k)$ and $\badmap$ is governed by $(\xi_1+k)/(\xi_1+k)$.
This ratio is one in the case of $\badmap$, leading to the lower bound in Part 2 of Theorem \ref{th:model1}. To bound the ratio sharply in the case of $\goodmap$, we observe the following:~since the solution is outgoing on the borders of the cell, the mass reaching $\Gamma_{\middlesubscript}$ comes necessarily from $\Gamma_{\leftsubscript}$. To go from $\Gamma_{\leftsubscript}$ to $\Gamma_{\middlesubscript}$ at frequency $k$, this mass must travel with a direction $\xi = k(\cos \theta, \sin \theta)$, where $0 \leq \theta \leq \theta_{\rm max}$ (with mass travelling from the very bottom of $\Gamma_\leftsubscript$ to the very top of $\Gamma_\middlesubscript$ traveling with angle $\theta_{\rm max}$).
This direction $\xi$ therefore satisfies
\beqs
|\xi_1 -k| = k|\cos\theta-1|= k\big(1-\cos\theta\big) \leq k\big(1-\cos(\theta_{\max})\big)
\eeqs
and
\beqs
|\xi_1+k| = k |\cos\theta+1|= k\big(1+\cos\theta\big) \geq k\big(1+\cos(\theta_{\max})\big),
\eeqs  
  from which the upper bounds in Theorem \ref{th:model1} follow.
Our proof implements these heuristic arguments in a rigorous way, using so-called \emph{semiclassical defect measures}. In particular, to prove the lower bounds in Theorem \ref{th:model1}, we take data $g$ 
from a so-called \emph{coherent state}, chosen so that all the mass of the solution concentrates on a ray 
 coming from one point $x_0\in\Gamma_{\leftsubscript}$ and travels in one direction $\xi_0 = k(\cos \theta_0, \sin \theta_0)$; we then take 
 $x_0$ to be very close to the origin (i.e., the bottom of $\Gamma_\leftsubscript$) and  $\theta_0$ to be as close as $\theta_{\rm max}$ as possible.

{
We highlight that the right-hand side of the upper bound on $\mathcal I^-_{1}$ \emph{decreases} as $\mathsf{d_\leftsubscript}$ increases, and $\|\mathcal{I}^-_{1}\|$ can be made arbitrarily small for sufficiently large $\mathsf{d_\leftsubscript}$; this property is used in the application to domain decomposition, where $\mathsf{d_\leftsubscript}$ corresponds to the overlap of the subdomains.
}

\begin{theorem}[Lower bounds for (\ref{eq:model_cell_timp})] \label{th:model2} Let $\height , \mathsf {d_\leftsubscript}, \mathsf{d_{\rightsubscript}} > 0$. Then,
$$
\limsup_{k \rightarrow \infty }\Vert \mathcal I^\pm_{2}(\height , \mathsf {d_\leftsubscript}, \mathsf{d_{\rightsubscript}}, k)  \Vert_{L^2 \rightarrow L^2} \geq  1.
$$
\end{theorem}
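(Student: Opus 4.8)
The plan is to prove both lower bounds by constructing, for each fixed $\height,\mathsf d_\leftsubscript,\mathsf d_\rightsubscript>0$, boundary data $g=g_k\in L^2(\Gamma_\leftsubscript)$ from coherent states for which
\[
\frac{\bigl\Vert \mathcal I^\pm_2(\height,\mathsf{d_\leftsubscript},\mathsf{d_\rightsubscript},k)\,g_k\bigr\Vert_{L^2(\Gamma_\middlesubscript)}}{\Vert g_k\Vert_{L^2(\Gamma_\leftsubscript)}}\;\longrightarrow\;\ell\ge 1
\]
along a subsequence $k\to\infty$, the convergence being established exactly as for the lower bounds in Theorem~\ref{th:model1}: a coherent state with position $x_0\in\Gamma_\leftsubscript$ and tangential frequency $k\sin\theta_0$ generates a solution whose semiclassical defect measure is a point mass transported along the billiard ray issued from $(x_0,\xi_0)$ with $\xi_0=k(\cos\theta_0,\sin\theta_0)$ --- reflecting, in Model~2, off the impedance boundary $\Gamma_\rightsubscript$ and leaving $D$ through $\Gamma'_s$. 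The propagation-of-measures statements of \S\ref{sec:4}, combined with the semiclassical machinery of \S\ref{sec:3} and the wellposedness of \S\ref{sec:5}, then identify the measures of $g_k$ on $\Gamma_\leftsubscript$ and of $(\tfrac1i\partial_{x_1}\pm k)u$ on $\Gamma_\middlesubscript$, hence $\ell$.

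For $\maptwonewplus$ I would take $g_k$ concentrated at an interior point $x_0\in\Gamma_\leftsubscript$ (away from the corners $(0,0)$ and $(0,\height)$) and at tangential frequency $0$, so that to leading semiclassical order $u$ is a Gaussian beam travelling in the $+x_1$-direction; this beam crosses $\Gamma_\middlesubscript$ and then reaches $\Gamma_\rightsubscript$, where, being normally incident, it is (asymptotically) completely absorbed by the condition $(-\tfrac1i\partial_{x_1}+k)u=0$, so nothing returns through $\Gamma_\middlesubscript$. Since $\tfrac1i\partial_{x_1}+k$ acts on such a beam as multiplication by a symbol tending to $2k$ on both $\Gamma_\leftsubscript$ and $\Gamma_\middlesubscript$, while the $L^2$-mass of the beam's cross-section is conserved between these two parallel segments, the ratio tends to $1$, which gives the lower bound for $\maptwonewplus$. (Launching the beam at a small oblique angle, so that a little mass reflects off $\Gamma_\rightsubscript$ and returns, only improves the limit; either version suffices.)

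For $\maptwonewminus$ the ``direct'' symbol $(\xi_1-k)/(\xi_1+k)$ has modulus $<1$, so the lower bound must come from the partial reflection at $\Gamma_\rightsubscript$ --- the feature distinguishing Model~2 from Model~1. The plan is to launch, from a point near the corner $(0,0)$, a beam in a direction $k(\cos\theta_0,\sin\theta_0)$ with $\theta_0$ large enough that it does reach $\Gamma_\rightsubscript$ before leaving $D$; it reflects there into a beam in direction $k(-\cos\theta_0,\sin\theta_0)$ with amplitude multiplied by the reflection coefficient $\tfrac{1-\cos\theta_0}{1+\cos\theta_0}$, and --- together with the further passages produced by successive reflections between $\Gamma_\rightsubscript$ and $\Gamma_\leftsubscript$ (where, away from the launch height, the impedance datum is effectively $0$) --- this family of left- and right-travelling beams meets $\Gamma_\middlesubscript$ at a sequence of heights. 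On each leftward passage $\tfrac1i\partial_{x_1}-k$ acts with symbol of modulus $k(1+\cos\theta_0)$, which exactly offsets one factor of the reflection loss; summing the (mutually $L^2$-orthogonal, as they sit at distinct heights) contributions on $\Gamma_\middlesubscript$ and comparing with $\Vert g_k\Vert$ --- which, if $g_k$ is kept concentrated near the launch height, only ``sees'' the initial passage --- yields a limiting ratio $\ell\ge1$ once $\theta_0$ is chosen as favourably as the geometry permits and $k$ is taken along a subsequence fixing the relevant phases.

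The hard part is this $\maptwonewminus$ case. One has to (i) track precisely how the mass of $g_k$ is distributed among the rays of the associated broken billiard trajectory, in particular the fraction that actually meets $\Gamma_\middlesubscript$ inside $D$, and weigh it against the amplitude attenuation at each reflection off $\Gamma_\rightsubscript$ and $\Gamma_\leftsubscript$; and (ii) organise the choice of launch point, angle $\theta_0$, and subsequence $k_n$ so that the traces of the successive passages on $\Gamma_\middlesubscript$ do not cancel in $L^2$ and so that enough of those passages remain inside $D$. All of this is to be carried out within the defect-measure framework of \S\ref{sec:3}--\S\ref{sec:4}, with the reflection at $\Gamma_\rightsubscript$ incorporated into the transport of the measure along the broken billiard flow, and the solvability inputs taken from \S\ref{sec:5}.
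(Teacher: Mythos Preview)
Your core strategy---coherent-state data plus propagation of defect measures along the billiard flow, with reflection at $\Gamma_\rightsubscript$---is exactly what the paper does, and your key observation for $\maptwonewminus$ (that the reflection coefficient $R=\tfrac{1-\cos\theta_0}{1+\cos\theta_0}$ is exactly cancelled by the symbol $\tfrac{1+\cos\theta_0}{1-\cos\theta_0}$ on a leftward-moving beam) is the heart of the argument. But you are making the $\maptwonewminus$ case far harder than it is, and a couple of your choices work against you.

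For $\maptwonewminus$ there is \emph{no} optimisation, summation, or phase-matching to do. Take the beam from \emph{any} interior point $x_0\in\mathring\Gamma_\leftsubscript$ at a \emph{small} nonzero angle $\theta_0$ (small enough that $x_0+(\mathsf d_\leftsubscript+2\mathsf d_\rightsubscript)\tan\theta_0\in(0,\height)$). The very first reflected passage through $\Gamma_\middlesubscript$ already carries mass exactly $1$: in the paper's notation this is the $q=1$ term of the second sum in Proposition~\ref{prop:imDirac}, with coefficient $\bigl(\tfrac{1+\sqrt r}{1-\sqrt r}\bigr)^2\bigl(\tfrac{1-\sqrt r}{1+\sqrt r}\bigr)^2=1$. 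All other terms are nonnegative, so $\widehat\nu_d^{\middlesubscript}(T^*\Gamma_\middlesubscript)\ge 1$, and you are done via Lemma~\ref{lem:good_mes}. Your ``corner launch at large angle'' is counterproductive (a large angle may send the direct ray out through $\Gamma_t$ before it ever reaches $\Gamma_\rightsubscript$), and your remark about choosing a subsequence ``fixing the relevant phases'' reflects a misconception: in the defect-measure framework the contributions at distinct heights on $\Gamma_\middlesubscript$ are point masses at distinct points of $T^*\Gamma_\middlesubscript$ and hence add as positive measures---there is no interference to manage. This is precisely what the paper packages as the \emph{non-interacting Dirac} condition (Definition preceding Proposition~\ref{prop:imDirac}), which is generic and easily arranged by Lemma~\ref{lem:nonintrays}.

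For $\maptwonewplus$, your normal-incidence choice $\xi'=0$ is heuristically clean but technically more delicate than necessary: the ray is trapped (same height forever), so the non-interaction/disjoint-support hypotheses of Propositions~\ref{prop:trace_meas2}--\ref{prop:imDirac} are not obviously satisfied, and one must argue separately that the reflected mass vanishes (it does, since $R=0$ there). The paper sidesteps this by using the \emph{same} small nonzero $\xi'$ as above: the direct passage ($q=1$ in the first sum of \eqref{eq:itDirac}) has coefficient $\bigl(\tfrac{1+\sqrt r}{1+\sqrt r}\bigr)^2=1$, and again you are done. Your parenthetical ``either version suffices'' is correct; the oblique version is the cleaner one and is what the paper uses.
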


 \subsubsection*{The heuristic behind Theorem \ref{th:model2}.} 
The difference between Model 2 and Model 1 is that now rays are reflected from $\Gamma_\rightsubscript$. Arguing as above, the map $\maptwonewplus$ is bounded below by one on rays travelling directly from $\Gamma_\leftsubscript$ to $\Gamma_\middlesubscript$. Provided these rays are not horizontal (i.e., $\xi_2\neq 0$), their subsequent reflections from $\Gamma_\rightsubscript$ and $\Gamma_\leftsubscript$ do not interfere with the initial ray (they move either up or down in the vertical direction, until being absorbed by the outgoing conditions at the top and bottom), and thus $\maptwonewplus$ is bounded below by one.

To see that  $\maptwonewminus$ is bounded below by one, we need to consider the first reflected ray. 
 When the wave $\exp(i \xi\cdot x)$ with $|\xi|=k$ hits the impedance boundary on the right, a reflected wave $R \exp(-i \xi_1 x_1 + i \xi_2 x_2)$ is created, where the reflection coefficient $R := (\xi_1 -k)/(\xi_1+k)$. For a wave $\exp(-i \xi_1 x_1 + i \xi_2 x_2)$, $\maptwonewminus$ is governed by $(-\xi_1-k)/(-\xi_1+k)$, and thus the contribution to $\maptwonewminus$ from the first reflected ray is 
 $|R(-\xi_1-k)/(-\xi_1+k)| = 1$.
 As discussed in the previous paragraph, provided that $\xi_2\neq 0$, further reflections do not interfere with this first reflected ray, and thus 
 $\maptwonewminus$ is bounded below by one.

\subsection{The behaviour of the composite impedance map in (\ref{eq:model_cell_timp})}\label{sec:1.4}

Given $\height , \mathsf {d_\leftsubscript}^+, \mathsf {d_{\rightsubscript}}^+, \mathsf {d_\leftsubscript}^-, \mathsf {d_{\rightsubscript}}^- > 0$, we 
consider arbitrary compositions of the two maps 
\beq\label{eq:two_maps}
\mathcal I^{-}_2 (\height , \mathsf {d_\leftsubscript}^{-}, \mathsf {d_{\rightsubscript}}^{-}, k)
\quad\tand\quad
\mathcal I^{+}_2 (\height , \mathsf {d_\leftsubscript}^{+}, \mathsf {d_{\rightsubscript}}^{+}, k);
\eeq
we allow the two maps $\cI_2^-$ and $\cI_2^+$ to have different  arguments $\mathsf d_{\leftsubscript}$ and $\mathsf d_{\rightsubscript}$ because of the application of these results in domain decomposition -- see Remark \ref{rem:twod} below.

An arbitrary composition of the two maps \eqref{eq:two_maps} can be written as the following:~given $n\geq 0$ and 
 $\sigma \in \{ +, -\}^n$, let 
$$
\mathcal I ^\sigma(\height , \mathsf{d^+_\leftsubscript}, \mathsf {d_{\rightsubscript}}^+, \mathsf{d^-_\leftsubscript}, \mathsf {d_{\rightsubscript}}^-, k) := \prod_{\ell=0, \cdots, n} \mathcal I^{\sigma(\ell)}_2 (\height , \mathsf {d_\leftsubscript}^{\sigma(\ell)}, \mathsf {d_{\rightsubscript}}^{\sigma(\ell)}, k),
$$
where the product denotes composition of the maps.

In addition, for any $\lambda >0$, we define the projection $\lambda$-away from zero frequency  $\Pi^k_\lambda$  as
$$
\Pi^k_\lambda g  := \mathcal F_k^{-1} \Big( \Big(1 - \psi\Big(\frac{\cdot}{\lambda}\Big)\Big) \mathcal F_kg\Big),  \hspace{0.6cm}\mathcal F_k g (\zeta) := \frac{k}{2 \pi} \int e^{- ik y \zeta} g(y) \; dy,
$$
where $\psi \in C^\infty_c(\mathbb R; [0,1])$ is equal to one on $[-1,1]$, 
and $\mathcal F_k$ is the Fourier transform at scale $k$. 
Written with $\mathcal F_1$, the non-scaled Fourier-transform,
$$
\Pi^k_\lambda g  = \mathcal F_1^{-1} \Big( \Big(1 - \psi\Big(\frac{\cdot}{\lambda k}\Big)\Big) \mathcal F_1g\Big);
$$
that is, $\Pi^k_\lambda$ is a projection $\lambda k$-away from zero in the Fourier variable.

The projection $\Pi^k_\lambda$ is applied below to impedance data; the heuristic interpretation of this is the following:
~since a Helmholtz solution is, in the high-frequency limit, supported in Fourier space where $|\xi_1|^2+|\xi'|^2 = k^2$ (with $\xi_1$ the dual variable of $x_1$ and $\xi'$ of $x'$), truncating the impedance data $\lambda k$-away from $\xi' = 0$ produces a solution supported at high frequencies where $|\xi_1|^2 \leq k^2(1-\lambda^2)$, hence away from the horizontal direction corresponding to $|\xi_1|^2 = k^2$.

\begin{theorem}[The composite impedance map] \label{th:compo} Let
$\height , \mathsf{d^+_\leftsubscript}, \mathsf {d_{\rightsubscript}}^+, \mathsf{d^-_\leftsubscript}, \mathsf {d_{\rightsubscript}}^- > 0$.
\begin{enumerate}
\item \label{i:compo:lower} For any $n \geq 1$ and any $\sigma \in \{ +, -\}^n$, 
$$
\limsup_{k \rightarrow \infty }\Vert \mathcal I^\sigma(\height , \mathsf{d^+_\leftsubscript}, \mathsf {d_{\rightsubscript}}^+, \mathsf{d^-_\leftsubscript}, \mathsf {d_{\rightsubscript}}^-, k)  \Vert_{L^2 \rightarrow L^2} \geq  1.
$$
\item \label{i:compo:upper} Let $\sigma \in \{+, -\}^{\mathbb N}$ and, for any $n\geq 1$, $\sigma_n := (\sigma(1), \cdots, \sigma(n)) \in \{ +, - \}^n$. 
Given $\lambda>0$, let $n_0(\lambda) := \height  (\min (\mathsf{d^+_\leftsubscript}, \mathsf{d^-_\leftsubscript}))^{-1}\lambda^{-1}\sqrt{1-\lambda^2}$.
Then 
$$
\tfa  n \geq n_0(\lambda), \hspace{0.5cm} \lim_{k \rightarrow \infty }\Vert \mathcal I ^{\sigma_n}(\height , \mathsf{d^+_\leftsubscript}, \mathsf {d_{\rightsubscript}}^+, \mathsf{d^-_\leftsubscript}, \mathsf {d_{\rightsubscript}}^-, k) \Pi^k_\lambda \Vert_{L^2 \rightarrow L^2} = 0 .
$$
\end{enumerate}
\end{theorem}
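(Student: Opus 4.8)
The plan is to reduce both parts to the propagation of semiclassical defect measures through a single cell, developed in \S\ref{sec:3}--\S\ref{sec:4}. Set $h=1/k$; on the semiclassical characteristic variety $\{\xi_1^2+|\xi'|^2=1\}$ (with $\xi_1$ dual to $x_1$ and $\xi'$ to $x'$) the operators $\frac{1}{i}\partial_{x_1}\pm k$ have symbol $k(\xi_1\pm1)$, and a plane wave hitting one of the impedance walls $\Gamma_{\leftsubscript},\Gamma_{\rightsubscript}$ with semiclassical normal frequency $\xi_1$ is reflected with coefficient of modulus $|R|=|\xi_1-1|/|\xi_1+1|$. I will use the resulting bookkeeping: (a) inside a cell the defect measure of $u$ lives on broken bicharacteristics which preserve $\xi'$ (reflection off a vertical wall flips $\xi_1$), lose a factor $|R|^2$ at each such reflection, and are absorbed when they meet $\Gamma_t$ or $\Gamma_b$; (b) the measure of the impedance datum on the left wall of a cell equals $|\xi_1+1|^2$ times the measure of the right-going part of $u$ there, so a right-going ray launched from a datum of mass $m$ carries mass $m$; (c) hence, when that ray meets $\Gamma_{\middlesubscript}$ going \emph{rightward}, $\mathcal I_2^+$ passes mass $m$ and $\mathcal I_2^-$ passes mass $|R|^2m$ to the next cell, while if it meets $\Gamma_{\middlesubscript}$ going \emph{leftward} (after an odd number of reflections) $\mathcal I_2^+$ passes $|R|^2m$ and $\mathcal I_2^-$ passes $m$. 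Consequently, along any sequence realising a defect measure for $g$, the limit of $\|\mathcal I^{\sigma_n}g\|_{L^2}$ is read off from the total mass left after pushing the defect measure of $g$ through the $n$ cells according to (a)--(c); Lemmas \ref{lem:modelwp} and \ref{lem:modelHF} make this independent of the admissible solution operator, and the part of $g$ with tangential frequencies in $\{|\xi'|>1\}$ produces only evanescent fields, which contribute negligibly.

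For part (\ref{i:compo:upper}), argue by contradiction: if it fails for some $n\ge n_0(\lambda)$ there are $k_j\to\infty$ and $g_j$ with $\|g_j\|_{L^2(\Gamma_{\leftsubscript})}=1$, $g_j=\Pi^{k_j}_\lambda g_j$ and $\|\mathcal I^{\sigma_n}(\cdots,k_j)g_j\|_{L^2}\ge c>0$. After extracting a subsequence, a defect measure $\mu_{\rm in}$ of $(g_j)$ exists and, by the definition of $\Pi^k_\lambda$, is supported in $\{|\xi'|\ge\lambda\}$. Push $\mu_{\rm in}$ through the cells using (a)--(c). Each ray-segment has $\xi'$ of fixed sign and $|\xi'|\ge\lambda$, while $|\xi_1|=\sqrt{1-|\xi'|^2}\le\sqrt{1-\lambda^2}$, so its $x'$-coordinate is strictly monotone with $|dx'/dx_1|=|\xi'|/|\xi_1|\ge\lambda/\sqrt{1-\lambda^2}$. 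To reach $\Gamma_{\middlesubscript}$ of a cell a segment must travel at least $\mathsf{d}_{\leftsubscript}^{\sigma(\ell)}$ in $x_1$, hence at least $\mathsf{d}_{\leftsubscript}^{\sigma(\ell)}\lambda/\sqrt{1-\lambda^2}\ge\min(\mathsf{d}_{\leftsubscript}^{+},\mathsf{d}_{\leftsubscript}^{-})\lambda/\sqrt{1-\lambda^2}$ in $x'$; any segment that would meet $\Gamma_t$ or $\Gamma_b$ first is absorbed and drops out. Iterating over the $n$ cells, any mass surviving to $\Gamma_{\middlesubscript}$ of the last cell has been displaced in $x'$ by at least $n\min(\mathsf{d}_{\leftsubscript}^{+},\mathsf{d}_{\leftsubscript}^{-})\lambda/\sqrt{1-\lambda^2}$, which exceeds $\height$ once $n\ge n_0(\lambda)=\height(\min(\mathsf{d}_{\leftsubscript}^{+},\mathsf{d}_{\leftsubscript}^{-}))^{-1}\lambda^{-1}\sqrt{1-\lambda^2}$; as $x'\in[0,\height]$ this is impossible, so the push-forward is the zero measure and $\|\mathcal I^{\sigma_n}g_j\|_{L^2}\to0$, contradicting $\ge c$.

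For part (\ref{i:compo:lower}), fix $n$ and $\sigma$ and build $L^2$-normalised $g_k$ realising $\|\mathcal I^\sigma(\cdots,k)g_k\|_{L^2}\to1$, along the lines of the lower bound in Theorem \ref{th:model2}: take $g_k$ a coherent state on $\Gamma_{\leftsubscript}$ concentrated near a corner, with a fixed tangential frequency chosen so that, in the bookkeeping (a)--(c), the associated ray is essentially reproduced by each $\mathcal I_2^{\sigma(\ell)}$, making the total push-forward mass no smaller than the initial mass. For $\sigma\equiv+$ this needs nothing beyond the directly transmitted ray, which already gives ratio $\to1$ by (c) (the reflected crossings of each $\Gamma_{\middlesubscript}$ only add mass); when $\sigma$ contains $-$'s one offsets the $|R|^2$-loss suffered by each $\mathcal I_2^-$ factor on a rightward crossing with the leftward crossings, which feed $\mathcal I_2^-$ losslessly, choosing the incidence angle large enough that $|R|$ is bounded below yet small enough (compatibly with a $\theta_{\rm max}$-type constraint of the kind in Theorem \ref{th:model1}) that the ray undergoes enough reflections in each of the $n$ cells before absorption at $\Gamma_t,\Gamma_b$.

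The step I expect to be the main obstacle --- common to both parts --- is justifying the assertion in the first paragraph that the defect measure of the outgoing impedance trace of one cell coincides with that of the incoming datum of the next, so that the $n$-fold composition is governed by the $n$-fold push-forward. This is where staying away from the glancing set $\{|\xi'|=1\}$ is essential: there $\xi_1=0$, the trace operators on the vertical line $\Gamma_{\middlesubscript}$ cease to be elliptic, and mass can concentrate on $\Gamma_{\middlesubscript}$ itself; in part (\ref{i:compo:upper}) the cut-off $\Pi^k_\lambda$ removes this regime, while in part (\ref{i:compo:lower}) one must check the construction stays in the elliptic region. One must also dispatch the bicharacteristics that graze $\Gamma_{\middlesubscript}$, $\Gamma_{\rightsubscript}$ or a corner of a cell --- a null set carrying no mass, but which the propagation results of \S\ref{sec:4} must be shown to cover, or which can be removed by a small perturbation of the parameters.
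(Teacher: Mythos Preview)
Your argument for Part~(\ref{i:compo:upper}) is essentially the paper's: contradiction, extraction of a defect measure supported in $\{|\xi'|\ge\lambda\}$, and the vertical displacement estimate $|\Delta x'|\ge \mathsf d_{\leftsubscript}^{\sigma(\ell)}\lambda/\sqrt{1-\lambda^2}$ per cell, which is exactly Proposition~\ref{prop:pushsupp} iterated. The only point you gloss over is that the intermediate traces on $\Gamma_{\middlesubscript}$ must be shown uniformly bounded so that measures exist at each stage; this is Lemma~\ref{lem:good_mes}.

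Part~(\ref{i:compo:lower}) has a real gap. Your description of the $-$ case (``offset the $|R|^2$-loss\ldots choosing the incidence angle large enough that $|R|$ is bounded below yet small enough that the ray undergoes enough reflections'') is confused and unnecessary. Your own bookkeeping (c) already says that the \emph{first leftward} crossing of $\Gamma_{\middlesubscript}$ feeds $\mathcal I_2^-$ with mass exactly $m$: the reflection loss $|R|^2$ at $\Gamma_{\rightsubscript}$ is cancelled by the $|R|^{-2}$ gain in the symbol of $(\hsc D_{x_1}-1)/(\hsc D_{x_1}+1)$ on a leftward ray. So the correct construction is simply: at stage $\ell$, follow the first rightward crossing if $\sigma(\ell)=+$ and the first leftward crossing if $\sigma(\ell)=-$; each preserves mass exactly, and taking $\xi'_0\neq 0$ small (not large) keeps this designated ray inside $[0,\height]$ for all $n$ cells. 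This is what the paper does (the map $\widetilde\sigma$ and Lemma~\ref{lem:nonintrays}, Part~(\ref{i:gr1})).

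The second, more serious, omission is the \emph{non-interaction} condition. To turn your bookkeeping into an actual lower bound you need the exact formula for $\widehat\nu_d^{\middlesubscript}$ at each step, and Proposition~\ref{prop:trace_meas2} only gives this under the disjoint-supports hypothesis~(\ref{eq:mdisjsupp}); otherwise the cross term $\nu_j^{\middlesubscript}$ between right- and left-going contributions at $\Gamma_{\middlesubscript}$ cannot be controlled and you cannot assert $\widehat\nu_d^{\middlesubscript}\ge\delta_{(\cdot)}$. After one iteration your single Dirac has become a finite sum of Diracs (Proposition~\ref{prop:imDirac}), and you must check that at every subsequent stage none of these coincide on $\Gamma_{\middlesubscript}$ and none is the flow-image of another. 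The paper handles this by a genericity argument (Lemma~\ref{lem:nonintrays}): the ``bad'' set of $\xi'_0$ for which some collision occurs in the first $n$ stages is discrete, so one can pick $\xi'_0$ arbitrarily small avoiding it. Your final paragraph about ``a null set\ldots which can be removed by a small perturbation'' gestures at this, but note it is not a glancing/corner issue---it is a combinatorial coincidence of Diracs that must be excluded at every step of the iteration.
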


\subsubsection*{The heuristic behind Theorem \ref{th:compo}}
The main idea behind Theorem \ref{th:compo} is that, in the high-frequency limit, the impedance-to-impedance map associated with (\ref{eq:model_cell_timp}) pushes the mass emanating from $\Gamma_{\leftsubscript}$ with an angle $\theta$ to the horizontal up and down by a distance proportional to $\theta^{-1}$, while preserving its mass. 
Therefore, if $g$ creates a Helmholtz solution emanating from $\Gamma_{\leftsubscript}$ with angles $>0$ 
 to the horizontal, all its mass is pushed off the domain in a finite number of iterations.
 After applying $\Pi^k_\lambda$, the data creates a solution emanating from $\Gamma_\leftsubscript$ with angles $\geq \arctan \frac{\lambda}{\sqrt{1-\lambda^2}}>0$;~hence the high-frequency nilpotence of the composite impedance map, i.e., Part  (\ref{i:compo:upper}) of the theorem. On the other hand, the same idea allows us to construct the lower bound in Part  (\ref{i:compo:lower}):~taking coherent-state data $g$ creating a Helmholtz solution concentrating all its mass on an arbitrarily small angle to the horizontal, the image by the composite impedance map after the corresponding, arbitrarily high, number of iterations, is still in the domain and hence has order one mass. 
 
\subsection{The semiclassical notation and definition of outgoingness} \label{sec:1.5}
It is convenient to work with the semiclassical small parameter $\hsc := k^{-1}$. In addition, we let $D_\bullet := \frac 1i \partial_\bullet$. Then, (\ref{eq:model_cell}) and  (\ref{eq:model_cell_timp}) become, with $g$ replaced by $\hsc g$,

\noindent\begin{minipage}{0.5\linewidth}
\begin{equation} \label{eq:modelh} \tag{M1}
\begin{cases}
(-\hsc^2 \Delta - 1) u = 0 \text{ in }\domain \\
(\hsc D_{x_1}+1) u = g \; \text{ on } \Gamma_{\leftsubscript}, \\
\text{$u$ is outgoing near $\Gamma_s$}.
\end{cases}
\end{equation}
\end{minipage}%
\begin{minipage}{0.5\linewidth}
\begin{equation} \label{eq:modelh_timp} \tag{M2}
\begin{cases}
(-\hsc^2 \Delta - 1) u = 0 \text{ in }\domain \\
(\hsc D_{x_1}+1) u = g \; \text{ on } \Gamma_{\leftsubscript}, \\
\text{$u$ is outgoing near $\Gamma'_s$}, \\
(- \hsc D_{x_1}+1) u = 0 \; \text{ on } \Gamma_{\rightsubscript}. \\
\end{cases}
\end{equation}
\end{minipage}\par\vspace{\belowdisplayskip}
We can now define the outgoingness near the border of the cell:
\begin{definition} \label{def:outgo}
We say that a $\hsc$-family of solutions $u$ to $(-\hsc^2 \Delta - 1) u = 0$ in $\domain$ is \emph{outgoing} near $\Gamma \in \{\Gamma_s, \Gamma'_s \}$ if there exists an open set $\domain^+ \supset \domain \cup \Gamma$ (independent of $\hsc>0$) with $\partial \domain^+ \cap(\partial\domain \backslash \Gamma) = \partial\domain \backslash \Gamma$, such that $u$ can be extended to a $\hsc$-tempered
 solution of $(-\hsc^2 \Delta - 1) u = 0$ in $\domain^+$ and
$$
\forall(x, \xi) \in \operatorname{WF}_\hsc u \cap \{ x \in \Gamma \}, \hspace{0.3cm} \xi \cdot n(x) \geq 0,
$$
where the normal $n(x)$ points 
out of the domain $\domain$ depicted in Figure \ref{fig:model}.
\end{definition}
The wavefront set $\operatorname{WF}_\hsc$ of an $\hsc$-tempered family of functions is defined in Definition \ref{def:WF} below.
It describes where the non-negligible mass of an $\hsc$-dependent family of  functions lies in
 phase-space (that is, in both position and direction)
 in the high-frequency limit $\hsc \rightarrow 0$;
Definition \ref{def:outgo} therefore means 
the solution $u$ has only mass pointing outside the
cell -- hence outgoing.

\subsection{Wellposedness results}\label{sec:wp}

We say that $S = S(\hsc) : g\in L^2(\Gamma_l) \mapsto u \in H^1(D)$ is a solution operator associated to model (\ref{eq:modelh})/(\ref{eq:modelh_timp}), if $S$ is linear and for any $g\in L^2(\Gamma_l)$, $u(\hsc) := S(\hsc)g$ is solution to model (\ref{eq:modelh})/(\ref{eq:modelh_timp}). The following results are shown in \S \ref{sec:5}.

\begin{lem} \label{lem:modelwp}
There exists an admissible (in the sense of Definition \ref{def:adm}) solution operator to (\ref{eq:modelh})/(\ref{eq:modelh_timp}).
\end{lem}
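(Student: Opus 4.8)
The plan is to write down a solution operator explicitly in each of the two geometries, exploiting that both domains are translation‑invariant in the $x'$ variable, and then to verify the two ingredients of admissibility: a polynomial bound in $\hsc^{-1}$, and outgoingness, the latter being the substantive point. For (\ref{eq:modelh}) I would take the extension domain $\domain^+:=\{x_1>0\}\times\mathbb R$, so that $\partial\domain^+\cap(\partial\domain\setminus\Gamma_s)=\Gamma_{\leftsubscript}$ as Definition~\ref{def:outgo} requires; given $g\in L^2(\Gamma_{\leftsubscript})$, I extend it by zero along $\{x_1=0\}$ to $\widetilde g\in L^2(\mathbb R)$ and define $u$ on $\domain^+$ by $\mathcal F[u(x_1,\cdot)](\xi')=e^{i\mu(\xi')x_1}(\hsc\mu(\xi')+1)^{-1}\mathcal F\widetilde g(\xi')$, where $\mathcal F$ is the Fourier transform in $x'$ and $\mu(\xi')=(\hsc^{-2}-|\xi'|^2)^{1/2}$ is taken with the outgoing branch ($\mu\geq0$ for $|\xi'|\leq\hsc^{-1}$, $\mu=i(|\xi'|^2-\hsc^{-2})^{1/2}$ for $|\xi'|>\hsc^{-1}$ --- the limiting‑absorption choice). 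For (\ref{eq:modelh_timp}) I would take instead $\domain^+:=(0,\mathsf {d_{\leftsubscript}}+\mathsf {d_{\rightsubscript}})\times\mathbb R$, and for each $\xi'$ let $x_1\mapsto\mathcal F[u(x_1,\cdot)](\xi')$ be the solution of the two‑point problem $-\hsc^2\partial_{x_1}^2v+(\hsc^2|\xi'|^2-1)v=0$ on $(0,\mathsf {d_{\leftsubscript}}+\mathsf {d_{\rightsubscript}})$ with $(\hsc D_{x_1}+1)v=\mathcal F\widetilde g(\xi')$ at $x_1=0$ and $(-\hsc D_{x_1}+1)v=0$ at $x_1=\mathsf {d_{\leftsubscript}}+\mathsf {d_{\rightsubscript}}$; a short computation shows this is uniquely solvable for all $\xi'$ and $\hsc>0$, the corresponding multiplier being bounded and continuous in $\xi'$ across the cut‑off $|\xi'|=\hsc^{-1}$, where the apparent degeneracy is removable. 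In both cases $S(\hsc):g\mapsto u|_{\domain}$ is linear; elementary estimates on these multipliers and their $x_1$‑derivatives give $\|S(\hsc)\|_{L^2(\Gamma_{\leftsubscript})\to H^1(\domain)}\lesssim\hsc^{-1}$, so $S(\hsc)$ maps into $H^1(\domain)$, $u$ is an $\hsc$‑tempered family on $\domain^+$, and $u$ solves $(-\hsc^2\Delta-1)u=0$ in $\domain$ together with the prescribed impedance condition(s) by construction. This accounts for everything in Definition~\ref{def:adm} except outgoingness.

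Outgoingness near $\Gamma\in\{\Gamma_s,\Gamma'_s\}$, in the sense of Definition~\ref{def:outgo}, is where the real work lies. By semiclassical elliptic regularity $\operatorname{WF}_\hsc u\subseteq\{|\xi|=1\}$, and by propagation of semiclassical singularities up to $\partial\domain^+$ (cf.\ \S\ref{sec:3}--\S\ref{sec:4}), $\operatorname{WF}_\hsc u$ is a union of generalised bicharacteristics. Now $\partial\domain^+$ consists only of the vertical line $\{x_1=0\}$ (and, for (\ref{eq:modelh_timp}), also $\{x_1=\mathsf {d_{\leftsubscript}}+\mathsf {d_{\rightsubscript}}\}$), while the lines $\{x_2=0\}$ and $\{x_2=\height\}$ lie in the interior of $\domain^+$; hence no reflection occurs there and the $\xi_2$‑component is conserved along every ray. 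The only source of $\operatorname{WF}_\hsc u$ is the impedance datum $\widetilde g$, supported on $\Gamma_{\leftsubscript}=\{x_1=0,\ 0\leq x_2\leq\height\}$, and a stationary‑phase analysis of the Schwartz kernel of $S(\hsc)$ shows that every ray of $\operatorname{WF}_\hsc u$ issues from a point $(0,y_2)\in\Gamma_{\leftsubscript}$ into $\domain^+$, i.e.\ has the form $t\mapsto(0,y_2)+t\xi$ with $t\geq0$ and $\xi_1\geq0$ --- it is exactly here that the outgoing branch of $\mu$ for (\ref{eq:modelh}), and the boundedness of the multiplier at $|\xi'|=\hsc^{-1}$ for (\ref{eq:modelh_timp}), are used, to rule out rays arriving from $x_1=+\infty$ or from $x_2=\pm\infty$. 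Consequently, if $(x,\xi)\in\operatorname{WF}_\hsc u$ with $x\in\Gamma_t$ then $x_2=\height\geq y_2$ forces $\xi_2\geq0=\xi\cdot n(x)$; if $x\in\Gamma_b$ then $\xi_2\leq0$ and $\xi\cdot n(x)=-\xi_2\geq0$; and, for (\ref{eq:modelh}), if $x\in\Gamma_{\rightsubscript}$ then $x_1>0$ forces $\xi_1>0=\xi\cdot n(x)$. This is precisely the condition in Definition~\ref{def:outgo}.

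I expect this microlocal step to be the main obstacle. It requires a clean description of the Schwartz kernel of $S(\hsc)$ by semiclassical stationary phase, with the usual care near the glancing directions $\xi=(0,\pm1)$, which correspond to the cut‑off $|\xi'|=\hsc^{-1}$ where $\mu$ fails to be smooth; the point that makes this manageable is that the transitional frequencies contribute to $\operatorname{WF}_\hsc u$ only along $\{x_1=0\}$, which meets neither $\Gamma_s$ nor $\Gamma'_s$, so the turning‑point behaviour never has to be resolved in detail. An alternative route, avoiding the explicit kernel, is to identify $u$ with the limiting‑absorption solution --- the $\delta\downarrow0$ limit of the problem with $-\hsc^2\Delta-(1+i\delta)$ in place of $-\hsc^2\Delta-1$, which is uniquely solvable for $\delta>0$ (uniqueness following from the energy identity supplied by the impedance boundary conditions) --- and then to quote the standard characterisation of the wavefront set of an outgoing Helmholtz solution.
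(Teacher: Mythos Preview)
Your construction is genuinely different from the paper's --- you build the solution by separation of variables/Fourier multipliers on a half-plane or an infinite strip, whereas the paper embeds the problem into an exterior impedance scattering problem outside two smooth convex obstacles and restricts the resulting solution. Your route is more elementary for (\ref{eq:modelh}) and sidesteps boundary-layer geometry entirely; the paper's route gives access to existing exterior-scattering machinery (Sommerfeld radiation, propagation of singularities in the exterior of convex bodies, the trace identity coming from integration by parts on $B(0,R)\setminus\Theta$).

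There is, however, a real gap. You write that the bound $\|S(\hsc)\|_{L^2\to H^1}\lesssim\hsc^{-1}$ ``accounts for everything in Definition~\ref{def:adm} except outgoingness'', but Definition~\ref{def:adm} asks for a \emph{uniform} bound $\|u\|_{L^2(D_\epsilon)}\le C\|g\|_{L^2}$ with $C$ independent of $\hsc$ (and likewise a uniform trace bound). For (\ref{eq:modelh}) this is in fact easy from your multiplier --- $|e^{i\mu x_1}/(\hsc\mu+1)|\le 1$ --- so you should say so. For (\ref{eq:modelh_timp}) it is not: the two-point multiplier has denominator $(1+a)^2-(1-a)^2e^{2i\mu L}$ with $a=\hsc\mu=\sqrt{1-\hsc^2|\xi'|^2}$, and at resonant frequencies $\mu L\in\pi\mathbb Z$ this is of size $\sim a$, which can be as small as $O(\hsc)$ near glancing. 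So your multiplier is genuinely of size $\hsc^{-1}$ pointwise in $\xi'$, and a direct Plancherel estimate only recovers the $\hsc^{-1}$ bound you state. Upgrading this to a uniform bound on the \emph{bounded} domain $D^2_\epsilon$ is exactly where the paper spends most of its effort: it proves it by a contradiction/defect-measure argument (essentially showing that on the would-be blow-up sequence all mass escapes through the outgoing sides, using that the reflection coefficient at the impedance walls vanishes on horizontal rays). Nothing in your sketch addresses this, and it is the substantive content of the lemma.

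Two smaller points. First, the trace bound (Part~(\ref{i:bdd_trace}) of Definition~\ref{def:adm}) is never mentioned; in the paper it comes from pairing the equation with $u$ and taking imaginary parts, and you would need an analogous step. Second, your description of $\operatorname{WF}_\hsc u$ for (\ref{eq:modelh_timp}) as straight rays ``$t\mapsto(0,y_2)+t\xi$ with $\xi_1\ge0$'' is inaccurate --- reflections at $x_1=L$ produce segments with $\xi_1<0$ --- though your actual conclusion (that $\xi_2$ keeps a fixed sign along each generalised ray, hence outgoingness on $\Gamma_t\cup\Gamma_b$) survives, since the reflections preserve $\xi_2$. The stationary-phase argument you propose for controlling $\operatorname{WF}_\hsc u$ at glancing is plausible but would need propagation of singularities up to the impedance boundary (as the paper invokes via \cite{MS1,MS2}); your parenthetical that ``the turning-point behaviour never has to be resolved'' is optimistic for (\ref{eq:modelh_timp}), where glancing rays are exactly the horizontally trapped ones responsible for the difficulty above.
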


The admissibility condition of Definition \ref{def:adm} corresponds to requiring that any solution can be extended in a slightly bigger 
domain, where it is bounded, and has bounded traces where an impedance boundary condition is imposed ($\Gamma_l$ for (\ref{eq:modelh}) and $\Gamma_l\cup \Gamma_r$ for (\ref{eq:modelh_timp})). 
All admissible solution operators have the same high-frequency behavior in the following sense, where 
$\Vert f \Vert_{H^s_\hsc} := \Vert f \Vert_{L^2} + \hsc^s \Vert f \Vert_{\dot H^s}$. 

\begin{lem} \label{lem:modelHF}
If $S_1$, $S_2$ are two admissible solutions operators, then, for any bounded $g \in L^2(\Gamma_l)$, any $N>0$, and any $\chi \in C^\infty_c(D)$, there exists $C_N>0$ such that $\Vert \chi(S_1 g - S_2 g) \Vert_{H^{N}_\hsc} \leq C_N \hsc^N$.
\end{lem}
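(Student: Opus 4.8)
The plan is to show that $w := S_1 g - S_2 g$ is $\hsc$-tempered and has empty semiclassical wavefront set over the interior of $\domain$; for $\hsc$-tempered $w$ this is equivalent to the assertion, since $\operatorname{WF}_\hsc w \cap (\operatorname{int}\domain\times\mathbb R^2)=\emptyset$ means exactly that $\N{\chi w}_{H^N_\hsc}=O(\hsc^N)$ for every $N>0$ and every $\chi\in C^\infty_c(\domain)$ (the $\hsc^s\N{\cdot}_{\dot H^s}$ part of the norm being controlled because $\operatorname{WF}_\hsc w$ will be bounded). By linearity, $w$ solves the homogeneous version of (\ref{eq:modelh}), resp.~(\ref{eq:modelh_timp}): $(-\hsc^2\Delta-1)w=0$ in $\domain$, $(\hsc D_{x_1}+1)w=0$ on $\Gamma_l$ (and $(-\hsc D_{x_1}+1)w=0$ on $\Gamma_r$ for (\ref{eq:modelh_timp})), and $w$ is outgoing near $\Gamma_s$, resp.~$\Gamma'_s$, in the sense of Definition \ref{def:outgo}. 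Admissibility (Definition \ref{def:adm}) provides the extension of $w$ to an $\hsc$-tempered solution on a slightly larger domain with bounded impedance traces, so $\operatorname{WF}_\hsc w$ is well defined, and semiclassical elliptic regularity confines it to the characteristic set $\{|\xi|=1\}$.

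Next I would invoke propagation of singularities for the impedance problem -- the microlocal companion of the measure-propagation statements of \S\ref{sec:4}: $\operatorname{WF}_\hsc w$ is a union of maximally extended generalized bicharacteristics, which in the interior are the straight lines $t\mapsto(x_0+2t\xi_0,\xi_0)$ and which reflect off the impedance boundary. The key structural point is that the impedance reflection coefficient $R(\xi')=(\sqrt{1-|\xi'|^2}-1)/(\sqrt{1-|\xi'|^2}+1)$ vanishes at normal incidence $\xi'=0$; equivalently, on the component of the boundary trace of $w$ with outgoing normal momentum ($\xi_1=+1$ at $\Gamma_l$, resp.~$\xi_1=-1$ at $\Gamma_r$) the operator $\hsc D_{x_1}+1$, resp.~$-\hsc D_{x_1}+1$, is elliptic near $\xi'=0$, so that component is $O(\hsc^\infty)$ there. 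Hence a ray hitting the impedance boundary perpendicularly is absorbed: its generalized bicharacteristic cannot be continued past that point, and no element of $\operatorname{WF}_\hsc w$ can be a ray emanating from such a point.

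It then remains to rule out, on purely geometric grounds, every nonempty generalized bicharacteristic meeting $\operatorname{int}\domain$. Fix $(x_0,\xi_0)\in\operatorname{WF}_\hsc w$ with $x_0$ interior, so $|\xi_0|=1$, and trace the bicharacteristic backward. Each reflection occurs at the impedance boundary ($\Gamma_l$ for (\ref{eq:modelh}); $\Gamma_l$ or $\Gamma_r$ for (\ref{eq:modelh_timp})) and leaves $\xi_2$ unchanged. If $\xi_2\neq0$, the height of the ray drifts monotonically at constant speed $2|\xi_2|$, so after finitely many such reflections it meets $\Gamma_t$ or $\Gamma_b$ -- both contained in $\Gamma_s\cap\Gamma'_s$ -- at a point where the momentum points into $\domain$, i.e.~an incoming point, which Definition \ref{def:outgo} forbids from $\operatorname{WF}_\hsc w$; by propagation along the bicharacteristic, $(x_0,\xi_0)\notin\operatorname{WF}_\hsc w$, a contradiction. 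If $\xi_2=0$, then $\xi_0=(\pm1,0)$ and the ray stays at constant height, so traced backward it reaches one of the vertical sides perpendicularly; if that side carries an impedance condition ($\Gamma_l$ always; $\Gamma_r$ for (\ref{eq:modelh_timp})) the ray is absorbed and hence not in $\operatorname{WF}_\hsc w$ by the previous paragraph, while if it is $\Gamma_r$ in (\ref{eq:modelh}) -- which lies in $\Gamma_s$ -- the arrival is again an incoming point, forbidden. In all cases $(x_0,\xi_0)\notin\operatorname{WF}_\hsc w$, so $\operatorname{WF}_\hsc w\cap(\operatorname{int}\domain\times\mathbb R^2)=\emptyset$, which is the assertion of the lemma.

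I expect the only non-routine step to be the behaviour of the impedance reflection near normal incidence, namely that perpendicularly incident rays are genuinely absorbed modulo $O(\hsc^\infty)$ -- equivalently, the propagation of the defect measure of $w$ through the (vertical) impedance boundaries with the exact coefficient $R$. This is precisely what the analysis of \S\ref{sec:4} is designed to supply; everything else above is elementary ray geometry in the rectangle $\domain$, together with standard semiclassical elliptic regularity and interior propagation of singularities, and the existence of the extension and the temperedness of $w$ come directly from Definition \ref{def:adm} and Lemma \ref{lem:modelwp}.
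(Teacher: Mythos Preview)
Your approach is the paper's: show $w := S_1 g - S_2 g$ has empty semiclassical wavefront set over $D$ by combining propagation of singularities with the outgoing condition, then read off the $H^N_\hsc$ estimate from the definition of $\operatorname{WF}_\hsc$ (the paper invokes Lemma~\ref{lem:osc_inter}(\ref{i:osc_inter_2}) for this last step). Your ray-tracing for $\xi_2\neq 0$ is correct and more explicit than the paper's one-line reference to the proof of Lemma~\ref{lem:modelwp}.

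The gap is in your handling of the horizontal rays $\xi_2=0$ for (\ref{eq:modelh_timp}). The claim that a perpendicularly incident ray ``is absorbed: its generalized bicharacteristic cannot be continued past that point'' is not a wavefront-set statement. The vanishing of $R(\xi')=(\sqrt{r}-1)/(\sqrt{r}+1)$ at $\xi'=0$ is a principal-symbol fact---it governs defect measures (this is what \S\ref{sec:4} supplies), not $\operatorname{WF}_\hsc$. With zero data the exact trace relation on $\Gamma_l$ is $\gamma w_+ = R(\hsc D_{x'})\gamma w_-$, and a symbol vanishing to finite order at $\xi'=0$ yields only a finite-order gain, not $O(\hsc^\infty)$; your ellipticity remark conflates a boundary trace condition with an interior microlocal equation. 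What actually kills the trapped horizontal orbit is the \emph{pair} of impedance walls: combining the reflection relations at $\Gamma_l$ and $\Gamma_r$ gives, microlocally near $\xi'=0$, $(I - R(\hsc D_{x'})^2\,e^{2i\sqrt{r(\hsc D_{x'})}(\mathsf{d_l}+\mathsf{d_r})/\hsc})\gamma w_+ = O(\hsc^\infty)$, and $|R|<1$ on the hyperbolic set makes this operator elliptic there, forcing $\gamma w_+ = O(\hsc^\infty)$ and hence emptiness of $\operatorname{WF}_\hsc w$ on the orbit. (For (\ref{eq:modelh}) no such issue arises: after at most one reflection at $\Gamma_l$, a horizontal backward ray reaches $\Gamma_r\subset\Gamma_s$ with incoming momentum, so your absorption step there is in fact unnecessary.) The paper is equally laconic on this point, so the omission is in good company, but the single-wall absorption you invoke is not the mechanism that closes the argument.
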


\section{Implications of the main results for the parallel overlapping Schwarz method}\label{sec:2}

The plan of this section is to 
\begin{itemize}
\item define the parallel overlapping Schwarz method studied in \cite{GoGaGrLaSp:22} (\S\ref{sec:2.1}),
\item summarise the performance of the parallel overlapping Schwarz method as illustrated in the numerical experiments in \cite{GoGaGrLaSp:22} (\S\ref{sec:summary}),
\item show how impedance-to-impedance maps govern the behaviour of the error of this method (\S\S\ref{sec:2.2}-\ref{sec:2.4}) and define these impedance-to-impedance maps for general decompositions (\S\ref{sec:2.5}), 
\item show how the maps of Models 1 and 2 in \S\ref{sec:1.2} are the relevant impedance-to-impedance maps for 2-d strip decompositions when the boundary condition on the whole domain (approximating the Sommerfeld radiation condition) is the outgoing condition described in \S\ref{sec:1.5} (\S\ref{sec:2.6}-\S\ref{sec:2.8}), and 
\item 
explain the relevance of Theorems \ref{th:model1}, \ref{th:model2}, and \ref{th:compo} to the analysis of the parallel Schwarz method (\S\ref{sec:2.9}).
\end{itemize}

\subsection{Definition of the parallel overlapping Schwarz method studied in \cite{GoGaGrLaSp:22}}\label{sec:2.1}

The paper \cite{GoGaGrLaSp:22} considers the Helmholtz interior impedance problem, i.e., given a bounded domain $\Omega\subset \mathbb{R}^d$ $d\geq 2$, $f\in L^2(\Omega)$, and $g\in L^2(\partial \Omega)$, find $u\in H^1(\Omega)$ satisfying 
\begin{align} \label{eq:Helmholtz}
(\Delta +k^2) u =-f \quad \text{on } \Omega \quad\tand \quad\left(\frac{1}{i} \frac{    \partial } {\partial n} - k  \right)u = g \quad \text{on }
     \partial \Omega
  \end{align}  
(where $\partial/\partial n$ denotes the outward normal derivative on $\partial \Omega$)  and considers its solution via the following parallel overlapping Schwarz method with impedance transmission conditions. Let  $\{\Omega_j\}_{j=1}^N$  
form  an overlapping cover of $\Omega$ with each $\Omega_j \subset \Omega$ and Lipschitz polyhedral. 
If  $u$ solves (\ref{eq:Helmholtz}), then $u_j:=u|_{\Omega_j}$ satisfies
\begin{align}
    (\Delta + k^2)u_j & = - f  \quad  &\text{in } \  \Omega_j, \label{eq11} \\
  \left(
  \frac{1}{i} 
  \frac{\partial }{\partial n_j} -  k \right) u_j & = \left(\frac{1}{i} \frac{\partial }{\partial n_j} -  k \right) u \quad  &\text{on }  \ \partial \Omega_j\backslash \partial \Omega,  \label{eq12} \\
  \left(
  \frac{1}{i} 
  \frac{\partial }{\partial n_j} -  k \right) u_j & = g  \quad  &\text{on } \  \partial \Omega_j  \cap \partial \Omega,\label{eq13}
\end{align}
where $\partial /\partial n_j$ denotes the outward normal derivative on $\partial \Omega_j$.
\footnote{The impedance boundary conditions in \cite{GoGaGrLaSp:22} are written in the form $\partial_n - i k$ (with this form more commonly-used in numerical analysis). In this section we write the results of \cite{GoGaGrLaSp:22} using the impedance condition $\frac{1}{i} \partial_n -k$; the two conventions are equivalent up to multiplication/division by $i$ of the data $g$ on $\partial \Omega$.
}

Let  $\{\chi_j\}_{j = 1}^N$ be such that $\chi_j\in C^{1,1}(\Omega;[0,1])$,
$\chi_j\equiv 0$ in $\Omega\cap (\overline{(\Omega_j)^c})$ (and thus, in particular, on $\partial\Omega_j\setminus \partial \Omega$) and $\sum_{j=1}^N \chi_j(x) =1$ for all $x\in \overline{\Omega}$.
The parallel Schwarz method is:~given an iterate $u^n$ defined on $\Omega$, let $u^{n+1}_j$ be the solution of 
\begin{align}
  (\Delta + k^2)u_j^{n+1}  & =  -f  \quad  &\text{in } \  \Omega_j,  \label{eq21}\\
  \left(
  \frac{1}{i} 
  \frac{\partial }{\partial n_j} -  k \right) u_j^{n+1}  & = \left(\frac{1}{i} \frac{\partial }{\partial n_j} -  k \right) u^n \quad  &\text{on }  \ \partial \Omega_j\backslash \partial \Omega, \label{eq22}  \\
    \left(
    \frac{1}{i} 
    \frac{\partial }{\partial n_j} -  k \right) u_j^{n+1}  & = g  \quad  &\text{on } \  \partial \Omega_j  \cap \partial \Omega;  \label{eq23} 
\end{align}
finally, let 
\begin{align} \label{star}
  u^{n+1} := \sum_\ell \chi_\ell  u_\ell ^{n+1}.
\end{align}
This method is well-defined since 
if $u^n \in U(\Omega)$ then $u^{n+1}\in U(\Omega)$, where
\beqs
 U (\Omega) := \big\{ u \in H^1(\Omega): \,  (\Delta +k^2) u \in L^2(\Omega), \, (-i\partial  / \partial n -  k) u 
   \in L^2(\partial \Omega) \big\};
\eeqs
see \cite[Theorem 2.12]{GoGaGrLaSp:22}.

We highlight that the impedance boundary condition enters in \eqref{eq11}-\eqref{star} in two ways
\begin{enumerate}
\item as the boundary condition on $\partial \Omega$ (\eqref{eq13}, \eqref{eq23}), and  
\item as the boundary conditions on the subdomains \eqref{eq22}.
\end{enumerate}
Regarding 1:~the motivation for imposing an impedance boundary condition on $\partial \Omega$ is that it is the simplest-possible approximation to the Sommerfeld radiation condition, and the interior impedance problem is a ubiquitous model problem in the numerical analysis of the Helmholtz equation (see, e.g., the discussion and references in \cite[\S 1.1]{GLS1}). However, the analysis in \cite{GoGaGrLaSp:22} is, in principle, applicable to other boundary conditions on $\partial \Omega$, and we discuss this further in Remark \ref{rem:other_bc} below.

Regarding 2:~as discussed in \S\ref{sec:motivation}, the advantage of using impedance boundary conditions on the subdomains was recognised in Despr\'es' thesis \cite{De:91, BeDe:97}, and \eqref{eq11}-\eqref{star} is the overlapping analogue of the non-overlapping method in \cite{De:91, BeDe:97} (see, e.g., the discussion in  \cite[\S2.3]{DoJoNa:15}).

We see later (in \S\ref{sec:2.8}) that the impedance-to-impedance maps in Theorems \ref{th:model1}-\ref{th:compo} are those dictating the behaviour of the parallel Schwarz method in the idealised case where the boundary condition on $\partial\Omega$ is the outgoing condition. The rationale for considering this idealised case is that it allows us to focus on the impedance boundary conditions imposed in the domain decomposition method itself (i.e., in Point (2) above), and ignore the influence of the impedance boundary condition imposed as an approximation of the Sommerfeld radiation condition (i.e., in Point (1) above).
{The implications of Theorems \ref{th:model1}-\ref{th:compo} for the parallel overlapping Schwarz method studied in \cite{GoGaGrLaSp:22} (i.e., with impedance boundary conditions everywhere) are discussed further in \S\ref{sec:2.9}.
}

\subsection{Summary of the numerical experiments in \cite{GoGaGrLaSp:22} on the performance of the parallel Schwarz method}\label{sec:summary}

The experiments in \cite[\S6]{GoGaGrLaSp:22} considered two situations.
\ben
\item Strip decompositions (described in \S\ref{sec:2.6} below) in 2-d rectangular domains with height one and maximum length $64/3$ with $k\in [20,80]$
 (so that at the highest $k$ there were approximately $267$ wavelengths in the domain).
\item Uniform (``checkerboard'') and non-uniform (created by the mesh partitioning software METIS) decompositions of the 2-d unit square with $k\in [40,160]$ (so that at the highest $k$ there were approximately $25$ wavelengths in the domain).
\een
The experiments in \cite[\S6]{GoGaGrLaSp:22} showed the following three features of the parallel Schwarz method.
\bit
\item[(a)] For a fixed number of subdomains with fixed overlap proportional to the subdomain length, the number of iterations required to achieve a fixed error tolerance decreases as $k$ increases (in the ranges above) -- this was shown for the strip decompositions in \cite[Experiment 6.2 and Table 2]{GoGaGrLaSp:22} and for the square in \cite[Tables 7, 8, 10, 11]{GoGaGrLaSp:22} (with a similar result seen for a different parallel DD method in \cite[Table 3]{GrSpZo:20}).
\item[(b)] For the strip decomposition with fixed number of subdomains, the convergence rate of the method increases as the length of subdomains increases with the overlap proportional to the subdomain length (so that the overall length of the domain increases) \cite[Experiment 6.1 and Figure 5]{GoGaGrLaSp:22}.
\item[(c)] For the strip decomposition with an increasing number of subdomains and fixed subdomain length and overlap (so the length of domain increases), at fixed $k$, one needs roughly $O(N)$ iterations to obtain a fixed error tolerance; see \cite[Experiment 6.2]{GoGaGrLaSp:22}.
\eit
We highlight that using the method with a fixed number of subdomains, as in (a), is not completely practical, since the subproblems have the same order of complexity as the global problem. Nevertheless, this situation provides a useful starting point for methods based on recursion; see the discussion in \cite[Section 1.4]{GrSpZo:20}. Furthermore, we see in \S\ref{sec:2.9} how the results of the present paper imply that analysing the method even in this idealised case is very challenging.

\subsection{The error propagation operator $\bcT$}\label{sec:2.2}

We consider the vector of errors
 \begin{align} \label{31} \mathbf{e}^n = (e_1^n, e_2^n, \ldots e_N^n)^\top , \quad \text{where} \quad  e_\ell^n := u_\ell - u_\ell^n  = u\vert_{\Omega_\ell}  - u_\ell^n,  \quad \ell = 1, \ldots, N. \end{align}  
By the definition of $u^n$ \eqref{star} and the fact that $\{\chi_\ell\}_{\ell=1}^N$ is a partition of unity,
\begin{align} \label{eq:global} 
e^n:= u-u^n =\sum_\ell \chi_\ell   u\vert_{\Omega_\ell }  -\sum_\ell    \chi_\ell u_\ell^n = \sum_\ell \chi_\ell  e_\ell^n . 
\end{align}
Thus, subtracting  \eqref{eq21}-\eqref{eq23} from  \eqref{eq11}-\eqref{eq13}, we obtain 
\begin{align}
  (\Delta + k^2)e_j^{n+1}  & = 0  \quad  \text{in } \  \Omega_j,  \label{eq31}\\
  \left(\frac{1}{i}\frac{\partial }{\partial n_j} -  k \right)  e_j^{n+1}  & = \left(\frac{1}{i}\frac{\partial }{\partial n_j} -  k \right) e^n \ = \ \sum_\ell \left(\frac{1}{i}\frac{\partial }{\partial n_j} -  k \right) \chi_\ell e_\ell^n,   \quad  \text{on }  \ \partial \Omega_j\backslash \partial \Omega, \label{eq32}  \\
    \left(\frac{1}{i}\frac{\partial }{\partial n_j} -  k \right) e_j^{n+1}  & = 0  \quad  \text{on } \  \partial \Omega_j  \cap \partial \Omega.   \label{eq33} 
\end{align} 
The map from $e^n$ to $e^{n+1}$ can be written in a convenient way using the operator-valued matrix $\bcT = (\cT_{j,\ell})_{j,\ell = 1}^N$, defined as follows.  For $v_\ell \in U(\Omega_\ell)$,  and any $j\in \{1,\ldots,N\}$, let $\cT_{j,\ell}v_\ell \in U(\Omega_j)$ be the solution of 
\begin{align}
  (\Delta + k^2)(\cT_{j,\ell} v_\ell)  & = 0  \quad  \text{in } \  \Omega_j,  \label{eq41}\\
  \left(\frac{1}{i}\frac{\partial }{\partial n_j} -  k \right) (\cT_{j,\ell} v_\ell) & = \left(\frac{1}{i}\frac{\partial }{\partial n_j} -  k \right) (\chi_\ell v_\ell) \quad  \text{on }  \ \partial \Omega_j\backslash \partial \Omega, \label{eq42}  \\
    \left(\frac{1}{i}\frac{\partial }{\partial n_j} -  k \right) ({\cT_{j,\ell} v_\ell}) & = 0  \quad  \text{on } \  \partial \Omega_j \cap \partial \Omega.   \label{eq43} 
\end{align}
Therefore,
 \begin{align}\label{eq:errit} 
  e_j^{n+1} = \sum_\ell \cT_{j,\ell} e_\ell^n, \quad \text{and thus} \quad  \be^{n+1} = \bcT \be^{n}.   
 \end{align}
Observe that 
     (i) if $\Omega_j \cap\Omega_\ell = \emptyset$, then $\cT_{j,\ell}=0$ (since the right-hand side of \eqref{eq42} is zero on $\partial\Omega_j\setminus\partial\Omega$), (ii) since $\chi_\ell$ vanishes on $\partial\Omega_\ell\setminus \partial\Omega$,  $(-i\partial /\partial n_\ell - k )(\chi_\ell v_\ell)$ vanishes on $\partial \Omega_\ell\setminus\partial \Omega$,  and thus  $\cT_{\ell, \ell} \equiv 0$ for all $\ell$.

 It is convenient here to introduce  the notation
     \begin{align} 
\Gamma_{j,\ell}:= (\partial \Omega_j \setminus \partial \Omega)\cap \Omega_\ell,\label{defGamma}
     \end{align} 
     so that  \eqref{eq42} holds on $\Gamma_{j,\ell}$ and \eqref{eq43} holds on $\partial \Omega_j \backslash \Gamma_{j,\ell}$.   

\begin{rem}
Using the definition of $\bcT$, the parallel Schwarz method \eqref{eq21}-\eqref{eq23} can be written as 
\beqs
\bu^{n+1} = \bcT \bu^n + \bF,
\eeqs
where $(\bu^n)_j := u^n_j$, $(\bF)_j:=F_j$,
where $F_j\in U(\Omega_j)$ satisfies
\beqs
(\Delta+k^2) F_j = -f \,\,\tin \Omega_j,
\eeqs
\beqs
 \left(\frac{1}{i}\pdiff{}{n_j}-k\right)F_j= 0 \,\,\ton \partial\Omega_j\setminus \partial \Omega, \quad\tand\quad
 \left(\frac{1}{i}\pdiff{}{n_j}-k\right) F_j= g \,\,\ton \partial\Omega_j\cap \partial \Omega.
\eeqs
\end{rem}

\subsection{The goal:~proving power contractivity of $\bcT$}\label{sec:2.3}

The paper \cite{GoGaGrLaSp:22} sought to prove that $\bcT^M$ is a contraction, for some appropriate $M\geq 1$, in an appropriate norm. The motivation for this is that, in 1-d with a strip decomposition (i.e., the 1-d analogue of the decompositions considered in \S\ref{sec:2.6} below), $\bcT^N=0$ where $N$ is the number of subdomains; see \cite[Propositions 2.5 and 2.6]{NaRoSt:94}. This property holds because, in 1-d, the impedance boundary condition is the exact Dirichlet-to-Neumann map for the Helmholtz equation.

To define an appropriate norm, let 
\beqs
 U_0 (\Omega_j) := \Big\{ u \in H^1(\Omega_j): \,  (\Delta  +k^2) u=0, \, (-i\partial/\partial_n -k)u
   \in L^2(\partial \Omega_j) \Big\}\subset U(\Omega_j).
\eeqs
 Since $e_j^n \in U_0(\Omega_j)$ for each $j$,  \cite{GoGaGrLaSp:22} analyses convergence of \eqref{eq:errit} in the space $\bbU_0:=\prod_{\ell=1}^N U_0(\Omega_\ell)$.

 \begin{lem}[Norm on $U_0(\Omega_j)$]\label{lem:norm}
For $\Omega_j$ a bounded Lipschitz domain, let  
$\|\cdot\|_{1,k,\partial \Omega_j}$ be defined by
   \begin{align}\label{eq:pseudo-energy}
\|v\|_{1,k,\partial \Omega_j}^2 \ := \  
\N{\pdiff{v}{n}}^2_{L^2(\partial \Omega_j)} + k^2 \N{v}^2_{L^2(\partial \Omega_j)},
   \end{align}
   where  $\partial/\partial n$ denotes the outward normal derivative on $\partial \Omega_j$. Then $\|\cdot\|_{1,k,\partial \Omega_j}$ is a norm on $U_0(\Omega_j)$ and 
     \begin{align} \label{eq:norm2}   \|v\|_{1,k,\partial \Omega_j}^2 \ = \ 
\N{\frac{1}{i}\pdiff{v}{n}- k v }^2_{L^2(\partial \Omega_j)}\ =\  \N{\frac{1}{i}\pdiff{v}{n}+ k v }^2_{L^2(\partial \Omega_j)},
     \end{align} 
\end{lem}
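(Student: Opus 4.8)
The plan is to establish the three claims in Lemma~\ref{lem:norm} in sequence: first the identity~\eqref{eq:norm2} relating the pseudo-energy to the two impedance traces, then the fact that $\|\cdot\|_{1,k,\partial\Omega_j}$ is genuinely a norm on $U_0(\Omega_j)$ (the only nontrivial point being definiteness). The starting point for~\eqref{eq:norm2} is the elementary pointwise algebraic identity
\[
\left|\tfrac{1}{i}\pdiff{v}{n}\mp kv\right|^2 = \left|\pdiff{v}{n}\right|^2 + k^2|v|^2 \mp 2k\,\mathrm{Re}\left(\overline{v}\,\tfrac{1}{i}\pdiff{v}{n}\right),
\]
valid at almost every point of $\partial\Omega_j$, so that after integrating over $\partial\Omega_j$ one gets
\[
\N{\tfrac{1}{i}\pdiff{v}{n}\mp kv}^2_{L^2(\partial\Omega_j)} = \N{\pdiff{v}{n}}^2_{L^2(\partial\Omega_j)} + k^2\N{v}^2_{L^2(\partial\Omega_j)} \mp 2k\,\mathrm{Re}\int_{\partial\Omega_j}\overline{v}\,\tfrac{1}{i}\pdiff{v}{n}.
\]
Hence~\eqref{eq:norm2} is equivalent to the cross term vanishing, i.e.
\[
\mathrm{Re}\int_{\partial\Omega_j}\overline{v}\,\tfrac{1}{i}\pdiff{v}{n}\,ds = \mathrm{Im}\int_{\partial\Omega_j}\overline{v}\,\pdiff{v}{n}\,ds = 0 \qquad\text{for all }v\in U_0(\Omega_j).
\]
This is exactly Green's identity: since $v\in U_0(\Omega_j)$ satisfies $(\Delta+k^2)v=0$, integration by parts gives $\int_{\partial\Omega_j}\overline{v}\,\pdiff{v}{n}\,ds = \int_{\Omega_j}\big(|\nabla v|^2 - k^2|v|^2\big)\,dx$, which is real; taking imaginary parts kills the boundary term. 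One has to be slightly careful about regularity — $v\in H^1(\Omega_j)$ with $\Delta v\in L^2(\Omega_j)$ means $\pdiff{v}{n}$ is a priori only in $H^{-1/2}(\partial\Omega_j)$, but the hypothesis in $U_0(\Omega_j)$ forces $(-i\partial_n - k)v\in L^2(\partial\Omega_j)$, hence (combined with $v\in H^{1/2}(\partial\Omega_j)\subset L^2(\partial\Omega_j)$) also $\partial_n v\in L^2(\partial\Omega_j)$, so all the integrals above are honest $L^2$ pairings and the integration by parts (Green's first identity for $H^1$ functions with $L^2$ Laplacian, using the $L^2$ normal trace) is justified.

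Once~\eqref{eq:norm2} is in hand, it is immediate that $\|\cdot\|_{1,k,\partial\Omega_j}$ is a seminorm (it is manifestly homogeneous and the triangle inequality is inherited from the $L^2(\partial\Omega_j)$ norm via either of the two representations in~\eqref{eq:norm2}). For definiteness, suppose $\|v\|_{1,k,\partial\Omega_j}=0$; then by~\eqref{eq:pseudo-energy} both $v=0$ and $\pdiff{v}{n}=0$ on $\partial\Omega_j$. So $v$ solves $(\Delta+k^2)v=0$ in $\Omega_j$ with zero Cauchy data on $\partial\Omega_j$, and unique continuation (or simply uniqueness for the Cauchy problem for the Helmholtz equation, e.g. via Holmgren's theorem since the coefficients are analytic, or via a standard unique continuation result for second-order elliptic equations) forces $v\equiv 0$ in $\Omega_j$. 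This completes the proof.

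The main obstacle, such as it is, is purely one of bookkeeping about function-space regularity: making sure that for $v\in U_0(\Omega_j)$ the normal derivative $\pdiff{v}{n}$ is an $L^2(\partial\Omega_j)$ function rather than merely a distribution, so that the pointwise algebraic identity can be integrated and Green's identity applied with genuine boundary integrals. This is handled by the defining condition of $U_0(\Omega_j)$, which explicitly requires $(-i\partial_n - k)v\in L^2(\partial\Omega_j)$; combined with the trace theorem giving $v|_{\partial\Omega_j}\in H^{1/2}(\partial\Omega_j)\hookrightarrow L^2(\partial\Omega_j)$, this yields $\partial_n v\in L^2(\partial\Omega_j)$. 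The rest — the algebraic identity, Green's identity, and the unique-continuation step — is entirely standard. I expect the author's proof to follow essentially this route.
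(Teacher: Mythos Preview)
Your proof is correct and follows the standard route. The paper does not actually supply its own proof of this lemma; it simply refers to \cite[Lemma~3.3]{GoGaGrLaSp:22} and remarks that \eqref{eq:norm2} is a well-known ``isometry'' result about impedance traces (citing \cite[Lemma~6.37]{Sp:15} and \cite[Equation~3]{ClCoJoPa:21}). Your argument --- the algebraic expansion, Green's identity to kill the cross term, the regularity bookkeeping to justify $\partial_n v\in L^2(\partial\Omega_j)$, and unique continuation for definiteness --- is exactly the standard proof underlying those references.
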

The norm on $\bbU_0$ is then defined by 
   \begin{align}\label{eq:pseudo-energy1}
\|\bv\|_{1,k,\partial }^2 \ := \sum_{\ell=1}^N \|v_\ell \|_{1,k,\partial \Omega_\ell}^2 \quad \text{for} \quad \bv \in \bbU_0.
\end{align}

For the proof of Lemma \ref{lem:norm}, see \cite[Lemma 3.3]{GoGaGrLaSp:22}. We highlight that (i) the norm \eqref{eq:pseudo-energy} was used in the non-overlapping analysis in \cite{De:91,BeDe:97} (see \cite[Equation 12]{BeDe:97}) and (ii) the relation \eqref{eq:norm2} is a well-known ``isometry'' result about impedance traces; see, e.g., \cite[Lemma 6.37]{Sp:15}, \cite[Equation 3]{ClCoJoPa:21}. {The relation \eqref{eq:norm2} and the fact that the $L^2$ norm is much easier to compute than the $H^{-1/2}$ norm are the reasons why \cite{GoGaGrLaSp:22} considers the impedance-to-impedance map in $L^2$, rather than the trace space $H^{-1/2}$ (although we expect that the theory in \cite{GoGaGrLaSp:22} can be suitably modified to hold in $H^{-1/2}$).}

\subsection{How impedance-to-impedance maps arise in studying $\bcT^M$ for $M\geq 1$}\label{sec:2.4}

When studying $\bcT^M$ for $M\geq 1$, compositions such as 
$\cT_{j,\ell} \cT_{\ell, j'}$ naturally arise. Indeed,
\begin{align} \label{eq:square} 
(\bcT^2)_{j,j'} \ = \ \sum_\ell \cT_{j,\ell} \cT_{\ell, j'},  
\end{align}
where the sum is over all $\ell \in  \{1, 2, \ldots , N \} \backslash \{ j,j'\}$, with  
neither $\Gamma_{j,\ell}$ nor $\Gamma_{\ell, j'}$ empty (equivalently, neither 
$\Omega_j\cap \Omega_{\ell}$ nor $\Omega_\ell\cap \Omega_{j'}$ empty).

To condense notation, let 
\beqs
\imp_j := \frac{1}{i}\pdiff{}{n_j} -k,
\eeqs
so that the boundary condition \eqref{eq42} becomes
\beq
\imp_j (\cT_{j,\ell} v_\ell)  = \imp_j (\chi_\ell v_\ell) \quad  \text{on }  
\Gamma_{j,\ell}.
 \label{eq42new}  
\eeq

A useful expression  for the action of   \eqref{eq:square} can be obtained by  inserting $v_\ell = \cT_{\ell,j'} z_{j'}$,  with $z_{j'} \in U(\Omega_{j'})$,  into the boundary condition \eqref{eq42new},  to obtain
   \begin{align}
    \imp_j \big(\cT_{j,\ell} \cT_{\ell,j'} z_{j'}\big)  &=  \imp_j \big(\chi_\ell\cT_{\ell,j'} z_{j'}\big) =  \chi_\ell\imp_j \big(\cT_{\ell,j'} z_{j'}\big) + \left(\pdiff{\chi_\ell}{n_j}\right)(\cT_{\ell,j'} z_{j'}\big)
        \quad \ton 
        \Gamma_{j,\ell}.
    \label{eq:relation2}
    \end{align}
    Observe that to find the first term on the right-hand side of \eqref{eq:relation2} one (i) finds $\cT_{\ell, j'} {z_{j'}}$, i.e.,  the unique function  in $U_0(\Omega_\ell)$ with impedance data given by $\imp_\ell(\chi_{j'} z_{j'})$ on $\Gamma_{\ell,j'}$ and zero otherwise,  (ii) evaluates $\imp_j(\cT_{\ell,j'}z_{j'})$    on $\Gamma_{j,\ell}$  and (iii) multiplies the result by $\chi_\ell$. The combination of steps (i) and (ii) naturally lead us to consider impedance-to-impedance maps.
    
      \subsection{ The impedance-to-impedance map for general decompositions}\label{sec:2.5}

\begin{definition}[Impedance map] \label{def:impmap}
Let $\ell, j, j' \in \{ 1, \ldots, N\}$
  be such that 
  neither $\Gamma_{j,\ell}$ nor $\Gamma_{\ell, j'}$ is empty (equivalently, neither 
$\Omega_j\cap \Omega_{\ell}$ nor $\Omega_\ell\cap \Omega_{j'}$ is empty).
    Given $g \in L^2(\Gamma_{\ell,j'})$, let $v_\ell$ be the unique element of
    $U_0(\Omega_\ell)$ with impedance data 
\begin{align}  
   \imp_\ell (v_\ell) & = \left\{ \begin{array}{ll} g & \text{on} \quad \Gamma_{\ell,j'} \\  0 & 
  \text{on} \quad \partial \Omega_\ell \backslash \Gamma_{\ell,j'}  \end{array} \right. . \label{H2} \end{align}
Then the impedance-to-impedance map $ \Imap{\ell}{j'}{j}{\ell}: L^2(\Gamma_{\ell,j'})
    \rightarrow L^2(\Gamma_{j,\ell})$ is defined by 
\begin{align}   \Imap{\ell}{j'}{j}{\ell} g := \imp_j(v_\ell), \quad \text{on} \quad \Gamma_{j,\ell}.     \label{H3}
 \end{align}   
  \end{definition}
  \begin{figure}[H]
    \centering
    \begin{subfigure}[t]{0.33\textwidth}
        \centering
      \includegraphics[width=1.1\textwidth]{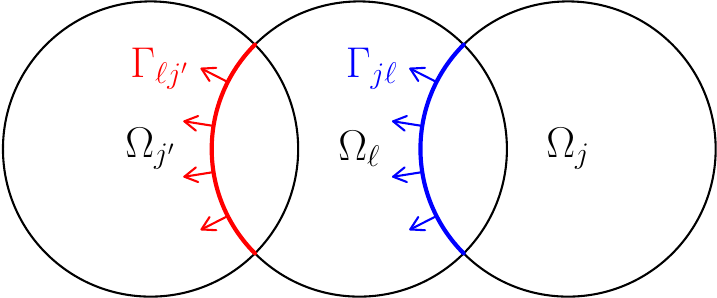}
        \caption{$\Omega_j \cap \Omega_{j'} = \emptyset$  }
         \end{subfigure}%
     \hfill
    \begin{subfigure}[t]{0.33\textwidth}
        \centering
        \includegraphics[width=.75\textwidth]{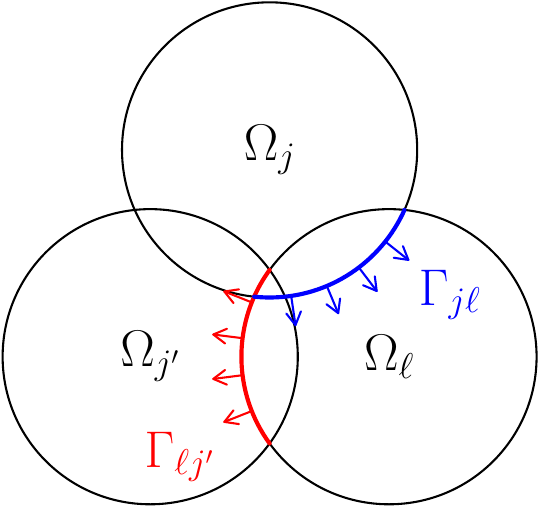}
        \caption{$\Omega_j \cap \Omega_{j'} \not = \emptyset$}
        \end{subfigure}%
           \begin{subfigure}[t]{0.33\textwidth}
        \centering
        \includegraphics[width=.95\textwidth]{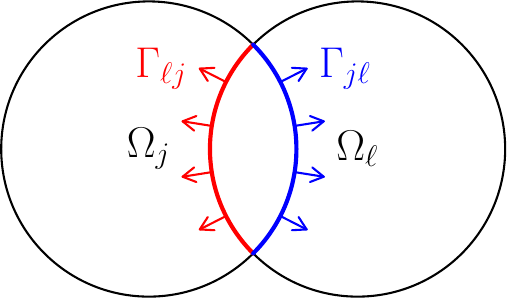}
        \caption{$\Omega_j = \Omega_{j'} $}
         \end{subfigure}%
           \caption{Illustrations of the domain (red) and co-domain (blue) of $\Imap{\ell}{j'}{j}{\ell}$   in 2d (in the case when none of the subdomains $\Omega_j,\Omega_{j'},$ and $\Omega_\ell$ touch $\partial \Omega$)}\label{fig:illustrations}
\end{figure}

Figure \ref{fig:illustrations} illustrates the domain (in red) and co-domain (in blue) of the impedance-to-impedance map, with arrows indicating the direction of the normal derivative. 

The following is a simplified version of \cite[Theorem 3.9]{GoGaGrLaSp:22}.

\begin{lem}[Connection between $\bcT^2$ and the impedance-to-impedance map] \label{thm:main_T2} 
Let $\ell, j, j' \in \{ 1, \ldots, N\}$
  be such that $\Omega_\ell\cap\Omega_{j'} \neq \emptyset$ and $\Omega_\ell\cap\Omega_{j} \neq \emptyset$.
If $z_{j'} \in U(\Omega_{j'})$, then
  \begin{align}
  \imp_j (\cT_{j,\ell} \cT_{\ell,j'} z_{j'} ) = \chi_\ell \, \Imap{\ell}{j'}{j}{\ell}   \big(\imp_\ell (\cT_{\ell,j'} z_{j'})\big)  +  \left(\pdiff{\chi_\ell}{n_j}\right) \left(\cT_{\ell,j'} z_{j'} \right) \quad\ton \Gamma_{j,\ell}. 
\label{eq:impit}    \end{align}
\end{lem}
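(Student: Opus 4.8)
The plan is to prove Lemma \ref{thm:main_T2} by unwinding the definitions of $\cT_{j,\ell}$, $\cT_{\ell,j'}$, and the impedance map $\Imap{\ell}{j'}{j}{\ell}$, and matching them up on $\Gamma_{j,\ell}$. First I would set $v_\ell := \cT_{\ell,j'} z_{j'} \in U_0(\Omega_\ell)$, which is legitimate since $\cT_{\ell,j'}$ maps into $U_0(\Omega_\ell)$ (being a solution of the homogeneous Helmholtz equation with $L^2$ impedance data). Then by definition \eqref{eq41}--\eqref{eq43}, $v_\ell$ has impedance data $\imp_\ell v_\ell = \imp_\ell(\chi_{j'} z_{j'})$ on $\Gamma_{\ell,j'}$ and $\imp_\ell v_\ell = 0$ on $\partial\Omega_\ell \setminus \Gamma_{\ell,j'}$ (recalling that $\chi_{j'}$ vanishes on $\partial\Omega_\ell \setminus \partial\Omega$ outside of $\Omega_{j'}$, so the data is indeed supported in $\Gamma_{\ell,j'}$, and the boundary condition on $\partial\Omega_\ell \cap \partial\Omega$ is zero). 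This is exactly the structure \eqref{H2} required by Definition \ref{def:impmap}, with $g = \imp_\ell(v_\ell)|_{\Gamma_{\ell,j'}} = \imp_\ell(\cT_{\ell,j'}z_{j'})|_{\Gamma_{\ell,j'}}$. Hence, directly from \eqref{H3},
\[
\Imap{\ell}{j'}{j}{\ell}\big(\imp_\ell(\cT_{\ell,j'}z_{j'})\big) = \imp_j(v_\ell) = \imp_j(\cT_{\ell,j'}z_{j'}) \quad\ton \Gamma_{j,\ell}.
\]

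Next I would expand the left-hand side of \eqref{eq:impit} using the definition of $\cT_{j,\ell}$ applied to $v_\ell$: by \eqref{eq42new}, on $\Gamma_{j,\ell}$ we have $\imp_j(\cT_{j,\ell}v_\ell) = \imp_j(\chi_\ell v_\ell)$. The only remaining step is the product rule for the operator $\imp_j = \frac{1}{i}\partial_{n_j} - k$: since $\chi_\ell$ is a smooth (indeed $C^{1,1}$) scalar function,
\[
\imp_j(\chi_\ell v_\ell) = \frac{1}{i}\partial_{n_j}(\chi_\ell v_\ell) - k\chi_\ell v_\ell = \chi_\ell\Big(\frac{1}{i}\partial_{n_j}v_\ell - k v_\ell\Big) + \frac{1}{i}\Big(\partial_{n_j}\chi_\ell\Big) v_\ell = \chi_\ell\,\imp_j(v_\ell) + \Big(\pdiff{\chi_\ell}{n_j}\Big)v_\ell,
\]
where I am using the convention $\partial_{n_j}\chi_\ell := \frac{1}{i}\pdiff{\chi_\ell}{n_j}$ implicit in the statement (one must check the normalisation of the $\partial\chi_\ell/\partial n_j$ term matches the paper's; if the paper's $\pdiff{\chi_\ell}{n_j}$ denotes the ordinary normal derivative then the factor is $\frac{1}{i}\pdiff{\chi_\ell}{n_j}$, and the statement in \eqref{eq:relation2} suggests the factor is absorbed into the notation). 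Substituting $\imp_j(v_\ell) = \Imap{\ell}{j'}{j}{\ell}(\imp_\ell(\cT_{\ell,j'}z_{j'}))$ from the first step, and $v_\ell = \cT_{\ell,j'}z_{j'}$, gives exactly \eqref{eq:impit}.

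The argument is essentially a bookkeeping exercise, so there is no deep obstacle. The one point requiring genuine care — and the place I would slow down — is verifying that $v_\ell = \cT_{\ell,j'}z_{j'}$ really does fit the hypotheses of Definition \ref{def:impmap} exactly: namely that its impedance data vanishes on all of $\partial\Omega_\ell \setminus \Gamma_{\ell,j'}$, including both the part $\partial\Omega_\ell \setminus \partial\Omega$ lying outside $\Omega_{j'}$ (where $\chi_{j'} \equiv 0$, hence $\imp_\ell(\chi_{j'}z_{j'}) = 0$) and the part $\partial\Omega_\ell \cap \partial\Omega$ (where \eqref{eq43} forces the data to zero), and that $\Gamma_{\ell,j'} = (\partial\Omega_\ell \setminus \partial\Omega)\cap\Omega_{j'}$ is precisely the support where nonzero data can live. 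This matching of the three pieces of $\partial\Omega_\ell$ against the piecewise definition \eqref{H2} is the crux; everything else is the product rule and a substitution. I would also remark that uniqueness of $v_\ell$ in $U_0(\Omega_\ell)$ (needed for Definition \ref{def:impmap} to be well-posed) is exactly the statement that $\|\cdot\|_{1,k,\partial\Omega_\ell}$ is a norm, i.e. Lemma \ref{lem:norm}, so no extra work is needed there.
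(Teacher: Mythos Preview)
Your proof is correct and follows essentially the same approach as the paper: verify that $v_\ell := \cT_{\ell,j'}z_{j'}$ satisfies the hypotheses of Definition~\ref{def:impmap} so that $\imp_j(v_\ell) = \Imap{\ell}{j'}{j}{\ell}(\imp_\ell v_\ell)$ on $\Gamma_{j,\ell}$, then combine this with the boundary condition \eqref{eq42new} and the product rule (i.e.\ \eqref{eq:relation2}) to conclude. Your observation about the $\tfrac{1}{i}$ factor in the $\pdiff{\chi_\ell}{n_j}$ term is well-spotted; the paper's \eqref{eq:relation2} and \eqref{eq:impit} indeed write $\pdiff{\chi_\ell}{n_j}$ where a literal application of the product rule gives $\tfrac{1}{i}\pdiff{\chi_\ell}{n_j}$, so either a convention is in play or this is a minor notational slip that does not affect the argument.
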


\begin{proof}
 Since $\cT_{\ell, j'} z_{j'} \in U_0(\Omega_{\ell})$ and $\imp_\ell (\cT_{\ell,j'} z_{j'} ) $ vanishes on $\partial \Omega_\ell \backslash  \Gamma_{\ell,j'}$, the definition of $\Imap{\ell}{j'}{j}{\ell}$ implies that
\beqs
\imp_j \big(\cT_{\ell,j'} z_{j'}\big) 
=\Imap{\ell}{j'}{j}{\ell}\big(\imp_\ell \cT_{\ell,j'} z_{j'}\big) 
\quad \ton \Gamma_{j,\ell};
\eeqs
the result then follows from \eqref{eq:relation2}.
 \end{proof} 
 
\begin{rem}[Imposing other boundary conditions on $\partial \Omega$]\label{rem:other_bc}
\S\ref{sec:2.1} mentioned how, in principle, the analysis of \cite{GoGaGrLaSp:22} can be repeated with boundary conditions on $\partial \Omega$ other than the impedance boundary condition. 
Suppose  the boundary condition in \eqref{eq:Helmholtz} is replaced by $\mathcal{B}u=g$ and \eqref{eq23} replaced by $\mathcal{B}u_j^{n+1}=g$ where $\mathcal{B}$ is an arbitrary operator.
Proceeding formally (i.e., ignoring the issue of identifying the correct analogue of $U(\Omega)$ in which the method is well-posed) we now outline the changes to the above.
The analogues of equations \eqref{eq41}-\eqref{eq43} are the following
\begin{align*}
  (\Delta + k^2)(\cT_{j,\ell} v_\ell)  & = 0  \quad  \text{in } \  \Omega_j,\\
\imp_j(\cT_{j,\ell} v_\ell) & = \imp_j (\chi_\ell v_\ell) \quad  \text{on }  \ \partial \Omega_j\backslash \partial \Omega, \\\
    \mathcal{B}({\cT_{j,\ell} v_\ell}) & = 0  \quad  \text{on } \  \partial \Omega_j \cap \partial \Omega. 
\end{align*}
Just under \eqref{defGamma} above, we split the boundary condition on $\partial \Omega_j\backslash\partial \Omega$ into two (using the support properties of $\chi_\ell$) to obtain the boundary conditions 
\beq\label{eq:John1}
\imp_j \cT_{j,\ell} v_\ell = \imp_j \chi_\ell v_\ell \,\,\ton \Gamma_{j,\ell},\quad
\imp_j \cT_{j,\ell} v_\ell = 0 \,\,\ton (\partial \Omega_j\setminus \partial \Omega)\setminus \Omega_\ell,\quad
\imp_j\cT_{j,\ell} v_\ell = 0 \,\,\ton \partial\Omega_j \cap \partial \Omega,
\eeq
and then combined the latter two as one boundary condition on $\partial \Omega_j\setminus \Gamma_{j,\ell}$.
Now we cannot do this combination, and so have to keep the following analogue of \eqref{eq:John1}
\beqs
\imp_j \cT_{j,\ell} v_\ell = \imp_j \chi_\ell v_\ell \,\,\ton \Gamma_{j,\ell},\quad
\imp_j \cT_{j,\ell} v_\ell = 0 \,\,\ton (\partial \Omega_j\setminus \partial \Omega)\setminus \Omega_\ell,\quad
\mathcal{B}(\cT_{j,\ell} v_\ell) = 0 \,\,\ton \partial\Omega_j \cap \partial \Omega,
\eeqs
The boundary conditions for the function $v_\ell$ in the definition of the impedance-to-impedance map (Definition \ref{def:impmap}) then become 
\begin{align*}  
   \imp_\ell (v_\ell) & = \left\{ \begin{array}{ll} g & \text{on} \quad \Gamma_{\ell,j'}, \\  0 & 
  \text{on} \quad (\partial \Omega_\ell\setminus \partial\Omega) \backslash \Omega_{j'},  \end{array} \right. \\
  \mathcal{B}(v_\ell)&=0 \,\,\ton \partial \Omega_\ell \cap \partial \Omega;
   \end{align*}
i.e., the absorbing boundary condition $\mathcal{B}(v_\ell)=0$ is always imposed on the part of any subdomain boundary that intersects $\partial\Omega$.
   \end{rem}
 
\subsection{Specialisation to 2-d strip decompositions}\label{sec:2.6}
 
\begin{figure}[H]
  \begin{center}
  \scalebox{0.9}{
  \includegraphics{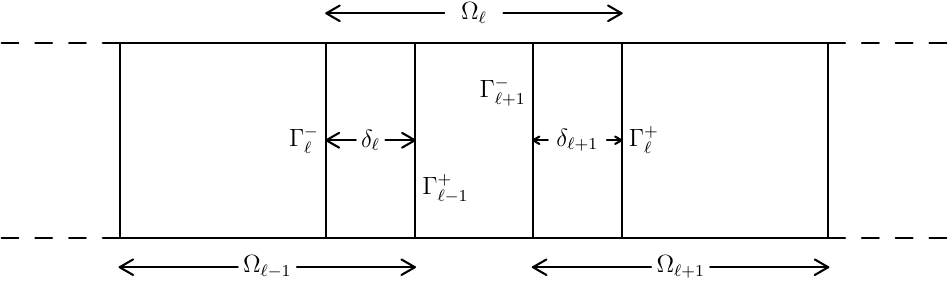}
  }
\end{center}   \caption{Three overlapping subdomains in the 2-d strip decomposition \label{fig:overlap}}
\end{figure}

As in \cite[\S4]{GoGaGrLaSp:22}, we now specialise to strip decompositions in 2-d.
    Without loss of generality, we assume the domain $\Omega$ is a rectangle of height one and that the  subdomains $\Omega_\ell$  have height one, width $L_\ell$, and are  bounded by vertical sides denoted $\Gamma_\ell^-$, $\Gamma_\ell^+$.
    We  assume that each  $\Omega_\ell$ is only overlapped by $\Omega_{\ell-1}$ and $\Omega_{\ell+1}$ (with   $\Omega_{-1} := \emptyset$ and $\Omega_{N+1}  :=\emptyset$); see  Figure \ref{fig:overlap}.

This set-up falls into the class of general decompositions considered above  with 
 \begin{align} \label{notstrip} \Gamma_\ell^-  = \Gamma_{\ell, \ell-1}  \quad \text{and}\quad \Gamma_\ell^+ = \Gamma_{\ell, \ell+1};\end{align} 
 i.e., the four interfaces shown in Figure \ref{fig:overlap} (from left to right) are
 \beqs
 \big\{ \Gamma_{\ell,\ell-1}, \Gamma_{\ell-1,\ell}, \Gamma_{\ell+1,\ell}, \Gamma_{\ell, \ell+1}\big\}
 \quad
 \tand
 \quad
 \big\{ \Gamma_{\ell}^-, \Gamma_{\ell-1}^+, \Gamma_{\ell+1}^-, \Gamma_\ell^+\big\}.
 \eeqs
 
 Under this set-up, the functions $\{\chi_j\}_{j=1}^N$ defined in \S\ref{sec:2.1} satisfy
        \begin{align} \label{eq:important} 
          \chi_{\ell}\vert_{\Gamma_{\ell-1}^+} = 1 = \chi_\ell\vert_{\Gamma_{\ell+1}^-},  \end{align}
and thus 
Theorem \ref{thm:main_T2} simplifies to the following (with this result a simplified version of \cite[Corollary 4.3]{GoGaGrLaSp:22}).

\begin{cor}\label{cor:main_T2strip} 
Let $\ell \in \{ 1, \ldots, N\}$ and $j,j' \in \{\ell-1,\ell+1\}$. 
If $z_{j'} \in U(\Omega_{j'})$, then
  \begin{align}
\imp_j (\cT_{j,\ell} \cT_{\ell,j'} z_{j'} ) = \Imap{\ell}{j'}{j}{\ell} \,    \left(\imp_\ell (\cT_{\ell,j'} z_{j'})\right)  \quad\ton \Gamma_{j,\ell}.
  \label{eq:impit2new}    
\end{align}
\end{cor}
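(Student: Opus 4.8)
The plan is to obtain Corollary~\ref{cor:main_T2strip} as a direct specialisation of Lemma~\ref{thm:main_T2}, exploiting the strip geometry to eliminate the two features by which \eqref{eq:impit} differs from \eqref{eq:impit2new}: the cut-off $\chi_\ell$ multiplying the impedance-to-impedance map, and the extra term $\big(\pdiff{\chi_\ell}{n_j}\big)\big(\cT_{\ell,j'}z_{j'}\big)$.

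First I would check that Lemma~\ref{thm:main_T2} applies for $\ell\in\{1,\dots,N\}$ and $j,j'\in\{\ell-1,\ell+1\}$: by construction of the strip decomposition, $\Omega_\ell$ is overlapped exactly by $\Omega_{\ell-1}$ and $\Omega_{\ell+1}$, so both $\Omega_\ell\cap\Omega_{j'}$ and $\Omega_\ell\cap\Omega_j$ are non-empty; hence $\Gamma_{\ell,j'}$ and $\Gamma_{j,\ell}$ are non-empty and, by \eqref{notstrip}, are (vertical) sides of the subdomains involved, so the impedance-to-impedance map of Definition~\ref{def:impmap} is well defined and \eqref{eq:impit} holds verbatim.

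Next I would simplify the right-hand side of \eqref{eq:impit} on $\Gamma_{j,\ell}$. By \eqref{notstrip}, $\Gamma_{j,\ell}=\Gamma_{\ell-1}^+$ when $j=\ell-1$ and $\Gamma_{j,\ell}=\Gamma_{\ell+1}^-$ when $j=\ell+1$; in either case \eqref{eq:important} gives $\chi_\ell\equiv1$ on $\Gamma_{j,\ell}$, so the first term on the right of \eqref{eq:impit} equals $\Imap{\ell}{j'}{j}{\ell}\big(\imp_\ell(\cT_{\ell,j'}z_{j'})\big)$. For the second term, the point is that $\Gamma_{j,\ell}$ lies in the interior of $\Omega$ (except for its two endpoints on $\partial\Omega$, which form a set of measure zero on the interface), and on $\Gamma_{j,\ell}$ the function $\chi_\ell\in C^{1,1}(\Omega;[0,1])$ attains its maximal value $1$; hence $\nabla\chi_\ell$ — in particular $\pdiff{\chi_\ell}{n_j}$ — vanishes at almost every point of $\Gamma_{j,\ell}$. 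Since $\cT_{\ell,j'}z_{j'}\in U_0(\Omega_\ell)$ has an $L^2$ trace on $\Gamma_{j,\ell}$, the product $\big(\pdiff{\chi_\ell}{n_j}\big)\big(\cT_{\ell,j'}z_{j'}\big)$ is zero in $L^2(\Gamma_{j,\ell})$, and \eqref{eq:impit} collapses to \eqref{eq:impit2new}.

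I do not expect a genuine obstacle: the only step requiring a little care is the vanishing of $\pdiff{\chi_\ell}{n_j}$ on $\Gamma_{j,\ell}$, since \eqref{eq:important} records only the value of $\chi_\ell$ there and not its derivative; this is supplied by the interior-maximum argument above, using the $C^1$ regularity of the partition of unity and the irrelevance of the finitely many boundary points of $\Gamma_{j,\ell}$ for the $L^2(\Gamma_{j,\ell})$ identity.
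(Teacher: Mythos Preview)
Your proposal is correct and follows essentially the same approach as the paper: the paper simply states that the strip property \eqref{eq:important} together with Lemma~\ref{thm:main_T2} yields the simplification, without spelling out the details. Your explicit justification that $\pdiff{\chi_\ell}{n_j}=0$ on $\Gamma_{j,\ell}$ via the interior-maximum argument (using $\chi_\ell\in C^{1,1}(\Omega;[0,1])$ and $\chi_\ell=1$ on $\Gamma_{j,\ell}\subset\Omega$) is exactly what is needed to make the paper's ``and thus'' rigorous, and your handling of the endpoints is appropriately careful.
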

 
Recall from the definition of $\cT_{j,\ell}$ \eqref{eq41}-\eqref{eq43} that
$\cT_{\ell,\ell} = 0$ and 
$  \cT_{j,\ell} = 0$ {if } $\Omega_j \cap \Omega_\ell =\emptyset.
$
Therefore, for these strip decompositions, $\bcT$ takes the block tridiagonal  form  
 
\begin{equation}\label{eq:splitT}
\bcT =\begin{pmatrix}
0		&	\cT_{1,2} \\
\cT_{2,1}	&	0		&\cT_{2,3}\\
		&	\cT_{3,2}	&0		&\cT_{3,4}\\
		&			&	\ddots	&	\ddots&	\ddots\\
		&			&	\cT_{N-1,N-2}&		0	&\cT_{N-1,N}\\
		&			&			&	\cT_{N,N-1}&0	
\end{pmatrix}.
\end{equation}

\subsection{The impedance-to-impedance maps for 2-d strip decompositions}\label{sec:2.7}

There are now four impedance-to-impedance maps associated with $\Omega_\ell$:
\begin{align}\nonumber
\Imap{\ell}{\ell-1}{\ell-1}{\ell} = \Imaps{\ell}{-}{\ell-1}{+},\quad
\Imap{\ell}{\ell+1}{\ell+1}{\ell} = \Imaps{\ell}{+}{\ell+1}{-},\\
\Imap{\ell}{\ell-1}{\ell+1}{\ell} = \Imaps{\ell}{-}{\ell+1}{-},\quad
\Imap{\ell}{\ell+1}{\ell-1}{\ell} = \Imaps{\ell}{+}{\ell-1}{+};\label{eq:strip_maps}
\end{align}
there are four maps because $\partial\Omega_\ell\setminus \partial\Omega$ has two disjoint components, $\Gamma_\ell^-$ and $\Gamma_\ell^+$, and there are two ``interior interface'' in $\Omega_\ell$, $\Gamma_{\ell-1}^+$ and $\Gamma_{\ell+1}^-$.

These four maps can in turn be expressed in terms of two maps.

 \begin{definition}\emph{(``Canonical'' impedance-to-impedance maps \cite[Definition 4.9]{GoGaGrLaSp:22})} \label{def:canonical}   For $\hL > 0$, let ${\wOmega} :=  [0,\hL] \times [0,1]$, let $\Gamma_{\leftsubscript} := \{0\} \times [0,1]$ and let $\Gamma_{\middlesubscript} := \{\mathsf {d_\leftsubscript}\} \times [0,1]$ for $\mathsf {d_\leftsubscript} \in (0,1)$ (compare to Figure \ref{fig:model}). 
  Let  ${u}$ be the solution to
\begin{equation}\label{eq:plane-wave-scattering1}
(  \Delta + {k}^2) {u} =0 \,\,\text{ in } \,\, {\wOmega},\quad
     \left(-\frac{1}{i}\partial_{x_1} -{k} \right)u = g\,\,\ton\,\, \Gamma_{\leftsubscript}, \quad
    \left(\frac{1}{i}\partial_n  - k\right) u =0\,\,\ton\,\,  \pwOmega\backslash \Gamma_{\leftsubscript}. 
  \end{equation}
  Let 
  \begin{align} \label{eq:maps} 
\cI^- g:= \left(\frac{1}{i}\partial_{x_1} -k\right) u , \qquad  \cI^+ g := \left(\frac{1}{i} \partial_{x_1} +k\right)u \quad \text{on} \quad \Gamma_{\middlesubscript} , \end{align} 
and let 
\begin{equation}\label{eq:rho_gamma}
 {\rho}({k},{\mathsf {d_\leftsubscript}}, L) = \sup_{{g}\in L^2(\Gamma_{\leftsubscript})}\frac{\|\cI^- g  \|_{L^{2}({\Gamma}_{\middlesubscript})} }
 {   \|{g}\|_{L^2(\Gamma_{\leftsubscript})}},
 \quad\quad
{\gamma}({k},{\mathsf {d_\leftsubscript}}, L)  = \sup_{{g}\in L^2(\Gamma_{\leftsubscript})}\frac{\| \cI^+g  \|_{L^{2}(\Gamma_{\middlesubscript})} }
{  \|{g}\|_{L^2(\Gamma_{\leftsubscript})}}.
\end{equation}
\end{definition}

The maps $\Imaps{\ell}{-}{\ell-1}{+}$ and $\Imaps{\ell}{+}{\ell+1}{-}$ can be written in terms of $\cI^-$ and $\cI^+$, as in the following result from \cite{GoGaGrLaSp:22}.

\begin{cor}\emph{(\cite[Corollary 4.11]{GoGaGrLaSp:22})}\label{cor:imp}
For the 2-d strip decomposition described above, 
  \begin{align} 
    \|\Imaps{\ell}{-}{\ell-1}{+}\|_{L^2\to L^2} = \hrho(k, \delta_\ell, L_\ell),  \qquad
                                   \|\Imaps{\ell}{+}{\ell+1}{-}\|_{L^2\to L^2}  =      \hrho(k, \delta_{\ell+1}, L_\ell),  
\label{eq:rho} \end{align}
and 
\begin{align}
  \|\Imaps{\ell}{-}{\ell+1}{-}\|_{L^2\to L^2}  =   \hgamma(k,  L_\ell-\delta_{\ell+1}, L_\ell),  \qquad
                                       \|\Imaps{\ell}{+}{\ell-1}{+}\|_{L^2\to L^2}  = \hgamma(k, L_\ell-\delta_\ell, L_{\ell} ) \label{eq:gamma}
\end{align}
(where $\delta_\ell$ and $\delta_{\ell+1}$ are shown in Figure \ref{fig:overlap}, and $L_\ell$ is the width of $\Omega_\ell$).
\end{cor}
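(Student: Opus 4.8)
The plan is to identify each of the four maps in \eqref{eq:strip_maps} with one of the canonical maps $\cI^{\pm}$ of Definition \ref{def:canonical} via a rigid motion of $\Omega_\ell$, and then to read off the operator norm from \eqref{eq:rho_gamma}. First I would fix coordinates on $\Omega_\ell$ so that $\Gamma_\ell^- = \{0\}\times[0,1]$ and $\Gamma_\ell^+ = \{L_\ell\}\times[0,1]$. Using that, in a strip decomposition, $\Omega_\ell$ is overlapped only by $\Omega_{\ell-1}$ and $\Omega_{\ell+1}$ and that the top and bottom sides of every subdomain lie on $\partial\Omega$, I would record that $\partial\Omega_\ell\setminus\partial\Omega = \Gamma_\ell^- \cup \Gamma_\ell^+$, that the two interior interfaces lying inside $\Omega_\ell$ are $\Gamma_{\ell-1}^+ = \{\delta_\ell\}\times[0,1]$ and $\Gamma_{\ell+1}^- = \{L_\ell - \delta_{\ell+1}\}\times[0,1]$ (with $\delta_\ell$, $\delta_{\ell+1}$ as in Figure \ref{fig:overlap}), and that, for $j,j'\in\{\ell-1,\ell+1\}$, the sets $\Gamma_{j,\ell}$ and $\Gamma_{\ell,j'}$ are exactly the segments among $\Gamma_\ell^{\pm}$ and $\Gamma_{\ell\pm1}^{\mp}$ appearing in \eqref{eq:strip_maps}.

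Next I would unwind Definition \ref{def:impmap}: the image of $g$ under any of the four maps is $\imp_j(v_\ell)$ restricted to the appropriate interior interface, where $v_\ell \in U_0(\Omega_\ell)$ is the unique solution of the interior impedance problem on the rectangle $\Omega_\ell$ with impedance datum $g$ on the relevant component of $\partial\Omega_\ell\setminus\partial\Omega$ and zero impedance on the rest of $\partial\Omega_\ell$. Translating $\Omega_\ell$ onto $\wOmega := [0,L_\ell]\times[0,1]$ — and, in the two cases where the datum sits on $\Gamma_\ell^+$ rather than on $\Gamma_\ell^-$, additionally applying the reflection $x_1\mapsto L_\ell - x_1$, which maps $\Omega_\ell$ to itself and exchanges $\Gamma_\ell^-$ with $\Gamma_\ell^+$ — carries this problem exactly onto \eqref{eq:plane-wave-scattering1}: on $\Gamma_{\leftsubscript}$ the outward normal of $\wOmega$ is $-\partial_{x_1}$, so $(-\frac{1}{i}\partial_{x_1} - k)u = g$ is the impedance condition with datum $g$, and there is zero impedance elsewhere on $\partial\wOmega$. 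The output interface becomes $\Gamma_{\middlesubscript} = \{\mathsf {d_{\leftsubscript}}\}\times[0,1]$ with $\mathsf {d_{\leftsubscript}}$ equal to the distance from the datum side to the output interface, namely $\delta_\ell$ for $\Imaps{\ell}{-}{\ell-1}{+}$, $\delta_{\ell+1}$ for $\Imaps{\ell}{+}{\ell+1}{-}$, $L_\ell - \delta_{\ell+1}$ for $\Imaps{\ell}{-}{\ell+1}{-}$, and $L_\ell - \delta_\ell$ for $\Imaps{\ell}{+}{\ell-1}{+}$. Finally, written in the coordinates used after the rigid motion, the output operator $\imp_j = \frac{1}{i}\partial_{n_j} - k$ equals $\frac{1}{i}\partial_{x_1} - k = \cI^-$ when $n_j$ points in the $+x_1$ direction and $-(\frac{1}{i}\partial_{x_1} + k) = -\cI^+$ when $n_j$ points in the $-x_1$ direction; tracking the outward normals of $\Omega_{\ell-1}$ and $\Omega_{\ell+1}$, the first two maps in \eqref{eq:strip_maps} produce the $\cI^-$ trace and the last two produce the $\cI^+$ trace. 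Since a rigid motion conjugates the solution operators and preserves $L^2$ norms of boundary traces, and an overall sign is irrelevant to an operator norm, \eqref{eq:rho} and \eqref{eq:gamma} then follow from \eqref{eq:rho_gamma}.

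The only ingredient beyond this bookkeeping is the unique solvability of the interior impedance problem on a bounded Lipschitz domain, so that $v_\ell$ (and likewise $u$ in Definition \ref{def:canonical}) is well defined; this is standard and is used throughout \cite{GoGaGrLaSp:22}, and the ``isometry'' identity \eqref{eq:norm2} is what makes the overall sign in the output trace immaterial. I expect the step requiring the most care to be purely combinatorial — correctly pairing each of the four maps in \eqref{eq:strip_maps} with its interior interface and with the outward normal of the correct one of the three subdomains $\Omega_{\ell-1}$, $\Omega_\ell$, $\Omega_{\ell+1}$ — since it is precisely there that one sees why two of the maps are governed by $\hrho$ (the $\cI^-$ trace) and two by $\hgamma$ (the $\cI^+$ trace). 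No genuine analytic difficulty should arise: the statement reduces to a change of variables together with the reflection symmetry of the canonical problem \eqref{eq:plane-wave-scattering1}.
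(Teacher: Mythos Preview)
Your argument is correct and is exactly the natural way to establish this result: identify each of the four maps with one of the canonical maps $\cI^{\pm}$ by a rigid motion (translation, plus the reflection $x_1\mapsto L_\ell-x_1$ when the datum sits on $\Gamma_\ell^+$), track the outward normal of the neighbouring subdomain to determine whether the output trace is $\cI^-$ or $\pm\cI^+$, and then read off the norm from \eqref{eq:rho_gamma}. Note, however, that the present paper does not give its own proof of this corollary; it is quoted verbatim from \cite[Corollary 4.11]{GoGaGrLaSp:22}, so there is no proof in the paper to compare against --- your sketch is essentially the proof one finds in that reference.
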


\begin{rem}[Why the maps in \S\ref{sec:1.4} involve two different values of $\mathsf{d}_\leftsubscript$ and $\mathsf{d_{\rightsubscript}}$]\label{rem:twod}
If the subdomains all have equal lengths $L$ and equal overlaps $\delta$, then the map $\mathcal I^-$ appears in Corollary \ref{cor:imp} with $\mathsf{d}_\leftsubscript = \delta$ and $\mathsf{d_{\rightsubscript}}= L-\delta$ and the map $\mathcal I^+$ appears with $\mathsf{d}_\leftsubscript = L-\delta$ and $\mathsf{d_{\rightsubscript}}= \delta$. This is why  in \S\ref{sec:1.4} we consider $\mathcal{I}^+(\mathsf{d}^{+},\mathsf{d_{\rightsubscript}}^{+})$ and $\mathcal{I}^-(\mathsf{d}^{+},\mathsf{d_{\rightsubscript}}^{+})$ with $\mathsf{d}^+$ not necessarily equal to $\mathsf{d}^-$ (and similar for $\mathsf{d_{\rightsubscript}}^\pm$).
\end{rem}

\subsection{The relation of the impedance-to-impedance maps in Definition \ref{def:canonical} to those in \S\ref{sec:1.2}}\label{sec:2.8}

The maps in \S\ref{sec:1.2} are related to the impedance-to-impedance maps in  Definition \ref{def:canonical} for 
the 2-d strip decomposition when the boundary condition on $\partial\Omega$ (approximating the Sommerfeld radiation condition) is the outgoing condition described in \S\ref{sec:1.5}. Indeed, following Remark \ref{rem:other_bc}, 
the maps $\cI^{\pm}$ in Definition \ref{def:canonical} correspond to $\mathcal I^{\pm}_{2}$ when the subdomain is not $\Omega_1$ or $\Omega_N$
(i.e., when the subdomain does not touch the left- or right-ends of the strip)
 and $\mathcal I^{\pm}_{1}$ when the subdomain is one of $\Omega_1$ or $\Omega_N$.
In particular, in this set-up with only two subdomains (i.e., $N=2$), the maps $\cI^{\pm}$ are $\mathcal I^{\pm}_{1}$.

As mentioned in \S\ref{sec:2.2}, the rationale for considering the idealised case where the boundary condition on $\partial\Omega$ is the outgoing condition is that it allows us to focus on the impedance boundary conditions imposed in the domain decomposition method itself, and ignore the influence of the impedance boundary condition imposed as an approximation of the Sommerfeld radiation condition.

Perfectly matched layers approximate outgoing Helmholtz solutions with accuracy increasing exponentially in the limit $k\to \infty$; see \cite{GLS2}.
We therefore expect the behaviour of the maps in \S\ref{sec:1.2} to be similar to the analogous maps with a perfectly matched layer on the top and bottom of $\domain$. 
         
\subsection{The relevance of Theorems \ref{th:model1}-\ref{th:compo} to the analysis of the parallel Schwarz method for 2-d strip decompositions}\label{sec:2.9}

\

We now give five conclusions and one conjecture about the analysis of the parallel Schwarz method for 2-d strip decompositions. The justification/discussion of each is given after its statement.

\vspace{1ex}

\emph{Conclusion 1:~with outgoing boundary condition on $\partial \Omega$, the parallel Schwarz method with two subdomains is power contractive for sufficiently-large subdomain overlap.} 

\vspace{1ex}

By considering powers of $\bcT$ given by \eqref{eq:splitT}, using \eqref{eq:impit2new}, \eqref{eq:strip_maps}, and the definitions of $\rho$ \eqref{eq:rho} and $\gamma$ \eqref{eq:gamma}, \cite[Corollary 4.14]{GoGaGrLaSp:22} obtains the following sufficient condition for $\bcT^N$ to be a contraction

\begin{theorem}\emph{(Informal statement of \cite[Corollary 4.14]{GoGaGrLaSp:22})}\label{th:contraction}
If $\rho$ is sufficiently small relative to $\gamma$ and $N$, then 
$\big\|\bcT^N\big\|_{1,k,\partial }<1.$
\end{theorem}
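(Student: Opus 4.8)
The plan is to expand $\bcT^{N}$ as a sum over walks of length $N$ in the path graph on the vertex set $\{1,\dots,N\}$, to recognise each walk's contribution as a composition of the canonical maps of Definition~\ref{def:canonical} --- a $\hrho$-type map wherever the walk reverses direction and a $\hgamma$-type map wherever it passes straight through a subdomain --- and then to exploit the elementary fact that a walk of length $N$ on $N$ vertices must reverse at least once, so that the small quantity $\hrho$ occurs to at least the first power in every term of $\bcT^{N}$.

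First I would expand: since $\cT_{\ell,\ell}=0$ and $\cT_{j,\ell}=0$ for $|j-\ell|\geq 2$ (see~\eqref{eq:splitT}), for any $\bv\in\bbU_{0}$ one has $(\bcT^{N}\bv)_{j}=\sum_{w}\cT_{\ell_{N},\ell_{N-1}}\cdots\cT_{\ell_{1},\ell_{0}}\,v_{\ell_{0}}$, the sum being over walks $w=(\ell_{0},\dots,\ell_{N})$ with $\ell_{i}\in\{1,\dots,N\}$, $\ell_{N}=j$, and $|\ell_{i+1}-\ell_{i}|=1$ for each $i$. Because every partial composition $\cT_{\cdot,\cdot}(\cdots)$ again lands in the appropriate $U_{0}(\Omega_{\cdot})$, a short induction on the walk length using Corollary~\ref{cor:main_T2strip} shows that $\imp_{j}$ of the contribution of $w$ equals, on $\Gamma_{\ell_{N},\ell_{N-1}}$, the composition $\Imap{\ell_{N-1}}{\ell_{N-2}}{\ell_{N}}{\ell_{N-1}}\circ\cdots\circ\Imap{\ell_{1}}{\ell_{0}}{\ell_{2}}{\ell_{1}}$ applied to $\imp_{\ell_{1}}(\cT_{\ell_{1},\ell_{0}}v_{\ell_{0}})$; this is well posed because each factor $\Imap{\ell_{i}}{\ell_{i-1}}{\ell_{i+1}}{\ell_{i}}$ outputs $\imp_{\ell_{i+1}}(\cdot)$ on $\Gamma_{\ell_{i+1},\ell_{i}}$, which is exactly what the next factor takes as input, and the innermost datum lives on the input interface of $\Imap{\ell_{1}}{\ell_{0}}{\ell_{2}}{\ell_{1}}$. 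By~\eqref{eq:strip_maps} and Corollary~\ref{cor:imp}, the $L^{2}\!\to\!L^{2}$ norm of $\Imap{\ell_{i}}{\ell_{i-1}}{\ell_{i+1}}{\ell_{i}}$ is a value of $\hrho(k,\cdot,L_{\ell_{i}})$ when $\ell_{i+1}=\ell_{i-1}$ (the walk reverses at $\ell_{i}$) and a value of $\hgamma(k,\cdot,L_{\ell_{i}})$ when $\ell_{i+1}\neq\ell_{i-1}$ (it passes through). Writing $\hrho_{\star}$, resp.\ $\hgamma_{\star}$, for the maximum over $\ell$ of the relevant $\hrho$-norms, resp.\ $\hgamma$-norms, and $r(w)\geq 0$ for the number of reversals of $w$, the composition attached to $w$ has operator norm at most $\hrho_{\star}^{\,r(w)}\hgamma_{\star}^{\,N-1-r(w)}$. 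The combinatorial heart is that $r(w)\geq 1$ for every $w$: a walk without reversal is strictly monotone, hence has $N+1$ distinct vertices, impossible in a set of size $N$. So, in the regime of interest $\hrho_{\star}\leq 1\leq\hgamma_{\star}$, every walk's composition has norm at most $\hrho_{\star}\hgamma_{\star}^{\,N-2}$.

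Next I would return to the norm $\|\cdot\|_{1,k,\partial}$. For each $j$, Lemma~\ref{lem:norm} gives $\|(\bcT^{N}\bv)_{j}\|_{1,k,\partial\Omega_{j}}=\N{\imp_{j}(\bcT^{N}\bv)_{j}}_{L^{2}(\partial\Omega_{j})}$; the trace $\imp_{j}(\bcT^{N}\bv)_{j}$ vanishes on $\partial\Omega_{j}\cap\partial\Omega$ by~\eqref{eq33} (as $N\geq 1$) and, on each interface piece $\Gamma_{j}^{-}$, $\Gamma_{j}^{+}$, is the sum over the walks ending at $j$ through the matching neighbour of the expressions above. For the innermost datum, $\cT_{\ell_{1},\ell_{0}}v_{\ell_{0}}\in U_{0}(\Omega_{\ell_{1}})$ and $\Gamma_{\ell_{1},\ell_{0}}\subset\partial\Omega_{\ell_{1}}$, so Lemma~\ref{lem:norm} together with a $k$-uniform bound $\|\cT_{j,\ell}\|\leq C_{0}$ on the single-step operators yields $\N{\imp_{\ell_{1}}(\cT_{\ell_{1},\ell_{0}}v_{\ell_{0}})}_{L^{2}(\Gamma_{\ell_{1},\ell_{0}})}\leq C_{0}\,\|v_{\ell_{0}}\|_{1,k,\partial\Omega_{\ell_{0}}}$. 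Combining this with the triangle inequality, summation over the finitely many walks (their number per endpoint depends only on $N$), and Cauchy--Schwarz, one obtains $\|\bcT^{N}\|_{1,k,\partial}\leq C_{N}\,\hgamma_{\star}^{\,N-2}\,\hrho_{\star}$, with $C_{N}$ independent of $k$; hence $\|\bcT^{N}\|_{1,k,\partial}<1$ as soon as $\hrho_{\star}<(C_{N}\hgamma_{\star}^{\,N-2})^{-1}$ --- i.e.\ as soon as $\hrho$ is sufficiently small relative to $\hgamma$ and $N$.

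The main obstacle is the careful bookkeeping in the walk expansion: organising the induction so that the successive impedance-to-impedance maps act between the correct $L^{2}$-interface spaces all along each walk, i.e.\ tracking for each step whether it is a reversal or a pass-through, and hence which of $\Gamma_{\ell}^{-}$, $\Gamma_{\ell}^{+}$ and which impedance sign is being propagated (including the degenerate behaviour at the end subdomains $\ell=1,N$, where every step is forced to be a reversal). The other technical ingredient is the $k$-uniform single-step estimate $\|\cT_{j,\ell}\|\leq C_{0}$ --- equivalently, an interior trace bound controlling a subdomain Helmholtz solution on the overlap interface in terms of its boundary impedance data --- which is where the Despr\'es-type energy identity underlying the well-posedness of the method (cf.~\cite{BeDe:97,GoGaGrLaSp:22}) enters. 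Everything else is the elementary walk-counting observation combined with the already-cited properties of the maps $\cI^{\pm}$.
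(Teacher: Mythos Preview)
The present paper does not prove this theorem; it is stated as an informal restatement of \cite[Corollary 4.14]{GoGaGrLaSp:22}, and the sentence immediately preceding it already summarises the argument (``By considering powers of $\bcT$ given by \eqref{eq:splitT}, using \eqref{eq:impit2new}, \eqref{eq:strip_maps}, and the definitions of $\rho$ \eqref{eq:rho} and $\gamma$ \eqref{eq:gamma} \ldots''). Your proposal is precisely this argument spelled out: the walk expansion of $\bcT^{N}$ from the tridiagonal structure \eqref{eq:splitT}, the inductive application of Corollary~\ref{cor:main_T2strip} to turn each walk into a composition of canonical impedance maps, the identification via \eqref{eq:strip_maps} and Corollary~\ref{cor:imp} of reversals with $\hrho$ and pass-throughs with $\hgamma$, and the pigeonhole observation that a length-$N$ nearest-neighbour walk on $\{1,\dots,N\}$ must reverse at least once. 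This is the approach of \cite{GoGaGrLaSp:22}.

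Your diagnosis of the two places requiring care is also accurate. The interface bookkeeping is indeed just careful induction using \eqref{eq:impit2new}. The genuinely nontrivial ingredient you flag --- controlling the innermost datum $\imp_{\ell_{1}}(\cT_{\ell_{1},\ell_{0}}v_{\ell_{0}})$ on the \emph{interior} interface $\Gamma_{\ell_{1},\ell_{0}}$ by $\|v_{\ell_{0}}\|_{1,k,\partial\Omega_{\ell_{0}}}$ --- is exactly where a $k$-uniform interior trace/solution bound for the subdomain impedance problem is needed, and this is supplied in \cite{GoGaGrLaSp:22} rather than here. (In the strip case \eqref{eq:important} and the constraint $\chi_{\ell}\in[0,1]$ give $\chi_{\ell_{0}}\equiv 1$ together with $\partial_{n_{\ell_{1}}}\chi_{\ell_{0}}=0$ on $\Gamma_{\ell_{1},\ell_{0}}$, so $\imp_{\ell_{1}}(\cT_{\ell_{1},\ell_{0}}v_{\ell_{0}})=\imp_{\ell_{1}}v_{\ell_{0}}$ there; bounding this by boundary data of $v_{\ell_{0}}$ is the interior trace step.) With that input your sketch yields $\|\bcT^{N}\|_{1,k,\partial}\le C_{N}\max(\hrho_{\star},\hgamma_{\star})^{N-2}\hrho_{\star}$, which is the desired conclusion.
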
   
   
Theorem \ref{th:model1} and Corollary \ref{cor:imp} show that, with outgoing boundary conditions on $\partial \Omega$ and two subdomains (i.e., $N=2$), the analogue of $\rho$ \eqref{eq:rho_gamma} in this set-up can be made arbitrarily small by increasing the overlap of the subdomains, and the analogue of $\gamma$ is bounded below by one; i.e., in this set up, $\bcT^N$ is a contraction for sufficiently-large overlap (which necessarily means that the length of $\Omega$ must be sufficiently large relative to its height).

\vspace{1ex}

\emph{Conclusion 2:~one cannot obtain a $k$-explicit convergence theory, valid for arbitrarily-large $k$, of the parallel Schwarz method with more than 2 subdomains by looking at the norms of single impedance-to-impedance maps.}

\vspace{1ex}

Theorem \ref{th:model2} and Corollary \ref{cor:imp} show that, with outgoing boundary conditions on $\partial \Omega$ and more than two subdomains, the analogues of $\rho$ and $\gamma$ are both bounded below by one. Thus Theorem \ref{th:contraction} or its precursor \cite[Theorem 4.13]{GoGaGrLaSp:22}, which are based on bounding $\bcT^N$ by norms of single impedance-to-impedance maps, cannot be used to prove that $\bcT^N$ is a contraction.

\vspace{1ex}
   
\emph{Conclusion 3:~the values of $k$ chosen in the computations of impedance-to-impedance maps in \cite{GoGaGrLaSp:22} 
   were not large enough to see the asymptotic behaviour of $\rho$ (the norm of $\cI^-$).} 

\vspace{1ex}

The lower bounds on $\cI^{\pm}_{2}$ in Theorem \ref{th:model2} do not immediately give lower bounds for $\rho$ and $\gamma$ in Definition \ref{def:canonical}; this is because $\rho$ and $\gamma$ contain contributions from additional reflections from the impedance boundary conditions on the top and bottom, which are not present in  $\cI^{\pm}_{2}$.
 Although it is possible that these additional reflections create fortunate cancellation, this
 would be highly non-generic,
   and we therefore expect the lower bounds in Theorem \ref{th:model2} to carry over to $\rho$ and $\gamma$; i.e., we expect both $\rho$ and $\gamma$ to be bounded below by one at high frequency.

\cite[Table 1]{GoGaGrLaSp:22} computes $\rho(k, L/3,L)$ and $\gamma(k,L/3,L)$ for $k$ in the range $[10,80]$ and $L$ in the range $[1,16]$. At $k=80$, $\rho \leq 0.3$ and $\gamma \geq 0.97$, showing that $k=80$ is large enough to see the asymptotic behaviour of $\gamma$, but not the asymptotic behaviour of $\rho$.
   
 \vspace{1ex}  
   
\emph{Conclusion 4:~one cannot obtain a $k$-explicit convergence theory, valid for arbitrarily-large $k$, of the parallel Schwarz method by looking at the norms of composite impedance-to-impedance maps on the whole of the space on which they are defined.}
  
\vspace{1ex} 

Recognising that estimating $\|\bcT^N\|_{1,k,\partial}$ via norms of single impedance-to-impedance maps might be too crude, \cite[\S4.4.5]{GoGaGrLaSp:22} outlined how to estimate $\|\bcT^N\|_{1,k,\partial}$ via norms of compositions of impedance-to-impedance maps; essentially this involves using 
\eqref{eq:impit2new} iteratively and relating the impedance-to-impedance maps to the canonical maps via \eqref{eq:strip_maps}.

However, Part (\ref{i:compo:lower}) of Theorem \ref{th:compo} shows that the norm of any composition of $\cI^{\pm}$ is bounded below by one for sufficiently-large $k$, meaning that the strategy outlined in \cite[\S4.4.5]{GoGaGrLaSp:22} will not be able to show that $\|\bcT^N\|_{1,k,\partial}$ is small for arbitrarily-large $k$

\vspace{1ex} 
   
\emph{Conclusion 5:~the lower bound on the composite map in Theorem \ref{th:compo} is only reached for very large $k$, explaining why the computed values of the composite map in \cite[Table 3]{GoGaGrLaSp:22} are small.}

\vspace{1ex} 

As discussed in \S\ref{sec:1.4}, 
the lower bound in Part (\ref{i:compo:lower}) of Theorem \ref{th:compo} is based on choosing particular data corresponding to rays leaving $\Gamma_{\leftsubscript}$ close to horizontal; this is clear both from the proof of Part (\ref{i:compo:lower}) of Theorem \ref{th:compo}, and also from Part (\ref{i:compo:upper}) of Theorem \ref{th:compo}, which shows that if one projects away from zero frequency on the boundary (with zero frequency on the boundary corresponding to rays leaving horizontally) then the norm of the composite map on such data is zero.

Rays leaving $\Gamma_{\leftsubscript}$ close to the horizontal hit $\Gamma_{\rightsubscript}$ close to the horizontal, and thus have small reflection coefficient (since rays hitting horizontally, i.e., normally, have zero reflection coefficient). Therefore, the asymptotic lower bound on the map in Part (\ref{i:compo:lower}) of Theorem \ref{th:compo} will only be reached for very large $k$.

This is consistent with the computations of a particular composite impedance map in \cite[Tables 5 and 6]{GoGaGrLaSp:22}; even when the subdomains have small overlap, the norms of a composition of four impedance-to-impedance maps were less than $0.1$ for $k\in \{10,20,40,80\}$; see \cite[Table 6]{GoGaGrLaSp:22}.

\vspace{1ex} 

\emph{Conjecture 1:~a convergence theory for the parallel Schwarz method with $k\gg 1$ can be obtained by combining 
Part (\ref{i:compo:upper}) of Theorem \ref{th:compo} with the fact that normally incident waves are those for which the exact DtN map is the impedance boundary condition.}

\vspace{1ex} 

For a 2-d strip decomposition with more than two subdomains and outgoing boundary condition on $\Omega$, 
a possible route to prove that $\|\bcT^M\|_{1,k,\partial}<1$ for some $M>0$ is the following. 
Combine (i) the expression for 
$\bcT^M$ in terms of composite impedance-to-impedance maps (outlined in \cite[\S4.4.5]{GoGaGrLaSp:22}), (ii) 
Part (\ref{i:compo:upper}) of Theorem \ref{th:compo}, to show that the contribution from non-normally incident waves is small for $M$ large enough, and (iii) the fact that the normally incident waves not covered by Part (\ref{i:compo:upper}) of Theorem \ref{th:compo} are those for which the exact DtN map is the impedance boundary condition (i.e., the impedance boundary condition on the subdomains is the ``correct'' one for these waves). We expect the optimal $M$ will come from balancing the fact that to use (iii) we want $\lambda$ to be small (so that only rays very close to horizontal need to be dealt with via this mechanism), but the smaller $\lambda$, the larger the $n_0(\lambda)$ in Theorem \ref{th:compo} (since the smaller the $\lambda$, the closer to the horizontal the rays can be, and a larger number of reflections is needed to push their mass into the outgoing layers).
Since implementing these steps would require a more-refined analysis of the parallel Schwarz method than in \cite{GoGaGrLaSp:22}, this will be investigated elsewhere.

\begin{rem}\emph{(Discussion of other literature on impedance-to-impedance maps in the context of domain decomposition)}\label{rem:literature}
Whereas the paper \cite{GoGaGrLaSp:22} showed how impedance-to-impedance maps govern the error propagation in the non-overlapping parallel Schwarz method, but do not appear explicitly in the formulation of the method, 
impedance-to-impedance maps appear explicitly in the formulation of certain fast direct solvers \cite{GiBaMa:15}, \cite{BeGiHe:20} and non-overlapping domain decomposition methods \cite{PeTuBo:17}. 

The impedance-to-impedance maps associated with the fast direct solver in \cite{GiBaMa:15} were recently analysed in \cite{BeCaMa:22}
The prototypical example of the maps studied in \cite{BeCaMa:22} is the following. Given a square with zero impedance data on three sides, the considered map is the map from impedance data on the fourth side to the impedance data on that same side but with the sign swapped (i.e., the map from $\partial_n u+i ku$ to $\partial_n u -ik u$, where $u$ is a Helmholtz solution); see \cite[Definition 1.1]{BeCaMa:22}.
The main goal of \cite{BeCaMa:22} is to study $I-R_1R_2$, where $R_1,R_2$ are two of these impedance-to-impedance maps on squares.
If $I-R_1R_2$ is invertible, then the impedance-to-impedance map on a $2\times 1$ rectangle can be recovered from the impedance-to-impedance maps on two non-overlapping square subdomains (see \cite[Equations 7-9 and the surrounding text]{BeCaMa:22}).
Whereas $I-R_1R_2$ was assumed invertible in \cite{GiBaMa:15}, 
 \cite[Theorem 1.2]{BeCaMa:22} proved a $k$-explicit bound on $(I-R_1R_2)^{-1}$.

The techniques used in \cite{BeCaMa:22} are quite different from those in the present paper; indeed 
 \cite[Theorem 1.2]{BeCaMa:22} is proved using vector-field arguments and bounds on the Neumann Green's function. Nevertheless, the fact that the bound on $(I-R_1R_2)^{-1}$ in \cite[Theorem 1.2]{BeCaMa:22} allows for its norm to grow as $k\to \infty$ is thematically similar to the ``bad'' behaviour as $k\to\infty$ of our impedance-to-impedance maps proved in Theorem \ref{th:model2} and Part (\ref{i:compo:upper}) of Theorem \ref{th:compo}.
\end{rem}

\section{Definition and useful properties of defect measures} \label{sec:3}

\subsection{The local geometry and associated notation} \label{subsec:geonot}

\subsubsection{The geometry}
For $\domain^+$ as in Definition \ref{def:outgo}, we denote $\Gamma_\middlesubscript^+ := \big(\{\mathsf {d_{\leftsubscript}}\} \times \mathbb R\big)\cap \domain^+$. We denote the dual variables to $(x_1,x')$ by $(\xi_1,\xi')$.
Near the interfaces $\Gamma=\Gamma_{\leftsubscript}$, $\Gamma=\Gamma_{\leftsubscript}$, and $\Gamma=\Gamma_{\middlesubscript}^+$, 
we use Riemannian/Fermi normal coordinates $(s,x')$, in which $\Gamma$ is given by $\{s=0\}$ and 
$[0,\mathsf{d_\leftsubscript+d_{\rightsubscript}}]\times[0,\height ]$ is given by $\{ s>0\}$ for both $\Gamma_{\leftsubscript}$ and $\Gamma_{\rightsubscript}$, and 
$[0,\mathsf{d_\leftsubscript}]\times[0,\height ]$ is given by $\{ s>0\}$ for $\Gamma_{\middlesubscript}$; therefore
$$
s = x_1 \text{ on }\Gamma_{\leftsubscript}, \quad s=- x_1 \text{ on }\Gamma_{\rightsubscript}, \quad s=- x_1 \text{ on }\Gamma_{\middlesubscript}^+.
$$
The conormal variable to $s$ is denoted by $\zeta$; so that
$$
\zeta = \xi_1 \text{ on }\Gamma_{\leftsubscript},\quad \zeta=-\xi_1 \text{ on }\Gamma_{\rightsubscript}, \quad \tand\quad \zeta=-\xi_1 \text{ on }\Gamma_{\middlesubscript}^+.
$$
In addition, on $T^*(\Gamma_\leftsubscript\cup \Gamma_\middlesubscript^+ \cup \Gamma_\rightsubscript)$, we let 
\beq\label{eq:r}
r(x',\xi') := 1 - |\xi'|^2,
\eeq
in such a way that, near $\Gamma_\leftsubscript\cup \Gamma_\middlesubscript^+ \cup \Gamma_\rightsubscript$,
$-\hsc^2 \Delta - 1$ is given by
$$
-\hsc^2 \Delta - 1 = (\hsc D_{x_1})^2 - r(x',\hsc  D_{x'}).
$$
We define the \emph{hyperbolic set} in $T^*(\Gamma_\leftsubscript\cup \Gamma_\middlesubscript^+ \cup \Gamma_\rightsubscript)$ by 
$$
\mathcal H := \big\{ (x', \xi') \in T^*(\Gamma_\leftsubscript\cup \Gamma_\middlesubscript^+ \cup \Gamma_\rightsubscript), \; r(x', \xi') > 0 \big\},
$$
and for $(x, \xi') \in \mathcal H$, we denote
$$
\zeta_{\rm in} (x',\xi'):= - \sqrt{r(x',\xi')}, \hspace{0.3cm}\zeta_{\rm out} (x',\xi'):=+\sqrt{r(x',\xi')}.
$$
Finally, $\varphi_t$ is the \emph{Hamiltonian flow} associated to the equation, that is, for
$(x_0,\xi_0) \in T^* \mathbb R^2$ and $t\in \mathbb R$, $\varphi_t(x_0,\xi_0) = (x(t), \xi(t))$ is the solution to
$$
x(0) = x_0, \; \xi(0) = \xi_0, \; \dot \xi(t) = 0, \; \dot x(t) = 2 \xi(t).
$$

\subsubsection{Quantisation and semiclassical wavefront set}

For $b\in C^\infty_c(T^*\mathbb R^2)$, we recall that the (standard) \emph{semiclassical quantisation} of $b$, which we denote by 
$b(x,\hsc D_x)$ or $\operatorname{Op}_\hsc(b)$, is defined by  
\begin{equation} \label{eq:scquant}
\big(b(x,\hsc D_x) v\big)(x) := (2\pi \hsc)^{-2} \int_{T^*\mathbb R^2} 
e^{i(x-y)\cdot\xi/\hsc} \,
a(x,\xi) v(y) \, dy  d\xi ,
\end{equation}
see, for example, \cite[\S4.1]{Zw:12}, \cite[Page 543]{DyZw:19}.
In addition, for $\Gamma \in \{ \Gamma_{\middlesubscript}, \Gamma_{\leftsubscript}, \Gamma_{\rightsubscript}\}$, if $a \in C^\infty_c(T^*\Gamma)$, we
 define, as an operator acting on $L^2(\Gamma)$
$$
\big(a(x',\hsc D_{x'}) v\big)(x') := (2\pi \hsc)^{-1} \int_{T^*\Gamma} 
e^{i(x'-y')\cdot\xi'/\hsc} \,
a(x,\xi) v(y) \, dy'  d\xi'.
$$

We can now introduce the \emph{semiclassical wavefront-set} of an $\hsc$-tempered family of functions. An $\hsc$--family of functions $v(\hsc)\in L^2_{\rm loc}$ is $\hsc$-tempered if for each $\chi \in C^\infty_c$ there exists $C, N$ such that $\|\chi v(\hsc)\|_{L^2}\leq C\hsc^{-N}$.

\begin{definition}[Semiclassical wavefront set]\label{def:WF}
For an $\hsc$-tempered family of functions $v(\hsc)\in L_{\rm loc}^2(D^+)$, 
 we say that $(x_0,\xi_0)\in {T^*D^+}$ 
 is \emph{not} in the semiclassical wavefront-set of  $v(\hsc)$ and write $(x_0,\xi_0)\notin \operatorname{WF}_{\hsc}(v)$ if for any $b\in C^\infty_c(T^* D^+)$ 
 supported sufficiently close to $(x_0,\xi_0)$, and for any $N,s$, there is $C_{s,N}>0$ such that 
$$
\Vert b(x,\hsc D_{x}) v  \Vert_{H^s_\hsc} \leq C_{s,N} \hsc^N,
$$
for the semi-classical Sobolev norm
\begin{equation} \label{eq:scsob}
\Vert f \Vert_{H^s_\hsc} := \Vert f \Vert_{L^2} + \hsc^s \Vert f \Vert_{\dot H^s}
\end{equation}
(where the second term on the right-hand side is the $H^s$ semi norm).
\end{definition}

\subsection{Semiclassical defect measures}

\begin{theorem}[Existence of defect measures] \label{th:ex_defect}
Let $v(\hsc_k) \in L^2(\domain^+).$
\begin{enumerate}
\item 
If there exists $C>0$ so that
$$
\Vert v \Vert_{L^2(\domain^+)} \leq C,
$$
then, there exists a subsequence $h_{k_\ell}\to 0$ and a non-negative Radon measure $\mu$ on $T^*{\mathbb R^2}$, such that for any $b\in C_c^\infty(T^*{\mathbb R^2})$
$$
\big\langle b(x,h_{k_\ell}D_{x})v,v\big\rangle_{{\mathbb{R}^2}}\rightarrow\int b\ d\mu,
$$
where v has been extended by zero to $\mathbb R^2$.
\item Let $\Gamma \in \{ \Gamma_{\middlesubscript}^+, \Gamma_{\leftsubscript}, \Gamma_{\rightsubscript} \}$ {and identify functions on $\Gamma$ with functions on the corresponding subset of $\mathbb{R}$.}
If there exists $C>0$ so that
$$
\Vert v \Vert_{L^2(\Gamma)} + \Vert \hsc \partial_n v \Vert_{L^2(\Gamma)} \leq C,
$$
then, there exists a subsequence $h_{k_\ell}\to 0$, non-negative Radon measures $\nu_{d}$, $\nu_{n}$, and a Radon measure $\nu_{j}$ on $T^*\mathbb{R}$ such that for any  $a\in C_c^\infty(T^*\mathbb{R})$,
\begin{align*}
\big\langle a(x,h_{k_\ell}D_{x'})v,v\big\rangle_{{\mathbb{R}}}&\rightarrow\int a\ d\nu_{d},\\
\big\langle a(x,h_{k_\ell}D_{x'})h_{k_\ell}D_s v,v\big\rangle_{{\mathbb{R}}}&\rightarrow\int a\ d\nu_{j}, \\
\big\langle a(x,h_{k_\ell}D_{x'})h_{k_\ell}D_s v,h_{k_\ell}D_s v\big\rangle_{{\mathbb{R}}}&\rightarrow\int a\ d\nu_{n},
\end{align*}
where the traces of $v$ have been extended to $\mathbb R$ by zero.
\end{enumerate}
\end{theorem}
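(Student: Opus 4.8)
The plan is to realise each claimed measure as a weak-$*$ limit of the bilinear functionals appearing in the statement, following the standard construction of semiclassical defect measures (cf.\ \cite{Miller}, \cite[\S5]{Zw:12}); since the only ingredients are the $L^2$-continuity of the semiclassical quantisation and the sharp G\aa rding inequality, I will only indicate the structure.

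For Part (1), I would fix a countable family $\mathcal D\subset C_c^\infty(T^*\mathbb R^2)$ that is dense in the $C^\infty$ topology, and consider the functionals $b\mapsto\langle b(x,\hsc D_x)v,v\rangle$ with $v$ extended by zero to $\mathbb R^2$. The Calder\'on--Vaillancourt theorem in semiclassical form gives $\N{b(x,\hsc D_x)}_{L^2\to L^2}\leq\N{b}_{L^\infty}+\hsc\,C\N{b}_{C^{N_0}}$ for a dimension-dependent $N_0$, whence $|\langle b(x,\hsc D_x)v,v\rangle|\leq(\N{b}_{L^\infty}+\hsc C\N{b}_{C^{N_0}})\N{v}_{L^2}^2$, and the right-hand side is uniformly bounded by the hypothesis $\N{v}_{L^2(\domain^+)}\leq C$. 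A diagonal argument over $\mathcal D$ then produces a subsequence $\hsc_{k_\ell}\to 0$ along which these functionals converge; the uniform bound, with the $O(\hsc)$ remainder killed in the limit, lets me extend the limit functional continuously from $\mathcal D$ to all of $C_c^0(T^*\mathbb R^2)$, so that by the Riesz representation theorem it is given by integration against a Radon measure $\mu$. Non-negativity of $\mu$ is exactly where sharp G\aa rding enters: if $b\geq 0$ is real-valued then $\langle b(x,\hsc D_x)v,v\rangle\geq-\hsc C\N{v}_{L^2}^2$, so the limit is $\geq 0$.

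For Part (2), I would pass to the Fermi coordinates $(s,x')$ near $\Gamma$, so that $\hsc D_s=\pm\hsc\partial_n$, and treat the traces $v|_\Gamma$ and $\hsc D_s v|_\Gamma$, extended by zero, as families bounded in $L^2(\mathbb R)$ by hypothesis. The functionals defining $\nu_d$ and $\nu_n$ are then of exactly the same shape as in Part (1), but in one space dimension and with $v$ replaced by $v|_\Gamma$ and by $\hsc D_s v|_\Gamma$ respectively; the same construction yields non-negative Radon measures, extracting one common subsequence for all the functionals at once. For the cross term defining $\nu_j$, Cauchy--Schwarz and the $L^2$-bound on $a(x',\hsc D_{x'})$ give $|\langle a(x',\hsc D_{x'})\hsc D_s v,v\rangle_\Gamma|\leq(\N{a}_{L^\infty}+o(1))\N{\hsc D_s v}_{L^2(\Gamma)}\N{v}_{L^2(\Gamma)}$, which is again uniformly controlled by $\N{a}_{L^\infty}$; the same diagonal and Riesz argument produces a Radon measure $\nu_j$, now only signed (complex), with no positivity claimed or true in general.

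The step I expect to need the most care is the passage from convergence on the countable dense family $\mathcal D$ to convergence for every symbol in $C_c^\infty$: the operator-norm estimate carries an $O(\hsc)$ error controlled by a higher seminorm $\N{\cdot}_{C^{N_0}}$ rather than by $\N{\cdot}_{L^\infty}$, so one must approximate a general symbol in the $C^\infty$ topology and send $\hsc\to 0$ along the subsequence \emph{before} letting the approximation error tend to zero. Beyond this bookkeeping, the proof is an application of the pseudodifferential calculus (Calder\'on--Vaillancourt and sharp G\aa rding, as in \cite[\S4--5]{Zw:12}) combined with the Riesz representation theorem.
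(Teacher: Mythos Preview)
Your outline is correct and is exactly the standard construction the paper has in mind: the paper gives no argument at all, simply citing \cite[Theorem~5.2]{Zw:12} for the proof, and your sketch (Calder\'on--Vaillancourt for the uniform bound, a diagonal extraction over a countable dense family of symbols, sharp G\aa rding for non-negativity, Riesz representation for the limit) is precisely the argument found there. The only remark is that Part~(2) is not literally covered by \cite[Theorem~5.2]{Zw:12} but, as you correctly note, it reduces to three instances of the same one-dimensional argument applied to the bounded $L^2(\mathbb R)$ families $v|_\Gamma$ and $\hsc D_s v|_\Gamma$, with the cross term $\nu_j$ signed rather than positive; this is standard (cf.\ \cite{Miller}).
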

\begin{proof}[Reference for the proof]
See \cite[Theorem 5.2]{Zw:12}.
\end{proof}

In the rest of this section, we assume that $v$ satisfies
$$
(-\hsc^2 \Delta - 1)v = 0 \,\,\text{ in }\,\, \domain^+
$$
and has defect measure $\mu$, and boundary measures $\nu_d, \nu_n, \nu_j$ on $\Gamma$. 

We now introduce the outgoing and incoming boundary measures on $\Gamma$. To do so, we define geodesic coordinates on the flow--out and the flow--in of a subset of $\mathcal H$:

\begin{definition} \label{def:geocor}
For $\mathcal V \subset \mathcal H$, let $\mathcal B_{{\rm out}}(\mathcal V), \mathcal B_{{\rm in}}(\mathcal V) $ be defined by
\begin{align*}
\mathcal B_{{\rm out}}(\mathcal V)  &:= \bigcup_{(x',\xi')\in\mathcal V} \Big\{ \varphi_{t}\big(0,x',\xi',\zeta = \zeta_{{\rm out}}(x',\xi')\big), \; 
t \in \mathbb R \Big\} \: \cap \: T^*\domain^+ \\
B_{{\rm in}}(\mathcal V)  &:= \bigcup_{(x',\xi')\in\mathcal V} \Big\{ \varphi_{t}\big(0,x',\xi',\zeta =\zeta_{{\rm in}}(x',\xi')\big), \; 
t \in \mathbb R \Big\} \: \cap \: T^*\domain^+. 
\end{align*}
In $\mathcal B_{{\rm out}}(\mathcal V)$, we work in geodesic coordinates $(\rho,t) \in \Big( \mathcal \pi_\Gamma ^ {-1} \mathcal V \cap \big\{ \zeta = \zeta_{{\rm out}} \big\} \Big) \times \mathbb R$, defined by $\mathcal B \ni (x,\xi) = \varphi_t (\rho)$; and in $\mathcal B_{{\rm in}}(\mathcal V)$, we can in the same way work in geodesic coordinates $(\rho,t) \in \Big( \mathcal \pi_\Gamma ^ {-1} \mathcal V \cap \big\{ \zeta = \zeta_{{\rm in}} \big\} \Big) \times \mathbb R$.
\end{definition}

We recall that, for $f:X_1 \rightarrow X_2$ and a measure $\alpha$ on $X_1$, the pushforward measure $f_* \alpha$ on $X_2$ is defined by $f_* \alpha(\mathcal B) := \alpha(f^{-1}(\mathcal B))$, and the change of variable formula
\begin{equation} \label{eq:puschchange}
\int_{X_2} g \; d(f_*\alpha) = \int_{X_1} g \circ f \; d\alpha
\end{equation}
holds. In addition, we let $a \otimes b$ denote the product measure of $a$ and $b$, and we let $\pi_\Gamma$ be the projection map defined by
\begin{equation} \label{eq:defproj}
\pi_\Gamma:\big(s=0,x',\zeta,\xi'\big)\in T^{*}\mathbb{R}^{2}\cap\{ x\in\Gamma\}\rightarrow(x',\xi')\in T^{*}\Gamma.
\end{equation}

\begin{lem}[Relationship between boundary measures and the measure in the interior]
\label{lem:interpr}
There exist {unique} positive measures $\mu_{\rm in}$ and $\mu_{\rm out}$ on $T^* \Gamma$ and supported in $\mathcal{H}$ so that, if
$\mathcal V \subset \mathcal H$,
then, in geodesic coordinates defined above,
$$
\mu = \big(p^{{\rm out}}_*(2\sqrt r\muout) \big) \otimes dt \,\, \text{ in }\,\, \mathcal B_{{\rm out}}(\mathcal V), \hspace{0.5cm}
\mu  = \big(p^{{\rm in}}_*(2\sqrt r\muin) \big) \otimes  dt \,\,  \text{ in } \,\,\mathcal B_{{\rm in}}(\mathcal V),
$$
where $p^{{\rm in}}$ and $p^{{\rm out}}$ are defined by
$$
p^{\rm out/in}: (x',\xi') \in \mathcal H
\longrightarrow \big(x', \xi', \zeta = \zeta_{\rm out/in}(x',\xi')\big) \in  \mathcal \pi_\Gamma ^ {-1} \mathcal V \cap \big\{ \zeta = \zeta_{\rm out/in} \big\}.
$$
\end{lem}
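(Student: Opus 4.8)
The plan is to relate the interior defect measure $\mu$ to the boundary measures $\nu_d,\nu_n,\nu_j$ near $\Gamma$ via the equation $(-\hsc^2\Delta - 1)v = 0$, and then to transport this boundary data along the Hamiltonian flow into the hyperbolic region. First I would fix $\Gamma$ and work in the Fermi coordinates $(s,x')$ with conormal variable $\zeta$, in which the operator factors as $(\hsc D_s)^2 - r(x',\hsc D_{x'})$. The key local computation is to show that on $T^*\Gamma$, in the hyperbolic set $\mathcal H$ where $r>0$, the measures $\nu_d,\nu_n,\nu_j$ are each supported on the two sheets $\{\zeta = \pm\sqrt r\}$ and are mutually determined there: morally $\nu_n = \zeta^2 \nu_d$ and $\nu_j = \zeta\,\nu_d$ restricted to $\mathcal H$, so that $\nu_d = \nu_d^{\rm in} + \nu_d^{\rm out}$ splits according to the sign of $\zeta$, and likewise for the others. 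This is the standard elliptic-regularity-plus-equation argument (as in \cite{Miller} and \cite{Zw:12}): one tests the equation against $a(x',\hsc D_{x'})v$ and $a(x',\hsc D_{x'})\hsc D_s v$, integrates by parts in $s$ picking up the boundary terms, and passes to the limit $\hsc\to 0$; the glancing region $\{r=0\}$ has measure zero for the relevant combinations, so one may localize to $\mathcal H$. From this I define $\muin$ and $\muout$ on $T^*\Gamma$ as (essentially) $\nu_d^{\rm in}$ and $\nu_d^{\rm out}$, possibly rescaled by a power of $\sqrt r$ chosen precisely so that the final pushforward formula comes out with the factor $2\sqrt r$.

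Next I would invoke the standard propagation theorem for defect measures of solutions of $(-\hsc^2\Delta -1)v=0$: $\mu$ is a positive measure on the characteristic set $\{|\xi|^2=1\}$ that is invariant under the Hamiltonian flow $\varphi_t$ (this is Theorem-level material from \cite{Miller}/\cite{Zw:12} which the paper is entitled to cite). On $\mathcal B_{\rm out}(\mathcal V)$, the geodesic coordinates $(\rho,t)$ with $\rho \in \pi_\Gamma^{-1}\mathcal V \cap \{\zeta=\zeta_{\rm out}\}$ and $(x,\xi) = \varphi_t(\rho)$ are genuine coordinates (the flow is transverse to $\Gamma$ exactly because $r>0$ forces $\zeta\neq 0$, so $\dot x_1 = 2\zeta \neq 0$). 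In these coordinates flow-invariance says precisely that $\mu$ is of the form (some measure on the $\rho$-slice) $\otimes\, dt$. It remains to identify that $\rho$-slice measure as $p^{\rm out}_*(2\sqrt r\,\muout)$. For this I would take a test function $b$ supported in a thin flow-box, compare $\int b\,d\mu$ computed by flowing from the boundary slice $\{s=0\}$ against the limit of $\langle b(x,\hsc D)v,v\rangle$, and match it against the boundary pairing defining $\muout$; the Jacobian of the map $(x',\xi')\mapsto (x',\xi',\zeta_{\rm out})$ composed with the change between the conormal variable $\zeta$ and the ``time'' parametrization along the flow produces the factor $2\sqrt r = 2|\zeta|$ (since $\dot s = 2\zeta$ on the out-going sheet). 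The incoming case is identical with $\zeta_{\rm out}$ replaced by $\zeta_{\rm in}$ and $t$ running the other way.

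The main obstacle I expect is the bookkeeping at the boundary: carefully establishing that the three boundary measures are concentrated on $\{\zeta^2 = r\}$ and related by the stated algebraic identities \emph{within} $\mathcal H$, while controlling the contribution of the glancing/elliptic regions, and then tracking the exact constant ($2\sqrt r$ rather than $\sqrt r$ or $1/\sqrt r$) through the pushforward and the geodesic change of variables. The geometry itself — that the flow is transverse to $\Gamma$ on $\mathcal H$, hence $\mathcal B_{\rm out}(\mathcal V)$ and $\mathcal B_{\rm in}(\mathcal V)$ are honest flow-outs on which $(\rho,t)$ are coordinates — is routine; the positivity and existence of $\muin,\muout$ follow from the positivity of $\nu_d$. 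So the proof is essentially: (i) local normal-form analysis at $\Gamma$ to produce $\muin,\muout$ and the sheet decomposition; (ii) invoke flow-invariance of $\mu$; (iii) push the boundary slice along the flow and match constants via the Jacobian. I would present (i) in detail, cite the propagation theorem for (ii), and do the change-of-variables computation for (iii) with the factor $2\sqrt r$ flagged as the one point requiring care.
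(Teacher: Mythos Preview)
Your outline is a reasonable sketch of how one actually proves this result, but the paper does not prove it from scratch: the entire proof is a citation. Existence of $\muin,\muout$ is taken from \cite[Proposition~1.7]{Miller}, and the product formula $\mu = (p^{\rm out/in}_*(2\sqrt r\,\mu_{\rm out/in}))\otimes dt$ is quoted from \cite[Lemma~2.16]{GLS1}. What you describe as steps (i)--(iii) is essentially the content of those references, so your approach is correct but reproves known material rather than citing it.

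The one point the paper does add, and which your proposal does not address, is a small adaptation for the interior interface $\Gamma=\Gamma_\middlesubscript^+$. In \cite{GLS1} the flow-out/flow-in sets are defined only for $t\gtrless 0$ (one side of $\Gamma$), whereas here Definition~\ref{def:geocor} allows $t\in\mathbb R$ since $\Gamma_\middlesubscript^+$ sits in the interior of $\domain^+$. The paper handles this by applying the one-sided result of \cite{GLS1} twice, once with measures taken from the left and once from the right, to obtain the statement for all $t\in\mathbb R$. Your flow-box argument would need the same observation: transversality of the flow to $\Gamma_\middlesubscript^+$ lets you extend the geodesic coordinates across $s=0$, but you should say so explicitly.
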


\begin{proof}
Existence of $\muin$ and $\muout$ is proven in \cite[Proposition 1.7]{Miller}, and the rest of the result is proved in \cite[Lemma 2.16]{GLS1} {(with, in particular, uniqueness following from \cite[Equations 2.24/2.41]{GLS1})}.
In \cite[Lemma 2.16]{GLS1}, $\mathcal B_{\rm out/in}$ 
is defined only with $t\gtrless 0$. 
When $\Gamma \in \{ \Gamma_\leftsubscript, \Gamma_\rightsubscript\}$, the definitions in \cite{GLS1} coincide with Definition \ref{def:geocor}, and then the result follows immediately from \cite[Lemma 2.16]{GLS1}. When $\Gamma = \Gamma_\middlesubscript^+$, applying \cite[Lemma 2.16]{GLS1} once with measures taken from the left, and once with measures taken from the right gives the result for all $t\in \mathbb R$.
\end{proof}

 The relationship between  the outgoing and incoming boundary measures and the boundary measures $\nu_d, \nu_n, \nu_j$ is given in the following lemma. Both here and in the rest of the paper we use the notation (as in \cite{Miller}) that $a \mu (\mathcal B) := \int_{\mathcal B} a d\mu$ for a measure $\mu$ and a Borel set $\mathcal B$.
 
\begin{lem}\emph{\cite[Proposition 1.10]{Miller}}\label{lem:Millretations}
{With $r(x',\xi')$ defined by \eqref{eq:r}}, the incoming and outgoing measures satisfy the following.
\begin{enumerate}
\item In $\mathcal H$, %
\begin{align*}
2\mu_{\rm out} &=  \sqrt{r(x',\xi')}\nu_{d} + 2\Re \nu_{j}+\frac{1}{\sqrt{r(x',\xi')}}\nu_{n}, \\
2\mu_{\rm in}  &=\sqrt{r(x',\xi')}\nu_{d}-2\Re \nu_{j}+\frac{1}{\sqrt{r(x',\xi')}}\nu_{n}.
\end{align*}
\item If $\mu_{\rm in}=0$ on some Borel set $\mathcal{B}\subset \mathcal{H}$, then, on $\mathcal B$ 
$$
\mu_{\rm out}=2\Re \nu_{j}=2\sqrt{r(x',\xi')}\nu_{d}=\frac{2}{\sqrt{r(x',\xi')}}\nu_{n}.
$$
\item If $\mu_{\rm out}=0$ on some Borel set $\mathcal{B}\subset \mathcal{H}$, then, on $\mathcal B$ 
$$
\mu_{\rm in}= - 2\Re \nu_{j}=  2\sqrt{r(x',\xi')}\nu_{d}=\frac{2}{\sqrt{r(x',\xi')}}\nu_{n}.
$$
\item If 
$$
-2 \Re  \nu_j=(\Re \alpha)\nu_{d}=4(\Re \alpha)|\alpha|^{-2}\nu_{n}
$$
on some Borel set $\mathcal{B}\subset \mathcal{H}$ for $\alpha$ a complex valued function
such that $\alpha+2\sqrt{r(x',\xi')}$ is never zero
on $\mathcal{B}$ then
$$
\muout= \mathcal R \muin,
$$
where
$$
\mathcal R :=\left|\frac{2\sqrt{r(x',\xi')}-\alpha}{2\sqrt{r(x',\xi')}+\alpha}\right|^{2}\text{ on }\mathcal{B},
$$
If instead, $\alpha-2\sqrt{r}$ is never zero, then 
$$
\mathcal R^{-1}\muout=\muin.
$$

\end{enumerate}
\end{lem}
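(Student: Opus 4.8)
\emph{Proof proposal.} This statement is \cite[Proposition 1.10]{Miller}; the only nontrivial part is (1), since (4) is pure linear algebra applied to the two identities in (1), and (2)--(3) follow from (1) together with one extra microlocal observation. The crux of (1) is to establish, on $\mathcal H$, the pair of identities $\sqrt r\,\nu_d + \tfrac1{\sqrt r}\nu_n = \muout + \muin$ and $2\Re\nu_j = \muout - \muin$, from which the two displayed formulas in (1) follow by adding and subtracting. To prove them, I would work microlocally near $\Gamma$ in the coordinates $(s,x')$ and conormal variables $(\zeta,\xi')$ of \S\ref{subsec:geonot}, in which $-\hsc^2\Delta-1=(\hsc D_s)^2-r(x',\hsc D_{x'})$ has principal symbol $\zeta^2-r(x',\xi')$. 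On $\mathcal H=\{r>0\}$ the characteristic variety splits into the two smooth sheets $\zeta=\pm\sqrt r$, corresponding to bicharacteristics leaving $\Gamma$ into $\domain^+$ ($\zeta_{\rm out}=+\sqrt r$) and arriving at $\Gamma$ from $\domain^+$ ($\zeta_{\rm in}=-\sqrt r$). Using a microlocal partition of unity subordinate to these sheets together with the semiclassical elliptic parametrix away from the glancing set $\{r=0\}$, one decomposes $v=v_++v_-$ near $\Gamma$ with $\operatorname{WF}_\hsc v_\pm$ in a neighbourhood of $\{\zeta=\pm\sqrt r\}$ and $(\hsc D_s\mp\sqrt{r(x',\hsc D_{x'})})v_\pm=O(\hsc^\infty)$ microlocally in $\mathcal H$; in particular, on the traces, $\hsc D_s v|_\Gamma=\sqrt r\,(v_+-v_-)|_\Gamma+O(\hsc^\infty)$ microlocally in $\mathcal H$.

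Next I would identify the (suitably normalised) defect measures of the traces $v_+|_\Gamma$ and $v_-|_\Gamma$ with $\muout$ and $\muin$ of Lemma \ref{lem:interpr}: this uses the flow-invariance of the interior measure $\mu$ and the transversality of these bicharacteristics to $\Gamma$, the factor $2\sqrt r$ appearing in Lemma \ref{lem:interpr} being exactly the transversal speed $|\dot s|=2|\zeta|$.

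Granting this, one expands the three boundary measures using $v=v_++v_-$ and $\hsc D_s v=\sqrt r(v_+-v_-)$. The diagonal terms give, up to the normalisation fixed by Lemma \ref{lem:interpr}, $\muout+\muin$ as the contribution to $\nu_d$, $r(\muout+\muin)$ as the contribution to $\nu_n$, and $\sqrt r(\muout-\muin)$ as the contribution to $\nu_j$; the cross terms $\langle a(x',\hsc D_{x'})v_+,v_-\rangle_\Gamma$ do \emph{not} vanish (both traces live over the same base $T^*\Gamma$), but they enter $\nu_d$ and $\tfrac1r\nu_n$ with opposite signs, hence cancel in $\sqrt r\,\nu_d+\tfrac1{\sqrt r}\nu_n$, and they are purely imaginary in $\nu_j$, hence disappear from $\Re\nu_j$ — which yields the two identities above, hence part (1). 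For (2) (and (3) by symmetry): if $\muin=0$ on $\mathcal B\subset\mathcal H$ then the incoming branch is absent there, so $v|_\Gamma=v_+|_\Gamma$ and $\hsc D_s v|_\Gamma=\sqrt r\,v|_\Gamma$ microlocally on $\mathcal B$, whence $\nu_n=r\,\nu_d$ and $\Re\nu_j=\sqrt r\,\nu_d$, and substituting into (1) gives $\muout=2\sqrt r\,\nu_d=\tfrac2{\sqrt r}\nu_n=2\Re\nu_j$ on $\mathcal B$. For (4): the hypotheses say exactly that $\nu_n=\tfrac{|\alpha|^2}{4}\nu_d$ and $2\Re\nu_j=-(\Re\alpha)\nu_d$ on $\mathcal B$ (a microlocal Robin relation $\hsc D_s v|_\Gamma\equiv-\tfrac\alpha2 v|_\Gamma$), and feeding these into (1),
$$
2\muout=\tfrac1{4\sqrt r}\,\big|2\sqrt r-\alpha\big|^2\,\nu_d, \qquad 2\muin=\tfrac1{4\sqrt r}\,\big|2\sqrt r+\alpha\big|^2\,\nu_d,
$$
so if $2\sqrt r+\alpha$ (resp.\ $2\sqrt r-\alpha$) is nonvanishing on $\mathcal B$ then $\muout=\mathcal R\,\muin$ (resp.\ $\muin=\mathcal R^{-1}\muout$) with $\mathcal R=|2\sqrt r-\alpha|^2/|2\sqrt r+\alpha|^2$.

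The one genuinely nontrivial ingredient is thus the incoming/outgoing microlocal splitting \emph{up to the boundary} together with the trace identity $\hsc D_s v_\pm|_\Gamma\equiv\pm\sqrt r\,v_\pm|_\Gamma$ uniformly in $\hsc$: this is elliptic and routine away from the glancing set $\{r=0\}$ but must be arranged with care near it, and it must be combined with the bookkeeping matching the trace defect measures of $v_\pm$ with the $\muin,\muout$ of Lemma \ref{lem:interpr} (tracking the $2\sqrt r$ Jacobian of the transversal flow). Both steps are carried out in \cite[Propositions 1.7 and 1.10]{Miller} (see also Lemma \ref{lem:interpr} and \cite[Lemma 2.16]{GLS1}).
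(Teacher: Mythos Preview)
The paper does not prove this lemma at all: it is stated with the attribution \cite[Proposition 1.10]{Miller} and simply quoted for later use, with no proof given. So there is no ``paper's own proof'' to compare against; your sketch is a reconstruction of Miller's argument rather than something the present paper supplies.

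That said, your outline is essentially the correct one and matches the strategy in Miller. A couple of minor remarks. First, your derivation of (2)--(3) is right to flag that an extra ingredient beyond the algebra of (1) is needed: from (1) alone one only gets $\mu_{\rm out}=2\Re\nu_j$ and $\sqrt r\,\nu_d+\tfrac1{\sqrt r}\nu_n=\mu_{\rm out}$ when $\mu_{\rm in}=0$, and to split the last sum into $\sqrt r\,\nu_d=\tfrac1{\sqrt r}\nu_n$ one needs precisely the microlocal observation you state (vanishing of the incoming branch forces $\nu_n=r\,\nu_d$ on $\mathcal B$, equivalently a Cauchy--Schwarz saturation $|\nu_j|^2=\nu_d\nu_n$). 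Second, in (4) your reading of the hypothesis as $\nu_n=\tfrac{|\alpha|^2}{4}\nu_d$ tacitly uses $\Re\alpha\neq 0$ to pass from $(\Re\alpha)\nu_d=4(\Re\alpha)|\alpha|^{-2}\nu_n$ to the unweighted identity; in applications (e.g.\ the impedance case $\alpha=2$ used later in the paper) this is harmless, but strictly speaking the purely imaginary case would need a word. Your algebra for $2\mu_{\rm out}=\tfrac1{4\sqrt r}|2\sqrt r-\alpha|^2\nu_d$ and $2\mu_{\rm in}=\tfrac1{4\sqrt r}|2\sqrt r+\alpha|^2\nu_d$ is correct.
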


\begin{notations} \label{not:defect}
When they exist, we denote $\nu^{\mathrm x}_d$, $\nu^{\mathrm x}_n$, $\nu^{\mathrm x}_j$, $\mu^{\mathrm x}_{\rm in}$, $\mu^{\mathrm x}_{\rm out}$ the measures of $v$ on $\Gamma_{\mathrm x}$ with $\mathrm x \in \{ \leftsubscript,\rightsubscript \}$, and on $\Gamma_\middlesubscript^+$ when $\mathrm x = i$.
In these definitions, we take the measures on $\Gamma_{\middlesubscript}^+$ \emph{from the left}. 
When all these objects exist, we succinctly say that $u$ admits defect measures and boundary measures.
\end{notations}

\section{Invariance properties of outgoing Helmholtz solutions } \label{sec:4}

In this section, we assume that $v$ satisfies
$$
(-\hsc^2 \Delta - 1)v = 0 \,\,\text{ in } \domain, 
$$
and we investigate the consequences of $v$ being outgoing near $\Gamma \in \{ \Gamma_s, \Gamma'_s \}$. In the following,
$D^+$ is as in Definition \ref{def:outgo}.

We first prove two lemmas about location of $\operatorname{WF}_\hsc v$.

\begin{lem}\label{lem:outflowout}
Assume that $v$ is outgoing near $\Gamma \in \{ \Gamma_s, \Gamma'_s \}$ (in the sense of Definition \ref{def:outgo}) and let $\widehat \Gamma := \partial 
\domain
 \backslash \Gamma$. Then, 
 $$
 \operatorname{WF}_\hsc v \cap T^* \big(\overline \domain \cap \domain^+\big) \subset \Big\{ (x,\xi), \; \exists t<0, \; x + t \xi \in\widehat \Gamma \Big\}.
 $$
\end{lem}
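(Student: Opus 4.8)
The plan is to exploit the defining property of outgoingness (Definition \ref{def:outgo}): the extension of $v$ to $\domain^+$ is an $\hsc$-tempered solution of $(-\hsc^2\Delta-1)v=0$, whose wavefront set is therefore invariant under the Hamiltonian flow $\varphi_t$ as long as the flow stays inside $\domain^+$, and which has no ``incoming'' mass along $\Gamma$ (i.e. $\xi\cdot n(x)\ge 0$ for $(x,\xi)\in\operatorname{WF}_\hsc v$ with $x\in\Gamma$). Concretely, fix $(x_0,\xi_0)\in\operatorname{WF}_\hsc v$ with $x_0\in\overline\domain\cap\domain^+$. By propagation of singularities for the semiclassical Helmholtz operator (elliptic regularity off the characteristic set forces $|\xi_0|=1$, and along the characteristic set $\operatorname{WF}_\hsc v$ is a union of maximally extended bicharacteristics inside $\domain^+$), the backward flow $t\mapsto \varphi_t(x_0,\xi_0)=(x_0+2t\xi_0,\xi_0)$ — equivalently, after reparametrising, the straight ray $t\mapsto x_0+t\xi_0$ — lies in $\operatorname{WF}_\hsc v$ for all $t<0$ until the ray leaves $\domain^+$.

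The main step is then to show this backward ray must exit $\domain^+$ through $\widehat\Gamma=\partial\domain\setminus\Gamma$, i.e. that there is some $t<0$ with $x_0+t\xi_0\in\widehat\Gamma$. Trace the ray backward from $x_0$. It cannot remain in the open bounded set $\domain^+$ forever, so it reaches $\partial\domain^+$ at some first (negative) time. Since $\domain^+\supset\domain\cup\Gamma$ and $\partial\domain^+\cap(\partial\domain\setminus\Gamma)=\partial\domain\setminus\Gamma=\widehat\Gamma$, the boundary $\partial\domain^+$ decomposes into the ``old'' part $\widehat\Gamma$ (which is also part of $\partial\domain$) and a ``new'' part lying strictly outside $\overline\domain$ across $\Gamma$. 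The key observation is that to pass from $\overline\domain$ into the region of $\domain^+$ beyond $\Gamma$, the ray must cross $\Gamma$ itself; but at a crossing point $(x,\xi_0)$ with $x\in\Gamma$ one has $(x,\xi_0)\in\operatorname{WF}_\hsc v$ (by flow-invariance), so the outgoing condition forces $\xi_0\cdot n(x)\ge 0$, meaning the ray is moving \emph{forward} out of $\domain$ there — it cannot be entering $\domain$ going backward in $t$. Hence the backward ray, which starts in $\overline\domain$, can never be on the far side of $\Gamma$ at a time before it last touched $\Gamma$; tracking it back it stays in $\overline\domain\cap\domain^+$ until it must hit $\widehat\Gamma$. (One should handle the degenerate tangential case $\xi_0\cdot n(x)=0$ on $\Gamma$ and rays that graze $\Gamma$, but these contribute a flow that stays in $\overline{\domain}$ and still eventually reaches $\widehat\Gamma$ since $\domain$ is bounded and convex-in-strips; alternatively, invoke that in this rectangular geometry $\Gamma$ is flat so a ray tangent to $\Gamma$ lies inside $\partial\domain$ and runs along it until a corner, which lies in $\widehat\Gamma$.)

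I would then package the argument as: (1) cite Definition \ref{def:outgo} to get the $\hsc$-tempered extension and the sign condition; (2) invoke elliptic regularity and propagation of singularities (e.g. \cite[Theorem E.44]{DyZw:19} or the standard statement for $-\hsc^2\Delta-1$) to conclude $\operatorname{WF}_\hsc v$ is flow-invariant in $T^*\domain^+$ and supported on $\{|\xi|=1\}$; (3) run the backward ray from $(x_0,\xi_0)$ and use boundedness of $\domain^+$ to get a first exit time; (4) use the sign condition on $\Gamma$ to rule out the ray exiting through the ``new'' part of $\partial\domain^+$, leaving $\widehat\Gamma$ as the only possibility.

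The main obstacle I anticipate is step (4): carefully arguing that the backward bicharacteristic starting in $\overline\domain\cap\domain^+$ cannot ``escape'' across $\Gamma$ into the enlarged region. This requires combining the flow-invariance of $\operatorname{WF}_\hsc v$ with the outgoing sign condition at every point of $\Gamma$ that the ray meets, and being a little careful about glancing/tangential rays and about the corners of the rectangle $\domain$ (where $\Gamma_\leftsubscript$, $\Gamma_t$, $\Gamma_b$ meet). In the present flat geometry these subtleties are mild — a straight ray either hits $\Gamma$ transversally (and then the sign condition applies directly) or is parallel to one of the flat pieces of $\partial\domain$ — so the degenerate cases can be dispatched by an explicit geometric check rather than a general microlocal argument.
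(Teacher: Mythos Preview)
Your approach is essentially the same as the paper's: both argue by contradiction, use flow-invariance of $\operatorname{WF}_\hsc v$ inside $\domain^+$ (via propagation of singularities), observe that the backward ray must leave $\overline\domain$ through $\Gamma$ if it avoids $\widehat\Gamma$, and derive a contradiction with the sign condition in Definition~\ref{def:outgo} at the crossing point. The paper's proof is simply more terse---it works directly with $\partial\domain$ rather than $\partial\domain^+$, asserts that at the crossing point $x_1\in\Gamma$ one necessarily has $\xi_0\cdot n(x_1)<0$ (since $x_0\in\overline\domain$), and cites \cite[Theorem~E.47]{DyZw:19} for propagation---while you frame the same contradiction via the ``new'' part of $\partial\domain^+$; you are also more explicit than the paper about the tangential/corner edge cases, which the paper leaves implicit.
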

\begin{proof}
If it is not the case, there exists $(x_0, \xi_0) \in  \operatorname{WF}_\hsc v \cap T^* \big(\overline \domain \cap \domain^+\big)$ so that $\forall t<0$, $x_0 + t \xi_0 \notin \widehat \Gamma$. Therefore, there exists $\tau < 0$ so that  $x_1 := x_0 + \tau \xi_0 \in \Gamma$. As $x_0 \in \overline\domain$, we have necessarily $\xi_0 \cdot n(x_1) <0$
(where, as in Definition \ref{def:outgo}, $n(x_1)$ is the normal at $x_1$ pointing out of $\domain$). On the other hand, as the wavefront set is invariant under the Hamilton flow (this follows, for example, from propagation 
of singularities, \cite[Theorem E.47]{DyZw:19}), $(x_1, \xi_0) \in \operatorname{WF}_\hsc v$. This is a contradiction with the outgoingness of $v$ (Definition \ref{def:outgo}).
\end{proof}

\begin{lem} \label{lem:hyperint}
Assume that $v$ is outgoing near $\Gamma \in \{ \Gamma_s, \Gamma'_s \}$ (in the sense of Definition \ref{def:outgo}). Then, for any $M>0$ and $\epsilon >0$, there exists $c>0$ so that
\begin{equation}
\operatorname{WF}_\hsc (v) \cap T^* \domain^+ \cap T^*\Big((\epsilon, \mathsf{d_l} + \mathsf{d_r} - \epsilon) \times (-M, \height + M)\Big) \subset \big\{(x, (\xi_1, \xi')), \; |\xi_1| \geq c \big\}.
\end{equation}
\end{lem}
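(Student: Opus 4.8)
\textbf{Plan for the proof of Lemma \ref{lem:hyperint}.}
The strategy is to argue by contradiction using Lemma \ref{lem:outflowout} together with the fact that the wavefront set is contained in the characteristic variety of the operator. Suppose the conclusion fails; then for some $M>0$, $\epsilon>0$, and along a sequence $c_n\to 0$, there exist $(x_n,\xi_n)\in\operatorname{WF}_\hsc(v)\cap T^*\domain^+$ with $x_n$ in the compact box $[\epsilon,\mathsf{d_l}+\mathsf{d_r}-\epsilon]\times[-M,\height+M]$ and $|(\xi_n)_1|<c_n$. Since $(-\hsc^2\Delta-1)v=0$, the wavefront set lies in $\{|\xi|=1\}$ (the characteristic set), so $|\xi_n|=1$ and hence, after passing to a subsequence, $(x_n,\xi_n)\to(x_\infty,\xi_\infty)$ with $|\xi_\infty|=1$ and $(\xi_\infty)_1=0$; that is, $\xi_\infty=(0,\pm1)$ is purely vertical. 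Because the wavefront set is closed, $(x_\infty,\xi_\infty)\in\operatorname{WF}_\hsc(v)$, and $x_\infty$ still lies in the same closed box, so in particular $x_\infty\in\overline\domain\cap\domain^+$ with $x_\infty$ away from the vertical sides $\Gamma_\leftsubscript,\Gamma_\rightsubscript$ (distance at least $\epsilon$ from each).

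Now apply Lemma \ref{lem:outflowout}: $(x_\infty,\xi_\infty)\in\operatorname{WF}_\hsc v\cap T^*(\overline\domain\cap\domain^+)$ forces the existence of $t<0$ with $x_\infty+t\xi_\infty\in\widehat\Gamma=\partial\domain\setminus\Gamma$. The key point is that the backward ray from $x_\infty$ in the direction $\xi_\infty=(0,\pm1)$ is \emph{vertical}: it moves only in the $x'$-coordinate while keeping $x_1$ fixed at its value in $[\epsilon,\mathsf{d_l}+\mathsf{d_r}-\epsilon]$. Hence this ray can only meet the top or bottom edges $\Gamma_t,\Gamma_b$, never the left/right edges $\Gamma_\leftsubscript,\Gamma_\rightsubscript,\Gamma_\middlesubscript$ (and never $\Gamma_\middlesubscript^+$ since $x_1\ne\mathsf{d_\leftsubscript}$ is not guaranteed — but $\Gamma_\middlesubscript$ is not part of $\partial\domain$ anyway, so this is irrelevant). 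But for both choices $\Gamma=\Gamma_s$ and $\Gamma=\Gamma'_s$ we have $\Gamma_t\cup\Gamma_b\subset\Gamma$, so $\widehat\Gamma$ contains no part of the top or bottom edges. Therefore a vertical backward ray from $x_\infty$ cannot reach $\widehat\Gamma$ at all, contradicting Lemma \ref{lem:outflowout}. This contradiction establishes the lemma.

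\emph{Remarks on the execution.} The one technical subtlety to handle carefully is the compactness/closedness step: one must ensure the limiting point $x_\infty$ still lies strictly away from the vertical sides, which is automatic because the box $[\epsilon,\mathsf{d_l}+\mathsf{d_r}-\epsilon]\times[-M,\height+M]$ is closed and contains all the $x_n$, hence $x_\infty$; and one must use that $\operatorname{WF}_\hsc(v)$ is closed (immediate from Definition \ref{def:WF}, since the complement is manifestly open) together with $\operatorname{WF}_\hsc(v)\subset\{|\xi|^2=1\}$ (elliptic regularity / the fact that $v$ solves $(-\hsc^2\Delta-1)v=0$ in $\domain^+$). I would also note explicitly that the argument does not actually require a limiting procedure if one prefers: for any $(x,\xi)\in\operatorname{WF}_\hsc v$ in the box with $|\xi|=1$, Lemma \ref{lem:outflowout} yields $t<0$ with $x+t\xi\in\widehat\Gamma$; since $\widehat\Gamma\subset\Gamma_\leftsubscript\cup\Gamma_\rightsubscript$ lies on the vertical lines $x_1=0$ and $x_1=\mathsf{d_\leftsubscript}+\mathsf{d_\rightsubscript}$, and $x_1\in[\epsilon,\mathsf{d_\leftsubscript}+\mathsf{d_\rightsubscript}-\epsilon]$, we need $|t\,\xi_1|\geq\epsilon$, hence $|\xi_1|\geq\epsilon/|t|$; bounding $|t|$ above by the transit time across the box (which is controlled by $M$, $\height$, and the fact that $|\xi_1|$ small forces $|\xi'|\approx1$, so the ray exits through top or bottom in time $O(M+\height)$ — but those edges are \emph{not} in $\widehat\Gamma$) gives a quantitative lower bound $c=c(M,\epsilon,\height,\mathsf{d_\leftsubscript},\mathsf{d_\rightsubscript})>0$. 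The main obstacle, such as it is, is simply organizing the case distinction between $\Gamma_s$ and $\Gamma'_s$ and observing that in both cases $\Gamma_t\cup\Gamma_b\subset\Gamma$, so that a near-vertical backward ray is trapped until it hits the top or bottom, where the outgoing condition has already removed any mass; everything else is bookkeeping with the geometry of Figure \ref{fig:model}.
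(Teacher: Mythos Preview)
Your approach is essentially the same as the paper's: combine the characteristic-set inclusion $\operatorname{WF}_\hsc v\subset\{|\xi|=1\}$ with the backward-ray argument of Lemma~\ref{lem:outflowout} to see that any wavefront point in the box must flow back to the vertical edges $\widehat\Gamma\subset\Gamma_\leftsubscript\cup\Gamma_\rightsubscript$, which forces $|\xi_1|$ to be bounded below by a geometric constant. The paper records this tersely as ``by a similar proof to the one of Lemma~\ref{lem:outflowout}'' to get $|\xi_1|/|\xi|\geq c$, then uses $|\xi|=1$.

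There is, however, one small gap in your write-up. You invoke Lemma~\ref{lem:outflowout} directly, but that lemma is stated only for points $x\in\overline\domain\cap\domain^+$, whereas the box $(\epsilon,\mathsf{d_\leftsubscript}+\mathsf{d_\rightsubscript}-\epsilon)\times(-M,\height+M)$ intersected with $\domain^+$ can contain points with $x_2\notin[0,\height]$ (since $\domain^+\supset\Gamma_t\cup\Gamma_b$ contains a neighbourhood of the horizontal edges). Your sentence ``so in particular $x_\infty\in\overline\domain\cap\domain^+$'' is therefore not justified. The fix is exactly what the paper signals by writing ``a similar proof'': re-run the argument of Lemma~\ref{lem:outflowout} rather than quote its conclusion. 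Concretely, if $x$ lies above $\Gamma_t$ with $\xi=(0,\pm1)$, then either the forward flow meets $\Gamma_t$ with $\xi\cdot n<0$ (contradicting outgoingness), or the backward flow enters $\overline\domain$ through $\Gamma_t$ along a vertical ray, at which point the original Lemma~\ref{lem:outflowout} contradiction applies. With that adjustment your argument is complete and matches the paper's.
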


\begin{proof}
By a similar proof to the one of Lemma \ref{lem:outflowout}, there exists $c(\epsilon, M)>0$ so that $\operatorname{WF}_\hsc (v) \cap T^* \domain^+ \cap T^*\big((\epsilon, \mathsf{d_l} + \mathsf{d_r} - \epsilon) \times (-M, \height + M)\big)\subset \{(x, (\xi_1, \xi')), \; |\xi_1|/|\xi| \geq c \}$. The result follows using the additional fact that $\operatorname{WF_\hsc} v \cap T^* \domain^+ \subset \{|\xi|^2 = 1\}$ since $(-\hsc^2 \Delta - 1)v = 0$ in $\domain^+$.
\end{proof} 

By the definition of $(\operatorname{WF}_\hsc v)^c$ and the definition of $\mu$, 
 $\operatorname{supp }\mu \subset \operatorname{WF}_\hsc v$ (if $(x_0, \xi_0) \notin \operatorname{WF}_\hsc v$, taking 
 $b$ as in Definition \ref{def:WF} gives $\mu(b) = 0$ using the fact that $b(x, \hsc D_x) v = O(\hsc^\infty)$ in $L^2$, and hence $(x_0, \xi_0) \notin \operatorname{supp} \mu$). Therefore, 
an immediate consequence of Lemma \ref{lem:outflowout} is the following, where
we recall the definition of $\mathcal B_{\rm out}$ from Definition \ref{def:geocor}.

\begin{cor} \label{cor:suppmu}
Assume that $v$ is outgoing near $\Gamma \in \{ \Gamma_s, \Gamma'_s \}$ and let $\widehat \Gamma := \partial \domain \backslash \Gamma$. 
Then, for any defect measure of $v$, 
$$
\operatorname{supp }\mu  \cap T^*\big(\overline \domain \cap \domain^+\big) \subset \mathcal B_{\rm out}(T^*\widehat \Gamma \cap \mathcal H).
$$
\end{cor}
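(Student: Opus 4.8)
The plan is to chain together three facts that are already on the table: the inclusion $\operatorname{supp}\mu\subset\operatorname{WF}_\hsc v$ recalled in the paragraph just before the corollary; Lemma \ref{lem:outflowout}, which puts $\operatorname{WF}_\hsc v$ over $\overline\domain\cap\domain^+$ inside the set of $(x,\xi)$ whose backward ray meets $\widehat\Gamma$; and the elliptic constraint $\operatorname{WF}_\hsc v\subset\{|\xi|^2=1\}$, which holds because $-\hsc^2\Delta-1$ is elliptic off $\{|\xi|^2=1\}$ and $(-\hsc^2\Delta-1)v=0$ in $\domain^+$. Granting these, all that remains is to translate the description ``$|\xi|^2=1$ and $x+t\xi\in\widehat\Gamma$ for some $t<0$'' into membership in $\mathcal B_{\rm out}(T^*\widehat\Gamma\cap\mathcal H)$ as defined in Definition \ref{def:geocor}.

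Concretely, I would fix $(x,\xi)\in\operatorname{supp}\mu\cap T^*(\overline\domain\cap\domain^+)$, so that $|\xi|^2=1$ and $y:=x+t_0\xi\in\widehat\Gamma$ for some $t_0<0$, and then establish that $(y,\xi)$ is a hyperbolic point with outgoing conormal. First I would observe that $\xi$ is not tangent to $\widehat\Gamma$ at $y$: since $\widehat\Gamma$ is a union of vertical segments, tangency forces $\xi$ vertical, and a vertical ray through a point of $\overline\domain$ meets $\widehat\Gamma$ only if it starts on one of those vertical lines, i.e. on $\widehat\Gamma$ itself, which is impossible because $x\in\domain^+$ while $\widehat\Gamma\subset\partial\domain^+$. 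Next, reusing verbatim the convexity argument from the proof of Lemma \ref{lem:outflowout}: since $x=y+(-t_0)\xi$ with $-t_0>0$ lies in the convex set $\overline\domain$, $\xi$ cannot point strictly out of $\domain$ at $y$, so non-tangency gives $\xi\cdot n(y)<0$. In the normal coordinates $(s,x')$ near $\widehat\Gamma$ at $y$, writing $\xi=(\zeta,\xi')$, this reads $\zeta>0$, and combined with $\zeta^2=1-|\xi'|^2=r(y',\xi')$ it yields $r(y',\xi')>0$; hence $(y',\xi')\in\mathcal H$ and $\zeta=\zeta_{\rm out}(y',\xi')$.

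To finish, I would use that the metric is Euclidean near $\widehat\Gamma$, so the Hamilton flow in these coordinates is $\varphi_\tau(q,p)=(q+2\tau p,p)$, whence $\varphi_{-t_0/2}\big(0,y',\xi',\zeta=\zeta_{\rm out}(y',\xi')\big)=(x,\xi)$; since $x\in\domain^+$, this point lies in $T^*\domain^+$, so $(x,\xi)\in\mathcal B_{\rm out}(\{(y',\xi')\})\subset\mathcal B_{\rm out}(T^*\widehat\Gamma\cap\mathcal H)$, which is the claim. I expect the only genuinely delicate point to be the bookkeeping around $\domain^+$ and $\widehat\Gamma$ --- the fact that $\domain^+$ extends $\domain$ across $\Gamma$ but not across $\widehat\Gamma$, so $\widehat\Gamma\subset\partial\domain^+$ --- since this is precisely what kills the glancing/tangent case and fixes the sign $\zeta=+\sqrt r$; everything else is a direct appeal to Lemma \ref{lem:outflowout} and the support inclusion for $\mu$.
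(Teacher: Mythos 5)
Your proposal is correct and takes essentially the same approach as the paper: the paper likewise chains $\operatorname{supp}\mu\subset\operatorname{WF}_\hsc v$ with Lemma~\ref{lem:outflowout} and dismisses the remaining translation into $\mathcal B_{\rm out}(T^*\widehat\Gamma\cap\mathcal H)$ as ``an immediate consequence.'' What you have done is spell out that ``immediate'' step --- the elliptic restriction to $\{|\xi|^2=1\}$, the exclusion of glancing via the verticality of $\widehat\Gamma$ and $\widehat\Gamma\subset\partial\domain^+$, and the sign of $\zeta$ from convexity --- and all of these details are correct.
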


We now define, for $(\mathrm x, \mathrm y) \in \{ i, r, l \}^2
$
\begin{align*}
\mathcal B^{\mathrm x}_{\mathrm y\rightarrow \mathrm x} &:= \big\{ \rho \in T^*\Gamma_{\mathrm x}\cap \mathcal H, \; \exists t<0, \; \pi_x \varphi_t(p^{\rm in})(\rho) \in \Gamma_{\mathrm y} \big\}, \\
\mathcal B^{\mathrm x}_{\mathrm x\rightarrow \mathrm y} &:= \big\{ \rho \in T^*\Gamma_{\mathrm x}\cap \mathcal H, \; \exists t>0, \; \pi_x \varphi_t(p^{\rm out})(\rho) \in \Gamma_{\mathrm y} \big\},
\end{align*}
with $p^{\rm in}$ and $p^{\rm out}$ exchanged in the case $(\mathrm x, \mathrm y) = (i, r)$. 

The sets $\mathcal B^{\mathrm x}_{\mathrm y\rightarrow \mathrm x}$ and $\mathcal B^{\mathrm x}_{\mathrm x\rightarrow \mathrm y}$ are, respectively, the elements of $T^*\Gamma_{\mathrm x}$ reached by the flowout of $\Gamma_{\mathrm y}$, and the elements of $T^*\Gamma_{\mathrm x}$
whose flowout reaches $\Gamma_{\mathrm y}$. The reason why we have to define the case $(\mathrm x, \mathrm y) = (i, r)$ separately is because the measures are taken from $[0,\height ]\times[0,\mathsf{d_\leftsubscript}]$ (i.e., from the left) on $\Gamma_i$, hence when working in $[0,\height ] \times [\mathsf{d_\leftsubscript}, \mathsf{d_\leftsubscript} + \mathsf{d_\rightsubscript}]$ the roles of $\zeta_{\rm in}$ and  $\zeta_{\rm out}$ on $T^*\Gamma_i$ (and therefore of $p^{\rm in}$ and $p^{\rm out}$) are exchanged.

\begin{lem}\label{lem:muinlup} 
$\;$
 \begin{enumerate}
\item If $v$ is outgoing near $\Gamma_s$ (as in Model 1), then, for any defect measures of $v$, 
\begin{enumerate}
\item $ \mu_{\rm in}^{\leftsubscript} = 0$, \label{i:muinlup1}
\item $\mu_{\rm out}^{\middlesubscript}= 0$, \label{i:muinlup2}
\item $\mu_{\rm out}^{\rightsubscript}= 0$, \label{i:muinlup3}
\end{enumerate}
\item If $v$ is outgoing near $\Gamma_t \cup \Gamma_b$  (as in Model 2),  then, for any defect measures of $v$, \\
for any $(\mathrm x, \mathrm y) \in \{ (\rightsubscript,\leftsubscript), (\leftsubscript, \rightsubscript), (\middlesubscript, \leftsubscript), (\leftsubscript, \middlesubscript), (\rightsubscript, i) \}$
\begin{enumerate}
\item $\operatorname{supp} \mu_{\rm in}^{\mathrm x} \subset \mathcal B^{\mathrm x}_{{\mathrm y}\rightarrow {\mathrm x}}$,  \label{i:muinlupa}
\item $\operatorname{supp}\mu_{\rm out}^{\mathrm x} \subset \mathcal B^{\mathrm x}_{\mathrm x\rightarrow \mathrm y}$, \label{i:muinlupd}
\end{enumerate}
with $\operatorname{supp} \mu_{\rm in, out}^{i}$ replaced with ${T^* \Gamma_i} \cap \operatorname{supp} \mu_{\rm in, out}^{i}$ in the case $\mathrm x = i$.
In addition,
\begin{enumerate}
\setcounter{enumii}{2}
\item $T^* \Gamma_i \,\cap\, \operatorname{supp} \mu_{\rm out}^{ i} \subset \mathcal B^{ i}_{{ r}\rightarrow { i}}$. \label{i:muinlupa2}
\end{enumerate}
\end{enumerate}
\end{lem}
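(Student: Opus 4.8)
The plan is to deduce everything from the two wavefront-set lemmas just proved (Lemmas \ref{lem:outflowout} and \ref{lem:hyperint}), together with Corollary \ref{cor:suppmu} and the structure of the Hamiltonian flow $\varphi_t$ in the rectangle, which here consists of straight lines $x(t) = x_0 + 2t\xi_0$ with $\xi$ constant. Since $\operatorname{supp}\mu \subset \operatorname{WF}_\hsc v$ (as noted before Corollary \ref{cor:suppmu}) and since $\mu$ is related to $\muin,\muout$ on each interface via Lemma \ref{lem:interpr}, controlling the location of $\operatorname{WF}_\hsc v$ controls the supports of the boundary measures. The key point throughout is that an incoming covector $\rho$ on $\Gamma_{\mathrm x}$ (i.e.\ one carrying $\zeta = \zeta_{\rm in}$, flowing \emph{into} $\domain$ in the backward sense) corresponds, via $p^{\rm in}$ and backward flow, to a point of $\operatorname{WF}_\hsc v$ just inside $\domain$; by Lemma \ref{lem:outflowout} that point must have come from $\widehat\Gamma = \partial\domain\setminus\Gamma$, i.e.\ its backward flowout must hit $\widehat\Gamma$. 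Translating ``backward flowout hits $\widehat\Gamma$'' into ``hits one of the allowed $\Gamma_{\mathrm y}$'' is exactly the definition of $\mathcal B^{\mathrm x}_{\mathrm y\to\mathrm x}$.

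For Part (1): $\widehat\Gamma = \partial\domain\setminus\Gamma_s = \Gamma_\leftsubscript$. An incoming covector at $\Gamma_\leftsubscript$ points into $\domain$, i.e.\ $\zeta_{\rm in} < 0$ in the coordinate $s = x_1$, which means $\xi_1 > 0$; its backward flowout immediately exits through $\Gamma_\leftsubscript$ itself, hence never reaches $\widehat\Gamma=\Gamma_\leftsubscript$ in the sense required (it starts there), so by Lemma \ref{lem:outflowout} applied to the associated interior wavefront point one gets a contradiction unless that mass is absent — giving $\muin^\leftsubscript = 0$, item \eqref{i:muinlup1}. For \eqref{i:muinlup2} and \eqref{i:muinlup3}: an \emph{outgoing} covector on $\Gamma_\middlesubscript^+$ taken from the left, or on $\Gamma_\rightsubscript$, points in the direction of increasing $x_1$ away from $\Gamma_\leftsubscript$; its backward flowout would have to originate from $\widehat\Gamma=\Gamma_\leftsubscript$, but a straight line through an interior point with $\xi_1>0$ that hits $\Gamma_\middlesubscript$ or $\Gamma_\rightsubscript$ going forward came, going backward, from smaller $x_1$ — so it either came from $\Gamma_\leftsubscript$ (allowed) or exited through $\Gamma_t\cup\Gamma_b$. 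One must check that the outgoing measure on $\Gamma_\middlesubscript^+$ and $\Gamma_\rightsubscript$ vanishes; this uses that Corollary \ref{cor:suppmu} forces $\operatorname{supp}\mu$ into $\mathcal B_{\rm out}(T^*\Gamma_\leftsubscript\cap\mathcal H)$, and a ray in $\mathcal B_{\rm out}(T^*\Gamma_\leftsubscript\cap\mathcal H)$ has $\xi_1 > 0$, hence at $\Gamma_\middlesubscript^+$ (coordinate $s=-x_1$, so $\zeta=-\xi_1<0=\zeta_{\rm in}$) it is \emph{incoming}, not outgoing — so $\muout^\middlesubscript=0$; similarly at $\Gamma_\rightsubscript$ (coordinate $s=-x_1$, $\zeta=-\xi_1<0$) the ray is incoming, so $\muout^\rightsubscript=0$. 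Lemma \ref{lem:interpr} then converts these statements about $\operatorname{supp}\mu$ back into statements about the boundary measures.

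For Part (2): $\widehat\Gamma = \partial\domain\setminus(\Gamma_t\cup\Gamma_b) = \Gamma_\leftsubscript\cup\Gamma_\rightsubscript$, so now backward flowouts are allowed to hit \emph{either} vertical side. Fix $(\mathrm x,\mathrm y)$ from the listed pairs. For \eqref{i:muinlupa}: an incoming covector $\rho\in T^*\Gamma_{\mathrm x}\cap\mathcal H$ corresponds to an interior wavefront point $p^{\rm in}(\rho)$ just inside $\domain$, which by Lemma \ref{lem:outflowout} lies on a backward trajectory hitting $\widehat\Gamma$; since $\varphi_t$ is a straight line with $|\xi_1|$ bounded below away from the vertical sides by Lemma \ref{lem:hyperint}, the first time $t<0$ at which the trajectory meets $\partial\domain$ it meets $\Gamma_\leftsubscript\cup\Gamma_\rightsubscript$, and a sign-of-$\xi_1$ bookkeeping (using the coordinate conventions $s=x_1$ on $\Gamma_\leftsubscript$, $s=-x_1$ on $\Gamma_\rightsubscript$ and on $\Gamma_\middlesubscript^+$) pins down \emph{which} one: $\rho\in\mathcal B^{\mathrm x}_{\mathrm y\to\mathrm x}$ for the specified $\mathrm y$. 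The dual statement \eqref{i:muinlupd} about $\muout^{\mathrm x}$ is obtained the same way but applied to the outgoing covector, whose \emph{forward} flowout reaches $\partial\domain$, and since it cannot reach $\Gamma_s'=\Gamma_t\cup\Gamma_b$ while carrying mass (those are the absorbing sides, and $\operatorname{supp}\mu$ is confined by Corollary \ref{cor:suppmu} to rays emanating from $\widehat\Gamma=\Gamma_\leftsubscript\cup\Gamma_\rightsubscript$), it must reach $\Gamma_{\mathrm y}$: $\rho\in\mathcal B^{\mathrm x}_{\mathrm x\to\mathrm y}$. Item \eqref{i:muinlupa2} is the ``from the left'' version on $\Gamma_\middlesubscript^+$ restricted to $T^*\Gamma_i$: measures on $\Gamma_\middlesubscript^+$ are taken from the left (Notation \ref{not:defect}), so the roles of $\zeta_{\rm in}$ and $\zeta_{\rm out}$ are swapped there, which is why the pair $(\mathrm x,\mathrm y)=(i,r)$ gets the special treatment flagged in the paragraph before the lemma; the assertion is that the part of $\muout^i$ sitting over $\Gamma_i$ itself comes from the right subdomain, i.e.\ its backward flowout into $[0,\height]\times[\mathsf{d_\leftsubscript},\mathsf{d_\leftsubscript}+\mathsf{d_\rightsubscript}]$ hits $\Gamma_r$.

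\medskip

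The main obstacle, I expect, is not any single step but the careful and consistent tracking of sign conventions: which covectors count as ``incoming'' versus ``outgoing'' at each of $\Gamma_\leftsubscript,\Gamma_\rightsubscript,\Gamma_\middlesubscript^+$ under the coordinate choices $s=x_1$ or $s=-x_1$, and how this interacts with the ``measures taken from the left'' convention on $\Gamma_\middlesubscript^+$ (which is precisely what forces the exceptional role of the pair $(i,r)$). A second, more technical, point is that Lemma \ref{lem:outflowout} and Corollary \ref{cor:suppmu} are statements about $\operatorname{WF}_\hsc v$ and $\operatorname{supp}\mu$ in the interior of $\domain^+$, whereas the conclusions are about boundary measures on $T^*\Gamma_{\mathrm x}$; the bridge is Lemma \ref{lem:interpr}, and one has to make sure that the identification of $\operatorname{supp}\muin^{\mathrm x}$ (resp.\ $\operatorname{supp}\muout^{\mathrm x}$) with the trace on $\Gamma_{\mathrm x}$ of $\operatorname{supp}\mu$ along incoming (resp.\ outgoing) trajectories is valid even at the corners of the rectangle and at glancing rays — this is handled by first restricting to $\mathcal H$ (where $|\xi_1|$ is bounded below, Lemma \ref{lem:hyperint}, so no glancing) and by the localisation away from $x_1\in\{0,\mathsf{d_\leftsubscript}+\mathsf{d_\rightsubscript}\}$ that Lemma \ref{lem:hyperint} already builds in.
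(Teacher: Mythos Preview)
Your approach is the same as the paper's: combine Corollary \ref{cor:suppmu} (which confines $\operatorname{supp}\mu$ to $\mathcal B_{\rm out}(T^*\widehat\Gamma\cap\mathcal H)$) with Lemma \ref{lem:interpr} to convert this into constraints on $\muin,\muout$, plus sign bookkeeping under the coordinate conventions on each interface. The paper packages the bookkeeping via the set identities $\mathcal B^{\mathrm x}_{\mathrm y\to\mathrm x}=\pi_{\Gamma_{\mathrm x}}\big(\mathcal B_{\rm in}(T^*\Gamma_{\mathrm x}\cap\mathcal H)\cap\mathcal B_{\rm out}(T^*\Gamma_{\mathrm y}\cap\mathcal H)\cap\{x\in\Gamma_{\mathrm x}\}\big)$ and disjointness facts such as $\mathcal B_{\rm out}(T^*\Gamma_\middlesubscript\cap\mathcal H)\cap\mathcal B_{\rm out}(T^*\Gamma_\leftsubscript\cap\mathcal H)=\emptyset$, but the mechanism is identical. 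Your treatment of \eqref{i:muinlup2}, \eqref{i:muinlup3}, and Part (2) is correct and matches the paper.

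There is, however, a genuine sign error in your argument for \eqref{i:muinlup1}. You write ``$\zeta_{\rm in}<0$ in the coordinate $s=x_1$, which means $\xi_1>0$''; but on $\Gamma_\leftsubscript$ one has $\zeta=\xi_1$, so $\zeta_{\rm in}=-\sqrt r$ gives $\xi_1<0$, not $>0$. The incoming covector $p^{\rm in}(\rho)$ at $\Gamma_\leftsubscript$ points \emph{out of} $\domain$ (``incoming'' in Miller's sense means mass arriving at the boundary from the interior). Consequently your statement ``its backward flowout immediately exits through $\Gamma_\leftsubscript$'' is based on the wrong sign and the argument collapses. The correct reasoning is: rays through $p^{\rm in}(\rho)$ have $\xi_1<0$, whereas Corollary \ref{cor:suppmu} (Model 1, $\widehat\Gamma=\Gamma_\leftsubscript$) forces $\operatorname{supp}\mu\cap T^*(\overline\domain\cap\domain^+)$ into $\mathcal B_{\rm out}(T^*\Gamma_\leftsubscript\cap\mathcal H)$, which consists entirely of rays with $\xi_1>0$; these two families of rays are disjoint, so by Lemma \ref{lem:interpr} no $\mu$-mass sits along $\mathcal B_{\rm in}$ near $\Gamma_\leftsubscript$, hence $\muin^\leftsubscript=0$. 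You yourself flagged sign tracking as the main hazard; here it bit.
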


\begin{proof}
Part  (\ref{i:muinlup1}) follows from Corollary \ref{cor:suppmu} together with Lemma \ref{lem:interpr}.
Part  (\ref{i:muinlup2}) follows from the same results and the additional observation that
$$
\mathcal B_{\rm out}(T^* \Gamma_{\middlesubscript}\cap \mathcal H) \cap \mathcal B_{\rm out}(T^*\Gamma_{\leftsubscript} \cap \mathcal H) = \emptyset,
$$
and Part  (\ref{i:muinlup3}) is proven in the same way.
Part (\ref{i:muinlupa}) and (\ref{i:muinlupd}) for $(\mathrm x, \mathrm y) \in \{ (\rightsubscript,\leftsubscript), (\leftsubscript, \rightsubscript)\}$ follow again from Corollary \ref{cor:suppmu} and Lemma \ref{lem:interpr}, together  with the facts that
$$
\begin{cases}
\mathcal B^{\mathrm x}_{\mathrm y\rightarrow \mathrm x} = \pi_{\Gamma_{\mathrm x}}\Big( \mathcal B_{\rm in}(T^* \Gamma_{\mathrm x} \cap \mathcal H) \cap \mathcal B_{\rm out}(T^* \Gamma_{\mathrm y}\cap \mathcal H) \cap \{ x\in \Gamma_{\mathrm x}\}\Big), \\
\mathcal B^{\mathrm x}_{\mathrm x\rightarrow \mathrm y} = \pi_{\Gamma_{\mathrm x}}\Big( \mathcal B_{\rm out}(T^* \Gamma_{\mathrm x} \cap \mathcal H) \cap \mathcal B_{\rm in}(T^* \Gamma_{\mathrm y}\cap \mathcal H) \cap \{ x\in \Gamma_{\mathrm x}\}\Big).
\end{cases}
$$
The case $(\mathrm x, \mathrm y) = (r, i)$ follows from the case $(\mathrm x, \mathrm y) = (r, l)$ and the observations that $\mathcal B^{r}_{l \rightarrow r} \subset \mathcal B^{r}_{i \rightarrow r}$ and $\mathcal B^{r}_{r \rightarrow l} \subset \mathcal B^{r}_{r \rightarrow i}$.
The cases $(\mathrm x, \mathrm y)\in \{(\middlesubscript, \leftsubscript), (\leftsubscript, \middlesubscript)\}$, as well as Part (\ref{i:muinlupa2}), are shown in the same way
and using the additional facts that
$$
\mathcal B_{\rm out}(T^* \Gamma_{ l} \cap \mathcal H) \cap \mathcal B_{\rm out}(T^* \Gamma_{ i}\cap \mathcal H) = \emptyset, \hspace{0.5cm}
\mathcal B_{\rm out}(T^* \Gamma_{ r}\cap \mathcal H) \cap \mathcal B_{\rm in}(T^* \Gamma_{ i}\cap \mathcal H) = \emptyset,
$$
while also exchanging the roles of $\mu_{\rm in}$ and $\mu_{\rm out}$ in the case of Part (\ref{i:muinlupa2}).
\end{proof}

We now investigate how defect measures propagate from one interface to another. To do so, we let,  for 
$(\mathrm x, \mathrm y) \in \{{\rightsubscript, \middlesubscript, \leftsubscript}\}^2$,
$d_{\leftsubscript} := 0$, $d_{\rightsubscript} := \mathsf {d_\leftsubscript} + \mathsf{d_{\rightsubscript}}$, $d_{\middlesubscript} := \mathsf {d_\leftsubscript}$, and 
\begin{align*}
\Phi^{\rm in }_{\mathrm x \rightarrow \mathrm y} : \;  & \; T^*(\{d_{\mathrm x}\} \times \mathbb R) &\longrightarrow  &T^*(\{d_{\mathrm y}\} \times \mathbb R) \\ 
&(x',\xi') &\longmapsto
&\; \pi_{T^*(\{ d_{\mathrm y}\} \times \mathbb R)}\Big(\{ \varphi_{t}(0,x',\xi', \zeta_{{\rm in}}(x',\xi')), \; t<0 \} \cap \{ x \in \{ d_{\mathrm y}\}\times\mathbb R \}\Big),
\end{align*}
where $\zeta_{{\rm in}}$ is replaced by $\zeta_{{\rm out}}$ in the case $(\mathrm x, \mathrm y) = (\middlesubscript, \rightsubscript)$ (i.e., $\Phi^{\rm in }_{\mathrm x \rightarrow \mathrm y}$ maps mass on the interface $\mathrm x$ to mass on the interface $\mathrm y$ under the backward flow from incoming mass on $\mathrm x$)
and
\begin{align*}
\Phi^{\rm out }_{\mathrm x \rightarrow \mathrm y} : \;  & \; T^*(\{d_{\mathrm x}\} \times \mathbb R) &\longrightarrow  &T^*(\{d_{\mathrm y}\} \times \mathbb R) \\ 
&(x',\xi') &\longmapsto
&\; \pi_{T^*(\{ d_{\mathrm y}\} \times \mathbb R)}\Big(\{ \varphi_{t}(0,x',\xi', \zeta_{{\rm out}}(x',\xi')), \; t>0 \} \cap \{ x \in \{ d_{\mathrm y}\}\times\mathbb R \}\Big)
\end{align*}
where $\zeta_{{\rm out}}$ is replaced by $\zeta_{{\rm in}}$ in the case $(\mathrm x, \mathrm y) = (\middlesubscript, \rightsubscript)$ 
(i.e., $\Phi^{\rm out }_{\mathrm x \rightarrow \mathrm y}$ maps mass on the interface $\mathrm x$ to mass on the interface $\mathrm y$ under the forward flow from outgoing mass on $\mathrm x$). 
Once again, the reason why we had to define the case $(\mathrm x, \mathrm y) = (i, r)$ separately is because the measures on $\Gamma_i$ are taken from the left.

In the following lemma, we recall the notation that $a \mu(\mathcal{C}) := \int_{\mathcal C} a d\mu$ for a measure $\mu$. 

\begin{lem} \label{lem:ltoi}
For any defect measure of $v$ and any $(\mathrm x, \mathrm y) \in \{\rightsubscript, \middlesubscript, \leftsubscript\}
\times\{{\rightsubscript, \middlesubscript, \leftsubscript}\},
$
$$
\begin{cases}
 \mu^{\mathrm x}_{\rm out}=   \mu^{\mathrm y}_{\rm in}\circ \Phi^{\rm out}_{\mathrm x \rightarrow \mathrm y} \hspace{0.3cm} &\text{ on  }   \mathcal B^{\mathrm x}_{\mathrm x \rightarrow \mathrm y}, \\
\mu^{\mathrm x}_{\rm in}=   \mu^{\mathrm y}_{\rm out} \circ \Phi^{\rm in}_{\mathrm x \rightarrow \mathrm y} \hspace{0.3cm} &\text{ on  }   \mathcal B^{\mathrm x}_{\mathrm y \rightarrow \mathrm x},
\end{cases}
$$
with $\mu^{i}_{\rm out}$ and  $\mu^{i}_{\rm in}$ exchanged in the cases $(\mathrm x, \mathrm y)\in \{ (\middlesubscript, \rightsubscript), (\rightsubscript, \middlesubscript) \}$.
In particular, for any $a\in C^\infty( T^*\mathbb R)$ not depending on $x$
$$
\begin{cases}
a \mu^{\mathrm x}_{\rm out}(\mathcal B) =  a \mu^{\mathrm y}_{\rm in}(\Phi^{\rm out}_{\mathrm x \rightarrow \mathrm y} \mathcal B), \hspace{0.3cm} \tfa  \mathcal B \subset \mathcal B^{\mathrm x}_{\mathrm x \rightarrow \mathrm y}, \\
a \mu^{\mathrm x}_{\rm in}(\mathcal B) =  a \mu^{\mathrm y}_{\rm out}(\Phi^{\rm in}_{\mathrm x \rightarrow \mathrm y} \mathcal B), \hspace{0.3cm} \tfa  \mathcal B \subset \mathcal B^{\mathrm x}_{\mathrm y \rightarrow \mathrm x},
\end{cases}
$$
with $\mu^{i}_{\rm out}$ and  $\mu^{i}_{\rm in}$ exchanged in the cases $(\mathrm x, \mathrm y)\in \{ (\middlesubscript, \rightsubscript), (\rightsubscript, \middlesubscript) \}$. 
\end{lem}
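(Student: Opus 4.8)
The plan is to exploit the structure theorem for defect measures in geodesic coordinates (Lemma \ref{lem:interpr}) together with Corollary \ref{cor:suppmu}, which confines the support of $\mu$ inside the flowout of $\widehat\Gamma\cap\mathcal H$. The key point is that, on the set $\mathcal B^{\mathrm x}_{\mathrm x\rightarrow\mathrm y}$, the bicharacteristic leaving $\Gamma_{\mathrm x}$ outgoing (i.e.\ emanating from $(x',\xi',\zeta_{\rm out})$) reaches $\Gamma_{\mathrm y}$ after positive time without being absorbed; since between the two interfaces there is no boundary condition (the strip $\{d_{\mathrm x}<x_1<d_{\mathrm y}\}$, suitably ordered, sits inside $\domain^+$ away from $\Gamma_s$ or $\Gamma'_s$), the measure $\mu$ is transported rigidly along that segment of bicharacteristic. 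When the bicharacteristic hits $\Gamma_{\mathrm y}$ transversally (which happens precisely on $\mathcal B^{\mathrm x}_{\mathrm x\rightarrow\mathrm y}$, where $\xi_1\neq 0$ and the point lies in $\mathcal H$), the piece of $\mu$ there is \emph{incoming} at $\Gamma_{\mathrm y}$, hence is governed by $\muin^{\mathrm y}$. Matching the two representations of the same measure $\mu$ in the overlap region $\mathcal B_{\rm out}(T^*\Gamma_{\mathrm x}\cap\mathcal H)\cap\mathcal B_{\rm in}(T^*\Gamma_{\mathrm y}\cap\mathcal H)\cap T^*\domain^+$ gives the identity of push-forwards.

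Concretely, I would first fix $(\mathrm x,\mathrm y)$ and a Borel set $\mathcal B\subset\mathcal B^{\mathrm x}_{\mathrm x\rightarrow\mathrm y}$, and work in the open region $\mathcal O$ swept out by the outgoing flowout of $\pi_{\Gamma_{\mathrm x}}^{-1}\mathcal B\cap\{\zeta=\zeta_{\rm out}\}$ until it reaches $\{x_1=d_{\mathrm y}\}$. By Lemma \ref{lem:interpr} applied at $\Gamma_{\mathrm x}$, in geodesic coordinates $(\rho,t)$ along this flowout one has $\mu=\big(p^{\rm out}_*(2\sqrt r\,\muout^{\mathrm x})\big)\otimes dt$ on $\mathcal O$. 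By Lemma \ref{lem:interpr} applied at $\Gamma_{\mathrm y}$ (from the appropriate side, with the left/right convention that forces the $\mathrm x=i$ swap when $\mathrm y=r$ or $\mathrm x=r,\mathrm y=i$), on the same region $\mathcal O$ one also has $\mu=\big(p^{\rm in}_*(2\sqrt r\,\muin^{\mathrm y})\big)\otimes dt'$ in the geodesic coordinates based at $\Gamma_{\mathrm y}$ (the flow arrives \emph{incoming} at $\Gamma_{\mathrm y}$). Since the Hamilton flow is just $\dot x=2\xi$, $\dot\xi=0$, the two geodesic parametrisations of $\mathcal O$ differ only by a shift of the time variable and by the diffeomorphism $\Phi^{\rm out}_{\mathrm x\rightarrow\mathrm y}$ on the transverse slice; the Jacobian factors $2\sqrt r$ are exactly what make the transverse measures match without an extra density. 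Integrating both expressions for $\mu$ against a test symbol of the form $a(x',\xi')\,b(t)$ (with $a$ supported in $\mathcal B$ and $\int b\,dt$ normalised to cancel) and using the change-of-variables formula \eqref{eq:puschchange} yields $a\,\muout^{\mathrm x}(\mathcal B)=a\,\muin^{\mathrm y}(\Phi^{\rm out}_{\mathrm x\rightarrow\mathrm y}\mathcal B)$, which is the second displayed conclusion; the first displayed conclusion ($\mu^{\mathrm x}_{\rm out}=\mu^{\mathrm y}_{\rm in}\circ\Phi^{\rm out}_{\mathrm x\rightarrow\mathrm y}$) is the case $a\equiv 1$, i.e.\ equality of the measures themselves. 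The incoming-to-outgoing statement on $\mathcal B^{\mathrm x}_{\mathrm y\rightarrow\mathrm x}$ is proved identically, running the flow backward, with the roles of $p^{\rm in}$ and $p^{\rm out}$ interchanged; and the cases $(\mathrm x,\mathrm y)\in\{(\middlesubscript,\rightsubscript),(\rightsubscript,\middlesubscript)\}$ require swapping $\mu^i_{\rm in}\leftrightarrow\mu^i_{\rm out}$ because, on $\Gamma_\middlesubscript^+$, the measures are taken from the left, so that relative to the strip $[\mathsf d_\leftsubscript,\mathsf d_\leftsubscript+\mathsf d_\rightsubscript]$ the conormal sign $\zeta_{\rm out}$ on $T^*\Gamma_i$ plays the role of an incoming direction and vice versa — exactly the convention already flagged in the definition of the $\Phi$'s and in Lemma \ref{lem:muinlup}.

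The main obstacle I anticipate is the careful bookkeeping of orientations and of the left/right conventions on $\Gamma_\middlesubscript^+$: one must check, case by case, that the flowout segment between $\{x_1=d_{\mathrm x}\}$ and $\{x_1=d_{\mathrm y}\}$ genuinely stays inside $\domain^+$ (so that Lemma \ref{lem:interpr}, which assumes $v$ solves the Helmholtz equation on $\domain^+$, applies throughout), and that on the relevant sets the arrival at $\Gamma_{\mathrm y}$ is transversal and lands in $\mathcal H$ — this is guaranteed on $\mathcal B^{\mathrm x}_{\mathrm x\rightarrow\mathrm y}$ by Lemma \ref{lem:hyperint} and by the definition of these sets, but it needs to be invoked explicitly. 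A secondary, purely technical point is justifying the manipulation with the product-measure representation $\mu=\nu\otimes dt$ when the test symbol does not factor as $a(x',\xi')b(t)$; this is handled by a standard density/monotone-class argument reducing to product symbols, and by noting that both sides of the claimed identity are Borel measures on $T^*\Gamma_{\mathrm x}$, so it suffices to check equality on the generating sets $\Phi$-related to slices of $\mathcal O$. Once the conventions are pinned down, each individual case is a short computation.
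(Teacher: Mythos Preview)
Your proposal is correct and follows essentially the same route as the paper: apply Lemma \ref{lem:interpr} at both $\Gamma_{\mathrm x}$ and $\Gamma_{\mathrm y}$ to write $\mu$ as a product $\big(p^{\rm out}_*(2\sqrt r\,\muout^{\mathrm x})\big)\otimes dt=\big(p^{\rm in}_*(2\sqrt r\,\muin^{\mathrm y})\big)\otimes dt'$ in the overlap, observe that the geodesic time parameters differ only by a shift so $dt=dt'$, and then unwind via the change-of-variables formula \eqref{eq:puschchange}, using at the end that $r$ depends only on $\xi'$ and the straight-line flow between parallel interfaces preserves $\xi'$, so that the $2\sqrt r$ factors cancel. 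The only minor remark is that Corollary \ref{cor:suppmu} and Lemma \ref{lem:hyperint} are not actually needed here --- the sets $\mathcal B^{\mathrm x}_{\mathrm x\rightarrow\mathrm y}$ already sit inside $\mathcal H$ by definition, and the lemma is a pure propagation statement not requiring any support constraint on $\mu$.
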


\begin{proof} We prove the lemma for $\Phi_{\mathrm x \rightarrow \mathrm y} := \Phi^{\rm out}_{\mathrm x \rightarrow \mathrm y}$, the proof for 
$\Phi^{\rm in}_{\mathrm x \rightarrow \mathrm y}$ being similar.
In addition, we assume that $(\mathrm x, \mathrm y) \in \{ (\rightsubscript,\leftsubscript), (\leftsubscript, \rightsubscript), (\middlesubscript, \leftsubscript), (\leftsubscript, \middlesubscript)\}$; the proof in the cases $(\mathrm x, \mathrm y)\in \{ (\middlesubscript, \rightsubscript), (\rightsubscript, \middlesubscript) \}$ is the same with the roles of $\zeta_{\rm in}$ and  $\zeta_{\rm out}$ exchanged on $T^*\Gamma_i$. Observe that
$$
\mathcal B_{\rm out}(\mathcal B^{\mathrm x}_{\mathrm x \rightarrow \mathrm y}) = \mathcal B_{\rm in}(\mathcal B^{\mathrm y}_{\mathrm x \rightarrow \mathrm y}).
$$
In $\mathcal V := \mathcal B_{\rm out}(\mathcal B^{\mathrm x}_{\mathrm x \rightarrow \mathrm y}) = \mathcal B_{\rm in}(\mathcal B^{\mathrm y}_{\mathrm x \rightarrow \mathrm y})$, we work in geodesic coordinates given by Definition \ref{def:geocor}, $(\rho_{\mathrm x},t_{\mathrm x}) \in \Big( \mathcal \pi_\Gamma ^ {-1} \mathcal B^{\mathrm x}_{{\mathrm x}\rightarrow {\mathrm y}} \cap \big\{ \zeta = \zeta_{{\rm out}} \big\} \Big) \times \mathbb R_+$, and in geodesic coordinates $(\rho_{\mathrm y},t_{\mathrm y}) \in \Big( \mathcal \pi_\Gamma ^ {-1} \mathcal B^{\mathrm y}_{{\mathrm x}\rightarrow {\mathrm y}} \cap \big\{ \zeta = \zeta_{{\rm in}} \big\} \Big) \times \mathbb R_-$.
Observe that
$$
t_{\mathrm y} = t_{\mathrm x} - \tau(\rho_1),\quad \rho_{\mathrm y}  = p^{\rm in} \Phi_{{\mathrm x}\rightarrow {\mathrm y}} \pi_{\Gamma_{\mathrm x}}(\rho_{\mathrm x}),
$$
for some $\tau(\rho_1)>0$, thus $dt_{\mathrm y} = dt_{\mathrm x} =: dt$. Now, by Lemma \ref{lem:interpr}, in $(\rho_{\mathrm x}, t_{\mathrm x})$ and $(\rho_{\mathrm y}, t_{\mathrm y})$ coordinates, $\mu$ can be written as 
$$
\mu = \big(p^{{\rm out}}_*(2\sqrt r \muout^{\mathrm x})\big)(\rho_{\mathrm x})  \otimes dt = \big(p^{{\rm in}}_*(2\sqrt r \muin^{\mathrm y}))(\rho_{\mathrm y}\big) \otimes dt.
$$
It follows that
$$
p^{{\rm in}}_*(2\sqrt r\muin^{\mathrm y}) = \Phi^{{\mathrm x}\rightarrow {\mathrm y}}_* p^{{\rm out}}_*(2\sqrt r \muout^{\mathrm x}), \quad\text{ where }\quad  \Phi^{{\mathrm x}\rightarrow {\mathrm y}} := p^{\rm in} \Phi_{{\mathrm x}\rightarrow {\mathrm y}} \pi_{\Gamma_{\mathrm x}}.
$$
Let now $a$ be arbitrary and $b := \frac{1}{2\sqrt r}a$. Then
\begin{align} \label{eq:inv2}
\int_{\Phi_{\mathrm x \rightarrow \mathrm y} \mathcal B} a \; d\muin^{\mathrm y} &= \int_{\Phi_{\mathrm x \rightarrow \mathrm y} \mathcal B} b \; d\big(2\sqrt r\muin^{\mathrm y}\big) \nonumber \\
&=  \int_{ p^{\rm in} (\Phi_{\mathrm x \rightarrow \mathrm y} \mathcal B)} b \circ \pi_{\Gamma_{\mathrm y}} \; d\big(p^{{\rm in}}_*(2\sqrt r\muin^{\mathrm y})\big) 
= \int_{ p^{\rm in} (\Phi_{\mathrm x \rightarrow \mathrm y} \mathcal B)} b \circ \pi_{\Gamma_{\mathrm y}} \; d\big(\Phi^{{\mathrm x}\rightarrow {\mathrm y}}_* p^{{\rm out}}_* (2\sqrt r\muout^{\mathrm x})\big) \nonumber \\
&=  \int_{ p^{\rm out} (\mathcal B )} b \circ \pi_{\Gamma_{\mathrm y}} \circ \Phi^{\mathrm x \rightarrow \mathrm y} \; d\big(p^{{\rm out}}_*(2\sqrt r \muout^{\mathrm x})\big) 
= \int_{ \mathcal B } b \circ \pi_{\Gamma_{\mathrm y}} \circ \Phi^{\mathrm x\rightarrow \mathrm y} \circ p^{\rm out} \; d(2\sqrt r\muout^{\mathrm x}) \nonumber \\
&= \int_{ \mathcal B } b \circ \Phi^{\rm out}_{\mathrm x \rightarrow \mathrm y}\; d(2\sqrt r\muout^{\mathrm x}) = \int_{ \mathcal B } 2\sqrt r \times b \circ \Phi^{\rm out}_{\mathrm x \rightarrow \mathrm y}\; d\muout^{\mathrm x} ,
\end{align}
where we used the change-of-variable formula (\ref{eq:puschchange}), the fact that
$$
\Phi^{\mathrm x\rightarrow \mathrm y}\Big(p^{\rm out} \mathcal B \Big) = p^{\rm in} (\Phi_{\mathrm x \rightarrow \mathrm y} \mathcal B),
$$
and
$$
\pi_{\Gamma_{\mathrm y}} \circ \Phi^{{\mathrm x}\rightarrow {\mathrm y}} \circ p^{\rm out}=  \Phi^{\rm out}_{\mathrm x \rightarrow \mathrm y}.
$$
Now, as the Hamiltonian flow $\varphi_t$ consists of straight lines and $\Gamma_{\mathrm x}$ and $\Gamma_{\mathrm y}$ are parallel (straight) segments, we have, using the fact that $r$ depends only on $\xi'$,
$$
2\sqrt r \times b \circ \Phi^{\rm out}_{\mathrm x \rightarrow \mathrm y} = (2\sqrt r b) \circ \Phi^{\rm out}_{\mathrm x \rightarrow \mathrm y} = a \circ \Phi^{\rm out}_{\mathrm x \rightarrow \mathrm y} 
,$$
and the first part of the result follows by (\ref{eq:inv2}). The second part follows again from the fact that the Hamiltonian flow $\varphi_t$ consists of straight lines and $\Gamma_{\mathrm x}$ and $\Gamma_{\mathrm y}$ are parallel segments.
\end{proof}

To finish this section, we define auxiliary measures in the following way.

\begin{definition} \label{def:aux}
If $v$ admits defect measures and boundary defect measures, we define the auxiliary measure $\widetilde \mu$ as
$$
\widetilde \mu := (\xi_1 + 1)^2 \mu.
$$
In addition, we let

\noindent\begin{minipage}{0.5\linewidth}
$$
\widetilde \mu_{\rm out}^{\mathrm x} :=
\begin{cases}
( + \sqrt r + 1)^2 \mu_{\rm out}^{\leftsubscript} & \text{ for } {\mathrm x} = \leftsubscript, \\
( - \sqrt r + 1)^2 \mu_{\rm out}^{\mathrm x} & \text{ for } {\mathrm x} \in \{\middlesubscript, \rightsubscript\},
\end{cases}
$$
\end{minipage}%
\begin{minipage}{0.5\linewidth}
$$
\widetilde \mu_{\rm in}^{\mathrm x} :=
\begin{cases}
( - \sqrt r + 1)^2 \mu_{\rm in}^{\leftsubscript} & \text{ for } {\mathrm x} = \leftsubscript, \\
( + \sqrt r + 1)^2 \mu_{\rm in}^{\mathrm x} & \text{ for } {\mathrm x} \in \{\middlesubscript, \rightsubscript\},
\end{cases}
$$
\end{minipage}\par\vspace{\belowdisplayskip}
\noindent and define, for ${\mathrm x} \in \{\leftsubscript,  \middlesubscript, \rightsubscript\}$, $\widetilde \nu^{\mathrm x}_{d}$ to be the 
Dirichlet measure associated to $(\hsc D_{x_1}+1)u$ on $\Gamma_{\mathrm x}$.
\end{definition}

The heuristic is that these measures correspond to measures of $(\hsc D_{x_1} + 1)u$. Although they are not defined in this way through Theorem \ref{th:ex_defect}, we now show that they satisfy all the expected properties.

\begin{lem} \label{lem:tilde_good}
Assume that $v$ is outgoing near $\Gamma \in \{ \Gamma'_{s}, \Gamma_{s} \}$ and admits defect measures and boundary defect measures.
\begin{enumerate}
\item \label{i:same} The auxiliary measures $\widetilde \mu^{\mathrm x}_{\rm out/in}$ and $\widetilde \nu^{\mathrm x}_{d}$ satisfy the conclusions of Lemma \ref{lem:muinlup} and \ref{lem:ltoi}.
\item \label{i:dir_bc} For ${\mathrm x} \in \{\leftsubscript,  \rightsubscript\}$, if $\widetilde \nu^{\mathrm x}_d = 0$ on some Borel set $\mathcal B \subset \mathcal H$, then, on $\mathcal B$, $\widetilde \mu_{\rm in}^{\mathrm x}= \widetilde \mu_{\rm out}^{\mathrm x}$. 
\item \label{i:nu_out} If $\widetilde \mu_{\rm in}=0$ on some Borel set $\mathcal{B}\subset \mathcal{H}$, then, on $\mathcal B$, 
$
\widetilde \mu_{\rm out}= 2\sqrt{r}\widetilde \nu_{d}
$.
Similarly, if $\widetilde \mu_{\rm out}=0$ on some Borel set $\mathcal{B}\subset \mathcal{H}$, then, on $\mathcal B$,  
$
\widetilde \mu_{\rm in}=  2\sqrt{r}\widetilde \nu_{d}
$.
\end{enumerate}
\end{lem}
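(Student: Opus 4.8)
The plan is to realise the auxiliary measures as the genuine defect and boundary measures of $w:=(\hsc D_{x_1}+1)u$, and then to apply the results of this section together with Lemma \ref{lem:Millretations} directly to $w$. First I would check that $w$ is an outgoing Helmholtz solution with interior defect measure $\widetilde\mu$. Since $\hsc D_{x_1}$ commutes with $-\hsc^2\Delta-1$, the family $w$ solves $(-\hsc^2\Delta-1)w=0$ in $\domain$, and applying $\hsc D_{x_1}+1$ to the extension of $u$ furnished by Definition \ref{def:outgo} solves the equation in $\domain^+$; by interior elliptic regularity for the Laplacian (boundedness of $u$ and of $\hsc^2\Delta u=-u$ in $L^2_{\rm loc}(\domain^+)$ forces boundedness of $\hsc\nabla u$), $w$ is $\hsc$-tempered and locally $L^2$-bounded in $\domain^+$, hence admits an interior defect measure. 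By the standard transformation rule for semiclassical defect measures under pseudodifferential operators, applied to the formally self-adjoint operator $\hsc D_{x_1}+1$ with real symbol $\xi_1+1$, this measure equals $(\xi_1+1)^2\mu=\widetilde\mu$; in particular it is uniquely determined, so nothing is lost by passing to a subsequence. Finally $\operatorname{WF}_\hsc w\subseteq\operatorname{WF}_\hsc u$, since a properly supported semiclassical pseudodifferential operator does not enlarge the wavefront set, so $w$ is again outgoing near $\Gamma$.

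Next I would identify the boundary measures of $w$ with the tilde-measures. Being a Helmholtz solution in $\domain^+$ with interior measure $\widetilde\mu$, $w$ has incoming and outgoing boundary measures $\muin^{w}$, $\muout^{w}$ on each $\Gamma_{\mathrm x}$ by Lemma \ref{lem:interpr} (via \cite[Proposition 1.7]{Miller}). On any flow-line in $\mathcal B_{\rm out}(\mathcal V)$ the dual variable $\xi_1$ is constant and equals $+\sqrt r$ when the flow-line leaves $\Gamma_{\leftsubscript}$ and $-\sqrt r$ when it leaves $\Gamma_{\middlesubscript}^+$ or $\Gamma_{\rightsubscript}$ (because $s=x_1$ on $\Gamma_{\leftsubscript}$ but $s=-x_1$ on the other two); comparing the $\mathcal B_{\rm out}(\mathcal V)$-decomposition of $\widetilde\mu=(\xi_1+1)^2\mu$ with that of $\mu$ in Lemma \ref{lem:interpr} and cancelling the common factor $2\sqrt r>0$ then yields exactly $\muout^{w}=\widetilde\mu_{\rm out}^{\mathrm x}$, and the same argument in $\mathcal B_{\rm in}(\mathcal V)$ gives $\muin^{w}=\widetilde\mu_{\rm in}^{\mathrm x}$. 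Moreover $w|_{\Gamma_{\mathrm x}}=(\hsc D_{x_1}+1)u|_{\Gamma_{\mathrm x}}$ is $L^2(\Gamma_{\mathrm x})$-bounded, since each $\Gamma_{\mathrm x}$ is vertical and there $\hsc D_{x_1}u|_{\Gamma_{\mathrm x}}=\pm\tfrac1i\hsc\partial_n u|_{\Gamma_{\mathrm x}}$ is bounded by hypothesis, so the Dirichlet measure of $w$ on $\Gamma_{\mathrm x}$ exists and is, by definition, $\widetilde\nu_d^{\mathrm x}$. To invoke Lemma \ref{lem:Millretations} for $w$ one also needs $\hsc\partial_n w|_{\Gamma_{\mathrm x}}$ bounded in $L^2$: on $\Gamma_{\middlesubscript}^+$ this is immediate by interior elliptic regularity, and on $\Gamma_{\leftsubscript},\Gamma_{\rightsubscript}$ one uses the equation in the form $(\hsc D_{x_1})^2u=r(x',\hsc D_{x'})u$ to trade the missing normal derivative for two tangential ones.

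With these identifications, $w$ is an outgoing Helmholtz solution whose interior, outgoing, incoming and Dirichlet measures are $\widetilde\mu$, $\widetilde\mu_{\rm out}^{\mathrm x}$, $\widetilde\mu_{\rm in}^{\mathrm x}$, $\widetilde\nu_d^{\mathrm x}$, so Part (\ref{i:same}) is Lemmas \ref{lem:muinlup} and \ref{lem:ltoi} applied verbatim to $w$ (for the statements involving $\widetilde\nu_d^{\mathrm x}$ one uses in addition $\sqrt r\,\widetilde\nu_d^{\mathrm x}\le\widetilde\mu_{\rm in}^{\mathrm x}+\widetilde\mu_{\rm out}^{\mathrm x}$, which follows from Lemma \ref{lem:Millretations}(1) and $\nu_n^{w}\ge0$, to transfer the support properties). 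For Part (\ref{i:dir_bc}): if $\widetilde\nu_d^{\mathrm x}=\nu_d^{w}=0$ on $\mathcal B$, the Cauchy--Schwarz inequality $|\nu_j^{w}|^2\le\nu_d^{w}\nu_n^{w}$ for the boundary measures forces $\nu_j^{w}=0$ on $\mathcal B$, and then Lemma \ref{lem:Millretations}(1) gives $2\widetilde\mu_{\rm out}^{\mathrm x}=\tfrac1{\sqrt r}\nu_n^{w}=2\widetilde\mu_{\rm in}^{\mathrm x}$ on $\mathcal B$. For Part (\ref{i:nu_out}): the hypothesis $\widetilde\mu_{\rm in}=0$ on $\mathcal B$ reads $\muin^{w}=0$ on $\mathcal B$, so Lemma \ref{lem:Millretations}(2) for $w$ gives $\widetilde\mu_{\rm out}=\muout^{w}=2\sqrt r\,\nu_d^{w}=2\sqrt r\,\widetilde\nu_d$ on $\mathcal B$, and symmetrically $\widetilde\mu_{\rm out}=0$ gives $\widetilde\mu_{\rm in}=2\sqrt r\,\widetilde\nu_d$ via Lemma \ref{lem:Millretations}(3).

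The main obstacle is the last claim of the second step: that $w$ admits boundary defect measures on the outer interfaces $\Gamma_{\leftsubscript},\Gamma_{\rightsubscript}$. Since these lie on $\partial\domain^+$, interior elliptic regularity is unavailable there, and the reduction via the equation leaves one needing $u|_{\Gamma_{\leftsubscript}},u|_{\Gamma_{\rightsubscript}}$ bounded in $H^2_\hsc$, which the bare hypothesis ``admits boundary defect measures'' does not deliver. I would deal with this either by building the required one-sided regularity into the standing hypothesis (in every use of this lemma $u$ will in fact be smooth up to $\Gamma_{\leftsubscript},\Gamma_{\rightsubscript}$), or by bypassing Lemma \ref{lem:Millretations} for $w$ on $\Gamma_{\leftsubscript},\Gamma_{\rightsubscript}$: expanding $\langle a\,w,w\rangle_{\Gamma_{\leftsubscript}}$ yields $\widetilde\nu_d^{\leftsubscript}=\nu_n^{\leftsubscript}+2\Re\nu_j^{\leftsubscript}+\nu_d^{\leftsubscript}$ in terms of the (existing) boundary measures of $u$, and combining this with Lemma \ref{lem:Millretations}(1)--(3) for $u$ and $|\nu_j^{\leftsubscript}|^2\le\nu_d^{\leftsubscript}\nu_n^{\leftsubscript}$ proves Parts (\ref{i:dir_bc})--(\ref{i:nu_out}) by a direct, if less transparent, computation.
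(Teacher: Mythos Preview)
Your proposal is correct, and your fallback is in fact the paper's route. The paper never realises the tilde-measures as genuine defect measures of $w=(\hsc D_{x_1}+1)u$; it works entirely with $u$'s measures and Definition \ref{def:aux}. For Part (\ref{i:same}) it shows $\{\xi_1=-1\}\cap\operatorname{supp}\mu=\emptyset$ (via Corollary \ref{cor:suppmu} when $\Gamma=\Gamma_s$, via the reflection relation $\mu_{\rm out}^{\rightsubscript}=\bigl|\tfrac{\sqrt r-1}{\sqrt r+1}\bigr|^2\mu_{\rm in}^{\rightsubscript}$ when $\Gamma=\Gamma'_s$), whence $\operatorname{supp}\mu=\operatorname{supp}\widetilde\mu$; since $\widetilde\mu$ also satisfies the conclusion of Lemma \ref{lem:interpr} by construction, and these two facts are the only ingredients in the proofs of Lemmas \ref{lem:muinlup} and \ref{lem:ltoi}, those proofs go through verbatim for the tilde-measures. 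For Part (\ref{i:dir_bc}) the paper does exactly your second option: expand $\widetilde\nu_d^{\mathrm x}$ in terms of $\nu_d^{\mathrm x},\nu_j^{\mathrm x},\nu_n^{\mathrm x}$, use Cauchy--Schwarz to force $\nu_d^{\mathrm x}=\nu_n^{\mathrm x}=\mp\nu_j^{\mathrm x}$ on $\mathcal B$, and then Lemma \ref{lem:Millretations}(4) with $\alpha=2$ gives the reflection relation equivalent to $\widetilde\mu_{\rm in}^{\mathrm x}=\widetilde\mu_{\rm out}^{\mathrm x}$. Part (\ref{i:nu_out}) is likewise reduced to Lemma \ref{lem:Millretations}(2)--(3) for $u$, using $\{\xi_1=-1\}\cap\operatorname{supp}\mu=\emptyset$ to pass from $\widetilde\mu_{\rm in/out}=0$ to $\mu_{\rm in/out}=0$.

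The upshot is that the paper's argument sidesteps your obstacle entirely: it never needs $\hsc\partial_n w$ bounded on $\Gamma_{\leftsubscript},\Gamma_{\rightsubscript}$, and works under the bare hypothesis. Your $w$-based route is conceptually cleaner---one would apply the existing lemmas off the shelf rather than re-run their proofs---but it requires either the extra regularity you flag or the direct-computation fallback, and the paper simply adopts the latter from the start.
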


\begin{proof}
We first show (1).
Observe that
$\operatorname{supp}\mu \subset \operatorname{supp}\widetilde \mu \cup \{ \xi_1 = - 1 \}$.
We now show that $\{\xi_1=-1\}\cap \operatorname{supp}\mu=\emptyset$. 
If $v$ is outgoing near $\Gamma_{s}$, this follows directly from Corollary \ref{cor:suppmu}.
If $v$ is outgoing near $\Gamma'_{s}$, by Lemma \ref{lem:Millretations}, $\mu_{\rm out}^{\rightsubscript} = \big| \frac{\sqrt r - 1}{\sqrt r + 1}\big|^2\mu_{\rm in}^{\rightsubscript}$, 
and thus $\mu_{\rm out}^\rightsubscript (\sqrt{r}=1)=0$, and by the interpretation of boundary measures given by Lemma \ref{lem:interpr} we have $\{\xi_1=-1\}\cap\operatorname{supp}\mu =\emptyset$ as well. It follows that $\operatorname{supp}\mu = \operatorname{supp}\widetilde \mu$. Hence,
$\widetilde \mu$ satisfies the conclusions of Corollary \ref{cor:suppmu}. By construction, it also satisfies the conclusions of Lemma \ref{lem:interpr}. Therefore, since these were the only properties used in the proofs of Lemma \ref{lem:muinlup} and \ref{lem:ltoi}, the auxiliary measures satisfy the conclusions of Lemma \ref{lem:muinlup} and \ref{lem:ltoi} (with the exact same proofs as for $\mu$).

We now show (2), for example with $\mathrm x = \leftsubscript$. By definition of the defect measures, on $\mathcal B$
$$
0 = \widetilde \nu^{\leftsubscript}_d = \nu^{\leftsubscript}_d - 2 \Re\nu_j^{\leftsubscript} + \nu_n^{\leftsubscript}.
$$
By the Cauchy-Schwarz inequality and a similar reasoning as in the proof of \cite[Lemma 3.3]{GSW},
$$
\left|\nu_j^{\leftsubscript}\right| \leq \left|\sqrt{\nu^{\leftsubscript}_d }\right|\left|\sqrt{\nu_n^{\leftsubscript}}\right|,
$$
Therefore, on   $\mathcal B$, $\nu^{\leftsubscript}_n = \nu^{\leftsubscript}_d = - \nu^{\leftsubscript}_j$.
Hence, from Lemma \ref{lem:Millretations} (with $\alpha = 2$), on $\mathcal B$,
$$
 \mu^{\leftsubscript}_{\rm out} = \bigg| \frac{\sqrt r - 1}{\sqrt r + 1}\bigg|^2 \mu^{\leftsubscript}_{\rm in},
$$
which, by Definition \ref{def:aux}, is
 $\widetilde \mu_{\rm in}^{\leftsubscript}= \widetilde \mu_{\rm out}^{\leftsubscript}$. 

Finally, (3) is a direct consequence of the similar property for $\mu$ from Lemma \ref{lem:Millretations} together with the
fact that $\{\xi_1=-1\}\cap \operatorname{supp}\mu=\emptyset$.
\end{proof} 

\section{Proof of result involving \ref{eq:model_cell}, i.e., Theorem \ref{th:model1}} \label{sec:6}

We fix an admissible solution operator $S : g \mapsto u$ to (\ref{eq:modelh}).
The aim of this section is to prove the following theorem, which implies Theorem \ref{th:model1}.

\begin{theorem} \label{th:lowerupper}
Let $\iota \in \{ 1, -1\}$. Then
\begin{enumerate}
\item \label{i:1_up} For any $\epsilon>0$, there exists $\hsc_0(\epsilon)>0$ such that, for any $\hsc$--family of data $(g(\hsc))_{\hsc>0}$, if $u(\hsc)=S(\hsc)g(\hsc)$ is the associated solution to (\ref{eq:modelh}), then for all $0<\hsc\leq h_0$
\begin{equation} \label{eq:uppermain}
\Vert (\hsc D_{x_1}  + \iota)u(\hsc)\Vert_{L^2(\Gamma_{\middlesubscript})} \leq  \Bigg( \frac{1+ \iota \cos (\theta_{\rm max})}{1 + \cos (\theta_{\rm max})} + \epsilon \Bigg)
 \Vert g(\hsc) \Vert_{L^2}.
\end{equation} 
\item \label{i:1_low} For all $0 < \epsilon < \theta_{\rm max}$, there exists $g(\hsc) \in L^2(\Gamma_{\leftsubscript})$ so that, the associated solution $u(\hsc)=S(\hsc)g(\hsc)$ to (\ref{eq:modelh}) satisfies
\begin{equation} \label{eq:lowermain}
\lim_{\hsc\rightarrow 0} \frac{\Vert (\hsc D_{x_1} + \iota)u(\hsc) \Vert_{L^2(\Gamma_{\middlesubscript})} }{\Vert g(\hsc) \Vert_{L^2(\Gamma_{\leftsubscript})}} = \frac{1 + \iota\cos(\theta_{\rm max} - \epsilon)}{1+\cos(\theta_{\rm max} - \epsilon)}.
\end{equation}
\end{enumerate}
\end{theorem}

Both in this section and in \S\ref{sec:7}, we use the following consequence of the results of \S\ref{sec:5} (see \S\ref{ss:int_trace}).

\begin{lem} \label{lem:good_mes}
Let $g(\hsc)\in L^2(\Gamma_l)$ be 
such that $\Vert g \Vert_{L^2}\leq C$, and $u(\hsc) = S(\hsc)g(\hsc)$ the associated solution to (\ref{eq:modelh}) (resp.~(\ref{eq:modelh_timp})). Then, up to a subsequence, $u(\hsc)$ has a defect measure
and boundary defect measures on $\Gamma_l$ and $\Gamma_i^+$ (resp.~$\Gamma_l$, $\Gamma_i^+$ and $\Gamma_r$) satisfying $\Vert (\hsc D_{x_1} u + \iota) u \Vert_{L^2(\Gamma_i)} \rightarrow \widehat \nu_d^i({T^*\Gamma_i})$
(i.e., (\ref{eq:osc_Gammai})  below), where  $\widehat \nu_d^i := \nu^{\middlesubscript}_d - 2 \iota \Re\nu_j^{\middlesubscript} + \nu_n^{\middlesubscript}$ is the associated Dirichlet boundary measure of $\widehat u := (\hsc D_{x_1} u + \iota)u$, $\iota \in \{ -1, 1\}$.
\end{lem}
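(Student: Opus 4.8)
The plan is to feed the a priori bounds coming from admissibility into Theorem~\ref{th:ex_defect}, extract a common subsequence along which all the measures exist, and then identify the Dirichlet measure of $\widehat u:=(\hsc D_{x_1}+\iota)u$ on $\Gamma_{\middlesubscript}^+$ together with the accompanying no-loss-of-mass statement.

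First I would collect the bounds, uniform in $\hsc$. Since $S$ is admissible and $\N{g}_{L^2(\Gamma_{\leftsubscript})}\le C$, Definition~\ref{def:adm} provides $C'$ with $\N{u}_{L^2(\domain^+)}\le C'$ and $\N{u}_{L^2(\Gamma)}+\N{\hsc\partial_n u}_{L^2(\Gamma)}\le C'$ on $\Gamma=\Gamma_{\leftsubscript}$ (and on $\Gamma=\Gamma_{\rightsubscript}$ in the case of~(\ref{eq:modelh_timp})). The analogous bound on $\Gamma_{\middlesubscript}^+$ is not part of the admissibility hypothesis but follows from interior regularity: $\Gamma_{\middlesubscript}^+\Subset\domain^+$ and $(-\hsc^2\Delta-1)u=0$ in $\domain^+$, so a Caccioppoli-type estimate bounds $\N{u}_{L^2(K)}+\N{\hsc\nabla u}_{L^2(K)}$ on any $K\Subset\domain^+$ by $\N{u}_{L^2(\domain^+)}$, and the trace theorem bounds $\N{u}_{L^2(\Gamma_{\middlesubscript}^+)}+\N{\hsc\partial_{x_1}u}_{L^2(\Gamma_{\middlesubscript}^+)}$; in particular $\N{\widehat u}_{L^2(\Gamma_{\middlesubscript})}\le C'$. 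I would then apply Theorem~\ref{th:ex_defect}(1) on $\domain^+$ and Theorem~\ref{th:ex_defect}(2) on each of $\Gamma_{\leftsubscript}$, $\Gamma_{\middlesubscript}^+$ (and $\Gamma_{\rightsubscript}$), extracting subsequences successively so that the interior defect measure $\mu$ and all the boundary measures $\nu_d,\nu_n,\nu_j$ exist along one common subsequence; this is exactly the assertion that $u$ admits defect and boundary measures in the sense of Notation~\ref{not:defect}.

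Next I would identify $\widehat\nu_d^i$ as the Dirichlet measure of $\widehat u$ on $\Gamma_{\middlesubscript}^+$. In the Fermi coordinates of \S\ref{subsec:geonot} one has $\hsc D_{x_1}=-\hsc D_s$ on $\Gamma_{\middlesubscript}^+$, so $\widehat u=(-\hsc D_s+\iota)u$ there; for real $a\in C^\infty_c(T^*\mathbb R)$, expanding $\langle a(x',\hsc D_{x'})\widehat u,\widehat u\rangle_{\Gamma_{\middlesubscript}}$ and using that $\iota$ is real and that $a(x',\hsc D_{x'})$ is self-adjoint modulo $O(\hsc)$ gives, up to $O(\hsc)$, the four terms $\langle a(x',\hsc D_{x'})\hsc D_s u,\hsc D_s u\rangle$, $-\iota\langle a(x',\hsc D_{x'})\hsc D_s u,u\rangle$, $-\iota\langle a(x',\hsc D_{x'})u,\hsc D_s u\rangle$ and $\langle a(x',\hsc D_{x'})u,u\rangle$, which by Theorem~\ref{th:ex_defect}(2) applied on $\Gamma_{\middlesubscript}^+$ converge to $\int a\,d\nu_n^{\middlesubscript}$, $-\iota\int a\,d\nu_j^{\middlesubscript}$, $-\iota\int a\,d\overline{\nu_j^{\middlesubscript}}$ and $\int a\,d\nu_d^{\middlesubscript}$ respectively; their sum is $\int a\,d(\nu_d^{\middlesubscript}-2\iota\Re\nu_j^{\middlesubscript}+\nu_n^{\middlesubscript})=\int a\,d\widehat\nu_d^i$, so $\widehat\nu_d^i$ is the Dirichlet boundary measure of $\widehat u$ on $\Gamma_{\middlesubscript}^+$.

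It remains to prove the convergence $\N{\widehat u}_{L^2(\Gamma_{\middlesubscript})}^2\to\widehat\nu_d^i(T^*\Gamma_{\middlesubscript})$ recorded as~(\ref{eq:osc_Gammai}), and this is the step I expect to carry the real content: one must show that the quadratic functional converges to the \emph{total} mass of $\widehat\nu_d^i$ and not merely that it dominates it, which amounts to ruling out a loss of mass at $|\xi'|=\infty$ (and at the endpoints of $\Gamma_{\middlesubscript}$). Since $(-\hsc^2\Delta-1)u=0$ in $\domain^+$, standard microlocal arguments---using $\operatorname{WF}_\hsc u\cap T^*\domain^+\subset\{|\xi|^2=1\}$ (as in the proof of Lemma~\ref{lem:hyperint}) together with the boundedness of $u$, uniform in $\hsc$, in every $H^N_\hsc$ near $\Gamma_{\middlesubscript}^+$ coming from ellipticity of $-\hsc^2\Delta-1$ away from $\{|\xi|=1\}$---show that $\widehat u|_{\Gamma_{\middlesubscript}^+}$ is localized modulo $O(\hsc^\infty)$ in $\{|\xi'|\le1\}$; moreover the endpoints of $\Gamma_{\middlesubscript}$ lie on $\Gamma$, across which $\domain^+$ extends, so $u$ and $\widehat u$ are $\hsc$-tempered smooth there and passing from $\Gamma_{\middlesubscript}^+$ to $\Gamma_{\middlesubscript}$ introduces no new mass. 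Hence, for $\phi\in C^\infty_c(\mathbb R)$ equal to $1$ near the $x'$-projection of $\Gamma_{\middlesubscript}$ and $\psi\in C^\infty_c(\mathbb R;[0,1])$ equal to $1$ near $[-1,1]$, one has $\N{(1-\operatorname{Op}_\hsc(\phi(x')\psi(\xi')))\widehat u}_{L^2(\Gamma_{\middlesubscript})}=O(\hsc^\infty)$, so that $\N{\widehat u}_{L^2(\Gamma_{\middlesubscript})}^2=\langle\operatorname{Op}_\hsc(\phi(x')\psi(\xi'))\widehat u,\widehat u\rangle_{\Gamma_{\middlesubscript}}+O(\hsc^\infty)\to\int\phi(x')\psi(\xi')\,d\widehat\nu_d^i=\widehat\nu_d^i(T^*\Gamma_{\middlesubscript})$, the last equality because $\widehat\nu_d^i$ is supported where $\phi(x')\psi(\xi')\equiv1$. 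The remaining steps---the a priori bounds, the extraction, and the algebraic identification of $\widehat\nu_d^i$---are routine.
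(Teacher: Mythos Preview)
Your overall architecture---bounds, extraction via Theorem~\ref{th:ex_defect}, algebraic identification of $\widehat\nu_d^{\middlesubscript}$, and a frequency cutoff for the convergence~\eqref{eq:osc_Gammai}---is the same as the paper's, and the last two steps are essentially what Lemma~\ref{lem:osc_Gammai} does. The genuine gap is the uniform trace bound on $\Gamma_{\middlesubscript}^+$.

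You write that ``a Caccioppoli-type estimate bounds $\N{u}_{L^2(K)}+\N{\hsc\nabla u}_{L^2(K)}$ \ldots\ and the trace theorem bounds $\N{u}_{L^2(\Gamma_{\middlesubscript}^+)}+\N{\hsc\partial_{x_1}u}_{L^2(\Gamma_{\middlesubscript}^+)}$''. Caccioppoli (equivalently Lemma~\ref{lem:osc_inter}\,(\ref{i:osc_inter_1})) indeed gives $\N{\chi u}_{H^m_\hsc}\le C_m$ for every $m$, but the standard trace inequality $\N{v|_\Gamma}_{L^2}\lesssim\N{v}_{H^{1/2+\eta}}$ is not semiclassical: applied to a function oscillating at frequency $\hsc^{-1}$ it costs a factor $\hsc^{-1/2}$, so this route only yields $\N{u}_{L^2(\Gamma_{\middlesubscript}^+)}+\N{\hsc\partial_{x_1}u}_{L^2(\Gamma_{\middlesubscript}^+)}\lesssim\hsc^{-1/2}$, which is not enough to invoke Theorem~\ref{th:ex_defect}(2) on $\Gamma_{\middlesubscript}^+$. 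A uniform-in-$\hsc$ interior trace bound for Helmholtz solutions is \emph{not} a general fact; the obstruction is mass on the glancing set $S^*\Gamma_{\middlesubscript}=\{\xi_1=0,|\xi'|=1\}$.

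The paper closes this gap in Lemma~\ref{lem:trace_Gammai} by using the outgoing condition: Lemma~\ref{lem:hyperint} shows that $\operatorname{WF}_\hsc u$ near $\Gamma_{\middlesubscript}^+$ is contained in $\{|\xi_1|\ge c\}$ for some $c>0$, so one can microlocalize by an $A\in\Psi^0_\hsc$ with $\operatorname{WF}_\hsc A\cap S^*\Gamma=\emptyset$ while $\operatorname{WF}_\hsc(I-A)\cap\operatorname{WF}_\hsc(\chi u)=\emptyset$; the trace estimate of \cite[Lemma~3.7]{GLS1}, valid precisely away from $S^*\Gamma$, then gives the uniform bound. In other words, the transversality coming from outgoingness---which you later invoke informally for the no-loss-of-mass step---is already needed one step earlier, and with a quantitative argument, to make the boundary measures on $\Gamma_{\middlesubscript}^+$ exist at all.
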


\subsection{The impedance trace on $\Gamma_{\middlesubscript} $ in terms of measures for \ref{eq:model_cell}} \label{subsec:measth}

\begin{prop} \label{prop:moregeneral}
Assume that $u$ satisfies
$$
(-\hsc^2\Delta - 1)  u = 0  \text{ in }\domain;
$$
and, for $\iota \in \{ 1,-1 \}$, let $\widehat u := (\hsc D_{x_1}+\iota)u$. If $u$ is outgoing near $\Gamma_s$ and 
admits defect measure and
boundary defect measures, then $\widehat u$ admits a Dirichlet boundary measure on $\Gamma_{\middlesubscript}^+$, and in ${T^* \Gamma_i}$ we have
\beq\label{eq:JJS0}
\widehat \nu_d^{\middlesubscript} = \frac{1}{2\sqrt r}\bigg( \frac{\iota + \sqrt r}{1+\sqrt r}\bigg)^2 \widetilde \mu_{\rm out}^{\leftsubscript} \circ \Phi^{\rm in}_{\middlesubscript\rightarrow \leftsubscript}.
\eeq
\end{prop}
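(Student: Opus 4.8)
The plan is to compute $\widehat\nu_d^\middlesubscript$ by first reading it off on $\Gamma_\middlesubscript^+$ from the boundary measures of $u$ that are already assumed to exist there, then using the outgoingness near $\Gamma_s$ to eliminate all but the incoming measure $\mu_{\rm in}^\middlesubscript$, and finally transporting $\mu_{\rm in}^\middlesubscript$ back to $\Gamma_\leftsubscript$ along the (straight) Hamilton flow.

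First I would identify the Dirichlet measure of $\widehat u$ on $\Gamma_\middlesubscript^+$. There the Fermi normal coordinate is $s=-x_1$, so the Dirichlet trace of $\widehat u=(\hsc D_{x_1}+\iota)u$ equals $(-\hsc D_s+\iota)u|_{\Gamma_\middlesubscript^+}$. Expanding $\langle a(x',\hsc D_{x'})\widehat u,\widehat u\rangle_{\Gamma_\middlesubscript^+}$ for real $a\in C_c^\infty$ into its four terms, and letting $\hsc\to 0$ along the subsequence on which $u$ has boundary defect measures, gives that $\widehat u$ has on $\Gamma_\middlesubscript^+$ the Dirichlet boundary measure
\[ \widehat\nu_d^\middlesubscript=\nu_d^\middlesubscript-2\iota\,\Re\nu_j^\middlesubscript+\nu_n^\middlesubscript, \]
exactly as recorded in Lemma \ref{lem:good_mes}; no new existence statement is needed, since the right-hand side is a fixed combination of the (nonnegative, resp.\ complex) boundary measures of $u$.

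Next I would use that $u$ is outgoing near $\Gamma_s$. By Lemma \ref{lem:muinlup}(1)(b), $\mu_{\rm out}^\middlesubscript=0$ on $\mathcal H$, so Lemma \ref{lem:Millretations}(3) applied at $\Gamma_\middlesubscript^+$ yields, on $\mathcal H$,
\[ \mu_{\rm in}^\middlesubscript=-2\Re\nu_j^\middlesubscript=2\sqrt r\,\nu_d^\middlesubscript=\frac{2}{\sqrt r}\,\nu_n^\middlesubscript. \]
Substituting $\nu_d^\middlesubscript=\frac1{2\sqrt r}\mu_{\rm in}^\middlesubscript$, $\Re\nu_j^\middlesubscript=-\frac12\mu_{\rm in}^\middlesubscript$, $\nu_n^\middlesubscript=\frac{\sqrt r}2\mu_{\rm in}^\middlesubscript$ and using $\iota^2=1$ collapses the three terms into one:
\[ \widehat\nu_d^\middlesubscript=\Big(\frac1{2\sqrt r}+\iota+\frac{\sqrt r}2\Big)\mu_{\rm in}^\middlesubscript=\frac{1+2\iota\sqrt r+r}{2\sqrt r}\,\mu_{\rm in}^\middlesubscript=\frac{(\iota+\sqrt r)^2}{2\sqrt r}\,\mu_{\rm in}^\middlesubscript. \]

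Finally I would transport $\mu_{\rm in}^\middlesubscript$ to $\Gamma_\leftsubscript$. Since $u$ is outgoing near $\Gamma_s$ and $\partial\domain\setminus\Gamma_s=\Gamma_\leftsubscript$, Corollary \ref{cor:suppmu} forces every ray of $\operatorname{supp}\mu$ inside $\domain$ to issue from $\Gamma_\leftsubscript$; projecting this through $p^{\rm in}$ and $\pi_{\Gamma_\middlesubscript}$ shows $\operatorname{supp}\mu_{\rm in}^\middlesubscript\subset\mathcal B^\middlesubscript_{\leftsubscript\to\middlesubscript}$, so Lemma \ref{lem:ltoi} (case $(\mathrm x,\mathrm y)=(\middlesubscript,\leftsubscript)$, for which no incoming/outgoing exchange occurs) gives $\mu_{\rm in}^\middlesubscript=\mu_{\rm out}^\leftsubscript\circ\Phi^{\rm in}_{\middlesubscript\to\leftsubscript}$ on all of $T^*\Gamma_\middlesubscript\cap\mathcal H$. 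Then $\mu_{\rm out}^\leftsubscript=(1+\sqrt r)^{-2}\widetilde\mu_{\rm out}^\leftsubscript$ by Definition \ref{def:aux}, and since $r=r(\xi')$ is invariant under $\Phi^{\rm in}_{\middlesubscript\to\leftsubscript}$ (the flow preserves $\xi'$ and $\Gamma_\middlesubscript,\Gamma_\leftsubscript$ are parallel segments, as in the last lines of the proof of Lemma \ref{lem:ltoi}), the scalar passes through the composition, giving
\[ \widehat\nu_d^\middlesubscript=\frac{(\iota+\sqrt r)^2}{2\sqrt r\,(1+\sqrt r)^2}\big(\widetilde\mu_{\rm out}^\leftsubscript\circ\Phi^{\rm in}_{\middlesubscript\to\leftsubscript}\big)=\frac1{2\sqrt r}\Big(\frac{\iota+\sqrt r}{1+\sqrt r}\Big)^2\widetilde\mu_{\rm out}^\leftsubscript\circ\Phi^{\rm in}_{\middlesubscript\to\leftsubscript}, \]
which is \eqref{eq:JJS0}. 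The only delicate points — neither requiring ideas beyond \S\ref{sec:4} — are keeping the ``measures taken from the left'' sign convention on $\Gamma_\middlesubscript^+$ straight, so that the surviving measure is $\mu_{\rm in}^\middlesubscript$ and the relevant map is $\Phi^{\rm in}$ rather than their outgoing counterparts, and promoting the identity of Lemma \ref{lem:ltoi} from $\mathcal B^\middlesubscript_{\leftsubscript\to\middlesubscript}$ to the whole hyperbolic set via the support localization above.
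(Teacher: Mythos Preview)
Your argument is essentially the paper's proof, with the same three moves (Lemma \ref{lem:muinlup}(1)(b) to kill $\mu_{\rm out}^\middlesubscript$, Lemma \ref{lem:Millretations}(3) to express everything via $\mu_{\rm in}^\middlesubscript$, then Lemma \ref{lem:ltoi} to transport to $\Gamma_\leftsubscript$). The only organizational difference is that the paper routes the computation through the intermediate $\widetilde\nu_d^\middlesubscript$ and the tilde measures $\widetilde\mu_{\rm in}^\middlesubscript$, $\widetilde\mu_{\rm out}^\middlesubscript$ (via Lemma \ref{lem:tilde_good}), whereas you stay with $\mu_{\rm in}^\middlesubscript$ and convert to $\widetilde\mu_{\rm out}^\leftsubscript$ only at the very end using Definition \ref{def:aux}; these are algebraically equivalent.

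There is one genuine omission: your identity $\widehat\nu_d^\middlesubscript=\frac{(\iota+\sqrt r)^2}{2\sqrt r}\mu_{\rm in}^\middlesubscript$ is derived only in $\mathcal H$ (Lemma \ref{lem:Millretations} is stated there), and the transport step is valid only on $T^*\Gamma_\middlesubscript\cap\mathcal H$, as you note. To conclude \eqref{eq:JJS0} in all of $T^*\Gamma_\middlesubscript$ you must also show that $\widehat\nu_d^\middlesubscript$ vanishes on the complement of $\mathcal H$. The paper handles this by invoking Lemma \ref{lem:traceH}, whose proof uses the outgoingness via Lemma \ref{lem:hyperint} to localize the wavefront set away from $\{\xi_1=0\}$, hence away from $\{r\le 0\}$. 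Without this step the proof is incomplete, since $\nu_d^\middlesubscript$, $\nu_j^\middlesubscript$, $\nu_n^\middlesubscript$ could in principle carry mass on the glancing or elliptic set, and you have not ruled that out.
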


\begin{proof}
By Part (1) of Lemma \ref{lem:muinlup},  $ \mu^{\middlesubscript}_{\rm out} = 0$. Thus, by Part (3) of Lemma \ref{lem:Millretations}, 
\begin{equation} \label{eq:s1}
 \nu^{\middlesubscript}_n = \frac 12 \sqrt{r}  \mu^{\middlesubscript}_{\rm in}, \quad  \Re  \nu^{\middlesubscript}_j = -\frac 12 \mu^{\middlesubscript}_{\rm in}, \quad\tand\quad  \nu^{\middlesubscript}_d = \frac{1}{2\sqrt{r}}  \mu^{\middlesubscript}_{\rm in} \hspace{0.5cm}\text{ in } \mathcal H.
\end{equation}
On the other hand, for $\widetilde u := (\hsc D_{x_1}+1)u$, the definition of $\widetilde u$ and $\widehat u$ in terms of $u$ imply that $\widetilde u$ and $\widehat u$ have Dirichlet boundary defect measures on $\Gamma_i^+$ given by (this notation coincide with Definition \ref{def:aux} in the case of $\widetilde \nu^{\middlesubscript}_d$)
\begin{align*}
\widetilde \nu^{\middlesubscript}_d = \nu_d^{\middlesubscript} - 2 \Re\nu_j^{\middlesubscript} + \nu_n^{\middlesubscript} \quad\tand\quad \widehat \nu^{\middlesubscript}_d = \nu^{\middlesubscript}_d - 2 \iota \Re\nu_j^{\middlesubscript} + \nu_n^{\middlesubscript}
\end{align*}
(where we recall that $s=-x_1$ on $\Gamma_{\middlesubscript}$).
Hence, by (\ref{eq:s1}), in $\mathcal H$
\begin{align*}
\widetilde \nu^{\middlesubscript}_d = \frac{(\sqrt r + 1)^2}{2\sqrt{r}} \mu^{\middlesubscript}_{\rm in}\quad\tand\quad
\widehat \nu^{\middlesubscript}_d = \frac{(\sqrt r + \iota)^2}{2\sqrt{r}} \mu^{\middlesubscript}_{\rm in},
\end{align*}
and thus, in $\mathcal H$
\begin{equation} \label{eq:s2}
\widehat \nu^{\middlesubscript}_d = \bigg( \frac{\sqrt r + \iota}{\sqrt r + 1} \bigg)^2 \widetilde \nu^{\middlesubscript}_d.
\end{equation}
On the other hand, by Part (\ref{i:same}) of Lemma \ref{lem:tilde_good} and
Part (1) of  Lemma \ref{lem:muinlup} again,  $ \widetilde \mu^{\middlesubscript}_{\rm out} = 0$ and thus, by 
Part (\ref{i:nu_out} of Lemma \ref{lem:tilde_good}, 
\beq\label{eq:JJS1}
\widetilde \nu^{\middlesubscript}_d = \frac{1}{2\sqrt{r}}  \widetilde \mu^{\middlesubscript}_{\rm in} \quad\tin \mathcal{H}.
\eeq
By Lemma \ref{lem:traceH}, $\widehat \nu^{\middlesubscript}_d$ is supported in $\mathcal H$;  
thus the combination of \eqref{eq:s2} and \eqref{eq:JJS1} implies that
\beqs
\widehat \nu^{\middlesubscript}_d = \bigg( \frac{\sqrt r + \iota}{\sqrt r + 1} \bigg)^2 \frac{1}{2\sqrt{r}}  \widetilde \mu^{\middlesubscript}_{\rm in}.
\eeqs
Finally, by propagation of defect measures given by Lemma \ref{lem:ltoi} together with Lemma \ref{lem:muinlup}, Part (\ref{i:muinlupa}) (recalling Part (\ref{i:same}) of Lemma \ref{lem:tilde_good}), with $(\mathrm x, \mathrm y) = (i, l)$, 
$$
 \widetilde \mu^{\middlesubscript}_{\rm in}= \widetilde \mu_{\rm out}^{\leftsubscript} \circ \Phi^{\rm in}_{\middlesubscript\rightarrow \leftsubscript}
 \quad\tin T^* \Gamma_i,
 $$
 and the result \eqref{eq:JJS0} follows.
\end{proof}

A first application of Proposition \ref{prop:moregeneral} is the following.

\begin{cor} \label{cor:meas} Assume that $u = Sg$ solves (\ref{eq:modelh}) with $\Vert g \Vert_{L^2}\leq C$, and admits defect measures and boundary 
defect measures given by Lemma \ref{lem:good_mes},
 and let $\widetilde u := (\hsc D_{x_1}+1)u$. Then,
$$
\lim_{\ell \rightarrow \infty} \Vert (\hsc_\ell D_{x_1} + \iota )u(\hsc_\ell)\Vert^2_{L^2(\Gamma_{\middlesubscript})} = \int_{\mathcal B^{\leftsubscript}_{\leftsubscript\rightarrow \middlesubscript}} \Big( \frac{\iota + \sqrt r}{1+\sqrt r}\Big)^2\; d\widetilde \nu^{\leftsubscript}_d, \hspace{0.3cm}\iota = 1,-1.
$$
\end{cor}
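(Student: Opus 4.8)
The plan is to chain together three facts already available: the convergence in Lemma~\ref{lem:good_mes}, the identity for $\widehat\nu^{\middlesubscript}_d$ from Proposition~\ref{prop:moregeneral}, and the relation $\widetilde\mu^{\leftsubscript}_{\rm out}=2\sqrt r\,\widetilde\nu^{\leftsubscript}_d$ that outgoingness near $\Gamma_s$ forces on $\Gamma_{\leftsubscript}$. First, along the subsequence $\hsc_\ell$ on which $u$ admits the measures of Lemma~\ref{lem:good_mes}, that lemma gives
\[
\lim_{\ell\to\infty}\bigl\Vert(\hsc_\ell D_{x_1}+\iota)u(\hsc_\ell)\bigr\Vert_{L^2(\Gamma_{\middlesubscript})}^2=\widehat\nu^{\middlesubscript}_d\bigl(T^*\Gamma_{\middlesubscript}\bigr),
\]
so it remains to evaluate $\widehat\nu^{\middlesubscript}_d(T^*\Gamma_{\middlesubscript})$. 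Since $u$ is outgoing near $\Gamma_s$ and admits defect and boundary defect measures, Proposition~\ref{prop:moregeneral} applies and yields, on $T^*\Gamma_{\middlesubscript}$,
\[
\widehat\nu^{\middlesubscript}_d=\frac{1}{2\sqrt r}\Bigl(\frac{\iota+\sqrt r}{1+\sqrt r}\Bigr)^2\,\widetilde\mu^{\leftsubscript}_{\rm out}\circ\Phi^{\rm in}_{\middlesubscript\to\leftsubscript}.
\]

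Next I would transport this mass back to $\Gamma_{\leftsubscript}$. By construction $\Phi^{\rm in}_{\middlesubscript\to\leftsubscript}$ restricts to a bijection of $\mathcal B^{\middlesubscript}_{\leftsubscript\to\middlesubscript}$ onto $\mathcal B^{\leftsubscript}_{\leftsubscript\to\middlesubscript}$ with inverse $\Phi^{\rm out}_{\leftsubscript\to\middlesubscript}$, and the measure $\widetilde\mu^{\leftsubscript}_{\rm out}\circ\Phi^{\rm in}_{\middlesubscript\to\leftsubscript}$ is carried by $\mathcal B^{\middlesubscript}_{\leftsubscript\to\middlesubscript}$ (the set where $\Phi^{\rm in}_{\middlesubscript\to\leftsubscript}$ is defined). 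Hence the change-of-variable formula \eqref{eq:puschchange} gives
\[
\widehat\nu^{\middlesubscript}_d\bigl(T^*\Gamma_{\middlesubscript}\bigr)=\int_{\mathcal B^{\leftsubscript}_{\leftsubscript\to\middlesubscript}}\Bigl[\frac{1}{2\sqrt r}\Bigl(\frac{\iota+\sqrt r}{1+\sqrt r}\Bigr)^2\Bigr]\circ\Phi^{\rm out}_{\leftsubscript\to\middlesubscript}\;d\widetilde\mu^{\leftsubscript}_{\rm out}.
\]
Because $r(x',\xi')=1-|\xi'|^2$ depends only on $\xi'$ and the straight-line Hamiltonian flow between the parallel segments $\Gamma_{\leftsubscript}$ and $\Gamma_{\middlesubscript}$ preserves $\xi'$ (exactly the observation used at the end of the proof of Lemma~\ref{lem:ltoi}), the bracketed integrand is invariant under composition with $\Phi^{\rm out}_{\leftsubscript\to\middlesubscript}$, so it may be dropped from the composition.

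Finally, outgoingness near $\Gamma_s$ gives $\mu^{\leftsubscript}_{\rm in}=0$ by Part~(1) of Lemma~\ref{lem:muinlup}, hence $\widetilde\mu^{\leftsubscript}_{\rm in}=0$ by Part~(\ref{i:same}) of Lemma~\ref{lem:tilde_good}, and then Part~(\ref{i:nu_out}) of Lemma~\ref{lem:tilde_good} yields $\widetilde\mu^{\leftsubscript}_{\rm out}=2\sqrt r\,\widetilde\nu^{\leftsubscript}_d$ on $\mathcal H$, in particular on $\mathcal B^{\leftsubscript}_{\leftsubscript\to\middlesubscript}$. Substituting, the factor $2\sqrt r$ cancels the $\frac{1}{2\sqrt r}$ and we obtain
\[
\widehat\nu^{\middlesubscript}_d\bigl(T^*\Gamma_{\middlesubscript}\bigr)=\int_{\mathcal B^{\leftsubscript}_{\leftsubscript\to\middlesubscript}}\Bigl(\frac{\iota+\sqrt r}{1+\sqrt r}\Bigr)^2 d\widetilde\nu^{\leftsubscript}_d,
\]
which, combined with the first display, is the desired identity. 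I expect the only delicate point to be the bookkeeping in the change of variables — keeping track of which of $\mu_{\rm in},\mu_{\rm out}$ and $\Phi^{\rm in},\Phi^{\rm out}$ occur, and verifying that $\widehat\nu^{\middlesubscript}_d$ (equivalently the pulled-back measure) is genuinely supported on $\mathcal B^{\middlesubscript}_{\leftsubscript\to\middlesubscript}$ so that \eqref{eq:puschchange} is legitimate; there is no analytic difficulty, everything following from results already proved above.
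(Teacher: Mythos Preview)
Your proof is correct and follows essentially the same route as the paper's: invoke Lemma~\ref{lem:good_mes} for the convergence, Proposition~\ref{prop:moregeneral} for $\widehat\nu^{\middlesubscript}_d$, and then Lemmas~\ref{lem:muinlup} and~\ref{lem:tilde_good} to replace $\widetilde\mu^{\leftsubscript}_{\rm out}$ by $2\sqrt r\,\widetilde\nu^{\leftsubscript}_d$. The only difference is that you spell out the change-of-variables from $T^*\Gamma_{\middlesubscript}$ back to $\mathcal B^{\leftsubscript}_{\leftsubscript\to\middlesubscript}$ (using the $\xi'$-invariance argument from Lemma~\ref{lem:ltoi}), whereas the paper absorbs this step into the phrase ``the result then follows by Proposition~\ref{prop:moregeneral} together with \eqref{eq:osc_Gammai}''.
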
 
\begin{proof}
Part (\ref{i:same}) of Lemma \ref{lem:tilde_good} and Part (1) of Lemma \ref{lem:muinlup} imply that  $\widetilde \mu_{\rm in}^{\leftsubscript} = 0$, and thus 
Part (\ref{i:nu_out}) of Lemma \ref{lem:tilde_good} 
implies that, in $\mathcal H$,
${\widetilde {\mu}}_{\rm out}^{\leftsubscript} = 2\sqrt{r} {\widetilde \nu^{\leftsubscript}_d}$; the result then follows by Proposition \ref{prop:moregeneral} together with (\ref{eq:osc_Gammai}) from Lemma \ref{lem:good_mes}.
\end{proof}

\subsection{Proof of Theorem \ref{th:lowerupper}}
\begin{proof}
{We first prove Point (\ref{i:1_up}) of Theorem \ref{th:lowerupper} (i.e. the upper bound \eqref{eq:uppermain}), then Point (\ref{i:1_low}) (i.e. the lower bound \eqref{eq:lowermain})}. Let $\iota \in \{ -1, 1\}$ {and let $\epsilon>0$}.
\subsection*{The upper bound}
If the upper bound \eqref{eq:uppermain} does not hold, then there exists $\hsc_\ell \rightarrow 0$, $g_\ell(h_\ell) \in L^2$ and $u(\hsc_\ell) = Sg(\hsc)$ solving  (\ref{eq:modelh})  and
\begin{equation} \label{eq:lower_contradictmegen}
\Vert (\hsc D_{x_1}  + \iota)u(\hsc_\ell)\Vert_{L^2(\Gamma_{\middlesubscript})} {>}  \Bigg(\frac{1+ \iota \cos (\theta_{\rm max})}{1 + \cos (\theta_{\rm max})} + \epsilon \Bigg)
 \Vert g_\ell(\hsc_\ell) \Vert_{L^2}.
\end{equation}
By rescaling, we can assume that
\begin{equation} \label{eq:low:normggen}
 \Vert g_\ell(\hsc_\ell) \Vert_{L^2} = 1.
\end{equation}
Then, by Lemma \ref{lem:good_mes}, up to extraction of a subsequence, $u(\hsc_\ell)$ admits defect measure and boundary defect measures.
Now, by Corollary \ref{cor:meas}, 
\begin{equation} \label{eq:uppfinfin}
\lim_{\ell \rightarrow \infty} \Vert (\hsc_\ell D_{x_1} + \iota1)u(\hsc_\ell)\Vert^2_{L^2(\Gamma_{\middlesubscript})} = \int_{\mathcal B^{\leftsubscript}_{\leftsubscript\rightarrow \middlesubscript}} \Big( \frac{\iota+ \sqrt r}{1+\sqrt r}\Big)^2\; d\widetilde \nu^{\leftsubscript}_d
\end{equation}
But, 
$$
\bigg( \frac{\sqrt{r} + \iota}{\sqrt{r}+1} \bigg)^2 = \bigg( \frac{\sqrt{1- \xi'^2} + \iota}{\sqrt{1- \xi'^2}+1} \bigg)^2 
$$
which is non-decreasing for $\xi' \in [0,1]$ and
$$
|\xi'| \leq \sin (\theta_{\rm max}) \hspace{0.3cm}\text{on } \mathcal B^{\leftsubscript}_{\leftsubscript\rightarrow \middlesubscript}.
$$
Therefore, by (\ref{eq:uppfinfin})
\begin{align*}
\lim_{\ell \rightarrow \infty} \Vert (\hsc_\ell D_{x_1} + \iota)u(\hsc_\ell)\Vert^2_{L^2(\Gamma_{\middlesubscript})} &\leq \bigg( \frac{\iota  +  \cos (\theta_{\rm max})}{1  + \cos (\theta_{\rm max})} \bigg)^2 \widetilde \nu^{\leftsubscript}_d
(\mathcal B^{\leftsubscript}_{\middlesubscript\rightarrow \leftsubscript}) \\
&\leq \bigg( \frac{\iota  +  \cos (\theta_{\rm max})}{1  + \cos (\theta_{\rm max})} \bigg)^2 \widetilde \nu^{\leftsubscript}_d
(T^*\Gamma_{\leftsubscript}) \\
&\leq \bigg( \frac{1  + \iota \cos (\theta_{\rm max})}{1  + \cos (\theta_{\rm max})} \bigg)^2 \bigg(\limsup_{\ell \to \infty}\Vert g_\ell(\hsc_\ell) \Vert_{L^2}^2\bigg)\\
&= \bigg( \frac{1  + \iota \cos (\theta_{\rm max})}{1  + \cos (\theta_{\rm max})} \bigg)^2, 
\end{align*}
where we used (\ref{eq:low:normggen}) on the last line. {This last inequality is contradicted by taking the limit $\ell \rightarrow \infty$ in (\ref{eq:lower_contradictmegen})}, 
and thus the upper bound \eqref{eq:uppermain} holds.

\subsection*{The lower bound}
{We now want to show Point (\ref{i:1_low}) of Theorem \ref{th:lowerupper}. To do so, 
we construct explicitly an $\hsc$-family of functions $g(\hsc)$ so that the associated solution $u(\hsc)=S(\hsc)g(\hsc)$ to (\ref{eq:modelh}) satisfies  (\ref{eq:lowermain}).}
Let $y_0 = (0, z_0) \in \mathring \Gamma_{\leftsubscript}$ and $\xi'_0 \in T^*\Gamma_{\leftsubscript}$ to be fixed later. {The idea is to chose the data $g(\hsc)$ specifically so that the Dirichlet measure of $\widetilde{u} := (\hsc D_{x_1} +1)u$ is a Dirac delta function in $(y_0, \xi_0)$, and then chose $(y_0, \xi_0)$ to saturate the upper bound, i.e., obtain (\ref{eq:lowermain}).} Let $\eta>0$ be such that $z_0 \in  [\eta, L-\eta]$, and $\chi \in C^\infty_c (\mathbb R)$ be such that $\chi = 1$ near $z_0$ and $\operatorname{supp } \chi \subset [\frac 12 \eta, L- \frac 12 \eta]$.  We define $g(\hsc) \in L^2(\Gamma_{\leftsubscript})$, $\operatorname{supp }g \Subset \Gamma_{\leftsubscript}$ by
\beq\label{eq:data_g}
g(\hsc) := (\pi \hsc)^{-1/4} e^{i (y-y_0)\cdot \xi'_0/\hsc} e^{-i(y-y_0)^2/2\hsc} \chi(y).
\eeq
Let $u$ be the associated solution of (\ref{eq:modelh}), and let $\widetilde u := (\hsc D_{x_1} +1)u$.

By Lemma \ref{lem:good_mes} (using the fact that $\Vert g \Vert_{L^2} \leq C$) 
any sequence $u(\hsc_\ell)$ admits, up to extraction of a subsequence, defect measure and boundary defect measures on $\Gamma_{\leftsubscript}$ and $\Gamma_{\middlesubscript}^+$. Integrating by parts (see for example by \cite[Page 102, Example 1]{Zw:12}), 
{$g$ has defect meausre $\delta_{\xi' = \xi'_0, y=y_0}$, 
and thus, by the boundary condition on $\widetilde u$,} we have that,
\begin{equation} \label{eq:gaussnu}
\widetilde {\nu}^{\leftsubscript}_d = \delta_{\xi' = \xi'_0, y=y_0},
\end{equation}
and in addition, as $\hsc \rightarrow 0$,
\begin{equation} \label{eq:gaussmass}
\Vert g(\hsc) \Vert_{L^2} \rightarrow 1.
\end{equation}

We show that, for any sequence $\hsc_\ell \rightarrow 0$, up to a subsequence,
\begin{equation} \label{eq:lowermain2}
\lim_{\ell\rightarrow\infty} \frac{\Vert (\hsc_\ell D_{x_1} + \iota)u(\hsc_\ell) \Vert_{L^2(\Gamma_{\middlesubscript})} }{\Vert g(\hsc_\ell) \Vert_{L^2(\Gamma_{\leftsubscript})}} = \frac{1+ \iota\cos(\theta_{\rm max} - \epsilon)}{1+\cos(\theta_{\rm max} - \epsilon)},
\end{equation}
from which {(\ref{eq:lowermain})} follows; indeed, if (\ref{eq:lowermain}) fails, there exists a sequence $\hsc_\ell \rightarrow 0$ so that
$$
\liminf_{\ell\rightarrow \infty} \left| \frac{\Vert (\hsc_\ell D_{x_1} + \iota)u(\hsc_\ell) \Vert_{L^2(\Gamma_{\middlesubscript})} }{\Vert g(\hsc_\ell) \Vert_{L^2(\Gamma_{\leftsubscript})}} - \frac{1+ \iota\cos(\theta_{\rm max} - \epsilon)}{1+\cos(\theta_{\rm max} - \epsilon)} \right| > 0,
$$
which contradicts (\ref{eq:lowermain2})

Take an arbitrary sequence $\hsc_\ell \rightarrow 0$. 
By Corollary \ref{cor:meas}  together with (\ref{eq:gaussnu}), 
\begin{equation} \label{eq:upperfinfin}
\lim_{\ell \rightarrow \infty} \Vert (\hsc_\ell D_{x_1} + \iota)u(\hsc_\ell)\Vert^2_{L^2(\Gamma_{\middlesubscript})} = \int_{\mathcal B^{\leftsubscript}_{\leftsubscript\rightarrow \middlesubscript}} \Big( \frac{\iota+\sqrt r}{1+\sqrt r}\Big)^2\; d \delta_{\xi' = \xi'_0, y=y_0}.
\end{equation}
Let
$$
\xi'_0 := \sin \theta \quad\text{ with }\quad \theta := \theta_{\rm max} - \epsilon.
$$
Since $\theta<\theta_{\rm max}$, there exists $y'_0 \in \mathring \Gamma_{\leftsubscript}$ so that $(\xi'_0, y'_0) \in \mathcal B^{\leftsubscript}_{\middlesubscript\rightarrow \leftsubscript}$. 
With the data $g$ given by \eqref{eq:data_g} with these $(\xi'_0,y'_0)$, 
the desired result 
\eqref{eq:lowermain2} follows from (\ref{eq:upperfinfin}), (\ref{eq:gaussmass}), and the fact that $r(\xi') = 1 - \xi'^2$.
\end{proof}

\section{Proof of the results involving \ref{eq:model_cell_timp}; i.e. Theorem \ref{th:model2} and Theorem \ref{th:compo}}\label{sec:7}

Similarly to the start of \S\ref{sec:6}, we fix admissible solution operators $S(\height , \mathsf {d_\leftsubscript}^+, \mathsf {d_{\rightsubscript}}^+) : g \mapsto u$ and $S(\height , \mathsf {d_\leftsubscript}^-, \mathsf {d_{\rightsubscript}}^-) : g \mapsto u$ to (\ref{eq:modelh_timp}) (with the two sets of parameters $(\height , \mathsf {d_\leftsubscript}^\pm, \mathsf {d_{\rightsubscript}}^\pm)$) throughout this section.

\subsection{The impedance trace on $\Gamma_{\middlesubscript}$ in term of measures for \ref{eq:model_cell_timp}}

\begin{prop} \label{prop:trace_meas2}
Assume that $u$ satisfies (\ref{eq:modelh_timp}) and has defect measure and boundary defect measures.
For $\iota \in \{ 1,-1 \}$, let $\widehat u := (\hsc D_{x_1}+\iota)u$. Then, up to a subsequence, $\widehat u$ admits a Dirichlet boundary measure $\widehat \nu_d^{\middlesubscript}$  on $\Gamma_{\middlesubscript}^+$ and
\begin{equation} \label{eq:supphat}
\operatorname{supp} \widehat \nu_d^{\middlesubscript} \, \cap\, {T^* \Gamma_i} \subset \operatorname{supp} \big(\widetilde \mu_{\rm out}^{\leftsubscript} \circ \Phi^{\rm in}_{\middlesubscript\rightarrow \leftsubscript} \big)\cup \operatorname{supp} \big(\widetilde \mu_{\rm out}^{\rightsubscript} \circ \Phi^{\rm in}_{\middlesubscript\rightarrow \rightsubscript}\big).
\end{equation}
If, in addition,
\begin{equation} \label{eq:mdisjsupp}
\operatorname{supp} \big(\widetilde \mu_{\rm out}^{\leftsubscript} \circ \Phi^{\rm in}_{\middlesubscript\rightarrow \leftsubscript} \big)\cap \operatorname{supp} \big(\widetilde \mu_{\rm out}^{\rightsubscript} \circ \Phi^{\rm in}_{\middlesubscript\rightarrow \rightsubscript}\big) \,\cap\,{T^* \Gamma_i} = \emptyset,
\end{equation}
then, in ${T^* \Gamma_i}$,
\begin{equation} \label{eq:exprhat}
\widehat \nu_d^{\middlesubscript} = \frac{1}{2\sqrt r}\Big( \frac{\iota + \sqrt r}{1+\sqrt r}\Big)^2 \widetilde \mu_{\rm out}^{\leftsubscript} \circ \Phi^{\rm in}_{\middlesubscript\rightarrow \leftsubscript} +  \frac{1}{2\sqrt r}\Big( \frac{ - \iota + \sqrt r}{-1+\sqrt r}\Big)^2 \widetilde \mu_{\rm out}^{\rightsubscript} \circ \Phi^{\rm in}_{\middlesubscript\rightarrow \rightsubscript}.
\end{equation}
\end{prop}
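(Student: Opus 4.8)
The plan is to extend the argument of Proposition~\ref{prop:moregeneral} by keeping track of the \emph{two} ways mass can reach $\Gamma_{\middlesubscript}$: directly from $\Gamma_{\leftsubscript}$, and from $\Gamma_{\rightsubscript}$ after one reflection off the impedance condition there. Existence of $\widehat\nu_d^{\middlesubscript}$ is immediate: since $u$ has boundary defect measures on $\Gamma_{\middlesubscript}^+$, both $u$ and $\hsc D_s u$ are bounded in $L^2(\Gamma_{\middlesubscript}^+)$, hence so is $\widehat u=(-\hsc D_s+\iota)u$ (recall $s=-x_1$ on $\Gamma_{\middlesubscript}^+$), and passing to a further subsequence yields a Dirichlet measure for $\widehat u$ which, by expanding the quadratic form exactly as in Lemma~\ref{lem:good_mes}, equals $\widehat\nu_d^{\middlesubscript}=\nu_d^{\middlesubscript}-2\iota\Re\nu_j^{\middlesubscript}+\nu_n^{\middlesubscript}$ on $T^*\Gamma_i$. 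The two transport identities we shall use are, on $T^*\Gamma_i$, $\widetilde\mu_{\rm in}^{\middlesubscript}=\widetilde\mu_{\rm out}^{\leftsubscript}\circ\Phi^{\rm in}_{\middlesubscript\rightarrow\leftsubscript}$ and $\widetilde\mu_{\rm out}^{\middlesubscript}=\widetilde\mu_{\rm out}^{\rightsubscript}\circ\Phi^{\rm in}_{\middlesubscript\rightarrow\rightsubscript}$; these follow from Lemma~\ref{lem:ltoi} applied to $\widetilde\mu$ (legitimate by Part~(\ref{i:same}) of Lemma~\ref{lem:tilde_good}) in the cases $(\mathrm x,\mathrm y)=(\middlesubscript,\leftsubscript)$ and $(\middlesubscript,\rightsubscript)$ respectively (the latter with $\mu^{\middlesubscript}_{\rm in},\mu^{\middlesubscript}_{\rm out}$ exchanged), together with the support information of Part~(2) of Lemma~\ref{lem:muinlup} which guarantees $\operatorname{supp}\mu_{\rm in}^{\middlesubscript}\cap T^*\Gamma_i\subset\mathcal B^{\middlesubscript}_{\leftsubscript\rightarrow\middlesubscript}$ and $\operatorname{supp}\mu_{\rm out}^{\middlesubscript}\cap T^*\Gamma_i\subset\mathcal B^{\middlesubscript}_{\rightsubscript\rightarrow\middlesubscript}$.

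To prove \eqref{eq:supphat}: by Part~(1) of Lemma~\ref{lem:Millretations}, the measures $\nu_d^{\middlesubscript},\nu_j^{\middlesubscript},\nu_n^{\middlesubscript}$ are all dominated, on $\mathcal H$, by $\mu_{\rm in}^{\middlesubscript}+\mu_{\rm out}^{\middlesubscript}$ (using $|\nu_j^{\middlesubscript}|\le\sqrt{\nu_d^{\middlesubscript}}\sqrt{\nu_n^{\middlesubscript}}$ for the cross term), so — using that $\widehat\nu_d^{\middlesubscript}$ is supported in $\mathcal H$ by Lemma~\ref{lem:traceH} — we get $\operatorname{supp}\widehat\nu_d^{\middlesubscript}\cap T^*\Gamma_i\subset(\operatorname{supp}\mu_{\rm in}^{\middlesubscript}\cup\operatorname{supp}\mu_{\rm out}^{\middlesubscript})\cap T^*\Gamma_i$. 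Since $\Phi^{\rm in}$ preserves $\xi'$ and hence $\sqrt r$, and since $\widetilde\mu_{\rm out}^{\leftsubscript}$ and $\mu_{\rm out}^{\leftsubscript}$ have equal support (the weight $(\sqrt r+1)^2$ never vanishes), the first transport identity gives $\operatorname{supp}\mu_{\rm in}^{\middlesubscript}\cap T^*\Gamma_i\subset\operatorname{supp}(\widetilde\mu_{\rm out}^{\leftsubscript}\circ\Phi^{\rm in}_{\middlesubscript\rightarrow\leftsubscript})$; for the second, the reflection relation $\mu_{\rm out}^{\rightsubscript}=\big|\frac{\sqrt r-1}{\sqrt r+1}\big|^2\mu_{\rm in}^{\rightsubscript}$ (from Lemma~\ref{lem:Millretations}, as used in the proof of Lemma~\ref{lem:tilde_good}) shows $\mu_{\rm out}^{\rightsubscript}$ charges no mass where $\sqrt r=1$, so $\widetilde\mu_{\rm out}^{\rightsubscript}=(1-\sqrt r)^2\mu_{\rm out}^{\rightsubscript}$ and $\mu_{\rm out}^{\rightsubscript}$ again have equal support, whence $\operatorname{supp}\mu_{\rm out}^{\middlesubscript}\cap T^*\Gamma_i\subset\operatorname{supp}(\widetilde\mu_{\rm out}^{\rightsubscript}\circ\Phi^{\rm in}_{\middlesubscript\rightarrow\rightsubscript})$. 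Taking the union yields \eqref{eq:supphat}.

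Now assume \eqref{eq:mdisjsupp}, and set $A:=\operatorname{supp}(\widetilde\mu_{\rm out}^{\leftsubscript}\circ\Phi^{\rm in}_{\middlesubscript\rightarrow\leftsubscript})$ and $B:=\operatorname{supp}(\widetilde\mu_{\rm out}^{\rightsubscript}\circ\Phi^{\rm in}_{\middlesubscript\rightarrow\rightsubscript})$, which are disjoint closed subsets of $T^*\Gamma_i$; separate them by disjoint open sets $U_A\supset A$, $U_B\supset B$. On $U_A$, $\mu_{\rm out}^{\middlesubscript}$ vanishes (its support lies in $B$), so Part~(3) of Lemma~\ref{lem:Millretations} gives $\nu_d^{\middlesubscript}=\frac1{2\sqrt r}\mu_{\rm in}^{\middlesubscript}$, $\Re\nu_j^{\middlesubscript}=-\frac12\mu_{\rm in}^{\middlesubscript}$, $\nu_n^{\middlesubscript}=\frac{\sqrt r}2\mu_{\rm in}^{\middlesubscript}$; substituting into $\widehat\nu_d^{\middlesubscript}=\nu_d^{\middlesubscript}-2\iota\Re\nu_j^{\middlesubscript}+\nu_n^{\middlesubscript}$ and using $\iota^2=1$ gives $\widehat\nu_d^{\middlesubscript}=\frac{(\sqrt r+\iota)^2}{2\sqrt r}\mu_{\rm in}^{\middlesubscript}=\frac1{2\sqrt r}\big(\frac{\iota+\sqrt r}{1+\sqrt r}\big)^2\widetilde\mu_{\rm in}^{\middlesubscript}$, which is the first term of \eqref{eq:exprhat} by the first transport identity. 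Symmetrically, on $U_B$, $\mu_{\rm in}^{\middlesubscript}$ vanishes, Part~(2) of Lemma~\ref{lem:Millretations} gives the mirror relations (now $\Re\nu_j^{\middlesubscript}=+\frac12\mu_{\rm out}^{\middlesubscript}$), and one finds $\widehat\nu_d^{\middlesubscript}=\frac{(\sqrt r-\iota)^2}{2\sqrt r}\mu_{\rm out}^{\middlesubscript}=\frac1{2\sqrt r}\big(\frac{-\iota+\sqrt r}{-1+\sqrt r}\big)^2\widetilde\mu_{\rm out}^{\middlesubscript}$ (valid on $U_B$ away from the locus $\sqrt r=1$, which carries no mass of $\mu_{\rm out}^{\middlesubscript}$), i.e.\ the second term of \eqref{eq:exprhat} by the second transport identity. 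Since $\operatorname{supp}\widehat\nu_d^{\middlesubscript}\cap T^*\Gamma_i\subset A\cup B$ by \eqref{eq:supphat}, while on $A$ the second term of \eqref{eq:exprhat} vanishes and on $B$ the first term vanishes (disjointness of their supports), the two local formulas patch into \eqref{eq:exprhat} on all of $T^*\Gamma_i$.

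The main obstacle is ensuring that the reflected contribution from $\Gamma_{\rightsubscript}$ does not interfere with the direct contribution from $\Gamma_{\leftsubscript}$ when computing $\widehat\nu_d^{\middlesubscript}$ — this is exactly what the disjointness hypothesis \eqref{eq:mdisjsupp} buys us, localising the computation so that near each point of $T^*\Gamma_i$ at most one of $\mu_{\rm in}^{\middlesubscript},\mu_{\rm out}^{\middlesubscript}$ is present and Parts~(2)--(3) of Lemma~\ref{lem:Millretations} apply verbatim, just as in the single-interface computation of Proposition~\ref{prop:moregeneral}. The one genuine technicality is the horizontal-ray locus $\{\sqrt r=1\}$, where the weights linking $\widetilde\mu_{\rm out}^{\middlesubscript},\widetilde\mu_{\rm out}^{\rightsubscript}$ to $\mu_{\rm out}^{\middlesubscript},\mu_{\rm out}^{\rightsubscript}$ degenerate; it is harmless because the reflection coefficient vanishes there, so $\mu_{\rm out}^{\rightsubscript}$ and hence $\mu_{\rm out}^{\middlesubscript}$ assign it no mass, and the two factorisations of each support coincide.
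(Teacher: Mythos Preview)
Your proof is correct and follows essentially the same route as the paper's: express $\widehat\nu_d^{\middlesubscript}=\nu_d^{\middlesubscript}-2\iota\Re\nu_j^{\middlesubscript}+\nu_n^{\middlesubscript}$, use Lemma~\ref{lem:traceH} and Lemma~\ref{lem:Millretations} to confine its support to $\operatorname{supp}\mu_{\rm in}^{\middlesubscript}\cup\operatorname{supp}\mu_{\rm out}^{\middlesubscript}$, pass from $\mu$ to $\widetilde\mu$ via the reflection relation at $\Gamma_{\rightsubscript}$, and then invoke the transport identities $\widetilde\mu_{\rm in}^{\middlesubscript}=\widetilde\mu_{\rm out}^{\leftsubscript}\circ\Phi^{\rm in}_{\middlesubscript\rightarrow\leftsubscript}$, $\widetilde\mu_{\rm out}^{\middlesubscript}=\widetilde\mu_{\rm out}^{\rightsubscript}\circ\Phi^{\rm in}_{\middlesubscript\rightarrow\rightsubscript}$; under \eqref{eq:mdisjsupp} you then localise and apply Parts~(2)--(3) of Lemma~\ref{lem:Millretations} exactly as the paper does. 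The only cosmetic differences are that the paper establishes $\operatorname{supp}\mu\subset\operatorname{supp}\widetilde\mu$ at the interior level (via $\{\xi_1=-1\}$) whereas you argue directly at the boundary that $\operatorname{supp}\mu_{\rm out}^{\rightsubscript}=\operatorname{supp}\widetilde\mu_{\rm out}^{\rightsubscript}$, and the paper routes the formula through $\widetilde\nu_d^{\middlesubscript}$ while you compute directly; both are equivalent. One remark: your sentence ``charges no mass where $\sqrt r=1$, so \ldots\ have equal support'' is slightly elliptic --- the missing step is that since $\mu_{\rm out}^{\rightsubscript}(\{\sqrt r=1\})=0$, any open set with positive $\mu_{\rm out}^{\rightsubscript}$-mass also has positive $(1-\sqrt r)^2\mu_{\rm out}^{\rightsubscript}$-mass (positivity of the integral of a strictly positive function over a set of positive measure), which is what forces the supports to agree.
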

The support property (\ref{eq:supphat}) reflects the fact that there are two contributions to the impedance trace on $\Gamma_\middlesubscript$:~mass coming from the left boundary $\Gamma_\leftsubscript$, and mass coming from the right boundary $\Gamma_\rightsubscript$. Both of these contributions a priori interact. However, when they \emph{don't} interact, 
i.e., under the disjoint supports assumption (\ref{eq:mdisjsupp}), the impedance trace on $\Gamma_\middlesubscript$ corresponds precisely to the sum of these contributions, as expressed by (\ref{eq:exprhat}), which is the analogue of (\ref{eq:JJS0}) in the present case of (\ref{eq:modelh_timp}).

\begin{proof}
By the definition of $\widehat u$ in terms of $u$ and the definition of defect measures, $\widehat u$ has a Dirichlet boundary measure in $\Gamma_{\middlesubscript}^+$ and
\begin{equation} \label{eq:np1}
\widehat \nu^{\middlesubscript}_d = \nu^{\middlesubscript}_d - 2 \iota \Re\nu_j^{\middlesubscript} + \nu_n^{\middlesubscript}
\end{equation}
(exactly the same as at the beginning of the proof of Proposition \ref{prop:moregeneral}).
Hence,
$$
\operatorname{supp}\widehat \nu^{\middlesubscript}_d \subset \operatorname{supp} \nu^{\middlesubscript}_d \cup \operatorname{supp}  \nu_j^{\middlesubscript} \cup\operatorname{supp} \nu_n^{\middlesubscript}.
$$
But, by Lemma \ref{lem:Millretations}, 
$$
\Big( \operatorname{supp} \nu^{\middlesubscript}_d \cup \operatorname{supp} \nu_j^{\middlesubscript} \cup \operatorname{supp} \nu_n^{\middlesubscript} \Big) \cap \mathcal H \subset \operatorname{supp} \mu^{\middlesubscript}_{\rm out} \cup \operatorname{supp} \mu^{\middlesubscript}_{\rm in}. 
$$
{The well-posedness results in \S\ref{sec:wp} (whose proofs are delayed to \S\ref{sec:5}) imply that}
$\nu^{\middlesubscript}_d, \nu^{\middlesubscript}_j$, and $\nu^{\middlesubscript}_n$ are all supported in $\mathcal H$ {(see Lemma \ref{lem:traceH} below)}, and thus
\begin{equation} \label{eq:np2}
\operatorname{supp}\widehat \nu^{\middlesubscript}_d \subset \operatorname{supp} \mu^{\middlesubscript}_{\rm out} \cup \operatorname{supp} \mu^{\middlesubscript}_{\rm in}.
\end{equation}
We now seek to relate $\mu^{\middlesubscript}_{\rm out/in}$ to $\widetilde{\mu}^{\middlesubscript}_{\rm out/in}$ using the relationship between $\mu$ and $\widetilde{\mu}$ and Lemma \ref{lem:interpr}.
Since $ \widetilde \mu = (\xi_1+1)^2 \mu$, 
$\operatorname{supp}\mu \subset \operatorname{supp}\widetilde \mu \cup \{ \xi_1 = - 1 \}$.
We now show that $\{\xi_1=-1\}\cap \operatorname{supp}\mu=\emptyset$. Indeed, by Lemma \ref{lem:Millretations}, $\mu_{\rm out}^{\rightsubscript} = \big| \frac{\sqrt r - 1}{\sqrt r + 1}\big|^2\mu_{\rm in}^{\rightsubscript}$, 
and thus $\mu_{\rm out}^\rightsubscript (\sqrt{r}=1)=0$. Therefore, by the interpretation of boundary measures given by Lemma \ref{lem:interpr} that $\{\xi_1=-1\}\cap\operatorname{supp}\mu =\emptyset$.
 Hence $\operatorname{supp}\mu \subset \operatorname{supp}\widetilde \mu$, from which, by Lemma \ref{lem:interpr} again, $\operatorname{supp}\mu^{\middlesubscript}_{\rm in / out} \subset \operatorname{supp}\widetilde \mu^{\middlesubscript}_{\rm in / out}$. Therefore, from (\ref{eq:np2}) we get
\begin{equation} \label{eq:np2p}
\operatorname{supp}\widehat \nu^{\middlesubscript}_d \subset \operatorname{supp} \widetilde \mu^{\middlesubscript}_{\rm out} \cup \operatorname{supp} \widetilde \mu^{\middlesubscript}_{\rm in}.
\end{equation}
Finally, observe that, by Part (\ref{i:same}) of Lemma \ref{lem:tilde_good} and propagation of defect measures given by Lemma \ref{lem:ltoi} together with Lemma \ref{lem:muinlup}, Parts (\ref{i:muinlupa2}) and (\ref{i:muinlupa}) with $(\mathrm x, \mathrm y) = (i, l)$,
\begin{equation} \label{eq:np3}
 \widetilde \mu^{\middlesubscript}_{\rm in}= \widetilde \mu_{\rm out}^{\leftsubscript} \circ \Phi^{\rm in}_{\middlesubscript\rightarrow \leftsubscript}, \hspace{0.5cm} \widetilde \mu^{\middlesubscript}_{\rm out}= \widetilde \mu_{\rm out}^{\rightsubscript} \circ \Phi^{\rm in}_{\middlesubscript\rightarrow \rightsubscript}\quad\tin T^*\Gamma_i.
\end{equation}
Then \eqref{eq:np3} combined with (\ref{eq:np2p}) gives (\ref{eq:supphat}).

We now assume that (\ref{eq:mdisjsupp}) holds and we show (\ref{eq:exprhat}). The previous support arguments show that
\begin{equation} \label{eq:np4}
\operatorname{supp} \mu^{\middlesubscript}_{\rm out} \cap \operatorname{supp} \mu^{\middlesubscript}_{\rm in} \,\cap\, T^* \Gamma_i = \emptyset.
\end{equation}
In other words
$$
\mu^{\middlesubscript}_{\rm out} = 0\,\, \text{ on }\operatorname{supp}\mu^{\middlesubscript}_{\rm in}\,\cap\, T^* \Gamma_i; \hspace{0.5cm}\mu^{\middlesubscript}_{\rm in} = 0\,\, \text{ on }\operatorname{supp}\mu^{\middlesubscript}_{\rm out}\,\cap\, T^* \Gamma_i.
$$
Therefore, by Lemma \ref{lem:Millretations},
\begin{equation}\label{eq:np5}
 \nu^{\middlesubscript}_n = \frac 12 \sqrt{r}  \mu^{\middlesubscript}_{\rm in}, \quad  \Re  \nu^{\middlesubscript}_j = -\frac 12 \mu^{\middlesubscript}_{\rm in}, \quad   \nu^{\middlesubscript}_d = \frac{1}{2\sqrt{r}}  \mu^{\middlesubscript}_{\rm in} \hspace{0.5cm}\text{ on }\operatorname{supp}\mu^{\middlesubscript}_{\rm in}\,\cap\, T^* \Gamma_i,
\end{equation}
and
\begin{equation}\label{eq:np6}
 \nu^{\middlesubscript}_n = \frac 12 \sqrt{r}  \mu^{\middlesubscript}_{\rm out}, \quad  \Re  \nu^{\middlesubscript}_j = \frac 12 \mu^{\middlesubscript}_{\rm out}, \quad   \nu^{\middlesubscript}_d = \frac{1}{2\sqrt{r}}  \mu^{\middlesubscript}_{\rm out} \hspace{0.5cm}\text{ on }\operatorname{supp}\mu^{\middlesubscript}_{\rm out}\,\cap\, T^* \Gamma_i.
\end{equation}
In addition, observe that, by the definition of $\widehat u$ in terms of $u$ and the definition of defect measures,
\begin{equation}\label{eq:np7}
\widetilde \nu^{\middlesubscript}_d = \nu^{\middlesubscript}_d - 2 \Re\nu_j^{\middlesubscript} + \nu_n^{\middlesubscript}.
\end{equation}
From (\ref{eq:np1}) and (\ref{eq:np7}) together with (\ref{eq:np5}) we get
$$
\widehat \nu_d^{\middlesubscript} = \Big( \frac{\sqrt r + \iota }{\sqrt r + 1} \Big)^2 \widetilde \nu_d^{\middlesubscript}\hspace{0.5cm}\text{ on }\operatorname{supp}\mu^{\middlesubscript}_{\rm in}\,\cap\, T^* \Gamma_i.
$$
Therefore, as in the same way $\widetilde \nu_d^{\middlesubscript} = \frac{1}{2\sqrt r} \widetilde \mu_{\rm in}^{\middlesubscript}$ on $\operatorname{supp}\mu^{\middlesubscript}_{\rm in}\subset \operatorname{supp}\widetilde{\mu}^{\middlesubscript}_{\rm in}$, 
$$
\widehat \nu_d^{\middlesubscript} = \Big( \frac{\sqrt r + \iota }{\sqrt r +1} \Big)^2 \frac{1}{2\sqrt r}\widetilde \mu^{\middlesubscript}_{\rm in}\hspace{0.5cm}\text{ on }\operatorname{supp}\mu^{\middlesubscript}_{\rm in}\,\cap\, T^* \Gamma_i,
$$
which gives, by (\ref{eq:np3}), 
\begin{equation}\label{eq:np8}
\widehat \nu_d^{\middlesubscript} = \Big( \frac{\sqrt r + \iota }{\sqrt r +1} \Big)^2 \frac{1}{2\sqrt r}\widetilde \mu_{\rm out}^{\leftsubscript} \circ \Phi^{\rm in}_{\middlesubscript\rightarrow \leftsubscript} \hspace{0.5cm}\text{ on }\operatorname{supp}\mu^{\middlesubscript}_{\rm in}\,\cap\, T^* \Gamma_i.
\end{equation}
In the same way, using (\ref{eq:np1}) and (\ref{eq:np7}) together with (\ref{eq:np6}), then (\ref{eq:np3}), we obtain
\begin{equation}\label{eq:np9}
\widehat \nu_d^{\middlesubscript} = \Big( \frac{\sqrt r - \iota }{\sqrt r - 1} \Big)^2 \frac{1}{2\sqrt r}\widetilde \mu_{\rm out}^{\rightsubscript} \circ \Phi^{\rm in}_{\middlesubscript\rightarrow \rightsubscript}.\hspace{0.5cm}\text{ on }\operatorname{supp}\mu^{\middlesubscript}_{\rm out}\,\cap\, T^* \Gamma_i.
\end{equation}
Together with  (\ref{eq:np4}) and (\ref{eq:np2}), (\ref{eq:np8}) and (\ref{eq:np9}) give (\ref{eq:exprhat}), and the proof is complete.
\end{proof}

Proposition \ref{prop:trace_meas2} is our key tool in the rest of this section. Throughout the section, we repeatedly use the fact that if $u=Sg$ solves (\ref{eq:modelh_timp}) with $\Vert g \Vert_{L^2}\leq C$, then Lemma \ref{lem:good_mes} 
allows us to extract defect measures and boundary defect measures for $u$.

\subsection{Effect of the composite impedance map on data concentrating as a Dirac: Proof of Theorem \ref{th:compo}, (\ref{i:compo:lower})}

We say that a family of Diracs on $T^*\Gamma_{\leftsubscript}$ is \emph{ $(\height , \mathsf {d_\leftsubscript}, \mathsf {d_{\rightsubscript}})$-non interacting} if none can be obtained as the image under the flow of another one, and if the family produce rays whose
intersections with $T^*\Gamma_{\middlesubscript}$ are all distinct. (Strictly speaking, it is the beam-like Helmholtz solutions corresponding to the Dirac data on the boundary that are non-interacting, but since we work on the boundary we talk about the family of Diracs as non-interacting.)
 To introduce such a definition, we denote, for $(x',  \xi')\in T^*(\{0\} \times \mathbb R)$ and $q\geq 1$,
$$
\begin{cases}
\Phi^1_{\rightarrow \middlesubscript} (x', \xi') := \Phi^{\rm out}_{\leftsubscript\rightarrow \middlesubscript}  (x', \xi'), \\
\Phi^{q+1}_{\rightarrow \middlesubscript} (x', \xi') := \Phi^{q}_{\rightarrow \middlesubscript} \circ \Phi^{\rm out}_{\rightsubscript\rightarrow \leftsubscript} \circ \Phi^{\rm out}_{\leftsubscript\rightarrow \rightsubscript} (x', \xi'), 
\end{cases}
\begin{cases}
\Phi^1_{\middlesubscript \leftarrow} (x', \xi') :=  \Phi^{\rm out}_{\rightsubscript\rightarrow \middlesubscript} \circ \Phi^{\rm out}_{\leftsubscript\rightarrow \rightsubscript}  (x', \xi'), \\
\Phi^{q+1}_{\middlesubscript \leftarrow} (x', \xi') := \Phi^{q}_{\middlesubscript \leftarrow} \circ \Phi^{\rm out}_{\rightsubscript\rightarrow \leftsubscript} \circ \Phi^{\rm out}_{\leftsubscript\rightarrow \rightsubscript} (x', \xi').
\end{cases}
$$
In other words, $\Phi^{q}_{\rightarrow \middlesubscript}(x',\xi')$ is the point of $T^*\Gamma_{\middlesubscript}$ attained by $(x',\xi')\in T^*\Gamma_l$ after $q-1$ reflections on the left and right boundaries of the domain and coming from the left, whereas $\Phi^{q}_{\leftarrow \middlesubscript}(x',\xi')$ is the point of $T^*\Gamma_{\middlesubscript}$ attained by $(x',\xi')\in T^*\Gamma_l$ after $q$ reflections on the left and right boundaries of the domain and coming from the right.
We can now define the notion of $(\height , \mathsf {d_\leftsubscript}, \mathsf {d_{\rightsubscript}})$-non interacting Diracs.

\begin{definition}
We say that  $(\delta_{x'_{\ell}, \xi'_\ell})_{0\leq \ell \leq N}$, for 
$(x'_{\ell}, \xi'_\ell) \in T^*\Gamma_\leftsubscript$, is a family of $(\height , \mathsf {d_\leftsubscript}, \mathsf {d_{\rightsubscript}})$-non interacting
Diracs if
\begin{gather} \label{eq:collision1}
\tfa  \; 0\leq \ell_1 \neq \ell_2 \leq N, \; \tfa  n\geq 0, \; (x'_{\ell_1}, \xi'_{\ell_1}) \neq  \big(\Phi^{\rm in}_{\rightsubscript\rightarrow \leftsubscript} \circ \Phi^{\rm in}_{\leftsubscript\rightarrow \rightsubscript} \big)^n (x'_{\ell_2}, \xi'_{\ell_2}),
\end{gather}
and
\begin{align}\nonumber
&\tfa \diamond_1, \diamond_2 \in \{ \rightarrow \middlesubscript , \middlesubscript \leftarrow\}, \; 0\leq \ell_1, \ell_2 \leq N, \; q_1, q_2\geq 0 \text{ such that } \\
&\hspace{5cm}\Phi^{q_1}_{\diamond_1} (x'_{\ell_1}, \xi'_{\ell_1}) \in T^*\Gamma_{\middlesubscript} \text{ and } \Phi^{q_2}_{\diamond_2} (x'_{\ell_2}, \xi'_{\ell_2}) \in T^*\Gamma_{\middlesubscript}, \nonumber \\
&\hspace{1cm}(\diamond_1, \ell_1, q_1) \neq  (\diamond_2, \ell_2, q_2) \implies  \Phi^{q_1}_{\diamond_1} (x'_{\ell_1}, \xi'_{\ell_1}) \neq \Phi^{q_2}_{\diamond_2} (x'_{\ell_2}, \xi'_{\ell_2})
 \label{eq:collision2} 
\end{align}
\end{definition}

The following proposition is a consequence of Proposition \ref{prop:trace_meas2}. It describes exactly the impedance-to-impedance maps in
the high frequency limit when applied to data concentrating as a sum of non-interacting Diracs.
\begin{prop} \label{prop:imDirac}
Let $\iota \in \{ 1, -1 \}$. Assume that $u=Sg$ solves (\ref{eq:modelh_timp}) with $\Vert g \Vert_{L^2} \leq C$ and let $\widetilde u : = (\hsc D_{x_1}+1) u$. If 
$$
\widetilde \nu^{\leftsubscript}_d = \sum_{\ell = 0}^N a_\ell(\xi') \delta_{x'_{\ell}, \xi'_\ell},
$$
where $a_\ell \in C^\infty(\mathbb R)$, $a_\ell \geq 0$ and $(\delta_{x'_{\ell}, \xi'_\ell})_{0\leq \ell \leq N}$ is a family of $(\height , \mathsf {d_\leftsubscript}, \mathsf {d_{\rightsubscript}})$-non interacting
Diracs,
then $\widehat u := (\hsc D_{x_1} + \iota)u$ admits Dirichlet boundary measure on $\Gamma_{\middlesubscript}^+$, satisfying, in $T^* \Gamma_i$,
\begin{multline} \label{eq:itDirac}
\widehat \nu_d^{\middlesubscript} = \bigg( \frac{\iota + \sqrt r}{1+\sqrt r}\bigg)^2 \sum_{\substack{\ell = 0 \cdots N\\ q \geq 1\\ \Phi^q_{\rightarrow \middlesubscript} (x'_{\ell}, \xi'_\ell) \in T^*\Gamma_{\middlesubscript}}} a_\ell \bigg( \frac{1-\sqrt r}{1+\sqrt r} \bigg)^{2(q-1)} \delta_{\Phi^q_{\rightarrow \middlesubscript} (x'_{\ell}, \xi'_\ell)} \\
+ \bigg( \frac{ - \iota + \sqrt r}{-1+\sqrt r}\bigg)^2 \sum_{\substack{\ell = 0 \cdots N \\ q \geq 1\\ \Phi^q_{\middlesubscript \leftarrow} (x'_{\ell}, \xi'_\ell) \in T^*\Gamma_{\middlesubscript}}} a_\ell \bigg( \frac{1-\sqrt r}{1+\sqrt r} \bigg)^{2q} \delta_{\Phi^q_{\middlesubscript \leftarrow} (x'_{\ell}, \xi'_\ell)}.
\end{multline}
\end{prop}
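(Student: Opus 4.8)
The plan is to apply Proposition \ref{prop:trace_meas2} iteratively, tracking how the auxiliary measure $\widetilde\mu_{\rm out}^{\leftsubscript}$ is transported around the domain by the billiard flow while accumulating the reflection factor $\big(\tfrac{1-\sqrt r}{1+\sqrt r}\big)^2$ at each bounce off $\Gamma_\rightsubscript$ and $\Gamma_\leftsubscript$. First I would record the two pieces of input. On the one hand, since $u$ is outgoing near $\Gamma'_s$ and admits defect and boundary measures (Lemma \ref{lem:good_mes}), Lemma \ref{lem:tilde_good}(1) tells us that $\widetilde\mu$ and the auxiliary boundary measures obey the conclusions of Lemmas \ref{lem:muinlup} and \ref{lem:ltoi}; in particular $\widetilde\mu_{\rm in}^{\leftsubscript}$ is supported in $\mathcal B^{\leftsubscript}_{\rightsubscript\to\leftsubscript}$ and Lemma \ref{lem:tilde_good}(3) gives $\widetilde\mu_{\rm out}^{\leftsubscript}=2\sqrt r\,\widetilde\nu_d^{\leftsubscript}$ on the complement, i.e. on the set where the mass genuinely emanates from $\Gamma_\leftsubscript$ rather than having been reflected there. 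On the other hand, the impedance condition on $\Gamma_\rightsubscript$ together with Lemma \ref{lem:tilde_good}(1) and Lemma \ref{lem:Millretations}(4) (with $\alpha=2$, on $\Gamma_\rightsubscript$) gives the reflection law $\widetilde\mu_{\rm out}^{\rightsubscript}=\big|\tfrac{\sqrt r-1}{\sqrt r+1}\big|^2\widetilde\mu_{\rm in}^{\rightsubscript}$ — but note the subtlety that for $\widetilde u$ the effective impedance sign on $\Gamma_\rightsubscript$ is the "$-$" one, so one must check the factor is indeed $\big(\tfrac{1-\sqrt r}{1+\sqrt r}\big)^2$ and not its reciprocal; this comes out of the $(-\tfrac1i\partial_{x_1}+1)$ form of the condition in \eqref{eq:modelh_timp}.

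Next I would set up the induction. Starting from $\widetilde\nu_d^{\leftsubscript}=\sum_\ell a_\ell\delta_{x'_\ell,\xi'_\ell}$, the non-interaction hypothesis \eqref{eq:collision1} guarantees that no Dirac has been created by reflecting another, so on the support of each $\delta_{x'_\ell,\xi'_\ell}$ we have $\widetilde\mu_{\rm in}^{\leftsubscript}=0$ and hence $\widetilde\mu_{\rm out}^{\leftsubscript}=2\sqrt r\,a_\ell\delta_{x'_\ell,\xi'_\ell}$ there. Using Lemma \ref{lem:ltoi} I push this mass forward: the part whose flow-out reaches $\Gamma_\middlesubscript$ directly contributes $2\sqrt r\,a_\ell\delta_{\Phi^1_{\to\middlesubscript}(x'_\ell,\xi'_\ell)}$ to $\widetilde\mu_{\rm in}^{\middlesubscript}$ (this is the $q=1$ term of the first sum), and the part whose flow-out reaches $\Gamma_\rightsubscript$ first gives $\widetilde\mu_{\rm in}^{\rightsubscript}$, which the reflection law turns into $\big(\tfrac{1-\sqrt r}{1+\sqrt r}\big)^2\times$ the same mass as $\widetilde\mu_{\rm out}^{\rightsubscript}$; some of that reaches $\Gamma_\middlesubscript$ (the $q=1$ term of the second sum, with the extra $\big(\tfrac{1-\sqrt r}{1+\sqrt r}\big)^{2}$), the rest flows back to $\Gamma_\leftsubscript$ as $\widetilde\mu_{\rm in}^{\leftsubscript}$, reflects again (another factor $\big(\tfrac{1-\sqrt r}{1+\sqrt r}\big)^2$, now with $\alpha=2$ and the "$+$" sign on $\Gamma_\leftsubscript$), and the process repeats. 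Iterating, after $q-1$ full left/right reflections the surviving mass carries the factor $a_\ell\big(\tfrac{1-\sqrt r}{1+\sqrt r}\big)^{2(q-1)}$ (for the "coming from the left" family) or $a_\ell\big(\tfrac{1-\sqrt r}{1+\sqrt r}\big)^{2q}$ (for the "coming from the right" family), and lands at $\Phi^q_{\to\middlesubscript}$ or $\Phi^q_{\middlesubscript\leftarrow}$ respectively. The second non-interaction hypothesis \eqref{eq:collision2} ensures all these image points on $T^*\Gamma_\middlesubscript$ are distinct, so the pushforwards do not collide and the disjoint-support hypothesis \eqref{eq:mdisjsupp} of Proposition \ref{prop:trace_meas2} holds; this licenses the use of \eqref{eq:exprhat} rather than just \eqref{eq:supphat}. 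Assembling $\widetilde\mu_{\rm out}^{\leftsubscript}\circ\Phi^{\rm in}_{\middlesubscript\to\leftsubscript}$ and $\widetilde\mu_{\rm out}^{\rightsubscript}\circ\Phi^{\rm in}_{\middlesubscript\to\rightsubscript}$ into the two sums and plugging into \eqref{eq:exprhat} — noting that the prefactor $\tfrac1{2\sqrt r}$ cancels the $2\sqrt r$ that came from $\widetilde\mu_{\rm out}^{\leftsubscript}=2\sqrt r\,\widetilde\nu_d^{\leftsubscript}$, leaving only the $\big(\tfrac{\iota+\sqrt r}{1+\sqrt r}\big)^2$ and $\big(\tfrac{-\iota+\sqrt r}{-1+\sqrt r}\big)^2$ — yields exactly \eqref{eq:itDirac}.

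The main obstacle I anticipate is bookkeeping of signs and of which "$\zeta_{\rm in}$ vs $\zeta_{\rm out}$" convention is in force on $\Gamma_\middlesubscript$, $\Gamma_\leftsubscript$, $\Gamma_\rightsubscript$: the measures on $\Gamma_i$ are taken from the left, so the roles of $p^{\rm in}/p^{\rm out}$ on $\Gamma_i$ are swapped (as flagged throughout \S\ref{sec:4}), and one must be careful that "$q-1$ reflections, coming from the left" versus "$q$ reflections, coming from the right" are correctly matched to the compositions $\Phi^q_{\to\middlesubscript}$ and $\Phi^q_{\middlesubscript\leftarrow}$ defined just above the proposition, and that the total reflection exponent is $2(q-1)$ in the first case but $2q$ in the second because the right-family mass undergoes one extra $\Gamma_\rightsubscript$-reflection before it can first reach $\Gamma_\middlesubscript$ from the right. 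A secondary point requiring care is verifying that the hypotheses \eqref{eq:mdisjsupp} of Proposition \ref{prop:trace_meas2} are genuinely implied by \eqref{eq:collision2} at every stage of the induction, not merely at the first; this is where the "all intersections with $T^*\Gamma_\middlesubscript$ distinct" clause does its work, and I would state it as an explicit sub-claim before running the induction. Everything else — the pushforward/change-of-variables manipulations — is routine given Lemmas \ref{lem:ltoi}, \ref{lem:tilde_good} and \ref{lem:Millretations}.
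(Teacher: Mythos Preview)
Your plan is correct and follows the same route as the paper: establish explicit formulas for $\widetilde\mu_{\rm out}^{\leftsubscript}$ and $\widetilde\mu_{\rm out}^{\rightsubscript}$ as sums of propagated Diracs (the paper's \eqref{eq:thbeam2}--\eqref{eq:thbeam3}), use \eqref{eq:collision2} to verify the disjoint-support hypothesis \eqref{eq:mdisjsupp}, and then plug into Proposition~\ref{prop:trace_meas2}. The paper organises the computation of \eqref{eq:thbeam2}--\eqref{eq:thbeam3} by decomposing $T^*\Gamma_\leftsubscript\cap\mathcal H$ into the sets $R^k$ of points that leave the strip after exactly $k$ iterations of $\Phi=\Phi_{\rightsubscript\to\leftsubscript}\circ\Phi_{\leftsubscript\to\rightsubscript}$ and arguing backwards from the exit, but your forward-tracking description is equivalent.

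There is, however, one concrete error in your reflection bookkeeping. You write that when mass returns to $\Gamma_\leftsubscript$ it ``reflects again (another factor $\big(\tfrac{1-\sqrt r}{1+\sqrt r}\big)^2$, now with $\alpha=2$ \dots)''. This is wrong for the \emph{auxiliary} measures $\widetilde\mu$. Away from the original Diracs one has $\widetilde\nu_d^{\leftsubscript}=0$ (since $\widetilde u=g$ on $\Gamma_\leftsubscript$ and the data measure is supported exactly on those Diracs, which \eqref{eq:collision1} keeps disjoint from the returned mass), so Lemma~\ref{lem:tilde_good}(\ref{i:dir_bc}) gives $\widetilde\mu_{\rm out}^{\leftsubscript}=\widetilde\mu_{\rm in}^{\leftsubscript}$ there --- reflection factor $1$, not $\big(\tfrac{1-\sqrt r}{1+\sqrt r}\big)^2$. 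The factor $\big(\tfrac{1-\sqrt r}{1+\sqrt r}\big)^2$ for $\widetilde\mu$ is acquired only at $\Gamma_\rightsubscript$. This is precisely what makes the exponents $2(q-1)$ and $2q$ come out: each full round trip $l\to r\to l$ contributes exactly one such factor (from the single $\Gamma_\rightsubscript$-bounce), and the $\middlesubscript\leftarrow$ family carries one additional $\Gamma_\rightsubscript$-bounce before it first reaches $\Gamma_\middlesubscript$ from the right. If you followed your stated rule literally, the exponent after $q-1$ round trips would be $4(q-1)$, contradicting \eqref{eq:itDirac}. The paper implements this via the pair \eqref{eq:thbeamref1} (factor $\big(\tfrac{1-\sqrt r}{1+\sqrt r}\big)^2$ on $\Gamma_\rightsubscript$) and \eqref{eq:thbeamref2} (factor $1$ on $\Gamma_\leftsubscript$ off the Diracs). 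With this single correction your argument goes through.
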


\begin{proof}
Let  $\Phi_{\leftsubscript\rightarrow \rightsubscript} := \Phi^{\rm in}_{\leftsubscript\rightarrow \rightsubscript}$, $\Phi_{\rightsubscript\rightarrow \leftsubscript} := \Phi^{\rm in}_{\rightsubscript\rightarrow \leftsubscript}$, 
and
$$
\Phi :=  \Phi_{\rightsubscript\rightarrow \leftsubscript} \circ \Phi_{\leftsubscript\rightarrow \rightsubscript}.
$$
We claim that
\begin{equation} \label{eq:thbeam2}
\frac{1}{2\sqrt r}\widetilde \mu_{\rm out}^{\leftsubscript} = \sum_{\ell = 0 \cdots N}  \sum_{k\geq 0} \; a_\ell \bigg| \frac{\sqrt r - 1}{\sqrt r + 1}\bigg|^{2k} \delta_{\Phi^k(y_\ell, \xi'_\ell)} \mathbf 1_{\Phi^k(y_\ell, \xi'_\ell) \in T^* \Gamma_{\leftsubscript}},
\end{equation} 
and 
\begin{equation} \label{eq:thbeam3}
\frac{1}{2\sqrt r}\widetilde \mu_{\rm out}^{\rightsubscript} =  \sum_{\ell = 0 \cdots N} \sum_{k\geq 0} \; a_\ell \Big| \frac{\sqrt r - 1}{\sqrt r + 1}\Big|^{2(k+1)} \delta_{\Phi_{\leftsubscript\rightarrow \rightsubscript}\circ \Phi^k (y_\ell, \xi'_\ell)} \mathbf 1_{\Phi_{\leftsubscript\rightarrow \rightsubscript}\circ\Phi^k(y_\ell, \xi'_\ell) \in T^* \Gamma_{\rightsubscript}}.
\end{equation} 
Once this claim is established, the result follows directly from 
Proposition \ref{prop:trace_meas2}, as the non-interaction assumption implies the 
disjoint-support assumption (\ref{eq:mdisjsupp}). We therefore only need to show (\ref{eq:thbeam2}) and  (\ref{eq:thbeam3}). 
As $ (- \hsc D_{x_1} + 1)\widetilde u = 0 \text{ on }\Gamma_{\rightsubscript}$,  Lemma \ref{lem:Millretations} together with Definition \ref{def:aux} implies that
\begin{equation} \label{eq:thbeamref1}
\widetilde \mu^{\rightsubscript}_{\rm out} = \bigg| \frac{\sqrt r - 1}{\sqrt r + 1}\bigg|^2 \widetilde \mu^{\rightsubscript}_{\rm in}. 
\end{equation} 
In addition, observe that if $\mathcal{B} \cap (\bigcup_{\ell}(x_\ell, \xi'_\ell)) = \emptyset$,
 then $\widetilde{\nu}^{\leftsubscript}_d(\mathcal B) = 0$ and thus also $\widetilde{\nu}^{\leftsubscript}_j(\mathcal B)=0$ by the Cauchy--Schwarz inequality. Hence by Lemma \ref{lem:tilde_good}, (\ref{i:nu_out}), 
\begin{equation}  \label{eq:thbeamref2}
\widetilde \mu^{\leftsubscript}_{\rm out}(\mathcal B) = \widetilde \mu^{\leftsubscript}_{\rm in}(\mathcal B), \hspace{0.5cm} \tfa  \mathcal B \subset T^*\Gamma_{\leftsubscript} \text{ with } \mathcal B \cap  \bigcup_{\ell = 0 \cdots N}(x_\ell, \xi'_\ell)  = \emptyset.
\end{equation}
Finally, by Lemma \ref{lem:tilde_good}, (\ref{i:same}) and Lemma \ref{lem:ltoi}, for any $a\in C^\infty(T^*\mathbb R)$ not depending on $x$,
\begin{equation}  \label{eq:thbeamref3}
a\widetilde\mu^{\leftsubscript}_{\rm in}(\mathcal B) = a\widetilde\mu^{\rightsubscript}_{\rm out}(\Phi_{\leftsubscript\rightarrow \rightsubscript} \mathcal B), \hspace{0.3cm} \tfa  \mathcal B \subset \mathcal B^{\leftsubscript}_{\rightsubscript\rightarrow \leftsubscript},
\end{equation}
and
\begin{equation}  \label{eq:thbeamref4}
a\widetilde\mu^{\rightsubscript}_{\rm in}(\mathcal B) = a\widetilde\mu^{\leftsubscript}_{\rm out}(\Phi_{\rightsubscript\rightarrow \leftsubscript} \mathcal B ), \hspace{0.3cm} \tfa  \mathcal B \subset \mathcal B^{\rightsubscript}_{\leftsubscript\rightarrow \rightsubscript}.
\end{equation}

Observe that we have the disjoint union
$$
T^*\Gamma_{\leftsubscript} \cap \mathcal H = \bigsqcup_{0\leq k \leq \infty} R^k, \hspace{0.5cm} R^k := \Big\{ \rho \in T^*\Gamma_{\leftsubscript}\cap \mathcal H, \; \inf \big\{ \ell \,:\, \Phi^{\ell}(\rho) \cap T^*\Gamma_{\leftsubscript}  = \emptyset \big\} = k\Big\};
$$
i.e., each $R^k$ consists of points of $T^*\Gamma_{\leftsubscript} \cap \mathcal H$ that are no longer in $T^*\Gamma_{\leftsubscript}$ after exactly $k$ iterations of the map $\Phi$ (going from left to right and back again).
Let now $\mathcal B \subset R^{k_0}$. We first assume that $k_0 < \infty$. Then, by definition,
$$
\Phi^{k_0}(\mathcal B) \cap T^*\Gamma_{\leftsubscript} = \emptyset.
$$
It follows that $\Phi_{\leftsubscript\rightarrow \rightsubscript}\circ \Phi^{k_0-1}(\mathcal B) \cap \mathcal B^{\rightsubscript}_{\leftsubscript\rightarrow \rightsubscript} = \emptyset$. As $\operatorname{supp} \widetilde\mu^{\rightsubscript}_{\rm in} \subset \mathcal B^{\rightsubscript}_{\leftsubscript\rightarrow \rightsubscript} $ by Part (\ref{i:muinlupa}) of Lemma \ref{lem:muinlup} (recall Lemma \ref{lem:tilde_good}, (\ref{i:same})), 
we have using (\ref{eq:thbeamref1}) that
$$
\widetilde\mu^{\rightsubscript}_{\rm out} (\Phi_{\leftsubscript\rightarrow \rightsubscript}\circ \Phi^{k_0-1}(\mathcal B)) = 0.
$$
If, for all $k$, $\Phi^k(\mathcal B)  \cap \bigcup_{\ell = 0 \cdots N}(x_\ell, \xi'_\ell) = \emptyset$, it follows by (\ref{eq:thbeamref1}), (\ref{eq:thbeamref2}), (\ref{eq:thbeamref3}) and (\ref{eq:thbeamref4}) that 
\begin{multline} \label{eq:beam_follow_{\middlesubscript}ay}
0 = \widetilde\mu^{\rightsubscript}_{\rm out} (\Phi_{\leftsubscript\rightarrow \rightsubscript}\circ \Phi^{k_0-1}(\mathcal B))  = \widetilde\mu^{\leftsubscript}_{\rm in} (\Phi^{k_0-1}(\mathcal B)) =  \widetilde\mu^{\leftsubscript}_{\rm out} (\Phi^{k_0-1}(\mathcal B)) 
=  \widetilde\mu^{\rightsubscript}_{\rm in} (\Phi_{\leftsubscript\rightarrow \rightsubscript}\circ \Phi^{k_0-2}(\mathcal B)) \\ = \bigg| \frac{\sqrt r - 1}{\sqrt r + 1}\bigg|^{-2} \widetilde\mu^{\rightsubscript}_{\rm out} (\Phi_{\leftsubscript\rightarrow \rightsubscript}\circ \Phi^{k_0-2}(\mathcal B)) = \cdots =  \bigg| \frac{\sqrt r - 1}{\sqrt r + 1}\bigg|^{-2k_0} \widetilde\mu^{\leftsubscript}_{\rm out}(\mathcal B),
\end{multline}
hence
\begin{equation} \label{eq:beam_follow_{\middlesubscript}ay1bis}
\text{if, for all }  k, \,\,\Phi^k(\mathcal B) \cap \bigcup_{\ell = 0 \cdots N}(x_\ell, \xi'_\ell)=\emptyset, \hspace{0.3cm} \text{ then }\widetilde\mu^{\leftsubscript}_{\rm out}(\mathcal B) = 0.
\end{equation} 
Assume now that there exists $k_1$ so that $\Phi^{k_1}(\mathcal B) \cap \bigcup_{\ell = 0 \cdots N}(x_\ell, \xi'_\ell)\neq\emptyset$. 
By the non-interaction assumption, if $\mathcal B$ is small enough,
such a $k_1$ is unique, and there exists a unique $\ell$ so that  $\Phi^{k_1}(\mathcal B) \ni (y_\ell, \xi'_\ell)$. Following the same argument as in (\ref{eq:beam_follow_{\middlesubscript}ay}) up to $k = k_1$, we get
$\widetilde \mu^{\leftsubscript}_{\rm in}(\Phi^{k_1}\mathcal B) = 0$. It follows by Lemma \ref{lem:Millretations} that $\frac{1}{2\sqrt r}\widetilde \mu^{\leftsubscript}_{\rm out}(\Phi^{k_1}\mathcal B) =  \widetilde \nu^{\leftsubscript}_d(\Phi^{k_1}\mathcal B)$. Hence $\frac{1}{2\sqrt r}\widetilde \mu^{\leftsubscript}_{\rm out}(\Phi^{k_1}\mathcal B) = a_\ell(\xi'_\ell)$. We now follow the same argument as in (\ref{eq:beam_follow_{\middlesubscript}ay}) from $k=k_1$ down to $k=0$. We get
\begin{multline} \label{eq:beam_follow_{\middlesubscript}ay2}
a_\ell(\xi'_\ell) = \frac{1}{2\sqrt r} \widetilde \mu^{\leftsubscript}_{\rm out}(\Phi^{k_1}\mathcal B)=  \frac{1}{2\sqrt r} \widetilde \mu^{\rightsubscript}_{\rm in} (\Phi_{\leftsubscript\rightarrow \rightsubscript}\circ \Phi^{k_1-1}(\mathcal B)) = \bigg| \frac{\sqrt r - 1}{\sqrt r + 1}\bigg|^{-2} \frac{1}{2\sqrt r} \widetilde \mu^{\rightsubscript}_{\rm out} (\Phi_{\leftsubscript\rightarrow \rightsubscript}\circ \Phi^{k_1-1}(\mathcal B)) \\
 = \frac{1}{2\sqrt r}\bigg| \frac{\sqrt r - 1}{\sqrt r + 1}\bigg|^{-2} \frac{1}{2\sqrt r} \widetilde \mu^{\leftsubscript}_{\rm in} (\Phi^{k_1-1}(\mathcal B)) = \cdots =   \bigg| \frac{\sqrt r - 1}{\sqrt r + 1}\bigg|^{-2k_1} \frac{1}{2\sqrt r} \widetilde \mu^{\leftsubscript}_{\rm out}(\mathcal B).
\end{multline}
From (\ref{eq:beam_follow_{\middlesubscript}ay1bis}) and (\ref{eq:beam_follow_{\middlesubscript}ay2}), it follows that (\ref{eq:thbeam2}) is verified for any $\mathcal B \subset \bigsqcup_{0\leq k < \infty} R^k$. It  remains to verify (\ref{eq:thbeam2}) for $\mathcal B \in  R^\infty$. Observe that such a set satisfies $\mathcal B \subset \{ \sqrt r = 1\}$. Hence,
using  (\ref{eq:thbeamref4}) and (\ref{eq:thbeamref1})
$$
\widetilde \mu^{\leftsubscript}_{\rm in} (\mathcal B) = \widetilde \mu^{\rightsubscript}_{\rm out}(\Phi_{\leftsubscript\rightarrow \rightsubscript} (\mathcal B)) = 0 \times \widetilde \mu^{\rightsubscript}_{\rm in}(\Phi_{\leftsubscript\rightarrow \rightsubscript} (\mathcal B)) = 0.
$$
It follows by Lemma  \ref{lem:Millretations} that $\widetilde \mu^{\leftsubscript}_{\rm out}(\mathcal B) = 2\sqrt r \widetilde\nu^{\leftsubscript}_d(\mathcal B) = 2\sqrt r \delta_{\xi' = \xi'_0, y=y_0}(\mathcal B) $. Thus
(\ref{eq:thbeam2}) is verified also for $\mathcal B \subset R^\infty$, hence (\ref{eq:thbeam2}) holds. The proof of (\ref{eq:thbeam3}) follows the same lines.
\end{proof}

To prove Theorem \ref{th:compo}, Part (\ref{i:compo:lower}), we iterate Proposition \ref{prop:imDirac} by taking a boundary
data producing a Dirac Dirichlet measure at high frequency, whose image with mass one is still in the domain after $n$ iterations. 
To do so, we need to know that a suitable family of $(\height , \mathsf {d_\leftsubscript}, \mathsf {d_{\rightsubscript}})$-non interacting
Diracs exists; this is given by the following lemma, in which we let $\overleftarrow\pi$ be the canonical projection from $T^*\Gamma_{\middlesubscript}$ to $T^*\Gamma_{\leftsubscript}$ (using that $T^*\Gamma_{\middlesubscript}\simeq T^*(\{0\}\times (0,\height ))= T^*\Gamma_{\leftsubscript}$).

\begin{lem}
\label{lem:nonintrays}
For any $N\geq 1$ and any $\sigma \in  \{ \rightarrow \middlesubscript, \middlesubscript \leftarrow\}^N$, there exists $(x'_\sigma, \xi'_\sigma) \in T^*\Gamma_{\leftsubscript}$ with $\xi'_\sigma \neq 0$ so that
\begin{enumerate}
\item \label{i:gr1} 
$$
\Phi^1_{\sigma(N)} \prod_{\ell = 1 \cdots N-1} \overleftarrow\pi \Phi^1_{\sigma(\ell)} \;(x'_\sigma, \xi'_\sigma) \in T^*\Gamma_{\middlesubscript}
$$
(where the product denotes composition of the maps)
\item \label{i:gr2} for any $0\leq \ell \leq N-1$, the family $X_\ell$ of points of $T^*\Gamma_{\leftsubscript}$ defined by 
$$
\begin{cases}
X_0 := \Big\{ (x'_\sigma, \xi'_\sigma) \Big\}, \\
X_{\ell + 1} := \Big\{ \overleftarrow\pi \Phi^q_{\diamond} (x', \xi'), \text{ s.t. }  (x', \xi') \in X_\ell, \; q \geq 1, \; \diamond \in \{ \rightarrow \middlesubscript, \middlesubscript \leftarrow\},\; \Phi^q_{\diamond} (x', \xi') \in T^*\Gamma_{\middlesubscript} \Big\}
\end{cases}
$$
forms a corresponding family of $(\height , \mathsf {d_\leftsubscript}, \mathsf {d_{\rightsubscript}})$-non interacting
Diracs, and $X_\ell \notin \{ 0, \height  \}$ for any $\ell$.
\end{enumerate}
\end{lem}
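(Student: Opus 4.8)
The plan is to reduce the statement to a one-dimensional billiard problem and then to choose the ray $(x'_\sigma,\xi'_\sigma)$ by a careful genericity argument. Reflection in the vertical interfaces $\Gamma_\leftsubscript$ and $\Gamma_\rightsubscript$ preserves the tangential frequency $\xi'$, and so does the identification $\overleftarrow\pi$; hence every Dirac produced by the construction (both in condition~(1) and in the families $X_\ell$) has second coordinate $\xi'_\sigma$. I therefore set $\xi'_\sigma=\sin\theta_0$ for some $\theta_0\in(0,\pi/2)$ to be chosen (so $\xi'_\sigma\neq0$ and $\sqrt r=\cos\theta_0$ along every ray), and it remains only to choose the height $x'_\sigma\in(0,\height)$ and to track heights, which stay in $(0,\height)$. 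Writing $\alpha:=\tan\theta_0$ and $d:=\mathsf{d_\leftsubscript}+\mathsf{d_\rightsubscript}$, the Hamiltonian flow consists of straight segments of slope $\alpha$ reflected in $\{x_1=0\}$ and $\{x_1=d\}$, and a short computation gives, on heights in $(0,\height)$: $\overleftarrow\pi\Phi^{\rm out}_{\leftsubscript\rightarrow\middlesubscript}$ is $y\mapsto y+\mathsf{d_\leftsubscript}\,\alpha$; $\overleftarrow\pi\bigl(\Phi^{\rm out}_{\rightsubscript\rightarrow\middlesubscript}\circ\Phi^{\rm out}_{\leftsubscript\rightarrow\rightsubscript}\bigr)$ is $y\mapsto y+(\mathsf{d_\leftsubscript}+2\mathsf{d_\rightsubscript})\,\alpha$; $\Phi^{\rm in}_{\rightsubscript\rightarrow\leftsubscript}\circ\Phi^{\rm in}_{\leftsubscript\rightarrow\rightsubscript}$ is $y\mapsto y-2d\,\alpha$; and a fresh ray issued from height $y$ on $\Gamma_\leftsubscript$ meets $\Gamma_\middlesubscript$ from the left, resp.\ from the right, precisely at the heights $y+\bigl(2(q-1)d+\mathsf{d_\leftsubscript}\bigr)\alpha$, resp.\ $y+\bigl(2qd-\mathsf{d_\leftsubscript}\bigr)\alpha$ ($q\ge1$) that are $<\height$.

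With this dictionary every height occurring in $\bigcup_{\ell\le N-1}X_\ell$, and every $\Gamma_\middlesubscript$-crossing height of the rays issued from those points, equals $x'_\sigma+\alpha w$ for $w$ ranging over a finite set of nonnegative reals that is locally constant in $x'_\sigma$ (the breakpoints being the thresholds where one of these heights passes through $\height$), and ``$\Phi^q_\diamond(\cdot)\in T^*\Gamma_\middlesubscript$'' in the definition of $X_{\ell+1}$ merely records which of those heights are $<\height$. Condition~(1) then reads: the $N$ partial sums of the increments $\mathsf{d_\leftsubscript}$ (for $\sigma(\ell)=\rightarrow\middlesubscript$) or $\mathsf{d_\leftsubscript}+2\mathsf{d_\rightsubscript}$ (for $\sigma(\ell)=\middlesubscript\leftarrow$), scaled by $\alpha$ and translated by $x'_\sigma$, together with the auxiliary heights $y+d\alpha$ needed to reach $\Gamma_\rightsubscript$ in a $\middlesubscript\leftarrow$ step, all lie in $(0,\height)$; the requirement $X_\ell\not\ni0,\height$ is that none of the $x'_\sigma+\alpha w$ equals $0$ or $\height$; and, using $(\Phi^{\rm in}_{\rightsubscript\rightarrow\leftsubscript}\circ\Phi^{\rm in}_{\leftsubscript\rightarrow\rightsubscript})^n:y\mapsto y-2nd\alpha$, the non-interaction conditions \eqref{eq:collision1}--\eqref{eq:collision2} amount to: for the finitely many pairs $w_1\neq w_2$ that co-occur at a common stage, $w_1-w_2\notin 2d\,\mathbb Z$, and the $\Gamma_\middlesubscript$-crossing heights attached to distinct rays at a common stage are pairwise distinct, i.e.\ finitely many equalities $x'_\sigma+\alpha w_1=x'_\sigma+\alpha w_2$ are excluded.

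I would then fix $\theta_0$ (small, depending on $N$, $\sigma$ and the $\mathsf d$'s) so that the set of $x'_\sigma\in(0,\height)$ realising condition~(1) is a nonempty open interval $I$ on which the combinatorial type of the billiard is constant, remove from $I$ the finitely many $x'_\sigma$ at which some $x'_\sigma+\alpha w$ hits $0$ or $\height$ or two crossing heights coincide, and --- since the $w$'s are explicit integer combinations of $\mathsf{d_\leftsubscript}$ and $d$, so that the residual ``resonance'' conditions $w_1-w_2\in2d\,\mathbb Z$ do not involve $x'_\sigma$ --- arrange, via the joint choice of $\theta_0$ and $x'_\sigma$ (i.e.\ of which crossings fall below $\height$), that no resonant pair actually co-occurs in any $X_\ell$. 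The main obstacle is precisely this compatibility: one must show $\theta_0$ can be taken small enough for the $N$-step path of condition~(1) to fit inside $(0,\height)$ while simultaneously each $X_\ell$ ($0\le\ell\le N-1$) remains a genuine finite family of Diracs with no resonant pair, which calls for a concrete analysis of how the reflected rays distribute their $\Gamma_\middlesubscript$-crossings rather than a soft measure-theoretic argument. Once such $(x'_\sigma,\xi'_\sigma)$ has been produced, $\xi'_\sigma=\sin\theta_0\neq0$ and $X_\ell\not\subset\{0,\height\}$ are exactly the non-degeneracy required for Proposition~\ref{prop:trace_meas2} to apply at each stage when this lemma is used in the proof of Part~(\ref{i:compo:lower}) of Theorem~\ref{th:compo}.
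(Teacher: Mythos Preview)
Your billiard reduction is correct and your dictionary for the maps $\Phi^q_\diamond$, $\overleftarrow\pi$ and $(\Phi^{\rm in}_{\rightsubscript\rightarrow\leftsubscript}\circ\Phi^{\rm in}_{\leftsubscript\rightarrow\rightsubscript})^n$ is accurate, but you stop short of a proof: the ``main obstacle'' you name in the last paragraph is the whole content of the lemma, and you do not resolve it. Worse, the scheme you outline cannot work as described. Once all rays share the common frequency $\xi'_\sigma$, every height appearing anywhere in the construction is $x'_\sigma+\alpha w$ with $w$ determined by $\mathsf{d_\leftsubscript}$ and $d=\mathsf{d_\leftsubscript}+\mathsf{d_\rightsubscript}$ alone, and condition \eqref{eq:collision1} reads $w_1-w_2\in 2d\,\mathbb Z$. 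As you yourself observe, this is independent of both $x'_\sigma$ and $\alpha$: perturbing either parameter translates or rescales all heights uniformly and cannot separate a resonant pair. Your fallback --- adjust which crossings fall below $\height$ so that resonant pairs do not ``co-occur'' --- is unworkable for small $\theta_0$: the smaller $\alpha$ is, the \emph{more} crossings fit inside $(0,\height)$, so the families $X_\ell$ only grow, and every pair of $w$-values that can arise does arise. (Concretely, when $\mathsf{d_\leftsubscript}=\mathsf{d_\rightsubscript}$ the values $w=2\mathsf{d_\leftsubscript}$ and $w=4d-2\mathsf{d_\leftsubscript}$ both occur in $X_2$ for every small $\alpha$, and their difference is exactly $2d$.)

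The paper takes the complementary route. It fixes $x'_\sigma=\tfrac12\height$, notes that condition~(\ref{i:gr1}) then holds for all sufficiently small $|\xi'_\sigma|$, and argues that the set of $\xi'$ for which condition~(\ref{i:gr2}) fails is discrete. The key device --- missing from your approach --- is that this discreteness is established by perturbing the $\xi'$-coordinate of a \emph{single} offending Dirac in the family: since both \eqref{eq:collision1} and \eqref{eq:collision2} compare points through maps that preserve $\xi'$, two Diracs with distinct $\xi'$-coordinates can never collide, so nudging one $\xi'_j$ off the common value immediately kills every collision it participates in while (for a small enough move) creating no new one. Iterating this shrinks the index sets $\mathcal C_1,\mathcal C_2,\mathcal C_3$ of failed constraints to empty, giving discreteness of the bad set in the product of $\xi'_j$'s. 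In short, the paper decouples the angles in order to break resonances; your approach keeps all angles locked to $\theta_0$ and therefore cannot.
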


\begin{proof}
We take (for example) $x'_\sigma := \frac 12 \height $.
Observe that the property (\ref{i:gr1}) is satisfied as soon as $|\xi'_\sigma|$ is small enough (depending on $N$). Hence, it remains to show  that we can construct $\xi'_\sigma$ so that the property (\ref{i:gr2}) holds as well. To do so, it suffices to show that, given a finite set of points $(x'_j)_{1\leq j \leq n} \in \Gamma_{\leftsubscript}$, the set
$$
\mathcal A := \Big\{ (\xi'_j)_{1\leq j \leq n} \subset B(0,1)^n, \text{ so that } (x'_j, \xi'_j)_{1\leq j \leq n} \text{ does \emph{not} satisfy }\mathcal P\Big\}
$$
is discrete, where we say that a set $X$ of points of $T^*\Gamma_{\leftsubscript}$ satisfies $\mathcal P$ if
$$
\Big\{ \overleftarrow\pi \Phi^q_{\diamond} (x', \xi'), \text{ s.t. }  (x', \xi') \in X, \; q \geq 1, \; \diamond \in \{ \rightarrow \middlesubscript , \middlesubscript \leftarrow\},\; \Phi^q_{\diamond} (x', \xi') \in T^*\Gamma_{\middlesubscript} \Big\}
$$
forms a corresponding family of $(\height , \mathsf {d_\leftsubscript}, \mathsf {d_{\rightsubscript}})$-non interacting Diracs and $X_\ell \notin \{ 0, \height  \}$ for any $\ell$. Indeed, if it is the case, the set of $\xi' \neq 0$ so that $(x'_\sigma, \xi')$ does not satisfy
 (\ref{i:gr2}) is discrete, and hence it suffices to take $\xi'_\sigma$  small enough.

We therefore show that for any $n$ and any set of points  $X = (x'_j)_{1\leq j \leq n} \in \Gamma_{\leftsubscript}$, $\mathcal A $ is discrete. Assume that, for some $(\xi'_j)_{1\leq j \leq n}$,  $(x'_j, \xi'_j)_{1\leq j \leq n}$  does not satisfies $\mathcal P$. Let $\mathcal C_1(X)$ be the subset of indices of $1\leq \cdots \leq n$ so that (\ref{eq:collision1}) fails, $\mathcal C_2(X)$ so that (\ref{eq:collision2}) fails, and $\mathcal C_3$ so that $X_\ell \in \{0, \height \}$. If $j \in \mathcal C_1$ (resp. $\mathcal C_2$  or $\mathcal C_3$), taking $\widetilde \xi'_j \neq  \xi'_j$ with $|\widetilde \xi'_j -  \xi'_j|$ small enough and $\widetilde X := X \backslash (x'_j, \xi'_j) \cup (x'_j, \widetilde \xi'_j)$, we have $C_1(\widetilde X) \subset C_1(X)\backslash \{ j \}$, $C_2(X) \subset C_2(\widetilde X) $ and $C_3(X) \subset C_3(\widetilde X) $ (resp. $C_2(\widetilde X) \subset C_2(X)\backslash \{ j \}$ with $C_1(X) \subset C_1(\widetilde X) $ and $C_3(X) \subset C_3(\widetilde X) $; or $C_3(\widetilde X) \subset C_3(X)\backslash \{ j \}$ with $C_1(X) \subset C_1(\widetilde X) $ and $C_2(X) \subset C_2(\widetilde X) $), which shows, acting repetitively, that $\mathcal A$ is indeed discrete.
\end{proof}

We can now conclude.
\begin{proof}[Proof of Theorem \ref{th:model2} and Theorem \ref{th:compo}, Part (\ref{i:compo:lower})]
Let $N \geq 1$ and $\sigma \in \{ +, -\}^N$ be arbitrary; in the case of Theorem \ref{th:model2}, $N := 1$.
Let $\widetilde \sigma \in\{ \rightarrow \middlesubscript , \middlesubscript \leftarrow\}^N$ be associated to $\sigma$ by
$$
\widetilde \sigma(n) = \begin{cases}
\middlesubscript \leftarrow & \text{ if }\sigma(n) = +, \\
\rightarrow \middlesubscript  & \text{ if }\sigma(n) = -,
\end{cases}
$$
and let $(x'_{\widetilde \sigma}, \xi'_{\widetilde \sigma}) \in T^*\Gamma_{\leftsubscript}$ be given by Lemma \ref{lem:nonintrays}. 
Moreover, let $\chi \in C^\infty_c(\Gamma_l)$ be equal to one near $x'_{\widetilde \sigma}$.
We define $g(\hsc) \in L^2(\Gamma_{\leftsubscript})$ by 
$$
g(\hsc)(y) :=  \chi(y) (\pi h)^{-1/4} e^{i (y-x'_{\widetilde \sigma})\cdot \xi'_{\widetilde \sigma}/\hsc} e^{-i(y-x'_{\widetilde \sigma})^2/2\hsc}
$$
{(compare to \eqref{eq:data_g})}, 
 and $(u^n)_{1 \leq n \leq N}$ to be the cascade of solutions of (\ref{eq:modelh_timp})  associated with $\sigma$ and $g$ by $S(\height , \mathsf {d_\leftsubscript}^\pm, \mathsf {d_{\rightsubscript}}^\pm)$, that is so that
$$
\begin{cases}
(-\hsc^2 \Delta - 1) u^n = 0 \text{ in } \domain \\
(\hsc D_{x_1}+1) u^n = \gamma_{\Gamma_{\middlesubscript}(\mathsf {d}^{\sigma(n-1)})} (\hsc_\ell D_{x_1} \pm_{\sigma(n-1)} 1) u^{n-1}_\ell \; \text{ on } \Gamma_{\leftsubscript}, \\
\text{$u^n$ is outgoing near $\Gamma'_s$}, \\
(- \hsc D_{x_1}+1) u^n = 0 \; \text{ on } \Gamma_{\rightsubscript}(\mathsf {d_{\rightsubscript}}^{\sigma(n)}), \\
\end{cases}
$$
where we denoted $\pm_{\sigma(n)} := \sigma(n)$ to improve readability, and
$$
\begin{cases}
(-\hsc^2 \Delta - 1) u^1 = 0 \text{ in } \domain \\
(\hsc D_{x_1}+1) u^1 = g(\hsc) \; \text{ on } \Gamma_{\leftsubscript}, \\
\text{$u^1$ is outgoing near $\Gamma'_s$}, \\
(- \hsc D_{x_1}+1) u^1 = 0 \; \text{ on } \Gamma_{\rightsubscript}(\mathsf {d_{\rightsubscript}}^{\sigma(1)}). \\
\end{cases}
$$
In addition, denote
$$
 \widetilde u^n_\ell : = (\hsc_\ell D_{x_1}+1) u^n_\ell,\qquad \widehat u^n_\ell : = (\hsc_\ell D_{x_1} \pm_{\sigma(n)}1) u^n_\ell.
$$
We claim that there is a sequence $\hsc_\ell$ so that, as $\ell \rightarrow \infty$
\begin{equation} \label{eq:beamsclaim}
\Vert \widehat u^N_\ell(\hsc_\ell) \Vert_{L^2(\Gamma_{\middlesubscript})} \rightarrow c \geq 1.
\end{equation}
Since $\Vert g(\hsc) \Vert_{L^2(\Gamma_l)} \rightarrow 1$ (by direct computation), showing (\ref{eq:beamsclaim}) ends the proof.
As $g$ is locally uniformly bounded in $L^2$, by Lemma \ref{lem:good_mes}, there is a sequence $\hsc_\ell$ so that $u^1_\ell$ admits defect measures and boundary defect measures. In addition, direct computation shows that
$$
\widetilde{\nu}_d^{1,\leftsubscript}
= \delta_{(x'_{\widetilde \sigma}, \xi'_{\widetilde \sigma}) }.
$$
Hence by Proposition \ref{prop:imDirac} {used with $\ell = 0$, $a_0 = 1$, and $(x'_0, \xi'_0) = (x'_{\widetilde \sigma}, \xi'_{\widetilde \sigma})$}, {$\widehat \nu^{1,\middlesubscript}_d$} exists and is given by (\ref{eq:itDirac}), {that is
\begin{multline} \label{eq:12_it1}
\widehat \nu_d^{1,\middlesubscript} =  \sum_{\substack{q \geq 1\\ \Phi^q_{\rightarrow \middlesubscript} (x'_0, \xi'_0) \in T^*\Gamma_{\middlesubscript}}} \bigg( \frac{\iota + \sqrt r}{1+\sqrt r}\bigg)^2 \bigg( \frac{1-\sqrt r}{1+\sqrt r} \bigg)^{2(q-1)} \delta_{\Phi^q_{\rightarrow \middlesubscript} (x'_0, \xi'_0)} \\
+  \sum_{\substack{ q \geq 1\\ \Phi^q_{\middlesubscript \leftarrow} (x'_0, \xi'_0) \in T^*\Gamma_{\middlesubscript}}} \bigg( \frac{ - \iota + \sqrt r}{-1+\sqrt r}\bigg)^2 \bigg( \frac{1-\sqrt r}{1+\sqrt r} \bigg)^{2q} \delta_{\Phi^q_{\middlesubscript \leftarrow} (x'_0, \xi'_0)}.
\end{multline}
Observe in particular that at least one of the coefficients in front of the Diracs in \eqref{eq:12_it1} is equal to one:~this coefficient corresponds to the first ($q=1$) element of the first sum if $\iota = 1$, or to the first  element of the second sum if $\iota = - 1$. Hence, denoting $(x_1', \xi'_1)$ the corresponding point, which is indeed in $T^* \Gamma_i$ thanks to  Lemma \ref{lem:nonintrays}, Part (\ref{i:gr1}), we have
$$
\widehat \nu_d^{1,\middlesubscript} \geq \delta_{(x_1', \xi'_1)}
$$
in the sense of measures (i.e., testing by any positive function).
We now iterate the argument. The existence of $\widehat \nu^{1,\middlesubscript}_d$ implies that, up to a subsequence, $\gamma_{\Gamma_{\middlesubscript}(\mathsf {d}^{\sigma(1)})} (\hsc_\ell D_{x_1} \pm_{\sigma(1)} 1) u^{1}_\ell$ is locally uniformly bounded in $L^2$, and, by the boundary condition
 satisfied by our cascade of solutions, $\widetilde{\nu}_d^{2,\leftsubscript} = \widehat{\nu}_d^{1,\middlesubscript}$; hence $\widetilde{\nu}_d^{2,\leftsubscript}$ is given by the right-hand side of (\ref{eq:12_it1}). By the non self-interaction property of Lemma  \ref{lem:nonintrays}, Part (\ref{i:gr2}), the family of weighted Diracs corresponding to (\ref{eq:12_it1})
satisfies the assumptions of Proposition \ref{prop:imDirac},
 hence we can iterate the argument:~$\widetilde{\nu}_d^{3,\leftsubscript} = \widehat{\nu}_d^{2,\middlesubscript}$ exists and is given by (\ref{eq:itDirac}). 
Repeated use of Proposition \ref{prop:imDirac} combined with Lemma  \ref{lem:nonintrays}, Part (\ref{i:gr2}) (to ensure that the family of Diracs obtained at each step indeed satisfies the non-interraction assumption of Proposition \ref{prop:imDirac}) gives in this way
$$
\widehat \nu^{N,\middlesubscript}_{d} \geq\delta_{{(x'_N, \xi'_N)}}, \hspace{0.3cm}{(x'_N, \xi'_N)} := \Phi^1_{\sigma(N)} \prod_{\ell = 1 \cdots N-1} \overleftarrow\pi \Phi^1_{\sigma(\ell)} \;(x'_{\widetilde \sigma}, \xi'_{\widetilde \sigma}) \in T^*\Gamma_{\middlesubscript},
$$
where $(x'_N, \xi'_N) \in T^*\Gamma_i$ thanks to Lemma \ref{lem:nonintrays}, Part (\ref{i:gr1}).}
Hence (\ref{eq:beamsclaim}) follows thanks to (\ref{eq:osc_Gammai}) (from Lemma \ref{lem:good_mes}).
\end{proof}

\subsection{High-frequency nilpotence of the impedance-to-impedance map away from zero-frequency: Proof of Theorem \ref{th:compo}, Part (\ref{i:compo:upper})}
The following Proposition reflects the fact that, in the high-frequency limit, the impedance-to-impedance map associated with (\ref{eq:modelh_timp}) pushes the mass emanating from $\Gamma_{\leftsubscript}$ with an angle $\lambda$ to the horizontal up and down by a distance proportional to $\lambda^{-1}$.
We denote $\overrightarrow\pi$ the canonical projection from $T^*\Gamma_{\leftsubscript}$ to $T^*\Gamma_{\middlesubscript}$.

\begin{prop} \label{prop:pushsupp}
Let $\iota \in \{ 1, -1 \}$,  $0<\lambda<1$, and  $X^+_\lambda, X^-_\lambda \subset T^*\Gamma_{\leftsubscript}$ so that
$$
X^+_\lambda \subset \{ \xi' \geq \lambda\}, \hspace{0.5cm}X^-_\lambda \subset \{ \xi' \leq -\lambda\}.
$$
Assume that $u=Sg$ solves (\ref{eq:modelh_timp}) with $\Vert g \Vert_{L^2} \leq C$ and let $\widetilde u : = (\hsc D_{x_1}+1) u$. If 
$$
\operatorname{supp} \widetilde \nu^{\leftsubscript}_d \subset X^+_\lambda \cup  X^-_\lambda,
$$
then, up to a subsequence $\widehat u := (\hsc D_{x_1} + \iota)u$ admits Dirichlet defect measure on $\Gamma_{\middlesubscript}^+$, satisfying
$$
 \operatorname{supp} \widehat \nu^{\middlesubscript}_d \,\cap\, {T^* \Gamma_i}\subset \bigg(\overrightarrow \pi X^+_\lambda + \bigg(\frac{\mathsf {d_\leftsubscript} \lambda}{\sqrt{1-\lambda^2}}, 0\bigg) \bigg) \cup \bigg( \overrightarrow \pi X^-_\lambda + \bigg(-\frac{\mathsf {d_\leftsubscript} \lambda}{\sqrt{1-\lambda^2}}, 0\bigg) \bigg).
$$
\end{prop}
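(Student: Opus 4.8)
The strategy is to feed the support statement \eqref{eq:supphat} of Proposition \ref{prop:trace_meas2} into one elementary fact about billiards in a vertical strip: the specular reflections at the vertical interfaces $\Gamma_\leftsubscript$ and $\Gamma_\rightsubscript$ leave the tangential momentum $\xi'$ unchanged (they only flip $\xi_1$), the straight Hamiltonian flow between them moves the tangential position $x'$ monotonically in the direction $\operatorname{sgn}\xi'$, and one leg from $\Gamma_\leftsubscript$ to $\Gamma_\middlesubscript$ displaces $x'$ by exactly $\mathsf{d_\leftsubscript}|\xi'|/\sqrt{1-\xi'^2}$, which is $\geq \mathsf{d_\leftsubscript}\lambda/\sqrt{1-\lambda^2}$ whenever $|\xi'|\geq\lambda$ since $t\mapsto t/\sqrt{1-t^2}$ is increasing on $[0,1)$. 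First I would apply Proposition \ref{prop:trace_meas2}: since $\|g\|_{L^2}\leq C$, Lemma \ref{lem:good_mes} lets us extract a subsequence along which $u$ has defect and boundary defect measures, and then $\widehat u=(\hsc D_{x_1}+\iota)u$ admits a Dirichlet boundary measure $\widehat\nu_d^{\middlesubscript}$ on $\Gamma_\middlesubscript^+$ satisfying \eqref{eq:supphat}. Since all the relevant measures are supported in $\mathcal H$ (Lemma \ref{lem:traceH}), and the compositions with $\Phi^{\rm in}_{\middlesubscript\to\leftsubscript}$, resp.\ $\Phi^{\rm in}_{\middlesubscript\to\rightsubscript}$, in \eqref{eq:supphat} amount to transporting $\operatorname{supp}\widetilde\mu_{\rm out}^{\leftsubscript}$, resp.\ $\operatorname{supp}\widetilde\mu_{\rm out}^{\rightsubscript}$, forward along the straight flow from $\Gamma_\leftsubscript$, resp.\ $\Gamma_\rightsubscript$, to $\Gamma_\middlesubscript$, it suffices to locate $\operatorname{supp}\widetilde\mu_{\rm out}^{\leftsubscript}$ and $\operatorname{supp}\widetilde\mu_{\rm out}^{\rightsubscript}$ and then to add this displacement.

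The main step is the bookkeeping for $\widetilde\mu_{\rm out}^{\leftsubscript}$ (the one for $\widetilde\mu_{\rm out}^{\rightsubscript}$ following by pushing one further leg $\Gamma_\leftsubscript\to\Gamma_\rightsubscript$), which I would obtain by replaying, at the level of supports rather than of explicit Dirac masses, the round-trip argument from the proof of Proposition \ref{prop:imDirac}. The ingredients are: on any set avoiding $\operatorname{supp}\widetilde\nu_d^{\leftsubscript}$ one has $\widetilde\mu_{\rm out}^{\leftsubscript}=\widetilde\mu_{\rm in}^{\leftsubscript}$ by Lemma \ref{lem:tilde_good}, Part (\ref{i:dir_bc}) (pure reflection at $\Gamma_\leftsubscript$, with no new mass); on $\Gamma_\rightsubscript$ one has $\widetilde\mu_{\rm out}^{\rightsubscript}=|(\sqrt r-1)/(\sqrt r+1)|^2\widetilde\mu_{\rm in}^{\rightsubscript}$ by Lemma \ref{lem:Millretations} together with Part (\ref{i:same}) of Lemma \ref{lem:tilde_good}; and Lemmas \ref{lem:muinlup} and \ref{lem:ltoi} (invoked through Part (\ref{i:same}) of Lemma \ref{lem:tilde_good}) propagate the supports along the straight flow, the absorbing conditions on $\Gamma_t\cup\Gamma_b$ guaranteeing that no mass re-enters through the top or bottom. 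Running this chain backwards from a point of $\operatorname{supp}\widetilde\mu_{\rm out}^{\leftsubscript}$ — each backward step fixes $\xi'$ and strictly decreases the accumulated $|x'|$-displacement, so by boundedness of the strip it must terminate at $\operatorname{supp}\widetilde\nu_d^{\leftsubscript}$ — shows that $\operatorname{supp}\widetilde\mu_{\rm out}^{\leftsubscript}$ is contained in the union of the forward billiard orbits (under the round trip $\Gamma_\leftsubscript\to\Gamma_\rightsubscript\to\Gamma_\leftsubscript$) of $\operatorname{supp}\widetilde\nu_d^{\leftsubscript}\subset X^+_\lambda\cup X^-_\lambda$. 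Since this round-trip map fixes $\xi'$ and moves $x'$ with sign $\operatorname{sgn}\xi'$, and $X^{\pm}_\lambda\subset\{\pm\xi'\geq\lambda\}$, this gives $\operatorname{supp}\widetilde\mu_{\rm out}^{\leftsubscript}\cap\{\pm\xi'>0\}\subset X^{\pm}_\lambda$, and likewise for $\widetilde\mu_{\rm out}^{\rightsubscript}$.

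To conclude, I would transport these sets to $\Gamma_\middlesubscript$ as in the first paragraph. For the first term of \eqref{eq:supphat}, a point of its support with $\pm\xi'>0$ arises from a point of $X^{\pm}_\lambda$ by the single leg $\Gamma_\leftsubscript\to\Gamma_\middlesubscript$, hence its $x'$-coordinate exceeds that of a point of $X^{\pm}_\lambda$ by $\pm\mathsf{d_\leftsubscript}|\xi'|/\sqrt{1-\xi'^2}\geq \mathsf{d_\leftsubscript}\lambda/\sqrt{1-\lambda^2}$; for the second term the accumulated displacement from $X^{\pm}_\lambda$ through $\Gamma_\rightsubscript$ and back to $\Gamma_\middlesubscript$ is the even larger $\pm(\mathsf{d_\leftsubscript}+2\mathsf{d_\rightsubscript})|\xi'|/\sqrt{1-\xi'^2}$. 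Plugging the inclusions for $\operatorname{supp}\widetilde\mu_{\rm out}^{\leftsubscript}$ and $\operatorname{supp}\widetilde\mu_{\rm out}^{\rightsubscript}$ into \eqref{eq:supphat} and adding this displacement gives $\operatorname{supp}\widehat\nu_d^{\middlesubscript}\cap T^*\Gamma_i\subset\big(\overrightarrow\pi X^+_\lambda+(\mathsf{d_\leftsubscript}\lambda/\sqrt{1-\lambda^2},0)\big)\cup\big(\overrightarrow\pi X^-_\lambda+(-\mathsf{d_\leftsubscript}\lambda/\sqrt{1-\lambda^2},0)\big)$, as claimed.

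The hard part is the second paragraph: turning the explicit Dirac-data computation of Proposition \ref{prop:imDirac} into a clean support statement valid for arbitrary data supported in $X^+_\lambda\cup X^-_\lambda$. One must keep careful track of the fact that the ``pure reflection'' identity at $\Gamma_\leftsubscript$ holds only off $\operatorname{supp}\widetilde\nu_d^{\leftsubscript}$ — so that mass is genuinely re-injected at each bounce — and that this re-injected mass, being itself a forward billiard image of $\operatorname{supp}\widetilde\nu_d^{\leftsubscript}$, never leaves the half-space $\{\pm\xi'\geq\lambda\}$ nor moves $x'$ in the wrong direction; everything else is the elementary strip-billiard geometry recorded above.
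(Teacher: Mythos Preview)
Your plan is the paper's: feed the support inclusion \eqref{eq:supphat} of Proposition~\ref{prop:trace_meas2} with a localisation of $\operatorname{supp}\widetilde\mu_{\rm out}^{\leftsubscript}$ and $\operatorname{supp}\widetilde\mu_{\rm out}^{\rightsubscript}$ obtained by running the round-trip billiard backwards, using $\widetilde\mu_{\rm out}^{\leftsubscript}=\widetilde\mu_{\rm in}^{\leftsubscript}$ off $\operatorname{supp}\widetilde\nu_d^{\leftsubscript}$ (Lemma~\ref{lem:tilde_good}\,(\ref{i:dir_bc})), the reflection law on $\Gamma_\rightsubscript$, propagation (Lemma~\ref{lem:ltoi}) and the outgoing condition (Lemma~\ref{lem:muinlup}). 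The paper packages this via the partition $T^*\Gamma_\leftsubscript\cap\mathcal H=\bigsqcup_k R^k$ by number of round trips before exit, but the content is identical.

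There is one real slip. From ``$\operatorname{supp}\widetilde\mu_{\rm out}^{\leftsubscript}$ lies in the forward round-trip orbit of $\operatorname{supp}\widetilde\nu_d^{\leftsubscript}\subset X^+_\lambda\cup X^-_\lambda$'' you infer ``$\operatorname{supp}\widetilde\mu_{\rm out}^{\leftsubscript}\cap\{\pm\xi'>0\}\subset X^{\pm}_\lambda$''. For general $X^{\pm}_\lambda$ this is false: for $\xi'>0$ the forward round trip \emph{increases} $x'$ by $2(\mathsf{d_\leftsubscript}+\mathsf{d_\rightsubscript})\xi'/\sqrt{1-\xi'^2}$, so the forward orbit of, say, a one-point set $X^+_\lambda$ is strictly larger than $X^+_\lambda$. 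Consequently, after the final leg $\Gamma_\leftsubscript\to\Gamma_\middlesubscript$ you only obtain that a point of $\operatorname{supp}\widehat\nu_d^{\middlesubscript}$ with $\xi'>0$ equals $(y'+c,\xi')$ for some $(y',\xi')\in X^+_\lambda$ and some $c\geq \mathsf{d_\leftsubscript}\lambda/\sqrt{1-\lambda^2}$, i.e., lies in $\overrightarrow\pi X^+_\lambda+[\,\mathsf{d_\leftsubscript}\lambda/\sqrt{1-\lambda^2},\infty)\times\{0\}$ rather than in the single translate. These coincide exactly when $X^+_\lambda$ is upward-closed in $x'$ on $[0,\mathsf h]$ (and $X^-_\lambda$ downward-closed). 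The paper's proof carries the same tacit monotonicity at the geometric implication preceding \eqref{eq:UPBzero}; in the application to Theorem~\ref{th:compo}\,(\ref{i:compo:upper}) the sets and all their iterates are of the half-strip form $[a,\mathsf h]\times\{\xi'\geq\lambda\}$, so neither argument is in danger there. But for \emph{arbitrary} $X^\pm_\lambda$ this step, as you have written it, does not go through.
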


\begin{proof}
Observe that, by Proposition \ref{prop:trace_meas2}, 
$$
\operatorname{supp}\widehat \nu_d^{\middlesubscript} \,\cap\, {T^* \Gamma_i} \subset  \operatorname{supp} \widetilde \mu_{\rm out}^{\leftsubscript} \circ \Phi^{\rm in}_{\middlesubscript\rightarrow \leftsubscript} \cup  \operatorname{supp} \widetilde \mu_{\rm out}^{\rightsubscript} \circ \Phi^{\rm in}_{\middlesubscript\rightarrow \rightsubscript}.
$$
Therefore, the result follows if we can show that
\begin{multline} \label{eq:suppUP}
 \big( \operatorname{supp} \widetilde \mu_{\rm out}^{\leftsubscript} \circ \Phi^{\rm in}_{\middlesubscript\rightarrow \leftsubscript} \cup  \operatorname{supp} \widetilde \mu_{\rm out}^{\rightsubscript} \circ \Phi^{\rm in}_{\middlesubscript\rightarrow \rightsubscript} \big)
 \,\cap\, {T^* \Gamma_i} \\
 \subset \bigg(\overrightarrow \pi X^+_\lambda + \bigg(\frac{\mathsf {d_\leftsubscript} \lambda}{\sqrt{1-\lambda^2}}, 0\bigg) \bigg) \cup \bigg( \overrightarrow \pi X^-_\lambda + \bigg(-\frac{\mathsf {d_\leftsubscript} \lambda}{\sqrt{1-\lambda^2}}, 0\bigg) \bigg).
\end{multline}
To show (\ref{eq:suppUP}), we focus on the part of the statement concerning $\widetilde \mu_{\rm out}^{\leftsubscript}$; the proof for $\widetilde \mu_{\rm out}^{\rightsubscript} $ is 
similar. To do so, let $\mathcal B_0 \subset {T^* \Gamma_i}$ be such that
\begin{equation} \label{eq:UPB}
\mathcal B_0 \cap \bigg( \overrightarrow \pi X^+_\lambda + \bigg(\frac{\mathsf {d_\leftsubscript} \lambda}{\sqrt{1-\lambda^2}}, 0\bigg) \bigg) \cup \bigg( \overrightarrow \pi X^-_\lambda + \bigg(-\frac{\mathsf {d_\leftsubscript} \lambda}{\sqrt{1-\lambda^2}}, 0\bigg)\bigg) = \emptyset.
\end{equation}
Our goal is to show that $ \widetilde \mu_{\rm out}^{\leftsubscript}  ( \Phi^{\rm in}_{\middlesubscript\rightarrow \leftsubscript} \mathcal B_0) = 0$.
We denote $\Phi_{\leftsubscript\rightarrow \rightsubscript} := \Phi^{\rm in}_{\leftsubscript\rightarrow \rightsubscript}$, $\Phi_{\rightsubscript\rightarrow \leftsubscript} := \Phi^{\rm in}_{\rightsubscript\rightarrow \leftsubscript}$, and
$$
\Phi :=  \Phi_{\rightsubscript\rightarrow \leftsubscript} \circ \Phi_{\leftsubscript\rightarrow \rightsubscript}.
$$
By simple geometric optics, a ray starting from $(x',\xi')\in T^*\Gamma$ makes an angle $\sin \xi'$ to the horizontal. Using that 
$\tan \arcsin \lambda = \frac{\lambda}{\sqrt{1-\lambda^2}}$, we then have that
\begin{multline*} 
(x',\xi') \notin \bigg(\overrightarrow \pi X^+_\lambda +\bigg (\frac{\mathsf {d_\leftsubscript} \lambda}{\sqrt{1-\lambda^2}}, 0\bigg) \bigg) \cup \Big( \overrightarrow \pi X^-_\lambda + \bigg(-\frac{\mathsf {d_\leftsubscript} \lambda}{\sqrt{1-\lambda^2}}, 0\bigg) \bigg) \\
\implies \tfa  n \geq 0, \; \Phi^n \circ \Phi^{\rm in}_{\middlesubscript\rightarrow \leftsubscript} (x', \xi') \notin X^+_\lambda \cup X^-_\lambda.
\end{multline*}
Hence, (\ref{eq:UPB}) together with the support assumption gives that
\begin{equation} \label{eq:UPBzero}
\tfa  n \geq 0, \hspace{0.3cm} \widetilde \nu_d^{\leftsubscript} \big( \Phi^n \circ \Phi^{\rm in}_{\middlesubscript\rightarrow \leftsubscript} \mathcal B_0 \big) = 0.
\end{equation}
As $ (- \hsc D_{x_1} + 1)\widetilde u = 0 \text{ on }\Gamma_{\rightsubscript}$, by Lemma \ref{lem:Millretations} together with Definition \ref{def:aux},
\begin{equation} \label{eq:614_2}
\widetilde \mu^{\rightsubscript}_{\rm out} = \bigg( \frac{\sqrt r - 1}{\sqrt r + 1}\bigg)^2 \widetilde \mu^{\rightsubscript}_{\rm in}.
\end{equation}
In addition, by Lemma \ref{lem:tilde_good}, (\ref{i:nu_out}), 
\begin{equation}  \label{eq:thbeamref2b}
\widetilde \mu^{\leftsubscript}_{\rm out}(\mathcal B) = \widetilde \mu^{\leftsubscript}_{\rm in}(\mathcal B), \hspace{0.5cm} \tfa  \mathcal B \subset T^*\Gamma_{\leftsubscript} \text{ s.t. }\widetilde\nu^{\leftsubscript}_d(\mathcal B) = 0. 
\end{equation}
Finally, by Lemma \ref{lem:ltoi} (recall Lemma \ref{lem:tilde_good}, (\ref{i:same})), for any $a\in C^\infty(T^*\mathbb R)$ not depending on $x$,
\begin{equation}  \label{eq:thbeamref3b}
a\widetilde \mu^{\leftsubscript}_{\rm in}(\mathcal B) = a\widetilde \mu^{\rightsubscript}_{\rm out}(\Phi_{\leftsubscript\rightarrow \rightsubscript} \mathcal B), \hspace{0.3cm} \tfa  \mathcal B \subset \mathcal B^{\leftsubscript}_{\rightsubscript\rightarrow \leftsubscript},
\end{equation}
and
\begin{equation}  \label{eq:thbeamref4b}
a\widetilde \mu^{\rightsubscript}_{\rm in}(\mathcal B) = a\widetilde \mu^{\leftsubscript}_{\rm out}(\Phi_{\rightsubscript\rightarrow \leftsubscript} \mathcal B ), \hspace{0.3cm} \tfa  \mathcal B \subset \mathcal B^{\rightsubscript}_{\leftsubscript\rightarrow \rightsubscript}.
\end{equation}

We denote $\widetilde {\mathcal B}_0 := \Phi^{\rm in}_{\middlesubscript\rightarrow \leftsubscript} \mathcal B_0$, consider again the disjoint union
$$
T^*\Gamma_{\leftsubscript} \cap \mathcal H = \bigsqcup_{0\leq k \leq \infty} R^k, \hspace{0.5cm} R^k := \Big\{ \rho \in T^*\Gamma_{\rightsubscript}\cap \mathcal H, \; \inf\big \{ \ell, \; \Phi^{\ell}(\rho) \cap T^*\Gamma_{\leftsubscript}  = \emptyset\big \} = k\Big\},
$$
and decompose
$$
\widetilde {\mathcal B}_0 = \bigsqcup_{0\leq k \leq \infty}  \widetilde{ \mathcal B} ^k_0.
$$
We show that  
\begin{equation}\label{eq:UPgoal}
\tfa  \; 0\leq k \leq \infty, \hspace{0.5cm} \widetilde \mu_{\rm out}^{\leftsubscript}  ( \widetilde{ \mathcal B} ^k_0) = 0, 
\end{equation}
which completes the proof. We first assume
that $k < \infty$.
Then,
$$
\Phi^{k}(  \widetilde{ \mathcal B} ^k_0 ) \cap T^*\Gamma_{\leftsubscript} = \emptyset.
$$
It follows that $\Phi_{\leftsubscript\rightarrow \rightsubscript}\circ \Phi^{k-1}(  \widetilde{ \mathcal B} ^k_0) \cap \mathcal B^{\rightsubscript}_{\leftsubscript\rightarrow \rightsubscript} = \emptyset$. As, by Lemma \ref{lem:muinlup}, (\ref{i:muinlupa}) (and Lemma \ref{lem:tilde_good}, (\ref{i:same}) again), $\operatorname{supp} \widetilde \mu^{\rightsubscript}_{\rm in} \subset \mathcal B^{\rightsubscript}_{\leftsubscript\rightarrow \rightsubscript} $, we have using (\ref{eq:614_2})
$$
\widetilde \mu^{\rightsubscript}_{\rm out} (\Phi_{\leftsubscript\rightarrow \rightsubscript}\circ \Phi^{k-1}(  \widetilde{ \mathcal B} ^k_0)) = 0.
$$
It follows by (\ref{eq:614_2}), (\ref{eq:thbeamref2b}), (\ref{eq:thbeamref3b}) and (\ref{eq:thbeamref4b}) combined with (\ref{eq:UPBzero}) that 
\begin{multline*} 
0 = \widetilde \mu^{\rightsubscript}_{\rm out} (\Phi_{\leftsubscript\rightarrow \rightsubscript}\circ \Phi^{k-1}(\widetilde{ \mathcal B} ^k_0))  = \widetilde \mu^{\leftsubscript}_{\rm in} (\Phi^{k-1}(\widetilde{ \mathcal B} ^k_0)) =  \widetilde \mu^{\leftsubscript}_{\rm out} (\Phi^{k-1}(\widetilde{ \mathcal B} ^k_0)) 
=  \widetilde \mu^{\rightsubscript}_{\rm in} (\Phi_{\leftsubscript\rightarrow \rightsubscript}\circ \Phi^{k-2}(\widetilde{ \mathcal B} ^k_0)) \\ = \bigg( \frac{\sqrt r - 1}{\sqrt r + 1}\bigg)^{-2} \widetilde \mu^{\rightsubscript}_{\rm out} (\Phi_{\leftsubscript\rightarrow \rightsubscript}\circ \Phi^{k-2}(\widetilde{ \mathcal B} ^k_0)) = \cdots =    \bigg( \frac{\sqrt r - 1}{\sqrt r + 1}\bigg)^{-2k} \widetilde \mu^{\leftsubscript}_{\rm out}(\widetilde{ \mathcal B} ^k_0),
\end{multline*}
hence (\ref{eq:UPgoal}) holds for all $k<\infty$. It remains to show that $\widetilde \mu_{\rm out}^{\leftsubscript}  ( \widetilde{ \mathcal B} ^\infty_0) = 0$.
Observe that such a set satisfies $ \widetilde{ \mathcal B} ^\infty_0 \subset \{ \sqrt r = 1\}$. Hence,
using  (\ref{eq:thbeamref4b}) and (\ref{eq:614_2})
$$
\widetilde \mu^{\leftsubscript}_{\rm in} (\widetilde{ \mathcal B} ^\infty_0) = \widetilde \mu^{\rightsubscript}_{\rm out}(\Phi_{\leftsubscript\rightarrow \rightsubscript} (\widetilde{ \mathcal B} ^\infty_0)) = 0 \times \widetilde \mu^{\rightsubscript}_{\rm in}(\Phi_{\leftsubscript\rightarrow \rightsubscript} (\widetilde{ \mathcal B} ^\infty_0)) = 0,
$$
from which, by (\ref{eq:thbeamref2b}) combined with (\ref{eq:UPBzero})
$$
\widetilde \mu_{\rm out}^{\leftsubscript}  ( \widetilde{ \mathcal B} ^\infty_0) = 0,
$$
which ends the proof.
\end{proof}

We can now conclude.

\begin{proof}[Proof of Theorem \ref{th:compo}, Part (\ref{i:compo:upper})] 
Let $\sigma \in \{+, -\}^{\mathbb N}$ and $\lambda>0$ and assume that the claim fails. Then, for any $A>0$, there exists $n \geq A$, $\epsilon >0$, $g_\ell \in L^2(\Gamma_{\leftsubscript})$ with $\Vert g_\ell \Vert_{L^2} = 1$ and $\hsc_\ell \rightarrow 0$ so that
$$
\tfa  \ell, \hspace{0.3cm} \Vert \mathcal I ^{\sigma_n}(\height , \mathsf{d^+_\leftsubscript}, \mathsf {d_{\rightsubscript}}^+, \mathsf{d^-_\leftsubscript}, \mathsf {d_{\rightsubscript}}^-, \hsc_\ell^{-1}) \Pi^{\hsc_\ell^{-1}}_\lambda g_\ell \Vert_{L^2} \geq \epsilon.
$$
Let 
$$
f_\ell := \Pi^{\hsc_\ell^{-1}}_\lambda g_\ell.
$$
We show that, for any $n$ big enough, we have, up to a subsequence in $\ell$, as $\ell \rightarrow\infty$
\begin{equation}\label{eq:nil:contrme}
\Vert \mathcal I ^{\sigma_n}(\height , \mathsf{d^+_\leftsubscript}, \mathsf {d_{\rightsubscript}}^+, \mathsf{d^-_\leftsubscript}, \mathsf {d_{\rightsubscript}}^-, \hsc_\ell^{-1}) f_\ell \Vert_{L^2} \longrightarrow 0,
\end{equation}
which gives a contradiction and completes the proof. We denote $(u_\ell^n)_{n\geq 1}$ the cascade of solutions of (\ref{eq:modelh_timp})  associated with $\sigma$ and $f_\ell$ by $S(\height , \mathsf {d_\leftsubscript}^\pm, \mathsf {d_{\rightsubscript}}^\pm)$, that is so that
$$
\begin{cases}
(-\hsc_\ell^2 \Delta - 1) u^n_\ell = 0 \text{ in } \domain \\
(\hsc_\ell D_{x_1}+1) u^n_\ell = \gamma_{\Gamma_{\middlesubscript}(\mathsf {d}^{\sigma(n-1)})} (\hsc_\ell D_{x_1} \pm_{\sigma(n-1)} 1) u^{n-1}_\ell \; \text{ on } \Gamma_{\leftsubscript}, \\
\text{$u^n_\ell$ is outgoing near $\Gamma'_s$}, \\
(- \hsc_\ell D_{x_1}+1) u^n_\ell = 0 \; \text{ on } \Gamma_{\rightsubscript}(\mathsf {d_{\rightsubscript}}^{\sigma(n)}), \\
\end{cases}
$$
where we denoted $\pm_{\sigma(n)} := \sigma(n)$, and
$$
\begin{cases}
(-\hsc_\ell^2 \Delta - 1) u^1_\ell = 0 \text{ in } \domain \\
(\hsc_\ell D_{x_1}+1) u^1_\ell =f_\ell \; \text{ on } \Gamma_{\leftsubscript}, \\
\text{$u^1_\ell$ is outgoing near $\Gamma'_s$}, \\
(- \hsc_\ell D_{x_1}+1) u^1_\ell = 0 \; \text{ on } \Gamma_{\rightsubscript}(\mathsf {d_{\rightsubscript}}^{\sigma(1)}). \\
\end{cases}
$$
In addition, denote
$$
 \widetilde u^n_\ell : = (\hsc_\ell D_{x_1}+1) u^n_\ell, \hspace{0.3cm} \widehat u^n_\ell : = (\hsc_\ell D_{x_1} \pm_{\sigma(n)}1) u^n_\ell.
$$
As $f_\ell$ is uniformly bounded in $L^2$, by Lemma \ref{lem:good_mes}, $u^1_\ell$ admit defect measures and boundary defect measures. As $\operatorname{supp} \widetilde \nu^{1,\leftsubscript}_d \subset X^+_\lambda \cup X^-_\lambda$ by definition of $\Pi^{\hsc_\ell^{-1}}_\lambda$, it follows by Proposition \ref{prop:pushsupp} that, up to a subsequence $\widehat u^1_\ell$ admit Dirichlet boundary defect measure on $\Gamma_{\middlesubscript}$, satisfying
$$
 \operatorname{supp} \widehat \nu^{1,\middlesubscript}_d \subset X^+_{\lambda,1} \cup X^-_{\lambda, 1}, \qquad  X^\pm_{\lambda,1} := \overrightarrow \pi X^\pm_{\lambda} + \bigg(\pm \frac{\mathsf d^{\sigma(1)}_\leftsubscript \lambda}{\sqrt{1-\lambda^2}}, 0\bigg).
$$
In addition, observe that the existence of $\widehat \nu^{1,\middlesubscript}_d$ implies that, up to a subsequence, $\gamma_{\Gamma_{\middlesubscript}(\mathsf {d}^{\sigma(1)})} (\hsc_\ell D_{x_1} \pm_{\sigma(1)} 1) u^{1}_\ell$ is locally uniformly bounded in $L^2$, hence we can iterate the argument. We obtain that, for any $n$, up to a subsequence in $\ell$,
$\widehat u^n_\ell$ admits Dirichlet boundary defect measure on $\Gamma_{\middlesubscript}$, satisfying
$$
 \operatorname{supp} \widehat \nu^{n,\middlesubscript}_d \subset X^+_{\lambda,n} \cup X^-_{\lambda, n}, \qquad  X^\pm_{\lambda,n} := X^\pm_{\lambda, n-1} + \bigg(\pm\frac{\mathsf d^{\sigma(n)}_\leftsubscript \lambda}{\sqrt{1-\lambda^2}}, 0\bigg).
$$
It follows that 
$$
\tfa  \,n \geq \frac{\height }{\min (\mathsf{d^+_\leftsubscript}, \mathsf{d^-_\leftsubscript})}\frac{\sqrt{1-\lambda^2}}{\lambda},
\qquad
\operatorname{supp} \widehat \nu^{n,\middlesubscript}_d \cap {T^* \Gamma_{\middlesubscript}} = \emptyset.
$$
Thanks to (\ref{eq:osc_Gammai}), this implies (\ref{eq:nil:contrme}) and therefore ends the proof. 
\end{proof}

\section{The wellposedness results i.e.~proof of Lemma \ref{lem:modelwp} and \ref{lem:modelHF}, and interior trace bounds} \label{sec:5}

\subsection{Definition of the admissible solution operators}

Let us first define extensions of our models. For $\epsilon > 0$, let
$$
\domain_\epsilon^1 := (0, \mathsf {d_{\leftsubscript}} + \mathsf {d_{\rightsubscript}} + \epsilon) \times (-\epsilon,\height+\epsilon), \hspace{0.3cm}\domain_\epsilon^2 := (0, \mathsf {d_{\leftsubscript}} + \mathsf {d_{\rightsubscript}}) \times (-\epsilon,\height+\epsilon),
$$
let $\Gamma^{1,2}_{\mathsf x, \epsilon}$ denote the corresponding parts of the boundaries of $\domain_\epsilon^{1,2}$, and $g\in L^2(\Gamma_l)$ being given, define $g_\epsilon$ as its extension by zero to $L^2(\Gamma^1_{l,\epsilon})=L^2(\Gamma^2_{l,\epsilon})$. Denote
also $\Gamma^1_{b, \epsilon} := \Gamma^1_{l,\epsilon}$ and $\Gamma^2_{b, \epsilon} := \Gamma^2_{l,\epsilon} \cup \Gamma^2_{r,\epsilon} $. 
We define the extended models (\ref{eq:modelh_ext}) and (\ref{eq:modelh_timp_ext})
in the following way.

\noindent\begin{minipage}{0.5\linewidth}
\begin{equation} \label{eq:modelh_ext} \tag{M1$_\epsilon$}
\begin{cases}
(-\hsc^2 \Delta - 1) u = 0 \text{ in }\domain^1_\epsilon \\
(\hsc D_{x_1}+1) u = g_\epsilon \; \text{ on } \Gamma^1_{\leftsubscript, \epsilon}, \\
\text{$u$ is outgoing near $\Gamma^1_{s, \epsilon}$}.
\end{cases}
\end{equation}
\end{minipage}
\begin{minipage}{0.5\linewidth}
\begin{equation} \label{eq:modelh_timp_ext} \tag{M2$_\epsilon$}
\begin{cases}
(-\hsc^2 \Delta - 1) u = 0 \text{ in }\domain^2_\epsilon \\
(\hsc D_{x_1}+1) u = g_\epsilon \; \text{ on } \Gamma^2_{\leftsubscript, \epsilon}, \\
\text{$u$ is outgoing near $\Gamma'^2_{s, \epsilon}$}, \\
(- \hsc D_{x_1}+1) u = 0 \; \text{ on } \Gamma^2_{\rightsubscript, \epsilon}. \\
\end{cases}
\end{equation}
\end{minipage}\par\vspace{\belowdisplayskip}
\noindent We also define
$$
\underline \domain_\epsilon^1 := (-\infty, \mathsf {d_{\leftsubscript}} + \mathsf {d_{\rightsubscript}} + \epsilon) \times (-\epsilon,\height+\epsilon), \hspace{0.3cm}\underline \domain_\epsilon^2 := (-\infty, \infty) \times (-\epsilon,\height+\epsilon),
$$
and, if $u \in L^2(U)$, with $U \subset \mathbb R^2$, we denote by $\underline u$ its extension by zero to $\mathbb R^2$.

\begin{definition} \label{def:adm}
A solution operator $S$ associated to model (\ref{eq:modelh}), resp.~(\ref{eq:modelh_timp}), is said to be $\epsilon$-admissible if there exists $\epsilon, C, \hsc_0 > 0$ such that for any $g\in L^2(\Gamma_l)$, $Sg$ can be extended to a solution of (\ref{eq:modelh_ext}), resp.~ (\ref{eq:modelh_timp_ext}), that
\begin{enumerate}
\item \label{i:bdd} is bounded: $\Vert  u \Vert_{L^2(D^{1,2}_{\epsilon})}\leq C \Vert g \Vert_{L^2(\Gamma_l)}$ for all $0<\hsc\leq \hsc_0$, and
\item \label{i:bdd_trace} has bounded traces: $\Vert  u \Vert_{L^2(\Gamma^{1,2}_{b,\epsilon})} + \Vert \hsc \partial_n u \Vert_{L^2(\Gamma^{1,2}_{b,\epsilon})} \leq C \Vert g \Vert_{L^2(\Gamma_l)}$ for all $0<\hsc\leq \hsc_0$.
\end{enumerate}

\end{definition}

In \S\ref{ss:wpend} 
 and \S\ref{ss:int_trace} below, we use some results about semiclassical pseudo-differential calculus in $\mathbb R^d$, 
in particular, the elliptic parametrix construction. We refer for example to \cite[Appendix E]{DyZw:19} for a precise definition of the pseudo-differential operator classes 
and the statement of this result (alternatively, a brief introduction to these techniques can be found in \cite[\S4]{LSW3}).

\subsection{Proof of Lemma \ref{lem:modelwp} and \ref{lem:modelHF}} \label{ss:wpend}

\begin{proof}[Proof of Lemma \ref{lem:modelwp}]
We prove existence for (\ref{eq:modelh_timp}); the proof for (\ref{eq:modelh}) is similar.

\subsection*{Existence of a solution to (\ref{eq:modelh_timp_ext})}
For $\epsilon>0$, let $\Theta = \Theta_1 \cup \Theta_2$ be the union of two smooth, connected, convex subsets $\Theta_1, \Theta_2 \subset \mathbb R^2$ so that 
\begin{equation} \label{eq:Theta_def}
\Theta \cap \big((0, \mathsf{d_r} + \mathsf{d_l}) \times \mathbb R\big) = \emptyset, \quad \Theta_1 \cap \big(\{0\}\times \mathbb R\big) = \Gamma^2_{l,\epsilon}, \quad\tand\quad
\Theta_2 \cap \big(\{\mathsf{d_l} + \mathsf{d_r} \}\times \mathbb R\big) = \Gamma^2_{r, \epsilon}.
\end{equation}
We extend $g_\epsilon$ to $\partial \Theta$ by $0$, as a new function $\widetilde g \in L^2(\partial \Theta)$. Now, let $w$ be solution the exterior impedance problem with the Sommerfeld radiation condition in $\Omega := \mathbb R^2 \backslash \overline{\Theta}$
\begin{equation} \label{eq:Rdpb_aux}
\begin{cases}
(-\hsc^2 \Delta - 1) w = 0 &\text{ in } \Omega, \\
(-\hsc D_{n}+1) w = \widetilde g  &\text{ on } \partial \Theta, \\
(-\hsc D_r + 1 ) w = o(r^{-\frac{1}{2}}) &\text{ as } r \rightarrow \infty,
\end{cases}
\end{equation}
where $n$ is the normal pointing into $\Theta$ (i.e., the normal pointing out of $\domain_\epsilon^+$ on $\Gamma_{\leftsubscript, \epsilon}$ and $\Gamma_{\rightsubscript, \epsilon}$). Noting the sign of the normal, the solution of \eqref{eq:Rdpb_aux} is unique by, e.g., \cite[Theorem 3.37]{CoKr:83}, and exists by Fredholm theory for all $\hsc>0$. In addition, observe that $w$ is $\hsc$-tempered with 
\begin{equation} \label{eq:euan_temp}
\Vert w \Vert_{H^1_\hsc(K \cap (\mathbb R^2 \backslash \Theta))} \leq C \hsc^{-1/4}\Vert g \Vert_{L^2}
\end{equation}
for any compact $K \subset \mathbb R^2$ by \cite[Theorem 1.8]{Sp:14}.
We now claim that
\begin{equation} \label{eq:goto_suppg}
\operatorname{WF}_\hsc w \, \cap \, \mathcal T^c \subset \big\{ \rho \in T^*( \mathbb R^2 \backslash \Theta), \; \exists t<0, \,\pi_x \varphi_t(\rho) \in \operatorname{supp} \widetilde g\big\},
\end{equation}
where $\mathcal T$ consists of the trapped rays between $\Theta_1$ and $\Theta_2$ (observe that $\mathcal T \subset \{ \xi \cdot n(x) = 0 \}$ in the notation of Definition \ref{def:outgo}), and $\varphi_t(x, \xi)$ denotes the point of the phase-space attained following, for a time $t$, the ray of geometrical optics in $\mathbb R^2 \backslash \Theta$ starting from $(x, \xi)$.
Observe that the above shows the existence of a solution to (\ref{eq:modelh_timp_ext}) by taking 
$$
u := w|_{ \domain_\epsilon^2}
$$
and any $\domain_\epsilon^+ \subset  (0, \mathsf{d_r} + \mathsf{d_l}) \times \mathbb R$ satisfying $\domain_\epsilon^+ \supset \domain_\epsilon^2 \cup \Gamma_\epsilon^2$ so that $\partial \domain_\epsilon^+ \cap(\partial\domain_\epsilon^2 \backslash \Gamma_\epsilon^2) = \partial\domain_\epsilon^2 \backslash \Gamma_\epsilon^2$.

We therefore show (\ref{eq:goto_suppg}). 
Observe that, for $A>0$ so that $B(0,A)\Supset \Theta$
\begin{equation} \label{eq:outgo_WF}
\operatorname{WF}_{\hsc} w \cap T^*B(0,A)^c \subset \big\{ \xi \cdot x > 0\big\}.
\end{equation}
The inclusion (\ref{eq:outgo_WF}) is proven similarly as in \cite{Burq}, 
observing that, for $\Psi \in C^\infty(\mathbb R^2)$ such that $\Psi = 0$ near $\Theta$ and $\Psi=1$ near $B(0,A)^c$, $\Psi w$ is outgoing (in the standard sense) and solves the following Helmholtz equation in $\mathbb R^2$,
$$
(-\hsc^2\Delta - 1)\Psi w = -[\hsc^2\Delta, \Psi]w,
$$
hence (\ref{eq:outgo_WF}) follows from the analogous property for the \emph{free} resolvent; see \cite[Proposition 2.2]{Burq}.
From $(\ref{eq:outgo_WF})$ and invariance of the wavefront set by the Hamiltonian flow in the interior (this follows, for example, from propagation 
of singularities, \cite[Theorem E.47]{DyZw:19}),
\begin{equation} \label{eq:goto_1}
\operatorname{WF}_\hsc w \subset \big\{ (x, \xi) \in T^* (\mathbb R^2 \backslash \Theta), \; \exists t<0, x + t \xi  \in \partial \Theta \big\}.
\end{equation}
On the other hand, from propagation of singularities up to the boundary (see, e.g.,~\cite{MS2} and Remark \ref{rem:prop} below), we have, for any $\tau > 0$
\begin{equation} \label{eq:goto_2}
\rho \notin \operatorname{WF}_\hsc w\, \text{ and } \,\big\{ \pi_x \varphi_t(\rho), \; t\in [0, \tau] \big\} \cap \operatorname{supp} \widetilde g = \emptyset \implies
\varphi_\tau(\rho) \notin \operatorname{WF}_\hsc w.
\end{equation}
By (\ref{eq:goto_1}) and (\ref{eq:goto_2}), we obtain (\ref{eq:goto_suppg}). We define $Sg := u_{|D}$, and it remains to show the points  
(\ref{i:bdd}) and (\ref{i:bdd_trace}) of Definition \ref{def:adm}.

\subsection*{The trace bound in Part (\ref{i:bdd_trace}) of Definition \ref{def:adm}. }

If we prove that
\begin{equation} \label{eq:trace_gen_ext}
\Vert w \Vert_{L^2(\partial \Theta)} + \Vert \hsc \partial_n w \Vert_{L^2(\partial \Theta)} \lesssim \Vert g\Vert_{L^2(\Gamma_l)}.
\end{equation}
then this implies the bound in Part (\ref{i:bdd_trace}) of Definition \ref{def:adm}.

For $R\gg 1$, let $\Omega_{R} := B(0, R) \backslash \Theta$. 
Pairing the equation by $v$ and integrating by parts on $\Omega_R$, we get
\begin{equation} \label{eq:extIPP}
\hsc^2 \N{\nabla w}^2_{L^2(\Omega_R)} - \N{w}^2_{L^2(\Omega_R)} = \hsc i \N{w}^2_{L^2(\partial \Omega_R)}  + \hsc \big\langle \hsc D_n w - w, w \rangle_{L^2(\partial \Omega_R)}.
\end{equation}
Taking the imaginary part,
we obtain by Cauchy-Schwarz inequality and the inequality $2ab\leq a^2 + b^2$ for $a, b \geq 0$,
$$
\Vert w \Vert^2_{L^2(\partial \Omega_R)} \leq \Vert  \hsc D_n w - w \Vert_{L^2(\partial B(0,R))} \Vert w \Vert_{L^2(\partial B(0,R))} + \frac 12 \Vert g  \Vert_{L^2}^2 +\frac 12 \Vert w \Vert_{L^2(\partial \Theta)}^2
$$
Using the Sommerfeld radiation condition, we obtain that 
\begin{align*}
\Vert w \Vert^2_{L^2(\partial \Omega_R)} &\leq 2 \Vert  \hsc D_n w - w \Vert_{L^2(\partial B(0,R))} \Vert w \Vert_{L^2(\partial B(0,R))} + \Vert g  \Vert_{L^2}^2 \\
&\leq 2 \epsilon_{\hsc}(R) R^{-\frac 12} \Vert  1 \Vert_{L^2(\partial B(0,R))} \Vert w \Vert_{L^2(\partial B(0,R))} + \Vert g  \Vert_{L^2}^2 \\
&\leq 2 \epsilon_{\hsc}(R) \Vert w \Vert_{L^2(\partial B(0,R))} + \Vert g  \Vert_{L^2}^2,
\end{align*}
where $\epsilon_{\hsc}(R) \rightarrow 0$ as $R\rightarrow \infty$ for fixed $\hsc>0$.
If, $\hsc >0$ being fixed, $\Vert w \Vert_{L^2(\partial B(0,R))} \leq \Vert w \Vert^2_{L^2(\partial B(0,R))}$, (\ref{eq:trace_gen_ext}) follows by taxing $R$ fixed big enough so that $2 \epsilon_{\hsc}(R) \leq \frac 12$. If is not the case, then $\Vert w \Vert_{L^2(\partial B(0,R))} \leq 1$, and (\ref{eq:trace_gen_ext}) follows by letting $R \rightarrow \infty$, since $\Vert w \Vert^2_{L^2(\partial \Theta)} \leq \Vert w \Vert^2_{L^2(\partial \Omega_R)}$.

\subsection*{An oscillatory property.}

Before showing Part (\ref{i:bdd_trace}) of Definition \ref{def:adm}, we obtain an oscillatory bound ((\ref{eq:trace_gen_ext}) below) on solutions $w$. Let $\chi \in C^\infty_c(\mathbb R^2)$, $\psi \in C^\infty(\mathbb R^2)$ be bounded and so that $\psi = 0$ in $B(0,2)$, and denote by $\underline w$ the extension of $w$ to $\mathbb R^2$ by zero.

First, observe that, as $(-\hsc ^2 \Delta - 1)$ is semiclassicaly elliptic on $\operatorname{WF}_\hsc \psi (\hsc D_x)$, 
there exists $E\in \Psi^{-2}_\hsc$ so that
\begin{equation} \label{eq:basic_para}
\psi (\hsc D_x) = E(-\hsc ^2 \Delta - 1) + O(\hsc^\infty)_{\Psi^{-\infty}}.
\end{equation}
Now,
\begin{align}
(-\hsc^2 \Delta - 1)\chi \underline w &= 1_{x\in D^2_\epsilon} (-\hsc^2 \Delta - 1) \chi w + \hsc\big(-\delta(x_1)\otimes \hsc\partial_{x_1} (\chi w) _{|x_1 = 0} + h\delta'(x_1)\otimes (\chi w) _{|x_1 = 0}\big) \nonumber \\ 
&= 1_{x\in D^2_\epsilon} [-\hsc^2 \Delta, \chi] w + \hsc\big(-\delta(x_1)\otimes \hsc\partial_{x_1} (\chi w) _{|x_1 = 0} + h\delta'(x_1)\otimes (\chi w) _{|x_1 = 0}\big) \nonumber \\
&=  [-\hsc^2 \Delta, \chi] \underline w - \hsc \delta(x_1)\otimes (\partial_{x_1}\chi) w_{|x_1 = 0} \nonumber  \\
&\hspace{1.5cm}+ \hsc(-\delta(x_1)\otimes \hsc\partial_{x_1} (\chi w) _{|x_1 = 0} + h\delta'(x_1)\otimes \chi w _{|x_1 = 0}). \label{eq:osc_int}
\end{align}
Let 
$$
z := \hsc \delta(x_1)\otimes (\partial_{x_1}\chi) w_{|x_1 = 0} + \hsc\big(-\delta(x_1)\otimes \hsc\partial_{x_1} \chi w _{|x_1 = 0} + h\delta'(x_1)\otimes (\chi w) _{|x_1 = 0}\big).
$$
Pairing with a test function $\varphi$ and using the Cauchy-Schwarz inequality and then trace inequalities, we find that
\begin{align*}
|\langle z, \varphi \rangle| &\leq C  \big( \hsc \Vert \varphi \Vert_{L^2(\{ x_1 = 0\})} + \hsc^2 \Vert \partial_n \varphi \Vert_{L^2(\{ x_1 = 0\})} \big) \big(\Vert w \Vert_{L^2(\partial \Theta)} + \Vert \hsc \partial_n w \Vert_{L^2(\partial \Theta)} \big)\\
&\leq C \big( \hsc \Vert \varphi \Vert_{H^{\frac 12 + \eta}} + \hsc^2 \Vert  \varphi \Vert_{H^{\frac 32 + \eta}} \big) \big(\Vert w \Vert_{L^2(\partial \Theta)} + \Vert \hsc \partial_n w \Vert_{L^2(\partial \Theta)} \big) \\
&\leq C \hsc^{\frac 12 - \eta} \Vert  \varphi \Vert_{H_\hsc^{\frac 32 + \eta}} \big(\Vert w \Vert_{L^2(\partial \Theta)} + \Vert \hsc \partial_n w \Vert_{L^2(\partial \Theta)} \big),
\end{align*}
for $\eta >0$. 
Hence
$$
\Vert z \Vert_{H_\hsc^{-\frac 32-\eta}} \leq C  \hsc^{\frac 12 - \eta} \big(\Vert w \Vert_{L^2(\partial \Theta)} + \Vert \hsc \partial_n w \Vert_{L^2(\partial \Theta)} \big).
$$
By combining the above with (\ref{eq:osc_int}), (\ref{eq:basic_para}) and (\ref{eq:euan_temp}), and using the fact that $E [-\hsc^2 \Delta, \chi] \in \hsc \Psi_\hsc^{-1} \subset \hsc \Psi_\hsc^{0}$, we get
\begin{equation} \label{eq:prel_oscc}
\Vert \psi(\hsc D_{x})\chi \underline w  \Vert_{L^2} \leq C  \hsc^{\frac 12 - \eta} \big(\Vert w \Vert_{L^2(\partial \Theta)} + \Vert \hsc \partial_n w \Vert_{L^2(\partial \Theta)} + \Vert g\Vert_{L^2(\Gamma_l)} \big).
\end{equation}
Together with (\ref{eq:trace_gen_ext}), this gives the oscillatory bound
\begin{equation} \label{eq:osccc}
\Vert \psi(\hsc D_{x})\chi \underline w  \Vert_{L^2} \leq C  \hsc^{\frac 12 - \eta} \Vert g\Vert_{L^2(\Gamma_l)}.
\end{equation}

\subsection*{The resolvent estimate in Part (\ref{i:bdd}) of Definition \ref{def:adm}.}

We show that, for any $\chi \in C^\infty_c(\mathbb R^2)$, there is $C>0$ so that for $\hsc_0>0$ small enough and any $0<\hsc\leq \hsc_0$
$$
\Vert \chi w\Vert_{L^2} \leq C \Vert g \Vert_{L^2}.
$$
In particular, this gives Part (\ref{i:bdd}) of Definition \ref{def:adm}.
Without loss of generality, we assume that $\Theta \Subset \operatorname{supp}\chi$.
If the estimate fails, there exists sequences $h_\ell \rightarrow 0$, $g_\ell \in L^2(\Gamma_\ell)$, and $w_\ell (\hsc_\ell)$ (that we will denote $w (\hsc_\ell)$ to lighten the notations)  so that
\begin{equation} \label{eq:asstilde1b}
\Vert \chi w (\hsc_\ell)   \Vert_{L^2} \geq \ell \Vert g_\ell \Vert_{L^2}.
\end{equation}
Rescaling, we may assume that
\begin{equation} \label{eq:asstilde2b}
\Vert \chi w (\hsc_\ell)   \Vert_{L^2}= 1.
\end{equation}
As $\Theta \Subset \operatorname{supp}\chi$, it follows from (\ref{eq:asstilde2b}) and the properties of the free outgoing resolvent that 
$w(\hsc_\ell)$ is bounded in $L^2_{\rm loc}$ (see for example \cite[Lemma 3.1]{Burq}). 
Together with (\ref{eq:asstilde1b}), the trace bound (\ref{eq:trace_gen_ext}) and the analogue of Theorem \ref{th:ex_defect} in $\mathbb R^2 \backslash \Theta$ (e.g.~\cite[Theorem 2.3]{GLS1}), up to extracting a subsequence, we can assume that $ u (\hbar_\ell)$ admits defect measure and boundary defect measures. 

Thanks to the oscillatory property (\ref{eq:prel_oscc}), it follows from (\ref{eq:asstilde2b}) that $\mu(\chi^2) = 1$ (see, e.g., \cite[Proof of Lemma 4.2]{GSW}).
To obtain a contradiction, we show that $\mu = 0$. We
focus on the most challenging part of the phase space $T^*\overline \domain^2_\epsilon\cap \domain_\epsilon^+$, that contains the trapped trajectories $\mathcal T$ (recall (\ref{eq:Theta_def})), and show that $\mu(T^*\overline \domain^2_\epsilon\cap \domain^+)=0$. The fact that $\mu$ vanishes on the other parts of the phase space is obtained in the same way, in the spirit of \cite{Burq} and \cite{GSW}, by combining the invariance of the measure with the outgoingness of the wavefront-set (with the invariance of the measure on the glancing set 
$\mathcal G$ in our case coming from the facts that, in our particular setting of the exterior of two convex obstacles, the generalized bicharacteristics through $\mathcal G$ are bicharacteristics of $H_p$ and $H_p\mu = \delta(x_1)\otimes(\mu^{\rm in} - \mu^{\rm out})$ everywhere, e.g., by \cite[Lemmas 2.10, 2.13, 2.14]{GLS1}). 

We therefore work in $\domain^2_\epsilon$ from now on. Observe that 
all the results of \S\ref{sec:3} and \S\ref{sec:4} apply to solutions of (\ref{eq:modelh_timp_ext}) in $D^2_{\epsilon}$, and we use these results in the corresponding case without further mention.
We will show that
\begin{equation} \label{eq:resImpmuout}
\mu_{\rm out}^{\leftsubscript} = \mu_{\rm in}^{\leftsubscript} = 0, \hspace{0.3cm}\mu_{\rm out}^{\rightsubscript} = \mu_{\rm in}^{\rightsubscript} = 0.
\end{equation}
Observe that, by (\ref{eq:asstilde1b}), 
$$
\Vert g_\ell\Vert_{L^2(\Gamma_{\leftsubscript})} \rightarrow 0.
$$
It follows from the above together with the boundary condition on $\Gamma_{l, \epsilon}$ that
$$
\nu^{\leftsubscript}_d + 2 \Re\nu_j^{\leftsubscript} + \nu_n^{\leftsubscript} = 0.
$$
Similarly as in the proof of Lemma \ref{lem:tilde_good}, this implies by Lemma \ref{lem:Millretations} that
\begin{equation} \label{eq:tileresrefll}
 \mu^{\leftsubscript}_{\rm out} = \bigg| \frac{\sqrt r - 1}{\sqrt r + 1}\bigg|^2 \mu^{\leftsubscript}_{\rm in},
\end{equation} 
and in the same way, as $ (- \hsc D_{x_1} + 1) u = 0 \text{ on }\Gamma_{\rightsubscript}$, 
\begin{equation} \label{eq:tileresrefl}
\mu^{\rightsubscript}_{\rm out} = \bigg| \frac{\sqrt r - 1}{\sqrt r + 1}\bigg|^2 \mu^{\rightsubscript}_{\rm in}. 
\end{equation}
Furthermore, by Lemma \ref{lem:ltoi}, denoting $\Phi_{\leftsubscript\rightarrow \rightsubscript} := \Phi^{\rm in}_{\leftsubscript\rightarrow \rightsubscript}$, $\Phi_{\rightsubscript\rightarrow \leftsubscript} := \Phi^{\rm in}_{\rightsubscript\rightarrow \leftsubscript}$, 
for any $a\in C^\infty(T^* \mathbb R)$ not depending on $x$,
\begin{equation}  \label{eq:thbeamref_res}
a \mu^{\leftsubscript}_{\rm in}(\mathcal B) = a \mu^{\rightsubscript}_{\rm out}(\Phi_{\leftsubscript\rightarrow \rightsubscript} \mathcal B), \hspace{0.3cm} \tfa  \mathcal B \subset \mathcal B^{\leftsubscript}_{\rightsubscript\rightarrow \leftsubscript},
\end{equation}
and
\begin{equation}  \label{eq:thbeamref_res2}
a \mu^{\rightsubscript}_{\rm in}(\mathcal B) = a \mu^{\leftsubscript}_{\rm out}(\Phi_{\leftsubscript\rightarrow \rightsubscript} \mathcal B ), \hspace{0.3cm} \tfa  \mathcal B \subset \mathcal B^{\rightsubscript}_{\leftsubscript\rightarrow \rightsubscript}.
\end{equation}
We have the disjoint union
$$
T^*\Gamma_{\leftsubscript} \cap \mathcal H = \bigsqcup_{0\leq k \leq \infty} R^k, \hspace{0.5cm} R^k := \Big\{ \rho \in T^*\Gamma_{\leftsubscript}\cap \mathcal H, \; \inf \big\{ \ell, \; \Phi^{\ell}(\rho) \cap T^*\Gamma_{\leftsubscript}  = \emptyset \big\} = k\Big\},
$$
where $\Phi := \Phi_{\rightsubscript\rightarrow \leftsubscript} \circ \Phi_{\leftsubscript\rightarrow \rightsubscript}$.
Let now $\mathcal B \subset R^{k_0}$. We first assume that $k_0 < \infty$. Then, $\Phi^{k_0}(\mathcal B) \cap T^*\Gamma_{\leftsubscript} = \emptyset$.
It follows that $\Phi_{\leftsubscript\rightarrow \rightsubscript}\circ \Phi^{k_0-1}(\mathcal B) \cap \mathcal B^{\rightsubscript}_{\leftsubscript\rightarrow \rightsubscript} = \emptyset$. As, by Lemma \ref{lem:muinlup}, (\ref{i:muinlupa}), $\operatorname{supp} \mu^{\rightsubscript}_{\rm in} \subset \mathcal B^{\rightsubscript}_{\leftsubscript\rightarrow \rightsubscript} $, we have $ \mu^{\rightsubscript}_{\rm in} (\Phi_{\leftsubscript\rightarrow \rightsubscript}\circ \Phi^{k_0-1}(\mathcal B)) = 0$, and using (\ref{eq:tileresrefll}), (\ref{eq:tileresrefl}), (\ref{eq:thbeamref_res}) and (\ref{eq:thbeamref_res2}) repetitively, it follows that
$\mu_{\rm out}^{\leftsubscript}(\mathcal B)=0$. Therefore
$$
\mu^{\leftsubscript}_{\rm out}\bigg( \bigsqcup_{0\leq k < \infty} R^k\bigg) = 0.
$$
In addition, as $\sqrt r = 1$ on $R^\infty$, (\ref{eq:tileresrefll}) 
 yields to $\mu^{\leftsubscript}_{\rm out}( R^\infty) = 0$ as well. Hence $\mu^{\leftsubscript}_{\rm out} = 0$. The same arguments lead to  $\mu^{\rightsubscript}_{\rm out} = 0$, and hence, using (\ref{eq:thbeamref_res}) and (\ref{eq:thbeamref_res2}) again, we conclude that (\ref{eq:resImpmuout}) holds.
Using Lemma \ref{lem:interpr} together with Corollary \ref{cor:suppmu}, it follows from (\ref{eq:resImpmuout}) that $\mu(T^*(\overline \domain^2_\epsilon\cap \domain_\epsilon^+)) = 0$.
\end{proof}

\begin{proof}[Proof of Lemma \ref{lem:modelHF}]
Similarly as in the proof of Lemma \ref{lem:modelwp}, since $S_1 g -S_2 g$ solves (\ref{eq:modelh}), resp  (\ref{eq:modelh_timp}) with zero impedance boundary condition on $\Gamma_b$,
propagation of singularities together with the outgoing condition give
$$
\operatorname{WF}_\hsc(S_1 g -S_2 g) \cap T^* D = \emptyset.
$$
The result follows by definition of the wavefront set together with Part (\ref{i:osc_inter_2}) of Lemma \ref{lem:osc_inter}.
\end{proof}

\begin{remark} \label{rem:prop}
The propagation results of Melrose and Sj\"ostrand \cite{MS1, MS2} are proved for homogeneous (that is, non semiclassical) second-order operators, with wavefront-set defined from homogeneous pseudo-differential operators. In the spirit of \cite{Leb}, to use their results in our semiclassical setting, for any sequences $\hsc = (\hsc_k)_{k\geq 1}$, $u=(u_k)_{k \geq 1}$, 
with $(-\hsc_k^2 \Delta - 1)u_k = 0$, if we define $\Theta(u) := \sum_k e^{-it\hsc_k^{-1}} u_k$, then  
for any $(x, \xi, t, \tau)\in {T^* \Omega} \times T^*\mathbb R$, $(x, \xi) \in {\operatorname{WF}_\hsc}(u)$ iff $(x, \xi, t, \tau) \in {\operatorname{WF} {(\Theta(u))}}$ (see \cite{Leb}), and $\Theta(u)$ solves the homogeneous wave equation $(\partial_t^2 - \Delta) \Theta(u) = 0$ (with damping boundary condition in the setting of this paper), to which \cite{MS1, MS2} apply; we therefore obtain propagation of singularities with $\hsc$-wavefront-sets for $u$.
For a more general presentation of semiclassical propagation of singularities up to the boundary, we refer the interested reader for example to \cite{I1, I2}.
\end{remark}

\subsection{Interior trace estimates} \label{ss:int_trace}

The purpose of this paragraph is to show the three following lemmas about the trace of the solution on the interior surface $\Gamma_i$.

\begin{lem}\label{lem:trace_Gammai}
Let $S$ be an $\epsilon$-admissible solution operator to (\ref{eq:modelh}), resp.~(\ref{eq:modelh_timp}), and $0\leq \epsilon' < \epsilon$. Then, there exists $\hsc_0>0$ and $C>0$ such that if $0<\hsc\leq \hsc_0$ then, given $g \in L^2(\Gamma_\leftsubscript)$, 
 any solution $u=Sg$ of (\ref{eq:modelh}), resp.~(\ref{eq:modelh_timp}) satisfies,
$$
 \Vert  u  \Vert_{L^2(\Gamma_{\middlesubscript, \epsilon'})} + \Vert \partial_n  u \Vert_{L^2(\Gamma_{\middlesubscript, \epsilon'})} \leq C \Vert g \Vert_{L^2(\Gamma_\leftsubscript)}.
$$
\end{lem}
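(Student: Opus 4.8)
The plan is to deduce this interior trace bound from the admissibility assumptions on $S$ (Definition \ref{def:adm}) together with the semiclassical elliptic parametrix construction used already in the proof of Lemma \ref{lem:modelwp}. The key observation is that $\Gamma_{\middlesubscript,\epsilon'} = \{x_1 = \mathsf{d_\leftsubscript}\}\times(-\epsilon',\height+\epsilon')$ is an interior hypersurface of the extended domain $\domain^{1,2}_\epsilon$, lying at positive distance from $\partial\domain^{1,2}_\epsilon$; hence the trace of $u$ on it can be controlled by interior regularity, and there is no boundary-condition subtlety as there is on $\Gamma_{\leftsubscript}$ or $\Gamma_{\rightsubscript}$.

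First I would fix a cutoff $\chi\in C^\infty_c(\domain^{1,2}_\epsilon)$ with $\chi\equiv 1$ on a neighbourhood of $\overline{\Gamma_{\middlesubscript,\epsilon'}}$, so that, by Part (\ref{i:bdd}) of Definition \ref{def:adm}, $\N{\chi u}_{L^2}\leq C\N{g}_{L^2(\Gamma_\leftsubscript)}$ uniformly for $0<\hsc\leq\hsc_0$. Since $(-\hsc^2\Delta-1)u=0$ in $\domain^{1,2}_\epsilon$, elliptic regularity (or rather, the elliptic parametrix for $-\hsc^2\Delta$ on the characteristic-free part combined with the fact that $u$ is $\hsc$-tempered — more precisely, microlocalising away from $\{|\xi|=1\}$ one gains $O(\hsc^\infty)$, and on $\{|\xi|\leq 2\}$ one has $\N{\chi u}_{H^s_\hsc}\lesssim \N{\chi' u}_{L^2}$ where $\chi'\equiv1$ on $\operatorname{supp}\chi$) gives $\N{\chi u}_{H^2_\hsc}\leq C\N{g}_{L^2(\Gamma_\leftsubscript)}$. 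Then the semiclassical trace inequality $\N{v}_{L^2(\Gamma_{\middlesubscript,\epsilon'})}+\N{\hsc\partial_n v}_{L^2(\Gamma_{\middlesubscript,\epsilon'})}\lesssim \hsc^{-1/2}\N{v}_{H^1_\hsc}+\hsc^{1/2}\N{v}_{H^2_\hsc}$ — or, more cleanly, $\N{v}_{H^{1/2}_\hsc(\Gamma)}\lesssim \N{v}_{H^1_\hsc}$ applied to $u$ and to $\hsc\partial_{x_1}u$ (the latter also solving the Helmholtz equation and controlled in $H^1_\hsc$ by the $H^2_\hsc$ bound on $\chi u$) — yields the claimed estimate. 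Note that, as stated, the lemma controls $\N{\partial_n u}_{L^2(\Gamma_{\middlesubscript,\epsilon'})}$ (not $\N{\hsc\partial_n u}$); this is consistent since the $\hsc$-dependent constants are absorbed into the $\hsc_0$-dependence, but one should track the $\hsc$-powers carefully and state the bound with the $\hsc$-factors the downstream applications actually need — presumably $\N{u}_{L^2}+\N{\hsc\partial_n u}_{L^2}\leq C\N{g}_{L^2}$, which is what the above argument delivers uniformly.

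The only genuine point requiring care — and the main obstacle — is justifying the interior elliptic estimate $\N{\chi u}_{H^2_\hsc}\lesssim\N{g}_{L^2}$ purely from the $L^2$ bound in Part (\ref{i:bdd}) of admissibility. The standard route is: write $\psi(\hsc D_x)(\chi u) = E(-\hsc^2\Delta-1)(\chi u) + O(\hsc^\infty)_{\Psi^{-\infty}}$ as in \eqref{eq:basic_para}, with $\psi\in C^\infty_c$ equal to one on a neighbourhood of $\{|\xi|=1\}$; since $(-\hsc^2\Delta-1)(\chi u)=[-\hsc^2\Delta,\chi]u$ has coefficients supported in $\operatorname{supp}\nabla\chi\Subset\domain^{1,2}_\epsilon$, one picks an intermediate cutoff $\chi_1$ and bounds $\N{\psi(\hsc D_x)\chi u}_{H^s_\hsc}\lesssim\N{\chi_1 u}_{L^2}+O(\hsc^\infty)$; the complementary piece $(1-\psi)(\hsc D_x)\chi u$ is $O(\hsc^\infty)$ by ellipticity of $(-\hsc^2\Delta-1)$ on $\operatorname{WF}_\hsc$ of $(1-\psi)(\hsc D_x)$. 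This is exactly the mechanism used for \eqref{eq:prel_oscc}, now applied with an interior (rather than boundary-adjacent) cutoff so that no distributional boundary terms appear. I would also invoke, where convenient, Part (\ref{i:osc_inter_2}) of Lemma \ref{lem:osc_inter} (referenced in the proof of Lemma \ref{lem:modelHF}), which packages precisely such interior oscillatory/trace control; if that lemma is stated in the needed generality the present proof reduces to a one-line application of it plus Part (\ref{i:bdd}) of Definition \ref{def:adm}.
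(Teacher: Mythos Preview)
Your approach has a genuine gap: the interior $H^m_\hsc$ control you obtain from Lemma \ref{lem:osc_inter} together with the semiclassical trace inequality only yields
\[
\|u\|_{L^2(\Gamma_{\middlesubscript,\epsilon'})} + \|\hsc\partial_n u\|_{L^2(\Gamma_{\middlesubscript,\epsilon'})} \lesssim \hsc^{-1/2}\|\chi u\|_{H^2_\hsc} \lesssim \hsc^{-1/2}\|g\|_{L^2},
\]
not the uniform bound claimed. The inequality $\|v\|_{H^{1/2}_\hsc(\Gamma)}\lesssim\|v\|_{H^1_\hsc}$ you write down is false in general; the semiclassical trace theorem always loses $\hsc^{-1/2}$, and this loss is sharp for functions whose wavefront set meets the conormal sphere $S^*\Gamma_{\middlesubscript}$ (i.e.\ for mass travelling tangentially to $\Gamma_{\middlesubscript}$, here $\xi_1=0$). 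Lemma \ref{lem:osc_inter} gives only that $\operatorname{WF}_\hsc u\subset\{|\xi|=1\}$ and decay at large frequencies; it says nothing about tangential concentration, so it cannot remove this loss.

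The paper's proof supplies precisely the missing microlocal input: by Lemma \ref{lem:hyperint}, the outgoing condition forces $\operatorname{WF}_\hsc u \cap T^*\big((\epsilon,\mathsf{d_\leftsubscript}+\mathsf{d_\rightsubscript}-\epsilon)\times(-M,\height+M)\big)\subset\{|\xi_1|\geq c\}$ for some $c>0$, i.e.\ the wavefront set near $\Gamma_{\middlesubscript,\epsilon'}$ is uniformly transversal to $\Gamma_{\middlesubscript}$. One can then choose $A\in\Psi^0_\hsc$ with $\operatorname{WF}_\hsc A\cap S^*\Gamma=\emptyset$ and $\operatorname{WF}_\hsc(\chi u)\cap\operatorname{WF}_\hsc(I-A)=\emptyset$, write $\chi u = A\chi u + O(\hsc^\infty)$, and apply the loss-free trace estimate \cite[Lemma 3.7]{GLS1} (valid precisely under the hypothesis $\operatorname{WF}_\hsc A\cap S^*\Gamma=\emptyset$) to $A\chi u$. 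Without invoking outgoingness through Lemma \ref{lem:hyperint}, there is no mechanism to rule out tangential mass and your argument cannot close.
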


\begin{lem}\label{lem:osc_Gammai}
Let $S$ be an $\epsilon$-admissible solution operator to (\ref{eq:modelh}), resp.~(\ref{eq:modelh_timp}),
 $\iota \in \{ -1, 1\}$, $g \in L^2(\Gamma_l)$ be such that $\Vert g \Vert_{L^2} \leq C$, $u=Sg$ and $\widehat \nu_d^i = \nu^{\middlesubscript}_d - 2 \iota \Re\nu_j^{\middlesubscript} + \nu_n^{\middlesubscript}$ a Dirichlet boundary measure of $\widehat u := (\hsc D_{x_1} u + \iota)u$ on $\Gamma_{i, \epsilon'}$ for some
 $0<\epsilon'<\epsilon$. Then, 
\begin{equation} \label{eq:osc_Gammai}
\Vert (\hsc D_{x_1} u + \iota) u \Vert_{L^2(\Gamma_i)} \rightarrow \widehat \nu_d^i({T^*\Gamma_i}).
\end{equation}
\end{lem}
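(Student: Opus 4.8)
**Proof plan for Lemma \ref{lem:osc_Gammai} (the oscillatory identity $\Vert (\hsc D_{x_1} + \iota)u\Vert_{L^2(\Gamma_i)} \to \widehat\nu_d^i(T^*\Gamma_i)$).**

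The plan is to reduce the statement to the convergence of $\langle a(x',\hsc D_{x'}) \widehat u, \widehat u\rangle_{\Gamma_i}$ against the Dirichlet measure $\widehat\nu_d^i$ for a cutoff $a \equiv 1$ near the relevant part of phase space, combined with a quantitative "no-mass-escapes-to-infinite-frequency" estimate that lets us replace the microlocal cutoff by the identity on $L^2(\Gamma_i)$. Concretely, write $\widehat u := (\hsc D_{x_1} + \iota)u$ and recall from Lemma \ref{lem:good_mes} and the discussion in \S\ref{sec:3} that (up to the subsequence along which defect measures exist) $\widehat u$ has a Dirichlet boundary measure $\widehat\nu_d^i$ on $\Gamma_{i,\epsilon'}$; by definition of that boundary measure, $\langle a(x',\hsc D_{x'})\widehat u,\widehat u\rangle_{\Gamma_i} \to \int a\,d\widehat\nu_d^i$ for every $a \in C_c^\infty(T^*\Gamma_i)$. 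Since $\Vert \widehat u\Vert_{L^2(\Gamma_i)}^2 = \langle \widehat u,\widehat u\rangle_{\Gamma_i}$, the lemma would follow if we could take $a\equiv 1$, i.e. if we knew that for any $\eta>0$ there is $a\in C_c^\infty(T^*\Gamma_i)$ with $0\le a\le 1$ such that $\limsup_\ell \big|\Vert\widehat u\Vert_{L^2(\Gamma_i)}^2 - \langle a(x',\hsc_\ell D_{x'})\widehat u,\widehat u\rangle_{\Gamma_i}\big| \le \eta$ and $\int(1-a)\,d\widehat\nu_d^i \le \eta$.

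The first ingredient for this is the localization of $\widehat\nu_d^i$ in frequency: by Lemma \ref{lem:traceH} (used already in Propositions \ref{prop:moregeneral} and \ref{prop:trace_meas2}) the relevant boundary measures are supported in the closed hyperbolic set $\mathcal H = \{|\xi'|\le 1\}$, hence $\widehat\nu_d^i$ is a finite measure supported in a fixed compact subset of $T^*\Gamma_i$; so for any $\eta>0$ we can choose $a\in C_c^\infty$ with $0\le a\le 1$, $a\equiv 1$ on that compact set, giving $\int(1-a)\,d\widehat\nu_d^i = 0$. The second, and genuinely substantive, ingredient is to control $\Vert (1 - a(x',\hsc D_{x'}))\widehat u\Vert_{L^2(\Gamma_i)}$, i.e. the part of the trace living at frequencies $|\xi'|\gtrsim 1$ (in semiclassical scaling). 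Here I would use the interior elliptic regularity / oscillatory bound machinery already built in \S\ref{sec:5}: the parametrix identity of the form \eqref{eq:basic_para} together with the oscillatory interior estimate \eqref{eq:osccc}-type argument (the one producing $\Vert\psi(\hsc D_{x'})\chi\underline u\Vert_{L^2}\lesssim \hsc^{1/2-\eta}\Vert g\Vert_{L^2}$), restricted/traced to $\Gamma_i$ via the trace estimate of Lemma \ref{lem:trace_Gammai}. The point is that $(-\hsc^2\Delta - 1)u = 0$ forces $u$ (and hence $\widehat u$, which is a first-order semiclassical operator applied to $u$) to concentrate microlocally on $\{|\xi|=1\}$, so the part of its trace with $|\xi'|$ bounded away from $1$ by a margin is $O(\hsc^\infty)$, while the part near $|\xi'|=1$ is handled by choosing the cutoff $a$ to be $1$ on a slightly larger neighbourhood of $\mathcal H$ than the support of $\widehat\nu_d^i$; combined with the $\hsc^{1/2-\eta}$ gain from the oscillatory bound on the genuinely "interior-frequency" piece, this shows $\Vert(1-a(x',\hsc D_{x'}))\widehat u\Vert_{L^2(\Gamma_i)} \to 0$.

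Putting these together: along the defect-measure subsequence, $\Vert\widehat u\Vert_{L^2(\Gamma_i)}^2 = \langle a(x',\hsc D_{x'})\widehat u,\widehat u\rangle_{\Gamma_i} + o(1) \to \int a\,d\widehat\nu_d^i = \widehat\nu_d^i(T^*\Gamma_i)$, which is exactly \eqref{eq:osc_Gammai}; and since every subsequence of $\Vert\widehat u\Vert_{L^2(\Gamma_i)}^2$ has a further subsequence converging (by Theorem \ref{th:ex_defect}) to $\widehat\nu_d^i(T^*\Gamma_i)$ — the limit being the same regardless of the extraction because it equals the total mass of the boundary measure, which by the relations of Lemma \ref{lem:Millretations} and the propagation results of \S\ref{sec:4} is determined by $\widetilde\mu_{\rm out}^{\leftsubscript}$ (and $\widetilde\mu_{\rm out}^{\rightsubscript}$ in the Model 2 case) and ultimately by $\widetilde\nu_d^{\leftsubscript}$, hence by $g$ — the full sequence converges. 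The main obstacle I anticipate is making the frequency-truncation step fully rigorous: one must be careful that the $\hsc$-dependent cutoff on the boundary interacts correctly with the interior elliptic parametrix and the trace map, i.e. that $\Vert(1-a(x',\hsc D_{x'}))\gamma_{\Gamma_i}\widehat u\Vert$ is controlled by interior quantities to which \eqref{eq:basic_para} and the oscillatory bound apply; this is a semiclassical trace/commutator argument that is routine in spirit but needs the precise pseudodifferential bookkeeping of \S\ref{sec:5}, and is presumably why the lemma is deferred to \S\ref{ss:int_trace} rather than proved inline.
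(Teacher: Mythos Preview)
Your approach is essentially the same as the paper's: both reduce the claim to showing that the high-frequency tail $\Vert(1-a(x',\hsc D_{x'}))\widehat u\Vert_{L^2(\Gamma_i)}\to 0$ for a suitable compactly supported frequency cutoff, with the defect-measure definition handling the compactly supported part. The paper implements the tail estimate concretely by taking $\psi(\xi')$ supported in $\{|\xi'|\ge 4\}$, observing that $\psi(\hsc D_{x'})$ (being tangential) commutes with the trace, then bounding $\Vert\psi(\hsc D_{x'})\chi\widehat u\Vert_{L^2(\Gamma_{i,\epsilon'})}\lesssim \hsc^{-1/2}\Vert\psi(\hsc D_{x'})\chi\widehat u\Vert_{H^1_\hsc}$ and splitting the interior norm via a $2$D frequency cutoff $\widetilde\psi(\hsc D_x)$: the piece with $\widetilde\psi$ is $O(\hsc^\infty)$ by Lemma~\ref{lem:osc_inter}(2), and the piece with $(1-\widetilde\psi)$ vanishes because $\operatorname{supp}\psi(\xi')\cap\operatorname{supp}(1-\widetilde\psi(\xi))=\emptyset$. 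This is exactly the ``routine pseudodifferential bookkeeping'' you anticipate; your invocation of Lemma~\ref{lem:traceH} is harmless but unnecessary, since showing the high-frequency trace vanishes already forces the measure to be frequency-compactly supported.

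One point to correct: your final paragraph about full-sequence convergence is both unnecessary and flawed. The lemma is stated along a fixed subsequence on which the boundary measures $\nu_d^i,\nu_j^i,\nu_n^i$ (and hence $\widehat\nu_d^i$) have been extracted; the convergence \eqref{eq:osc_Gammai} is meant along that subsequence. Your claim that the limit is ``determined by $g$'' via $\widetilde\nu_d^{\leftsubscript}$ is not correct in general: a bounded $L^2$ family $g(\hsc)$ can have several defect measures along different subsequences, so $\widetilde\nu_d^{\leftsubscript}$ is itself subsequence-dependent. In the applications (e.g.\ Corollary~\ref{cor:meas}) the lemma is always invoked after fixing such a subsequence, so no full-sequence statement is needed.
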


\begin{lem} \label{lem:traceH}
If $v$ is outgoing near $\Gamma'_s$, and admits defect measures $\nu^{\middlesubscript}_d$, $\nu^{\middlesubscript}_n$, $\nu^{\middlesubscript}_j$ on $\Gamma_{\middlesubscript}^+$, then
they are supported in $\mathcal H$.
\end{lem}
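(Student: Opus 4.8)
The plan is to show that the semiclassical wavefront sets of the Dirichlet and Neumann traces of $v$ on $\Gamma_{\middlesubscript}^+$ lie in $\mathcal H$, and then to use that the supports of $\nu^{\middlesubscript}_d$, $\nu^{\middlesubscript}_n$, $\nu^{\middlesubscript}_j$ are controlled by those wavefront sets. First I would microlocalise $v$ itself near $\Gamma_{\middlesubscript}^+$. As in the proof of Lemma \ref{lem:hyperint}, $\operatorname{WF}_\hsc v \cap T^*\domain^+ \subset \{|\xi|^2 = 1\}$ since $(-\hsc^2\Delta-1)v = 0$ in $\domain^+$. Moreover, since $0 < \mathsf{d_\leftsubscript} < \mathsf{d_\leftsubscript}+\mathsf{d_\rightsubscript}$ and $\Gamma_{\middlesubscript}^+$ is a bounded vertical segment, we may fix $\epsilon>0$ with $\epsilon < \mathsf{d_\leftsubscript} < \mathsf{d_\leftsubscript}+\mathsf{d_\rightsubscript}-\epsilon$ and $M>0$ with $\Gamma_{\middlesubscript}^+ \subset \{\mathsf{d_\leftsubscript}\}\times(-M,\height+M)$; Lemma \ref{lem:hyperint} then gives $c>0$ such that every $(x,(\xi_1,\xi'))\in\operatorname{WF}_\hsc v$ with $x\in\Gamma_{\middlesubscript}^+$ satisfies $|\xi_1|\geq c$. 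Combining the two facts yields $|\xi'|^2 = 1-\xi_1^2 \leq 1-c^2$, so in the coordinates $(x',\xi')$ on $T^*\Gamma_{\middlesubscript}^+$ one gets
\[
\pi_{\Gamma_{\middlesubscript}^+}\big(\operatorname{WF}_\hsc v \cap \{x\in\Gamma_{\middlesubscript}^+\}\big) \subset \big\{ r(x',\xi') \geq c^2 \big\} \subset \mathcal H .
\]

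Next I would pass from $v$ to its traces and then to the measures. Writing $-\hsc^2\Delta-1 = (\hsc D_s)^2 - r(x',\hsc D_{x'})$ in the Fermi coordinates near $\Gamma_{\middlesubscript}^+$, the equation reads $(\hsc D_s)^2 v = r(x',\hsc D_{x'})v$, so normal derivatives are traded for tangential operators; the standard semiclassical restriction (trace) estimates then give
\[
\operatorname{WF}_\hsc\big(\gamma_{\Gamma_{\middlesubscript}^+} v\big) \,\cup\, \operatorname{WF}_\hsc\big(\gamma_{\Gamma_{\middlesubscript}^+}(\hsc D_s v)\big) \subset \pi_{\Gamma_{\middlesubscript}^+}\big(\operatorname{WF}_\hsc v \cap \{x\in\Gamma_{\middlesubscript}^+\}\big),
\]
since if $(x_0',\xi_0')$ is outside the right-hand side then $v$ is $O(\hsc^\infty)$ microlocally on a neighbourhood of the whole fibre over $(x_0',\xi_0')$, whence a semiclassical trace inequality makes $\gamma v$ and $\gamma(\hsc D_s v)$ both $O(\hsc^\infty)$ microlocally at $(x_0',\xi_0')$. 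Finally $\operatorname{supp}\nu^{\middlesubscript}_d \subset \operatorname{WF}_\hsc(\gamma v)$, $\operatorname{supp}\nu^{\middlesubscript}_n \subset \operatorname{WF}_\hsc(\gamma(\hsc D_s v))$, and $\operatorname{supp}\nu^{\middlesubscript}_j \subset \operatorname{WF}_\hsc(\gamma v)\cap \operatorname{WF}_\hsc(\gamma(\hsc D_s v))$ by Cauchy--Schwarz together with the $L^2$-boundedness of the traces (built into the hypothesis that the boundary measures exist, cf.\ Theorem \ref{th:ex_defect}). Combining with the previous display, all three measures are supported in $\mathcal H$.

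The only part that is not essentially formal is the second step: the semiclassical restriction theorem for traces on the interface $\Gamma_{\middlesubscript}^+$, and the identification of the measure supports with the trace wavefront sets. Both are routine once one uses the Helmholtz equation to express $(\hsc D_s)^2 v$ via tangential operators; the one point requiring mild care is that the traces on the open segment $\Gamma_{\middlesubscript}^+$ should be tested against symbols $a\in C^\infty_c(T^*\Gamma_{\middlesubscript}^+)$, so that the zero-extension of the traces across $\partial\Gamma_{\middlesubscript}^+$ occurring in Theorem \ref{th:ex_defect} never enters and the measures $\nu^{\middlesubscript}_d,\nu^{\middlesubscript}_n,\nu^{\middlesubscript}_j$ are unaffected by it.
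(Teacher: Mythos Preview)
Your proof is correct and follows essentially the same strategy as the paper: both use Lemma \ref{lem:hyperint} together with $\operatorname{WF}_\hsc v\subset\{|\xi|^2=1\}$ to conclude that near $\Gamma_{\middlesubscript}^+$ the tangential frequencies satisfy $r(\xi')\geq c^2$, and then pass this information to the traces. The only cosmetic difference is that the paper works directly with a tangential frequency cutoff $\Psi(\hsc D_{x'})$ (which commutes with restriction), shows $(1-\Psi)\chi v=O(\hsc^\infty)$ in every $H^s_\hsc$, applies a Sobolev trace estimate, and reads off $\nu_d^{\middlesubscript}((1-\Psi)^2)=0$; you instead phrase the same step as an inclusion of the trace wavefront sets in the projection of $\operatorname{WF}_\hsc v$, which is an equivalent packaging of the argument.
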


In particular, 
Lemma \ref{lem:good_mes} is 
a consequence of Lemma \ref{lem:trace_Gammai} and \ref{lem:osc_Gammai} together with Part (\ref{i:bdd}) of Definition \ref{def:adm}. 

\begin{proof}[Proof of Lemma \ref{lem:good_mes}]
Existence of the defect measure and boundary measures is a 
direct consequence Lemma \ref{lem:trace_Gammai}, Parts (\ref{i:bdd}) and (\ref{i:bdd_trace}) of Definition \ref{def:adm}, and Theorem \ref{th:ex_defect}.
The relationship \eqref{eq:osc_Gammai} follows from Lemma \ref{lem:osc_Gammai} using the fact that, given an $\epsilon$-admissible solution operator, $D^+$ 
can be taken such
that $\Gamma_i^+\Subset \Gamma_{i,\epsilon}$; indeed, for $0<\epsilon'<\epsilon$ so that $\Gamma_i^+\Subset \Gamma_{i,\epsilon'}$, the boundary measures of $u$ on $\Gamma_i^+$ and of $u$ on $\Gamma_{i, \epsilon'}$ coincide on $T^*\Gamma_i$.
\end{proof}

We need the following result, quantifying in the interior of the domain the fact that an Helmholtz solution is oscillating at frequency
$\sim \hsc^{-1}$.

\begin{lem} \label{lem:osc_inter}
Let $U \subset \mathbb R^d$ be an open set and $v \in L_{\rm loc}^2(U)$ be so that $(-\hsc ^2 \Delta - 1) v = 0$ in $U$.
Then, for any $\chi \in C^\infty_c (U)$, and any $\widetilde \chi \in C^\infty_c (U)$ so that $\widetilde \chi = 1$ on $\operatorname{supp} \chi$,
the following holds.
\begin{enumerate}
\item \label{i:osc_inter_1} For any $m\geq 0$ there exists $C_m>0$ such that 
$$
\Vert \chi u \Vert_{H^m_\hsc} \leq C_m \Vert \widetilde \chi u \Vert_{L^2}.
$$
\item \label{i:osc_inter_2} For any bounded function $\psi \in C^\infty(\mathbb R^d)$ so that $\psi = 0$ in $B(0,2)$ and any $N\geq 1$ there exists $C_N>0$ such that 
$$
\Vert \psi (\hsc D_x) \chi u \Vert_{H^N_\hsc} \leq C_N \hsc^N \Vert \widetilde \chi u \Vert_{L^2}.
$$ 
\end{enumerate}
\end{lem}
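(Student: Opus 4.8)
The statement to prove is Lemma~\ref{lem:osc_inter}, the interior oscillation estimate: for a semiclassical Helmholtz solution $(-\hsc^2\Delta-1)v=0$ in an open set $U$, cutoff versions of $v$ are controlled in every semiclassical Sobolev norm by an $L^2$ norm on a slightly larger set, and moreover the piece of $v$ that lives at frequencies $\gtrsim \hsc^{-1}$ away from the characteristic sphere is $O(\hsc^\infty)$.

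\medskip

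\noindent\textbf{The plan.}
The whole lemma is a standard application of the semiclassical elliptic parametrix, exactly as in the construction quoted around \eqref{eq:basic_para}. First I would treat Part~(\ref{i:osc_inter_1}). Fix $\chi,\widetilde\chi\in C^\infty_c(U)$ with $\widetilde\chi\equiv 1$ on $\operatorname{supp}\chi$, and choose an intermediate cutoff $\chi_1\in C^\infty_c(U)$ with $\chi_1\equiv 1$ on $\operatorname{supp}\chi$ and $\widetilde\chi\equiv 1$ on $\operatorname{supp}\chi_1$. The operator $-\hsc^2\Delta-1$ has symbol $|\xi|^2-1$, which is elliptic \emph{outside} the sphere $\{|\xi|=1\}$ but not on it, so one cannot invert it globally; instead one uses the equation itself to trade derivatives. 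Concretely, pick $b\in C^\infty_c(\mathbb R^d)$ equal to $1$ on a neighbourhood of $\overline{B(0,2)}$ and a function $\psi_0\in C^\infty(\mathbb R^d)$, bounded with all derivatives, equal to $1$ outside $B(0,3)$ and vanishing on $B(0,2)$; then $1=b(\hsc D_x)+\psi_0(\hsc D_x)$ modulo nothing (they sum to something $\equiv 1$ — better: take $b+\psi_0\equiv 1$). On the range of $\psi_0(\hsc D_x)$ the operator $-\hsc^2\Delta-1$ is semiclassically elliptic, so there is $E\in\Psi^{-2}_\hsc$ with $\psi_0(\hsc D_x)=E(-\hsc^2\Delta-1)+O(\hsc^\infty)_{\Psi^{-\infty}}$ (this is \eqref{eq:basic_para}; see \cite[Appendix E]{DyZw:19}). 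Apply this to $\chi u$: since $(-\hsc^2\Delta-1)\chi u=[-\hsc^2\Delta,\chi]u=[-\hsc^2\Delta,\chi]\chi_1 u$ (using $\chi_1\equiv1$ on $\operatorname{supp}\chi$) and $[-\hsc^2\Delta,\chi]\in\hsc\Psi^1_\hsc$, we get
\begin{equation*}
\psi_0(\hsc D_x)\chi u = E[-\hsc^2\Delta,\chi]\chi_1 u + O(\hsc^\infty)\chi u,
\end{equation*}
and $E[-\hsc^2\Delta,\chi]\in\hsc\Psi^{-1}_\hsc\subset\hsc\Psi^0_\hsc$ maps $L^2\to H^m_\hsc$ (indeed $H^m_\hsc\to H^{m+1}_\hsc$) with norm $O(\hsc)$ uniformly. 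The complementary piece $b(\hsc D_x)\chi u$ is compactly supported in frequency, so $b(\hsc D_x)\in\Psi^{-\infty}_\hsc$ and $\|b(\hsc D_x)\chi u\|_{H^m_\hsc}\lesssim\|\chi u\|_{L^2}\leq\|\widetilde\chi u\|_{L^2}$. Adding the two contributions gives $\|\chi u\|_{H^m_\hsc}\leq C_m\|\widetilde\chi u\|_{L^2}$, which is Part~(\ref{i:osc_inter_1}).

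\medskip

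\noindent\textbf{Part (\ref{i:osc_inter_2}).}
Now let $\psi\in C^\infty(\mathbb R^d)$ be bounded with all derivatives and $\psi\equiv 0$ on $B(0,2)$. Then on $\operatorname{supp}\psi$ the symbol $|\xi|^2-1$ is elliptic (since $|\xi|\geq 2\Rightarrow |\xi|^2-1\geq 3$), so again by the elliptic parametrix there is $\widetilde E\in\Psi^{-2}_\hsc$ with $\psi(\hsc D_x)=\widetilde E(-\hsc^2\Delta-1)+O(\hsc^\infty)_{\Psi^{-\infty}}$. Applying to $\chi u$ and using $(-\hsc^2\Delta-1)\chi u=[-\hsc^2\Delta,\chi]\chi_1 u$ as above,
\begin{equation*}
\psi(\hsc D_x)\chi u = \widetilde E[-\hsc^2\Delta,\chi]\chi_1 u + O(\hsc^\infty)_{\Psi^{-\infty}}\chi u.
\end{equation*}
The first term is again $O(\hsc)$ in $H^N_\hsc$ by Part (\ref{i:osc_inter_1}) applied to $\chi_1$ (with a further cutoff $\chi_2$ between $\chi_1$ and $\widetilde\chi$), but this only gives $O(\hsc)$, not $O(\hsc^N)$. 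To promote this to $O(\hsc^\infty)$ one iterates: rewrite $\widetilde E[-\hsc^2\Delta,\chi]\chi_1 u$ and apply the same parametrix identity to $\chi_1 u$, peeling off another factor of $\hsc$ each time (formally, $\psi(\hsc D_x)\chi u$ equals a finite Neumann-type sum of operators in $\hsc^k\Psi^{-k}_\hsc$ acting on $\widetilde\chi u$, plus an $O(\hsc^\infty)$ remainder, because each commutator with a cutoff produces a factor $\hsc$ and each application of $\widetilde E$ a smoothing of order $2$). After $N$ steps one obtains $\|\psi(\hsc D_x)\chi u\|_{H^N_\hsc}\leq C_N\hsc^N\|\widetilde\chi u\|_{L^2}$. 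A clean way to organise the iteration: prove by induction on $N$ that for any such $\chi,\widetilde\chi$ one has $\|\psi(\hsc D_x)\chi u\|_{H^N_\hsc}\leq C_N\hsc^N\|\widetilde\chi u\|_{L^2}$, the base case $N=0$ being Part~(\ref{i:osc_inter_1}) together with boundedness of $\psi(\hsc D_x)$, and the inductive step using the parametrix identity to write $\psi(\hsc D_x)\chi u = \hsc\,A\,\chi_1 u + O(\hsc^\infty)$ with $A\in\Psi^{-1}_\hsc$ of the form $\widetilde E'[-\hsc^2\Delta,\chi]$ and then absorbing $\hsc A$ into the inductive hypothesis at level $N-1$ applied to $\chi_1$ (after noting $A$ can itself be microlocalised to $\operatorname{supp}\psi$ up to $O(\hsc^\infty)$, so the hypothesis genuinely applies).

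\medskip

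\noindent\textbf{Main obstacle.}
There is no deep difficulty here; the only thing to be careful about is the bookkeeping in Part~(\ref{i:osc_inter_2}): making the iteration precise requires keeping track of a decreasing sequence of cutoffs $\chi=\chi_0,\chi_1,\dots,\chi_N,\widetilde\chi$ (each identically $1$ on the support of the previous, the last supported where $\widetilde\chi\equiv1$), and checking that at each stage the frequency-localisation to $\{|\xi|\gtrsim1\}$ is preserved modulo $O(\hsc^\infty)$ so that the parametrix $\widetilde E$ (which is only a \emph{left} parametrix on the relevant frequency range) can be reapplied. All constants are uniform in $\hsc\in(0,\hsc_0]$ because they come from fixed symbol seminorms of $E,\widetilde E,[-\hsc^2\Delta,\chi_j]$ and the $C^\infty_c$ cutoffs. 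I would simply cite \cite[Appendix E]{DyZw:19} (or \cite[\S4]{LSW3}) for the parametrix and composition/boundedness statements and keep the iteration argument to a few lines.
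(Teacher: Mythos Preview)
Your overall strategy is sound, but there is a genuine (if minor) error in your argument for Part~(\ref{i:osc_inter_1}), and your route to Part~(\ref{i:osc_inter_2}) is more laborious than necessary.

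\medskip

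\textbf{The gap.} In Part~(\ref{i:osc_inter_1}) you write that $E[-\hsc^2\Delta,\chi]\in\hsc\Psi^{-1}_\hsc$ ``maps $L^2\to H^m_\hsc$ \dots\ with norm $O(\hsc)$ uniformly''. This is false for $m>1$: an operator in $\hsc\Psi^{-1}_\hsc$ maps $H^s_\hsc\to H^{s+1}_\hsc$ with norm $O(\hsc)$, so from $\chi_1 u\in L^2$ you only get control of $\psi_0(\hsc D_x)\chi u$ in $H^1_\hsc$, not $H^m_\hsc$. To reach $H^m_\hsc$ you must iterate (exactly as you do for Part~(\ref{i:osc_inter_2})), introducing a nested family of cutoffs $\chi=\chi_0,\chi_1,\dots,\chi_m$ and using the bound at level $m-1$ for $\chi_1$ to feed the bound at level $m$ for $\chi$. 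Alternatively, one can use a higher-order parametrix $E_m\in\Psi^{-2m}_\hsc$ with $\psi_0(\hsc D_x)=E_m(-\hsc^2\Delta-1)^m+O(\hsc^\infty)$ and compute $(-\hsc^2\Delta-1)^m\chi u=\operatorname{ad}^m_{-\hsc^2\Delta}(\chi)\,u\in\hsc^m\Psi^m_\hsc\cdot u$.

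\medskip

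\textbf{Comparison with the paper.} The paper avoids all iteration by a single observation you did not use: include the spatial cutoff in the operator whose parametrix is built. Since $\psi(\hsc D_x)\chi$ has semiclassical wavefront set contained in $\operatorname{supp}\chi\times\{|\xi|\ge 2\}$, the operator $\chi_1(-\hsc^2\Delta-1)$ (not just $-\hsc^2\Delta-1$) is elliptic there, so the parametrix construction gives
\[
\psi(\hsc D_x)\chi \;=\; E\,\chi_1(-\hsc^2\Delta-1)\;+\;O(\hsc^\infty)_{\Psi^{-\infty}}\widetilde\chi.
\]
Applied to $v\,1_U$, the main term is \emph{identically zero} because $\chi_1$ is supported in $U$ where the equation holds, so $\chi_1(-\hsc^2\Delta-1)(v\,1_U)=0$. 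This yields Part~(\ref{i:osc_inter_2}) in one line, with no commutator and no induction. Part~(\ref{i:osc_inter_1}) then follows immediately by splitting $\chi u=(1-\psi_1)(\hsc D_x)\chi u+\psi_1(\hsc D_x)\chi u$ with $1-\psi_1$ compactly supported in frequency. So the paper proves (2) first and derives (1) from it; you go the other way around and pay for it with bookkeeping.
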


\begin{proof}
Let $\chi_1 \in C^\infty_c(U)$ be so that $\chi_1 = 1$ on $\operatorname{supp} \chi$ 
and $\widetilde \chi = 1$ on $\operatorname{supp} \chi_1$.
Observe that $\chi_1(-\hsc ^2 \Delta - 1)$ is semiclassically elliptic on $\operatorname{WF}_\hsc \psi (\hsc D_x) \chi$, hence, by the elliptic parametrix construction, there exists $E\in \Psi_\hsc^{-2}$ so that
$$
\psi (\hsc D_x) \chi = E\chi_1(-\hsc ^2 \Delta - 1) + O(\hsc^\infty)_{\Psi^{-\infty}}.
$$
Applying $\widetilde \chi$ to the right of the above, we get
$$
\psi (\hsc D_x) \chi = E\chi_1(-\hsc ^2 \Delta - 1) + O(\hsc^\infty)_{\Psi^{-\infty}} \widetilde \chi,
$$
and hence (2) follows applying this identity to $v1_{U}$. 
Let now $\psi_1 \in C^\infty(\mathbb R^d)$ be such that $1 - \psi_1$ is compactly supported and $\psi_1 = 0$ on $B(0,2)$.
Since $(1-\psi_1)(\hsc D_x) \in \Psi^{-\infty}_\hsc$, 
$$
\Vert (1-\psi_1)(\hsc D_x)\chi  v \Vert_{H^m_\hsc} \leq C_m \Vert \chi  v \Vert_{L^2}.
$$
Together with (2), this gives (1).
\end{proof}

\begin{proof}[Proof of Lemma \ref{lem:trace_Gammai}]
By \cite[Lemma 3.7]{GLS1}, if $\Gamma$ is a smooth closed curve and $A\in \Psi_\hsc^m(\mathbb{R}^2)$ is such that $\operatorname{WF}_\hsc A \cap S^*\Gamma=\emptyset$, there exists $C>0$ such that
$$
\|Af\|_{L^2(\Gamma)}\leq C\|Af\|_{L^2} +Ch^{-1}\|(-\hsc^2\Delta - 1)Af\|_{L^2}+O(\hsc^\infty)\|f\|_{L^2}.
$$
Let $\epsilon'', \delta >0$ be so that $0<\epsilon' <\epsilon'' < \epsilon'' + 4 \delta< \epsilon$. Next, let $\Gamma \subset [ \frac{\mathsf d_l}{2}, \frac{\mathsf d_l + \mathsf d_r}{2} ] \times [- \epsilon'' - 4\delta, \mathsf h + \epsilon'' + 4 \delta] $ be a smooth closed curve so that
and $\Gamma \cap \big( \{ \mathsf d_l \} \times \mathbb R \big) = \{ \mathsf d_l \} \times [- \epsilon'' - 3 \delta, \mathsf h + \epsilon'' + 3\delta]$.
Next, let $\chi \in C^\infty_c(\mathbb R^2)$ be so that $\operatorname{supp} \chi \cap \Gamma =  \{ \mathsf d_l \} \times[- \epsilon'' - \delta, \mathsf h + \epsilon'' + \delta]$ and $\chi = 1$ near $\Gamma_{i,\epsilon'}$; and $\widetilde \chi \in C^\infty_c(\mathbb R^2)$ be so that $\widetilde \chi = 1$ on
$\operatorname{supp} \chi$ and $\operatorname{supp} \widetilde \chi \cap \Gamma =  \{ \mathsf d_l \} \times[- \epsilon'' - 2\delta, \mathsf h + \epsilon'' + 2 \delta]$.
Applying the above to $\chi f$ and using the pseudo-locality of the semiclassical pseudo-differential calculus, we get
\begin{equation}\label{eq:tr_gls}
\| A \chi f\|_{L^2(\Gamma_{i, \epsilon'})}\leq C\|\widetilde \chi A \chi f\|_{L^2} +Ch^{-1}\|\widetilde \chi (-\hsc^2\Delta - 1)A\chi f\|_{L^2}+O(\hsc^\infty)\|\chi f\|_{L^2}.
\end{equation}
By Lemma \ref{lem:hyperint}, there exists $A\in \Psi^0(\mathbb{R}^2)$ so that  $\operatorname{WF}_\hsc A\cap S^*\Gamma=\emptyset$ and
 $ \operatorname{WF}_\hsc (\chi \hsc D_{x_1} u) \cap \operatorname{WF}_\hsc (I-A) = \emptyset$.
In addition, let $\psi \in C^\infty(\mathbb R^d)$ so that $\psi = 0$ in $B(0,2)$ and $\psi = 1$ outside $B(0,3)$. Observe that
$$
\chi \hsc D_{x_1} u = A \chi \hsc D_{x_1} u + (1-A)\psi(\hsc D_x) \chi \hsc D_{x_1} u + (1-A)(1-\psi(\hsc D_x)) \chi \hsc D_{x_1} u,
$$
hence, by definition of the wavefront-set, as $(1-A)(1-\psi(\hsc D_x))\chi$ has a compactly supported symbol, and using Part (2) of Lemma \ref{lem:osc_inter}, 
$$
\chi \hsc D_{x_1} u = A \chi \hsc D_{x_1} u + O(\hsc^\infty) \widetilde \chi u
$$
in any $H_\hsc^m$ space. Hence, applying (\ref{eq:tr_gls}) to $\hsc D_{x_1} u$ (and using a trace estimate to deal with the $O(\hsc^\infty)$ error), we obtain the trace bound on $\hsc\partial_n u$ using the fact that $[-\hsc^2\Delta, A\chi] \in \hsc \Psi_\hsc^{1}$,
Part (1) of Lemma \ref{lem:osc_inter}, and the resolvent bound of Part (\ref{i:bdd}) of Definition \ref{def:adm}. The trace bound on $u$ is obtained in the 
same way.
\end{proof}

\begin{proof}[Proof of Lemma \ref{lem:osc_Gammai}]
Let $\psi \in C^\infty(\mathbb R)$ be so that $\psi = 0$ in $B(0,4)$ and $\psi = 1$ outside $B(0,5)$, 
and $\widetilde \psi \in C^\infty(\mathbb R^2)$ be so that $\widetilde \psi = 0$ in $B(0,2)$ and $\widetilde \psi = 1$ outside $B(0,3)$.
By definition of the Dirichlet boundary measure, it suffices to show that
$$
\psi(\hsc D_{x'}) \chi \gamma_{\Gamma_{i, \epsilon'}}(\hsc D_{x_1} + \iota) u \rightarrow 0.
$$
for any $\chi \in C^\infty_c(\mathbb R)$, such that $\chi = 1$ near $\{ \mathsf d_l \} \times [0, \mathsf h]$ and (say) $\operatorname{supp} \chi \subset \{ \mathsf d_l \} \times [-\frac {\epsilon'} 4, \mathsf h + \frac  {\epsilon'} 4]$. Let $\widetilde \chi \in C^\infty_c(\mathbb R)$, such that $\widetilde \chi = 1$ near $\operatorname{supp} \chi$ and $\operatorname{supp} \widetilde \chi \subset \{ \mathsf d_l \} \times [-\frac  {\epsilon'} 2, \mathsf h + \frac  {\epsilon'} 2]$. By pseudo-locality of the semiclassical pseudo-differential calculus (e.g.,~together with a trace estimate and
Part \ref{i:bdd} of Definition \ref{def:adm}), 
 it suffices to show that
$$
\widetilde \chi \psi(\hsc D_{x'}) \chi \gamma_{\Gamma_{i, \epsilon}}(\hsc D_{x_1}  + \iota) u \rightarrow 0.
$$
Extend $\widetilde \chi$ and $\chi$ as elements of $C^\infty_c(\mathbb R^2)$, so that $\widetilde \chi =1$ on $\operatorname{supp}{\chi}$, $\chi = 1$ near $\Gamma_i$, and  $\operatorname{supp}\widetilde \chi \subset \domain_{ \epsilon'}$. As $\widetilde \chi \psi(\hsc D_{x'}) \chi$ is a tangential operator, it commutes with the trace and it suffices to show that
$$
\Vert \widetilde \chi \psi(\hsc D_{x'}) \chi (\hsc D_{x_1}  + \iota) u \Vert_{L^2(\Gamma_{i, \epsilon})} \rightarrow 0.
$$
Now, by a trace estimate, 
$$
\Vert \widetilde \chi \psi(\hsc D_{x'}) \chi (\hsc D_{x_1}  + \iota) u \Vert_{L^2(\Gamma_{i, \epsilon})} \lesssim \hsc^{- \frac 1 2} \Vert \psi(\hsc D_{x'}) \chi (\hsc D_{x_1}  + \iota) u \Vert_{H^1_\hsc}.
$$
Decompose
$$
\psi(\hsc D_{x'}) \chi (\hsc D_{x_1}  + \iota) u = \psi(\hsc D_{x'}) \widetilde \psi(\hsc D_{x}) \chi (\hsc D_{x_1}  + \iota) u + \psi(\hsc D_{x'}) (1-\widetilde \psi(\hsc D_{x})) \chi (\hsc D_{x_1}  + \iota) u.
$$
By Part (2) of Lemma \ref{lem:osc_inter},
$$
\hsc^{-\frac 12} \Vert \psi(\hsc D_{x'}) \widetilde \psi(\hsc D_{x}) \chi (\hsc D_{x_1}  + \iota) u \Vert_{H^1_\hsc} \rightarrow 0.
$$
On the other hand, as $\operatorname{supp}(1-\widetilde \psi(\xi)) \cap \operatorname{supp} \psi(\xi')=\emptyset$, we have $\psi(\hsc D_{x'}) (1-\widetilde \psi(\hsc D_{x})) = O(\hsc^\infty)_{\Psi^{-\infty}}$, and the result follows using the bound 
in Part (\ref{i:bdd}) of Definition \ref{def:adm}.
\end{proof}

\begin{proof}[Proof of Lemma \ref{lem:traceH}]
We show the result for $\nu_d^{\middlesubscript}$; the proofs for $\nu_d^{\middlesubscript}$ and $\nu_j^{\middlesubscript}$ are the same. 
Shrinking $\domain^+$ if necessary, let $\widetilde \domain^+ \supset \domain^+$
be such that Definition \ref{def:outgo} holds with $\domain^+$ replaced by $\widetilde \domain^+$, 
and let $\chi \in C^\infty_c$ be such that $\operatorname{supp} \chi \subset \widetilde \domain^+$ and $\chi =1$ near $\Gamma_{\middlesubscript}^+$.
By Lemma \ref{lem:hyperint}, there exists $c>0$ so that
\begin{equation} \label{eq:suppWFpreli}
\operatorname{WF}_\hsc ( \chi v) \subset \{ |\xi_1| \geq c \}.
\end{equation} 
Since, in addition, $\operatorname{WF}_\hsc (\chi v)  \subset \{ |\xi|^2 = 1 \}$, 
it follows from (\ref{eq:suppWFpreli}) that
\begin{equation} \label{eq:suppWF}
\operatorname{WF}_\hsc (\chi v) \subset \{ 1 \geq r(\xi') \geq c^2 \}.
\end{equation}
Now, let $\psi \in C^\infty_c$ be so that $\operatorname{supp }\psi \subset [c^2/2, 2]$ and $\psi = 1$
near $\{ 1 \geq z \geq c^2 \}$, and define $\Psi(\xi'):=\psi(r(\xi'))$. From (\ref{eq:suppWF}) together the definition of the wavefront-set and  Lemma \ref{lem:osc_inter}, (\ref{i:osc_inter_2}), for any $s$ and any $N$ there exists $C_s,N>0$ so that
$$
\Vert (1-\Psi(\hsc D_{x'})) \chi v \Vert_{H^s_\hsc} \leq C_{s,N} \hsc^N.
$$
Hence, by Sobolev trace estimates, $\Vert (1-\Psi(\hsc D_{x'})) v \Vert_{L^2(\Gamma_{\middlesubscript})} \rightarrow 0$.
It follows that, by definition of defect measures, $\nu_d^i\big((1-\Psi(\hsc D_{x'}))^2\big) = 0$. As $(1-\Psi(\hsc D_{x'}))^2 = 1$ near $\mathcal H^c$, the Lemma follows.

\end{proof}

\section*{Acknowledgements}

DL and EAS acknowledge support from EPSRC grant EP/R005591/1, thank Jeffrey Galkowski (University College London) for useful discussions, thank Ivan Graham (University of Bath) for reading and commenting on an earlier draft of the paper, {and thank the referee for their careful reading of the paper and helpful comments.}

\bibliographystyle{amsalpha}
\bibliography{ref}

\end{document}